\documentclass[12pt]{amsart}
\usepackage{amsmath, amsthm, amssymb}
\usepackage{amsfonts}
\usepackage{fullpage}
\usepackage{color}
\usepackage{hyperref}
\usepackage{enumitem}
\usepackage{bm}
\usepackage{verbatim}

%\allowdisplaybreaks[3]

%\documentclass[12pt]{amsart}
%\usepackage[notcite]{showkeys}
\usepackage[dvipsnames]{xcolor}
%\usepackage{bbm}
%\usepackage{soul}
%\usepackage{fullpage}
%\usepackage{amssymb}
%\usepackage{enumitem}
%\usepackage{ulem}
%\ifx\pdfoutput\undefined
\usepackage{graphicx}  
\DeclareGraphicsExtensions{.pstex,.eps}

\usepackage{xcolor}

\renewcommand{\ell}{{l}}  %Because jmet was using this when it wasn't used elsewhere.  sorry.

%\else
%\usepackage{amsfonts}
%\usepackage[pdftex]{graphicx}  \DeclareGraphicsExtensions{.pdf,.mps}
%\def\Dessin#1{\begin{center}
%\includegraphics{#1.mps}
%\end{center}
%}
%\fi

% \setlength{\textheight}{8in} \setlength{\oddsidemargin}{0.0in}
% \setlength{\evensidemargin}{0.0in} \setlength{\textwidth}{6.4in}
% \setlength{\topmargin}{0.18in} \setlength{\headheight}{0.18in}
% \setlength{\marginparwidth}{1.0in}
% \setlength{\abovedisplayskip}{0.2in}

% \setlength{\belowdisplayskip}{0.2in}

% \setlength{\parskip}{0.05in}

%\pagestyle{headings}

\newcommand{\bY}{\mathbf{Y}}
\newcommand{\bH}{\mathbf{H}}
\newcommand{\bh}{\mathbf{h}}

\newcommand{\R}{{\mathbb{R}}}
\newcommand{\Z}{{\mathbb Z}}

\renewcommand{\S}{{\mathbb S}}

\newcommand{\N}{{\mathbb N}}

\renewcommand{\AA}{{\mathcal A}} 
\newcommand{\BB}{{\mathcal B}}

\newcommand{\EE}{{\mathcal E}}

\newcommand{\bEE}{{\bm{ \mathcal E}}}
\newcommand{\MM}{{\mathcal M}}
\newcommand{\FF}{{\mathcal F}}

\newcommand{\GG}{{\mathcal G}}
\newcommand{\HH}{{\mathcal H}}
\newcommand{\PP}{{\mathcal P}}
\newcommand{\QQ}{{\mathcal Q}}

\renewcommand{\SS}{{\mathcal S}}

\newcommand{\tri}{|\!|\!|}

\newcommand{\tF}{{\tilde{F}}}
\newcommand{\tia}{{\tilde a}}

\newcommand{\tc}{{\tilde c}}
\newcommand{\tih}{{\tilde h}}
\newcommand{\ts}{{\tilde s}}
\newcommand{\tR}{{\tilde{R}}}
\newcommand{\tg}{{\tilde g}}

\newcommand{\tp}{{\tilde p}}

\newcommand{\tr}{\operatorname{tr}}

\renewcommand{\Re}{\mathop{\rm Re}\nolimits}
\renewcommand{\Im}{\mathop{\rm Im}\nolimits}

% frequency envelopes
%\newcommand{\ssi}{s^{(\si)}}

\DeclareMathOperator{\Ric}{Ric}
\DeclareMathOperator{\tRic}{{\widetilde{Ric}}}

\theoremstyle{plain}

\newtheorem{thm}{Theorem}[section]
\newtheorem{prop}[thm]{Proposition}

\newtheorem{lemma}[thm]{Lemma}

\theoremstyle{definition}
\newtheorem{ex}{Example}[thm]

\newtheorem{rem}{Remark}[thm]

\numberwithin{equation}{section}

\newcommand{\propref}[1]{Proposition~\ref{#1}}

\def\squarebox#1{\hbox to #1{\hfill\vbox to #1{\vfill}}}

% meremy's macros and packages

\newcommand{\<}{\langle}
\renewcommand{\>}{\rangle}

\renewcommand{\d}{\partial}
\newcommand{\ep}{\epsilon}
\newcommand{\lV}{\lVert}
\newcommand{\rV}{\rVert}

\def\be{{\beta}}
\def\ga{\gamma}
\def\Ga{\Gamma}
\def\de{\delta}
\def\De{\Delta}
\def\ep{\epsilon}

\def\la{\lambda}
\def\La{\Lambda}
\def\si{\sigma}
\def\Si{\Sigma}
\def\om{\omega}

\def\nab{\nabla}

\def\al{\alpha}

\renewcommand{\dh}{{\bm\delta} h}
\newcommand{\dA}{{\bm\delta} A}

\newcommand{\dla}{{\bm\delta} \lambda}
\newcommand{\dSS}{{\bm\delta} \SS}

\newcommand{\bmF}{\mathbf{F}}
\newcommand{\tbmF}{\tilde{\mathbf{F}}}

\title[Skew Mean Curvature Flow]
{Local well-posedness of the  Skew mean curvature flow for small data in $d\geq 2$ dimensions}

\author[J. Huang]
{Jiaxi Huang}

\author[D. Tataru]
{Daniel Tataru}

\address{Beijing International Center for Mathematical Research, Peking University, Beijing 100871, P. R. China}
\email{huangjiaxi@bicmr.pku.edu.cn}

\address{Department of Mathematics, University of California, Berkeley \\
 Berkeley, CA 94720, USA}
\email{tataru@math.berkeley.edu}

\subjclass[2010]{Primary: 35Q55; Secondary: 53E10.}

\keywords{Skew mean curvature flow, local well-posedness, low regularity, small data}

\thanks{J. Huang was supported by the NSFC Grant No. 12071010}

\thanks{D. Tataru was supported by the NSF grant DMS-2054975 as well as by a Simons Investigator grant from the Simons Foundation.   }

\begin{document}

\begin{abstract}
The skew mean curvature flow is an evolution equation for $d$ dimensional ma\-nifolds
embedded in $\mathbb{R}^{d+2}$ (or more generally, in a Riemannian manifold). It can be viewed as a Schr\"odinger analogue of the mean curvature 
flow, or alternatively  as a quasilinear version of the Schr\"odinger Map equation.  In an earlier paper, 
the authors introduced a harmonic/Coulomb gauge formulation of 
the problem, and used it to prove small data local well-posedness in dimensions $d \geq 4$.
In this article, we prove small data local well-posedness in low-regularity Sobolev spaces for the skew mean curvature flow in dimension $d\geq 2$. This is achieved by introducing a new, 
heat gauge formulation of the equations, which turns out to be more robust in low dimensions.
\end{abstract}

\date{\today}
\maketitle

%\centerline{\today}

\setcounter{tocdepth}{1}
\pagenumbering{roman} \tableofcontents \newpage \pagenumbering{arabic}

\section{Introduction}

In this article we continue our study of the 
local well-posedness for the skew mean curvature flow (SMCF).
This  is a nonlinear Schr\"odinger type flow 
modeling the evolution of a $d$ dimensional oriented manifold  embedded into a fixed oriented $d+2$ dimensional manifold. It can be seen as a Schr\"odinger analogue of the well studied mean curvature flow. 

In earlier work \cite{HT21}, we have considered 
the (SMCF) flow in higher dimension $d \geq 4$,
and proved local well-posedness for small initial data
in low regularity Sobolev spaces.
This was achieved by developing a suitable harmonic/Coulomb gauge formulation of the  equations, which allowed us to reformulate the problem as a quasilinear Schr\"odinger evolution.

In this article, we consider the small data local well-posedness for the skew mean curvature flow in low dimensions $d \geq 2$, also for low regularity initial data.
As the earlier harmonic/Coulomb gauge formulation has issues 
in low dimensions, here we introduce an alternative 
heat gauge, which resolves these difficulties.

\subsection{The (SMCF) equations}
Let $\Sigma^d$ be a $d$-dimensional oriented manifold, and $(\mathcal{N}^{d+2},g_{\mathcal{N}})$ be a $d+2$-dimensional oriented Riemannian manifold. Let $I=[0,T]$ be an interval and $F:I\times \Sigma^d \rightarrow \mathcal{N}$ be a 
one parameter family of immersions. This induces a time dependent Riemannian structure 
on $\Sigma^d$. For each $t\in I$, we denote the submanifold by $\Sigma_t=F(t,\Sigma)$,  its tangent bundle by $T\Sigma_t$, and its normal bundle by $N\Sigma_t$ respectively. For an arbitrary vector $Z$ at $F$ we denote by $Z^\perp$
its orthogonal projection onto $N\Sigma_t$.
The mean curvature  $\mathbf{H}(F)$ of $\Sigma_t$ can be identified naturally with a section of the normal bundle $N\Sigma_t$.

  The normal bundle $N\Sigma_t$ is a rank two vector bundle with a naturally induced complex structure $J(F)$ which simply rotates a vector in the normal space by $\pi/2$ positively. Namely, for any point $y=F(t,x)\in \Sigma_t$ and any normal vector $\nu\in N_{y}\Sigma_t$, we define $J(F)\in N_{y}\Sigma_t$ as the unique vector with the same length
  so that 
  \[
  J(F)\nu\bot \nu, \qquad \omega(F_{\ast}(e_1), F_{\ast}(e_2),\cdots F_{\ast}(e_d), \nu,  J(F)\nu)>0,
  \]
  where $\om$ is the volume form of $\mathcal{N}$ and $\{e_1,\cdots,e_d\}$ is an oriented basis of $\Sigma^d$. The skew mean curvature flow (SMCF) is defined by the initial value problem
\begin{equation}           \label{Main-Sys}
\left\{\begin{aligned}
&(\d_t F)^{\perp}=J(F)\mathbf{H}(F),\\
&F(\cdot,0)=F_0,
\end{aligned}\right.
\end{equation}
which evolves a codimension two submanifold along its binormal direction with a speed given by its mean curvature.

The (SMCF) was derived both in physics and mathematics. 
The one-dimensional (SMCF) in the Euclidean space $\R^3$ is the well-known vortex filament equation (VFE)
\begin{align*}
\d_t \ga=\d_s \ga\times \d_s^2 \ga,
\end{align*}
where $\ga$ is a time-dependent space curve, $s$ is its arc-length parameter and $\times$ denotes the cross product in $\R^3$. The (VFE) was first discovered by Da Rios \cite{DaRios1906} in 1906 in the study of the free motion of a vortex filament.

The (SMCF) also arises in the study of asymptotic dynamics of vortices in the context of superfluidity and superconductivity. For the Gross-Pitaevskii equation, which models the wave function associated with a Bose-Einstein condensate, physics evidence indicates that the vortices would evolve along the (SMCF). An incomplete verification was attempted by Lin \cite{LinT00} for the vortex filaments in three space dimensions. For higher dimensions, Jerrard \cite{Je02} proved this conjecture when the initial singular set is a codimension two sphere with multiplicity one.

The other motivation is that the (SMCF) naturally arises in the study of the hydrodynamical Euler equation. A singular vortex in a fluid is called a vortex membrane in higher dimensions if it is supported on a codimension two subset. The law of locally induced motion of a vortex membrane can be deduced from the Euler equation by applying the Biot-Savart formula. Shashikanth \cite{Sh12} first investigated the motion of a vortex membrane in $\R^4$ and showed that it is governed by the two dimensional (SMCF), while Khesin \cite{Kh12} then generalized this conclusion to any dimensional vortex membranes in Euclidean spaces.

From a mathematical standpoint, the (SMCF) equation is a canonical geometric flow for codimension two submanifolds which can be viewed as the Schr\"odinger analogue of the well studied mean curvature flow. In fact, the infinite-dimensional space of codimension two immersions of a Riemannian manifold admits a generalized Marsden-Weinstein sympletic structure, and hence the Hamiltonian flow of the volume functional on this space is verified to be the (SMCF). Haller-Vizman \cite{HaVi04} noted this fact where they studied the nonlinear Grassmannians.
For a detailed mathematical derivation of these equations we refer the reader  to the article \cite[Section 2.1]{SoSun17}. 

The one dimensional case of this problem has been extensively studied. This is because the one dimensional (SMCF) flow
agrees the classical  Schr\"odinger Map type equation, provided that one chooses suitable coordinates, i.e. the arclength
parametrization. As such, it exhibits many special properties 
(e.g. complete integrability) which are absent in higher 
dimensions.   For more details we refer the reader to the survey article of Vega~\cite{Ve14}.

The study of higher dimensional (SMCF), on the other hand, 
is far less developed.  Song-Sun \cite{SoSun17} proved the local existence of (SMCF) with a smooth, compact oriented surface as the initial data in two dimensions, then Song \cite{So19} generalized this result to compact oriented manifolds for all $d\geq 2$ and also  proved a corresponding uniqueness result.  Song \cite{So17} also proved that the Gauss map of a $d$ dimensional (SMCF) in $\R^{d+2}$ satisfies a Schr\"odinger Map type equation but relative to the varying metric. More recently, Li \cite{Li1,Li2} considered a class of transversal small pertubations of Euclidean planes under the (SMCF) and proved a global regularity result for small initial data.

This article is instead concerned with the case when $\Sigma^d = \R^d$, i.e. where $\Sigma_t$ has a trivial topology. We will further restrict to the case 
when $\mathcal{N}^{d+2}$ is the Euclidean space $\R^{d+2}$. Thus, the reader should visualize $\Sigma_t$ as an asymptotically flat codimension two submanifold 
of $\R^{d+2}$. 

Such manifolds $\Si=\R^d$ with $d\geq 4$ were already considered in our earlier work \cite{HT21}, where we proved the local well-posedness for small data in low-regularity Sobolev spaces. Here we consider instead the lower dimensional case, 
namely the dimensions $d = 2,3$.
A key role in both \cite{HT21} and in this article  was played by our gauge choices,  which are discussed next.

\subsection{ Gauge choices for (SMCF)}

There are two components for the gauge choice, which are briefly discussed here and in full detail in Section~\ref{Sec-gauge}:

\begin{enumerate}
    \item The choice of coordinates on $I \times \Sigma$.
    \item The choice of an orthonormal frame on $I \times N\Sigma$.
\end{enumerate}

Indeed, as written above in \eqref{Main-Sys}, the (SMCF) equations  are independent of the choice of coordinates in $I \times \Sigma$; here we include 
the time interval $I$ to emphasize that coordinates may be chosen in a time dependent fashion. The manifold $\Sigma^d$ simply serves to provide a parametrization for the moving manifold $\Sigma_t$; it  determines the topology of $\Sigma_t$, but nothing else. Thus, the (SMCF) system written in the form \eqref{Main-Sys} should be seen as a geometric evolution, with a large gauge group, namely the group of time dependent changes of coordinates in $I \times \Sigma$.  One may think of the 
gauge choice here as having two components, (i) the choice of coordinates at the initial time,  and (ii) the time evolution 
of the coordinates. One way to describe the latter choice
is to rewrite the equations in the form 
\begin{equation*}       %    \label{Main-Sys-re}
\left\{\begin{aligned}
&(\d_t - V \partial_x) F =J(F)\mathbf{H}(F),\\
&F(\cdot,0)=F_0,
\end{aligned}\right.
\end{equation*}
where the vector field $V$ can be freely chosen, and captures the time evolution of the  coordinates. Indeed, some of the earlier papers \cite{SoSun17} and \cite{So19} on (SMCF) use this formulation with $V = 0$. This would seem to simplify the equations, however it introduces difficulties at the level of comparing solutions.This is because the regularity of the map $F$ is no longer determined by the regularity of the second fundamental form, and instead there is a loss 
of derivatives which may only be avoided if the initial data is assumed to have extra regularity. This loss is what prevents a complete low regularity theory in that approach.

In our earlier work \cite{HT21} in dimension $d \geq 4$, we choose \emph{harmonic coordinates} on $\Sigma$, separately at each time. This implicitely fixes $V$, which may be obtained as the solution of an appropriate  elliptic equation. The same approach could be made to work in dimension $d = 3$, if one uses a more careful study of the linearized equation as in the present paper. Unfortunately this does not seem to work well in two dimensions, essentially due to a lack of  sufficient control on the metric at low regularity, which is caused by a lack of decay of the fundamental solution for the Laplacian. 

To rectify this issue, in the present paper we use instead a \emph{heat gauge}, where the coordinates and implicitely the metric are determined dynamically via a heat flow. This in particular requires also a good choice of coordinates at the initial time; there, we fall back to the 
harmonic coordinates. In dimension three and higher, this is all that is needed, and in effect both gauge choices, i.e. the heat gauge and the harmonic gauge, work equally well.  However, in two dimensions the harmonic coordinates fail to yield the needed low frequency decay 
of the metric. We rectify this by adding an a-priori low frequency assumption on the metric in suitable coordinates, and then propagate this in time via the heat gauge.

We now discuss the second component of the gauge choice, namely the orthonormal frame in the normal bundle. Such a choice is needed in order to fix the second fundamental form for $\Sigma$; indeed, the (SMCF) is most naturally interpreted as a nonlinear Schr\"odinger evolution for the second fundamental form of $\Sigma$.  In our earlier paper \cite{HT21} we use the Coulomb gauge. But that seems to no longer be well behaved  
in two dimensions, so we replace it again with a heat flow.
In this context, this strategy is reminiscent of the work of 
the second author and collaborators for the Chern-Simons-Schr\"odinger flow.

\subsection{Scaling and function spaces}
To understand what are the natural thresholds for local well-posedness, it is interesting  
to to consider the scaling properties of the solutions. As one might expect, a clean scaling law is obtained when $\Sigma^d = \R^d$ and $\mathcal{N}^{d+2} = \R^{d+2}$. Then we have the following

\begin{prop}[Scale invariance for (SMCF)]
	Assume that $F$ is a solution of \eqref{Main-Sys} with initial data $F(0)=F_0$, then ${F}_\mu(t,x):=\mu^{-1}F(\mu^2 t,\mu x)$ is a solution of \eqref{Main-Sys} with initial data ${F}_\mu(0)=\mu^{-1}F_0(\mu x)$.
\end{prop}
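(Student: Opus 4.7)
The plan is a direct verification: track how each geometric object (induced metric, second fundamental form, complex structure on the normal bundle, mean curvature, and time derivative) transforms under the parabolic scaling $F_\mu(t,x) = \mu^{-1} F(\mu^2 t, \mu x)$, and then observe that both sides of \eqref{Main-Sys} transform with the same weight. Since the ambient geometry is Euclidean and the rescaling is a dilation on $\R^{d+2}$, the transformation laws are all purely algebraic and require no integration or boundary issues.

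First, I would compute $\partial_i F_\mu(t,x) = (\partial_i F)(\mu^2 t, \mu x)$, so that the induced metric satisfies $(g_\mu)_{ij}(t,x) = g_{ij}(\mu^2 t, \mu x)$; in particular $g_\mu^{ij}(t,x) = g^{ij}(\mu^2 t, \mu x)$. Next I would differentiate once more to find that the scalar-valued second fundamental form picks up a factor $\mu$, namely $A_\mu(t,x) = \mu\, A(\mu^2 t, \mu x)$; tracing with the inverse metric yields the mean curvature transformation
\[
\mathbf{H}(F_\mu)(t,x) = \mu\, \mathbf{H}(F)(\mu^2 t, \mu x).
\]
Since Euclidean dilations preserve the tangent/normal splitting of $\R^{d+2}$ at corresponding points, the orthogonal projection $(\cdot)^\perp$ commutes with the rescaling, and the complex structure on the normal bundle is preserved, i.e. $J(F_\mu)(t,x)\, \nu = J(F)(\mu^2 t, \mu x)\, \nu$ for any normal vector $\nu$ (the orientation condition is preserved because the dilation factors $\mu^{d+2} > 0$ cancel out in the sign of the volume form).

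Finally, I would compute $\partial_t F_\mu(t,x) = \mu\, (\partial_t F)(\mu^2 t, \mu x)$, so that
\[
(\partial_t F_\mu)^\perp(t,x) = \mu\, (\partial_t F)^\perp(\mu^2 t, \mu x) = \mu\, J(F)\mathbf{H}(F)(\mu^2 t, \mu x) = J(F_\mu)\mathbf{H}(F_\mu)(t,x),
\]
using \eqref{Main-Sys} for $F$ at the middle step and the transformation laws above at the last step. The initial data statement $F_\mu(0,x) = \mu^{-1} F_0(\mu x)$ is immediate from the definition at $t=0$. There is no real obstacle here; the only mild point worth stating cleanly is the invariance of $J$ under the scaling, which rests on the orientation condition in the definition of $J(F)$ and the positivity of the dilation factor.
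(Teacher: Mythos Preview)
Your proof is correct; this is exactly the direct chain-rule verification one expects, and all the scaling weights you compute are right. The paper itself states this proposition without proof, so there is nothing further to compare.
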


The above scaling would suggest the critical Sobolev space for our moving surfaces 
$\Sigma_t$ to be $\dot H^{\frac{d}{2}+1}$. However, instead of working directly with the 
surfaces, it is far more convenient to track the regularity at the level of the curvature
$\mathbf{H}(\Sigma_t)$, which scales at the level of $\dot H^{\frac{d}2-1}$. 

For our main result we will use instead 
inhomogeneous Sobolev spaces, and it will suffice to go one derivative above scaling.   There is also a low frequency issue, precisely in two space dimensions where the $L^2$ norm is critical. There we will need to make a slightly stronger assumption on the low frequency part of the initial data.

\subsection{The main result}
Our objective in this paper is to establish the local well-posedness of skew mean 
curvature flow for small data at low regularity. A key observation is that providing a rigorous description  of fractional Sobolev spaces for functions (tensors) on a rough manifold is a delicate matter, which a-priori requires both a good choice of coordinates on the manifold and a good 
frame on the vector bundle (the normal bundle in our case). This is done in the next section, where we fix the gauge and write the equation as a quasilinear Schr\"odinger evolution in a good 
gauge. At this point, we contend ourselves with a less precise formulation of the main result:

\begin{thm}[Small data local well-posedness in dimensions $d\geq 3$]   \label{LWP-thm}
	Let $d\geq 3$, $s>\frac{d}{2}$ and $\si_d=\frac{d}{2}-\de$. Then there exists $\ep_0>0$ sufficiently small such that, for all initial data $\Sigma_0$ with metric $g_0$  and mean curvature $ \mathbf{H}_0$ satisfying 
\begin{equation} \label{small-datad3}
	\||D|^{\si_d} (g_0-I_d)\|_{H^{s+1-\si_d}}\leq \ep_0, \qquad \lV \mathbf{H}_0 \rV_{H^s(\Sigma_0)}\leq \ep_0,
\end{equation}	
relative to some parametrization of $\Sigma_0$,	the skew mean curvature flow \eqref{Main-Sys} for maps from $\R^d$ to the Euclidean space $(\R^{d+2},g_{\R^{d+2}})$ is locally well-posed on the time interval $I=[0,1]$ in a suitable gauge. 
\end{thm}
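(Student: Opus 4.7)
The plan is to pass to a good gauge that turns \eqref{Main-Sys} into a quasilinear magnetic Schr\"odinger equation for the mean curvature, control all auxiliary gauge quantities (metric, connection, drift) parabolically/elliptically in terms of this dynamical unknown, and close a contraction argument in scaling-adapted Schr\"odinger spaces. The overall template is that of \cite{HT21}, but with the harmonic/Coulomb gauge replaced by a heat gauge, both for the coordinates on $\Sigma$ and for the frame on $N\Sigma$.

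\textbf{Gauge choice and reduction.} At $t=0$, the smallness hypothesis on $|D|^{\sigma_d}(g_0-I_d)$ allows an implicit function theorem argument to produce harmonic coordinates on $\Sigma_0$, and an orthonormal frame on $N\Sigma_0$ is then fixed by a Coulomb-type condition on the initial slice. To propagate them in time, introduce a tangential drift $V$ on $\Sigma$ so that the pulled back immersion $F$ evolves by a heat-flow compatibility equation, and rotate the normal frame between time slices so that the temporal normal connection $A_0$ satisfies a parabolic gauge condition. Combining \eqref{Main-Sys} with the Gauss and Codazzi relations in this gauge, the complex-valued tensor $\psi$ representing the mean curvature in the chosen frame satisfies a system of the schematic form
\[
i(\partial_t + V^j \partial_j)\psi + g^{jk} D_j D_k \psi = N(\psi, A, g, V), \qquad D_j = \partial_j + i A_j,
\]
where $N$ is quadratic in $\psi$ and its first derivatives, while $g$, $A$, $V$, and $A_0$ are recovered from $\psi$ by elliptic/parabolic systems encoded in the gauge conditions.

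\textbf{Auxiliary bounds and Schr\"odinger analysis.} The elliptic/parabolic systems for $g-I_d$, $A$, $V$ are estimated in function spaces matched to the Schr\"odinger scaling using standard heat semigroup bounds; the heat flow is precisely what delivers the low-frequency control required, avoiding the low-dimensional obstructions of the harmonic/Coulomb approach. For the linearization of the $\psi$-equation around a small background we then prove energy, local smoothing, and maximal-function estimates in the $\ltX$-type spaces from \cite{HT21}, with the magnetic coefficient $A$ and the perturbation $g-I_d$ sitting at scaling. A standard contraction argument in a small ball in the associated solution space on $I=[0,1]$ produces a unique $\psi$; the geometric immersion $F$ is then recovered by integrating $\partial_t F = V^j \partial_j F + J(F)\mathbf{H}(F)$, and gauge invariance yields the asserted local well-posedness of \eqref{Main-Sys} itself.

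\textbf{Main obstacle.} The crux is closing the Schr\"odinger estimates for $\psi$ at the threshold $s > d/2$ together with the matching bounds for $g$, $A$, $V$ produced by the heat gauge. The magnetic potential $A$ is scale critical, so one must carefully balance paradifferential decompositions against the parabolic gain coming from the gauge, and in $d=3$ (the borderline case covered by this theorem) the linearized local smoothing and $l^2$-summability arguments demand finer frequency-envelope bookkeeping than in the higher-dimensional treatment of \cite{HT21}. Once this coupled linear/nonlinear estimate closes, the remainder of the scheme (initial gauge construction, contraction, continuity of the data-to-solution map, gauge independence) proceeds along lines by now standard for geometric Schr\"odinger flows.
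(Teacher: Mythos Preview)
Your high-level architecture is correct and matches the paper's: harmonic/Coulomb gauge at $t=0$, heat gauge for both coordinates and normal frame for $t>0$, a parabolic system for the auxiliary variables $(g,A)$, a quasilinear Schr\"odinger evolution for the curvature variable, local energy decay/smoothing estimates in the $l^2X^s$ spaces of \cite{HT21,MMT3}, an iteration scheme, and finally reconstruction of $F$ from the frame.

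The one substantive discrepancy is your choice of dynamical variable. You run the Schr\"odinger equation for the complex mean curvature $\psi$; the paper instead runs it for the full complex second fundamental form $\lambda$. For the range $d\ge 3$ covered by this theorem your choice does work---the paper explicitly says so in a remark---but it carries a step you do not mention: the nonlinearity and the parabolic equations for $(g,A)$ involve $\lambda$, not just $\psi$, so you must also recover $\lambda$ from $\psi$ by solving the elliptic div--curl system coming from the Codazzi relations \eqref{la-commu}. The paper avoids this elliptic step by taking $\lambda$ itself as the unknown (the reason being that the div--curl inversion fails at $L^2$ in $d=2$, and the authors want a treatment uniform in $d\ge 2$); the price they pay is that the components of $\lambda$ are overdetermined, so the Gauss/Codazzi/Ricci constraints must be propagated dynamically, which they do via a separate linear system for the defect tensors. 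Either route closes for $d\ge 3$.

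Two smaller points. First, the paper does not run a direct contraction in $l^2X^s$; it builds an iteration, proves uniform $l^2X^s$ bounds, and obtains convergence in the weaker topology $l^2X^{s-2}$ (weak Lipschitz dependence), then upgrades via frequency envelopes. Second, the ``finer bookkeeping'' you flag is not really a $d=3$ phenomenon: the genuinely new low-frequency difficulty is in $d=2$, which is why the paper introduces the $Y$-type spaces for the metric. For $d\ge 3$ the heat-gauge estimates go through without the extra $Y^{lo}_0$ hypothesis.
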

With a slight adjustment, a similar result holds in dimension $d=2$:

\begin{thm}[Small data local well-posedness in dimension $d=2$]   \label{LWP-thmd=2}
	Let $d = 2$, $s>\frac{d}{2}$ and $\si_d=\frac{d}{2}-\de$. Then there exists $\ep_0>0$ sufficiently small such that, for all initial data $\Sigma_0$ with metric $g_0$ and mean curvature $ \mathbf{H}_0$ satisfying 
	\begin{equation*}% \label{small-data_d2}
	\||D|^{\si_d} (g_0-I_d)\|_{H^{s+1-\si_d}}\leq \ep_0, \qquad \lV \mathbf{H}_0 \rV_{H^s(\Sigma_0)}\leq \ep_0,
\end{equation*}	
	as well as a low frequency bound for $g_0$ 
	\begin{equation}\label{extra-2d}
	  \|g_0-I_d\|_{Y_{0}^{lo}}<\ep_0,  
	\end{equation}
relative to some parametrization of $\Sigma_0$,	
the skew mean curvature flow \eqref{Main-Sys} for maps from $\R^d$ to the Euclidean space $(\R^{d+2},g_{\R^{d+2}})$ is locally well-posed on the time interval $I=[0,1]$ in a suitable gauge. 
\end{thm}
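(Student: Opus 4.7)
The proof follows the same gauge-and-quasilinear-Schrödinger blueprint as \cite{HT21}, but with the harmonic/Coulomb choice replaced by a heat gauge adapted to the low dimensional setting. First I would fix the initial coordinates: pick harmonic coordinates on $\Sigma_0$, in which \eqref{small-datad3} gives high-frequency control of $g_0 - I_2$ in terms of $\mathbf{H}_0$ via the usual elliptic identity $\Delta_g F = \mathbf{H}$, while the extra hypothesis \eqref{extra-2d} supplies the complementary low-frequency control that the two-dimensional Laplacian fails to recover. An initial orthonormal frame on $N\Sigma_0$ is chosen in Coulomb gauge. For $t > 0$ both the coordinate vector field $V$ and the temporal connection $A_0$ on $N\Sigma_t$ are selected so that the spatial connection $A_j$ and the metric components $g_{ij} - \delta_{ij}$ evolve along the flow by a heat-type equation rather than an elliptic one, which decouples the low-frequency behaviour of the gauge variables from the curvature and allows the initial low-frequency bound to be propagated in time by parabolic estimates.

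In this heat gauge, the second fundamental form, represented in the chosen frame as a complex symmetric tensor $\psi$, satisfies a quasilinear Schrödinger equation of the schematic shape
\[
(i\partial_t + \Delta_g)\psi = A \cdot \nabla \psi + B(\psi, g, A, V)\,\psi,
\]
while $g - I_2$, $A$, and $V$ are determined from $\psi$ by parabolic equations with sources quadratic in $\psi$, together with their initial data. The analytic core then consists of three families of bounds: (i) variable-coefficient Strichartz and local smoothing estimates for $i\partial_t + \Delta_g$ at the $H^s$ level, $s > 1$, for metrics $g$ uniformly close to $I_2$; (ii) parabolic estimates controlling $g$, $A$, $V$ in $l^1$-Besov type function spaces like those of \cite{HT21} for the high-frequency part, plus a dedicated low-frequency space $Y^{lo}$; (iii) bilinear and trilinear estimates bounding the right-hand side of the Schrödinger equation through the gauge quantities. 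The working space for $\psi$ is the same local-smoothing type space used in the $d\geq 4$ paper, sharpened to reflect the weaker dispersive behaviour of the two-dimensional Schrödinger group.

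The main obstacle, and the reason \eqref{extra-2d} must be imposed, is the two-dimensional failure of low-frequency decay for $(-\Delta)^{-1}$: elliptic gauges control $g - I_2$, $A$ and $V$ only through Riesz potentials of quadratic expressions in $\psi$, and these do not lie in any $L^2$-based space at low frequencies when $d=2$. The heat gauge sidesteps this by replacing $(-\Delta)^{-1}$ with the time integral of $e^{t\Delta}$, effectively trading spatial decay for time integrability on $[0,1]$. The price is that one must carry along an additional $Y_0^{lo}$ norm of the initial metric as independent data and then propagate it via the Duhamel formula for a heat equation whose source is controlled bilinearly by $\psi$. The norm $Y^{lo}$ is chosen precisely so that such a bilinear source, integrated against the heat kernel over $[0,1]$, lies in $Y^{lo}$ whenever $\psi$ belongs to the working space with small norm. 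Establishing this closure is the crucial new bilinear estimate and the main place where the two-dimensional argument departs from the higher-dimensional one.

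Given these bounds, local well-posedness follows by the usual iteration: construct $(\psi^{(n)}, g^{(n)}, A^{(n)}, V^{(n)})$ by solving the linear Schrödinger equation for $\psi^{(n+1)}$ using the previous gauge, then update the gauge via its heat equations with sources quadratic in $\psi^{(n+1)}$. The estimates (i)--(iii) yield contraction in a slightly weaker norm, which gives existence, uniqueness, and Lipschitz dependence on the data at the level of the statement; continuous dependence at the peak regularity $s$ is obtained through a frequency envelope argument in the style of \cite{HT21}. Reconstructing the geometric solution $F$ from $(\psi, g, A, V)$ in the chosen heat gauge then closes the proof.
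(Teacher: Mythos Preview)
Your outline matches the paper's strategy almost exactly: harmonic coordinates and Coulomb gauge on $\Sigma_0$, heat coordinates and heat gauge for $t>0$, the complex second fundamental form (the paper calls it $\lambda$, reserving $\psi$ for its trace) as the Schr\"odinger variable, a parabolic system for $(g-I_2,A)$, local energy decay for the paradifferential flow, multilinear estimates, iteration with weak convergence in $H^{s-2}$, frequency envelopes for continuous dependence at the top regularity, and finally reconstruction of $F$ from the frame.

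Two points to sharpen. First, the paper does not use Strichartz estimates at all; the entire Schr\"odinger analysis runs on local energy decay in the $l^2X^s$/$l^2N^s$ spaces of \cite{MMT3,MMT4,MMT5}. Second, and more substantively, because you take the full tensor $\lambda$ as the primary unknown (rather than recovering it elliptically from its trace as in \cite{HT21}, which fails in $d=2$), its components are not independent: they are tied together by the Gauss, Codazzi, and Ricci relations, and the curvature of $A$ must match $\Im(\lambda\bar\lambda)$. These constraints hold at $t=0$ by construction but are not enforced by the Schr\"odinger--parabolic system itself, so a separate argument is required to propagate them. The paper handles this by deriving a closed linear homogeneous evolution system (parabolic and Schr\"odinger, coupled) for the constraint defects $T^1,\dots,T^5$ and invoking energy uniqueness; without this step the reconstruction of $F$ from $(\lambda,g,A)$ would not be justified.
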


We continue with some comments on the function spaces in the above theorems:

\begin{itemize}
\item For the metric $g_0$, we use the difference $g_0-I_d$ in the 
above statements in order to emphasize the normalization $g_0 \to I_d$
at infinity.

\item In dimension $d \geq 3$, the $g_0-I_d$ norm in \eqref{small-datad3} only plays a qualitative role, namely to place us in a regime where, in harmonic coordinates, $g_0$ is uniquely determined by the mean curvature $\mathbf H_0$.

\item The $Y_0^{lo}$ norm in \eqref{extra-2d}, defined
in Section~\ref{Sec3}, captures low frequency $\ell^1$
summability properties for $g_0$ with respect 
to cube lattice partitions of $\R^d$. Similar norm appear in our analysis in dimensions $d \geq 3$. The main difference is that in higher dimension, the $Y$
norms of $g_0-I_d$ can be estimated in terms of the 
$H^s$ norm of $ \mathbf H$ in harmonic coordinates. In two dimensions, this estimate borderline fails, so we instead include the $Y_0^{lo}$ bound in the hypothesis.
\end{itemize}

Following the spirit of our earlier work \cite{HT21}, in these results
we consider rough data and provide a full, Hadamard style well-posedness result based on a more modern, frequency envelope approach and  using a paradifferential form for both the full and the linearized equations. For an overview 
of these ideas we refer the reader to the expository paper \cite{IT-primer}. This is unlike any of the prior results, which prove only existence and uniqueness for smooth data.

The favourable gauge mentioned in the theorem is defined in the next section in two steps:

\bigskip

a) at the initial time, where we proceed as in 
\cite{HT21}, and use
\begin{itemize}
    \item Harmonic coordinates on the manifold $\Sigma_0$.
    \item The Coulomb gauge for the orthonormal frame on the normal bundle $N\Sigma_0$.
\end{itemize}

\bigskip

b) dynamically for $t > 0$, where we use

\begin{itemize}
    \item The heat coordinates on the manifolds $\Sigma_t$.
    \item The heat gauge for the orthonormal frame on the normal bundle $N\Sigma$.
\end{itemize}

One simple example of initial data allowed by our theorem consists of graph submanifolds with defining functions $u_1$, $u_2$, of the form
\[
\Sigma_0 = \{ x, u_1(x), u_2(x); x \in \R^d\}
\]
Here one may simply take $u_1$ and $u_2$ to be small in $H^{s+2}$,
with the added low frequency control in the $Y^{lo}_0$ space in dimension two. However, the $H^{s+2}$ control is only needed at high frequency, while at low frequency it suffices to have 
control only in homogeneous norms $\dot H^{\frac{d}{2}+1-\delta}$
with $\delta > 0$. This allows for perturbations which are not small in any uniform norm:

\begin{ex}[Bump-like sub-manifolds]
     Let $\phi_i,\ i=1,2$ be Schwartz functions.
     Then for small $\epsilon > 0$ and $\delta > 0$, the manifold 
     $\Sigma_0$ given by the defining functions
     \[
     u_j = \epsilon^{\frac{d}2 -2 +\delta} \phi_j(\epsilon x)
     \]
     satisfies the hypotheses of our theorem.
     with $\ep>0$ sufficiently small. This manifold is not a small  perturbation of the Euclidean plane in low dimension.
\end{ex}

\begin{ex}[ Sub-manifolds with nontrivial asymptotics]
     For small $\epsilon_j > 0$ and $\delta > 0$, the manifold 
     $\Sigma_0$ given by the defining functions
     \[
     u_j = \epsilon_j (1+x^2)^{\frac{1-\delta}2}
     \]
     satisfies the hypotheses of our theorem.
     with $\ep_j>0$ sufficiently small. This manifold is also not a small  perturbation of the Euclidean plane.
\end{ex}

In the next section we reformulate the (SMCF) equations as a quasilinear Schr\"odinger 
evolution for good scalar complex variable $\la$, which is exactly the second fundamental form
but represented in the good gauge.  There we provide an alternate formulation of the above result, 
as a well-posedness result for the $\la$ equation. In the final section of the paper we 
close the circle and show that one can reconstruct the full (SMCF) flow starting from 
the good variable $\la$.

Once our problem is rephrased as a nonlinear Schr\"odinger evolution,  one may compare its 
study with earlier results on general quasilinear Schr\"odinger evolutions. This story begins 
with the classical work of Kenig-Ponce-Vega \cite{KPV2,KPV3,KPV}, where local well-posedness is established for more regular and localized data. Lower regularity results in translation invariant Sobolev spaces were later established by Marzuola-Metcalfe-Tataru~\cite{MMT3,MMT4,MMT5}. 
The local energy decay properties of the Schr\"odinger equation, as developed earlier in \cite{CS,CKS,Doi,Doi1} play a key role in these results. While here we are using some of the ideas in the above papers, the present problem is both more complex and exhibits  additional structure. Because of this, new ideas and more work are required in order to close the estimates required for both the full problem and for its linearization.

\subsection{An overview of the paper}

Our first objective in this article will be to provide a self-contained formulation
of the (SMCF) flow, interpreted as a nonlinear Schr\"odinger equation for a well chosen variable. This variable, denoted by $\la$, represents the second fundamental form on $\Sigma_t$, in complex notation. We remark that 
in our earlier paper \cite{HT21} we have used instead the complex representation $\psi$ of the mean curvature $\mathbf H$ as the good variable, and $\lambda$ was uniquely determined by $\psi$ via an elliptic  div-curl system. However, solving this system in two dimensions is a delicate matter, which is why here we switch to $\lambda$. The slight downside  of this strategy is that the components of $\lambda$ are not independent, and instead satisfy a set of compatibility conditions which need to be propagated along the flow.

In addition to the main variable $\lambda$,
we will use several dependent variables, as follows:
\begin{itemize}
    \item The Riemannian metric $g$ on $\Sigma_t$.
    \item The magnetic potential $A$, associated to the natural connection on the 
    normal bundle $N \Sigma_t$.
\end{itemize}

These additional variables will be viewed as uniquely determined by our main variable $\la$ and initial metric $g_0$ in a dynamical fashion. This is first done at the initial time by choosing harmonic coordinates on $\Sigma_0$, respectively the Coulomb gauge 
on $N \Sigma_0$. Finally, our dynamical gauge choice also has two
components:

\begin{enumerate}
    \item[(i)] The choice of coordinates on $\Sigma_t$; here we use heat coordinates, with 
    suitable boundary conditions at infinity.
    \item[(ii)] The choice of the orthonormal frame on $N\Sigma_t$; here we use the heat gauge, again assuming flatness at infinity.
\end{enumerate}

To begin this analysis, in the next section we describe the gauge choices, so that by the end we obtain 
\begin{enumerate}
    \item[(a)] A nonlinear Schr\"odinger equation for $\la$, see \eqref{mdf-Shr-sys-2}.  
    \item[(b)] A parabolic system \eqref{par-syst} for the dependent variables
    $\SS=(g,A)$, together with suitable compatibility conditions (constraints).
\end{enumerate}
 
Setting the stage to solve these equations, in Section~\ref{Sec3} we describe the function spaces for both $\la$ and $\SS$. This is done at two levels, first at fixed time, which is needed in order to track data sets,
and then in the space-time setting, which is needed in order to solve both the heat flows \eqref{par-syst} and 
the Schr\"odinger evolution \eqref{mdf-Shr-sys-2}. The fixed time spaces are classical Sobolev spaces,
with matched regularities for all the components. The main space-time norms are the so called local energy spaces associated to the Schr\"odinger evolution, as developed in \cite{MMT3,MMT4,MMT5}. In addition, we 
also use parabolic mixed norm spaces, which capture the regularity gain  in the heat flows.

We begin our analysis in Section~\ref{s:elliptic}, where we 
place the initial data in the harmonic/Coulomb gauge. In higher dimension this analysis was already carried out in our earlier paper
\cite{HT21}. Thus our emphasis here is on the two dimensional case,
where some additional low frequency issues arise in connection with the 
$Y$ norms for the metric $g$. Compared to our earlier article \cite{HT21},
here we are able to improve the analysis and  relax the low frequency
component of the $Y$ norm. This suffices in dimension three, but is only borderline in dimension two, which is why we add the low frequency $Y$ 
bound to the hypothesis of Theorem~\ref{LWP-thmd=2}.

Next, in Section~\ref{Sec-Para}, we consider the solvability of the parabolic system
\eqref{par-syst}. We will do this in two steps. First  we prove that this system is solvable in the space $\EE^s$. Then we prove  space-time bounds for the metric $h$ in local energy spaces; the latter will be needed in the study of the Schr\"odinger evolution \eqref{mdf-Shr-sys-2}.

Finally, we turn our attention to the Schr\"odinger system \eqref{mdf-Shr-sys-2}, whose 
study may be compared with earlier results on general quasilinear Schr\"odinger evolutions. This begins 
with the classical work of Kenig-Ponce-Vega \cite{KPV2,KPV3,KPV}, where local well-posedness is established for more regular and localized data. Lower regularity results in translation invariant Sobolev spaces were later established by Marzuola-Metcalfe-Tataru~\cite{MMT3,MMT4,MMT5}. 
The local energy decay properties of the Schr\"odinger equation, as developed earlier in \cite{CS,CKS,Doi,Doi1} play a key role in these results. Here we are following a similar track, though the present problem is both more complex and exhibits  additional structure. Because of this, new ideas and more work are required in order to close the estimates required for both the full problem and for its linearization.

We divide our approach in several steps.
In Section~\ref{Sec-mutilinear} we establish several multilinear and nonlinear estimates in our space-time
function spaces. These are then used in Section~\ref{Sec-LED} in order to prove local energy decay bounds
first for the linear paradifferential Schr\"odinger flow, and then for a full linear 
Schr\"odinger flow associated to the linearization of our main evolution.

The analysis is completed in Section~\ref{Sec-LWP}, where we combine the linear heat flow bounds and the linear Schr\"odinger bounds 
in order to (i) construct solutions for the full nonlinear Schr\"odinger flow, and (ii) to prove the uniqueness and continuous dependence of the solutions. 
The solutions are initially constructed without reference to the constraint equations, but then we prove that the constraints are indeed satisfied, by propagating them from the initial time.

Last but not least, in the last section we prove that the full set of variables $(\lambda,g,A)$
suffice in order to uniquely reconstruct the defining function $F$ for the evolving surfaces $\Sigma_t$,
as $H^{s+2}_{loc}$ manifolds. More precisely, with respect to the parametrization provided by  our chosen gauge, $F$ has regularity 
\[
\partial_t F,\ \partial_x^2 F \in C[0,1;H^s]. 
\]

\bigskip

\section{The differentiated equations and the gauge choice}
\label{Sec-gauge}

The goal of this section is to introduce our main independent variable $\la$, which represents 
the second fundamental form in complex notation, as well as the 
following auxiliary variables: the metric $g$,
the connection coefficients $A$ for the  normal bundle. 
For $\la$ we start with \eqref{Main-Sys} and derive a nonlinear Sch\"odinger type system \eqref{mdf-Shr-sys-2}, with coefficients depending on  $\SS = (g,A)$. Under suitable gauge conditions, the auxiliary variables $\SS$ are  shown to satisfy a parabolic system \eqref{par-syst}, as well as a natural set of constraints. We conclude the section with a gauge formulation of our main result, see Theorem~\ref{LWP-MSS-thm}.  
Here we will introduce the heat coordinates and heat gauge in detail.
For some of the detailed derivations, we refer to section 2 in \cite{HT21}.

\subsection{The Riemannian metric \texorpdfstring{$g$}{} and the second fundamental form.} Let $(\Sigma^d,g)$ be a $d$-dimensional oriented manifold and let $(\R^{d+2},g_{\R^{d+2}})$ be $(d+2)$-dimensional Euclidean space. Let $\al,\be,\ga,\cdots \in\{1,2,\cdots,d\}$. Considering the immersion $F:\Sigma\rightarrow (\R^{d+2},g_{\R^{d+2}})$, we obtain the induced metric $g$ in $\Sigma$,
\begin{equation}        \label{g_metric}
	g_{\al\be}=\d_{x_{\al}} F\cdot \d_{x_{\be}} F.
\end{equation}
We denote the inverse of the matrix $g_{\al\be}$ by $g^{\al\be}$, i.e.
\begin{equation*}
g^{\al\be}:=(g_{\al\be})^{-1},\qquad g_{\al\ga}g^{\ga\be}=\delta_{\al}^{\be}.
\end{equation*}

Let $\nab$ be the cannonical Levi-Civita connection on $\Si$ associated with the induced metric $g$.
A direct computation shows that on the Riemannian manifold $(\Si,g)$ we have the Christoffel symbols
\begin{align*}
	\Gamma^{\ga}_{\al\be}=\ g^{\ga\si}\Ga_{\al\be,\si}=\ g^{\ga\si}\d^2_{\al\be}F\cdot\d_\si F.
\end{align*}
For any tensor $T^{\al_1\cdots \al_r}_{\be_1\cdots\be_s}dx^{\beta_1}\otimes...dx^{\beta_s}\otimes\frac{\partial }{\partial x^{\alpha_1}}\otimes...\otimes\frac{ \partial }{\partial x^{\alpha_r}}$, we define its \emph{covariant derivative} as follows
\begin{equation}     \label{co_d}
\nab_\ga T^{\al_1\cdots \al_r}_{\be_1\cdots\be_s}=\d_\ga T^{\al_1\cdots \al_r}_{\be_1\cdots\be_s}-\sum_{i=1}^s\Ga_{\ga\be_i}^\si T^{\al_1\cdots \al_r}_{\be_1\cdots \be_{i-1}\si\be_{i+1}\cdots\be_s}+\sum_{j=1}^r\Ga_{\ga\de}^{\al_j} T^{\al_1\cdots\al_{j-1}\de\al_{j+1}\cdots \al_r}_{\be_1\cdots\be_s}.
\end{equation}
Hence, the Laplace-Beltrami operator $\Delta_g$ can be written in the form
\begin{align*}
	\Delta_g f=&\ \tr\nab^2 f=g^{\al\be}(\d_{\al\be}^2f-\Gamma^{\ga}_{\al\be}\d_{\ga} f),
\end{align*}
for any twice differentiable function $f:\Sigma\rightarrow \R$. The curvature $R$ on the Riemannian manifold $(\Sigma,g)$ is given by 
\begin{align*}           %  \label{R}
R_{\ga\al\be}^{\si}=\d_{\al} \Gamma_{\be\ga}^{\si}-\d_{\be} \Gamma_{\al\ga}^{\si} +\Gamma_{\be\ga}^m\Gamma_{\al m}^{\si}  -\Gamma_{\al\ga}^m\Gamma_{\be m}^{\si}.
\end{align*}
We also have
\begin{align}             \label{R-2}
R_{\si\ga\al\be}=\d_{\al} \Gamma_{\be\ga,\si}-\d_{\be} \Gamma_{\al\ga,\si} +\Gamma_{\be\si}^m\Gamma_{\al \ga,m}  -\Gamma_{\al\si}^m\Gamma_{\be \ga,m},
\end{align}
and the Ricci curvature
\begin{equation*}
\Ric_{\al\be}=R^{\si}_{\al\si\be}=g^{\si\ga}R_{\ga\al\si\be}.
\end{equation*}

\medskip 

Next, we derive the second fundamental form for $\Sigma$.
Let $\bar{\nab}$ be the Levi-Civita connection in $(\R^{d+2},g_{\R^{d+2}})$ and let $\bh$ be the second fundamental form for $\Sigma$ as an embedded manifold. For any vector fields $u,v\in T_{\ast}\Sigma$, the Gauss relation is 
\begin{equation*}
	\bar{\nab}_u F_{\ast}v=F_{\ast}(\nab_u v)+ \bh(u,v).
\end{equation*}
Then we have
\begin{align*}
	\bh_{\al\be}=\bh(\d_{\al},\d_{\be})=\bar{\nab}_{\d_{\al}} \d_{\be} F-F_{\ast}(\nab_{\d_{\al}}\d_{\be})
	=\d_{\al\be}^2 F-\Gamma_{\al\be}^{\ga} \d_{\ga} F.
\end{align*}
This gives the mean curvature $\bH$ at $F(x)$,
\begin{equation*}
	\bH=\tr_g \bh=g^{\al\be}\bh_{\al\be}=g^{\al\be}(\d^2_{\al\be}F-\Gamma^{\ga}_{\al\be}\d_{\ga} F)=\Delta_g F.
\end{equation*}
Hence, the $F$-equation in \eqref{Main-Sys} is rewritten as
\begin{equation*}  %\label{F-eqn}
	(\d_t F)^{\perp}=J(F)\Delta_g F=J(F)g^{\al\be}(\d^2_{\al\be}F-\Gamma^{\ga}_{\al\be}\d_{\ga} F).
\end{equation*}
This equation is still independent of the choice of coordinates 
in $\Sigma^d$.

\subsection{The complex structure equations} \label{complex-mc}
Here we introduce a complex structure on the normal bundle $N\Sigma_t$. This is achieved 
by choosing $\{\nu_1,\nu_2\}$ to be an orthonormal basis of $N\Si_t$ such that
\[
J\nu_1=\nu_2,\quad J\nu_2=-\nu_1.  
\]
Such a choice is not unique; in making it we introduce a second component to our gauge group,  
namely the group of sections of an $SU(1)$ bundle over $I \times \R^d$.  

The vectors $\{ F_1,\cdots,F_d,\nu_1,\nu_2\}$ form a frame at each point on the manifold $(\Sigma,g)$, where $F_{\al}$ for $\al\in\{1,\cdots,d\}$ are defined as
\[
F_{\al}=\d_{\al}F.
\]  
We define the tensors $\kappa_{\al\be},\ \tau_{\al\be}$, the connection coefficients $A_{\al}$ and the temporal component $B$ of the connection in the normal bundle by
\[
\kappa_{\al\be}:=\d_{\al} F_{\be}\cdot\nu_1,\quad \tau_{\al\be}:=\d_{\al} F_{\be}\cdot\nu_2,\quad A_{\al}=\d_{\al}\nu_1\cdot\nu_2,\quad B=\d_t \nu_1\cdot \nu_2.
\]
Then we complexify the normal frame $\{\nu_1,\nu_2\}$ and second fundamental form as 
\begin{equation*}
	m=\nu_1+i\nu_2,\quad \lambda_{\al\be}=\kappa_{\al\be}+i\tau_{\al\be}.
\end{equation*}
Here we can define the \emph{complex scalar mean curvature} $\psi$ to be
\begin{equation}\label{csmc}
\psi:=\tr\la=g^{\al\be}\la_{\al\be}.
\end{equation}
Our objective for the rest of this section will be to interpret the (SMCF) equation as 
a nonlinear Schr\"odinger evolution for $\la$, by making suitable gauge choices.
We remark that the action of sections of the $SU(1)$ bundle is given by 
\begin{equation}     \label{gauge-A}
\psi \to e^{i \theta \psi }, \quad \lambda \to e^{i \theta \lambda}, \quad m \to e^{i\theta} m,
\quad A_\alpha \to A_\alpha - \partial_\alpha \theta.
\end{equation}
for a real valued function $\theta$.

If we differentiate the frame, we obtain a set of structure equations of the following type 
\begin{equation}               \label{strsys-cpf}
\left\{\begin{aligned}
&\d_{\al}F_{\be}=\Gamma^{\ga}_{\al\be}F_{\ga}+\Re(\lambda_{\al\be}\bar{m}),\\
&\d_{\al}^A m=-\lambda^{\ga}_{\al} F_{\ga},
\end{aligned}\right.
\end{equation}
where 
$\d_{\al}^A=\d_{\al}+iA_{\al}$.

\subsection{The Gauss and Codazzi relations} 
\label{s:GC}
The Gauss and Codazzi equations are derived from the equality of second derivatives $\d_{\al}\d_{\be}F_{\ga}=\d_{\be}\d_{\al}F_{\ga}$ for the tangent vectors on the submanifold $\Sigma$ and for the normal vectors respectively.  Here we use the Gauss and Codazzi relations to derive the Riemannian curvature, the first compatibility condition and a symmetry. 

By the structure equations \eqref{strsys-cpf}, we get
\begin{equation}\label{d^2 F_ga}
\begin{aligned}           
\d_{\al}\d_\be F_{\ga}
=&(\d_{\al}\Ga_{\be\ga}^{\si}+\Ga_{\be\ga}^{\mu}\Gamma^{\si}_{\al\mu}-\Re(\la_{\be\ga}\bar{\la}_{\al}^{\si}))F_{\si}+\Re[(\d_{\al}^A\la_{\be\ga}+\Ga^{\si}_{\be\ga}\la_{\al\si})\bar{m}].
\end{aligned}
\end{equation}
Then in view of $\d_{\al}\d_{\be} F_{\ga}=\d_{\be}\d_{\al} F_{\ga}$ and equating the coefficients of the tangent vectors, we obtain
\begin{equation*}
\d_{\al}\Ga_{\be\ga}^{\si}+\Ga_{\be\ga}^{\mu}\Gamma^{\si}_{\al\mu}-\d_{\be}\Ga_{\al\ga}^{\si}-\Ga_{\al\ga}^{\mu}\Gamma^{\si}_{\be\mu}=\Re(\la_{\be\ga}\bar{\la}_{\al}^{\si}-\la_{\al\ga}\bar{\la}_{\be}^{\si}).
\end{equation*}
This gives the Riemannian curvature
\begin{align}          \label{R-la}
R_{\si\ga\al\be}=\Re(\lambda_{\be\ga}\bar{\la}_{\al\si}-\la_{\al\ga}\bar{\la}_{\be\si}),
\end{align}
which is a complex formulation of the Gauss equation.
Correspondingly we obtain the the Ricci curvature
\begin{equation}\label{Ric}
\Ric_{\ga\be}=\Re (\la_{\ga\be}\bar{\psi}-\la_{\ga\al}\bar{\la}_{\be}^{\al}).
\end{equation}
After equating the coefficients of the vector $m$ in \eqref{d^2 F_ga}, we obtain 
\begin{equation*}
\d^A_{\al}\la_{\be\ga}+\Ga_{\be\ga}^{\si}\la_{\al\si}=\d^A_{\be}\la_{\al\ga}+\Ga_{\al\ga}^{\si}\la_{\be\si},
\end{equation*}
By the definition of covariant derivatives \eqref{co_d}, 
we obtain the complex formulation of the Codazzi equation, namely
\begin{equation}         \label{la-commu}
\nab^A_{\al} \la_{\be\ga}=\nab^A_{\be}\la_{\al\ga}.
\end{equation}

\medskip

Next, we use the relation $\d_{\al}\d_{\be}m=\d_{\be}\d_{\al}m$ in order to derive a compatibility condition between the connection $A$ in the normal bundle and the second fundamental form.
Indeed, from $\d_{\al}\d_{\be}m=\d_{\be}\d_{\al}m$ we obtain the commutation relation
\begin{equation} \label{comp-cond_m-pre}
[\d^A_{\al},\d^A_{\be}]m=i(\d_{\al} A_{\be}-\d_{\be} A_{\al})m.
\end{equation}
By \eqref{strsys-cpf}  we have
\begin{align*}
\d^A_{\al}\d^A_{\be} m=&-\d^A_{\al}(\la^{\ga}_{\be} F_{\ga})=-(\d^A_{\al}\la^{\si}_{\be}+\la^{\ga}_{\be}\Ga^{\si}_{\al\ga})F_{\si}-\la^{\ga}_{\be}\Re(\la_{\al\ga}\bar{m})).
\end{align*}
Then multiplying \eqref{comp-cond_m-pre}  by $m$ yields
\begin{align*}
\d_{\al} A_{\be}-\d_{\be} A_{\al}=\Im(\la^{\ga}_{\al}\bar{\la}_{\be\ga}).
\end{align*}
This gives the compatibility condition for the curvature $A$,
\begin{equation}          \label{cpt-AiAj-2}
\nab_{\al} A_{\be}-\nab_{\be} A_{\al}=\Im(\la^{\ga}_{\al}\bar{\la}_{\be\ga}),
\end{equation}
which can be seen as the complex form of the Ricci equations.
We remark that, by equating the coefficients of the tangent vectors in \eqref{comp-cond_m-pre} , we also obtain the relation \eqref{la-commu} again.

\subsection{The motion of the frame \texorpdfstring{$\{F_1,\cdots,F_d,m\}$}{} under (SMCF)} Here we derive the equations of motion for the frame, assuming that the immersion $F$ satisfying \eqref{Main-Sys}. 

We begin by rewriting  the SMCF equations in the form 
\begin{equation*}   % \label{SMCF-general-version}
\d_t F=J(F)\bH(F)+V^{\ga} F_{\ga},
\end{equation*}  
where $V^{\ga}$ is a vector field on the manifold $\Sigma$, which 
in general depends on the choice of coordinates. 
By the definition of $m$ and $\la_{\al\be}$, the above $F$-equation is rewritten as
\begin{equation}          \label{sys-cpf}
\d_t F=-\Im (\psi\bar{m})+V^{\ga} F_{\ga}.
\end{equation}

Then by \eqref{sys-cpf}, the structure equations \eqref{strsys-cpf} and the orthogonality relation $m\bot F_{\al}=0$ we obtain the following equations of motion for the frame
\begin{equation}              \label{mo-frame}
\left\{\begin{aligned}
&\d_t F_{\al}=-\Im (\d^A_{\al} \psi \bar{m}-i\la_{\al\ga}V^{\ga} \bar{m})+[\Im(\psi\bar{\la}^{\ga}_{\al})+\nab_{\al} V^{\ga}]F_{\ga},\\
&\d^{B}_t m=-i(\d^{A,\al} \psi -i\la^{\al}_{\ga}V^{\ga} )F_{\al}.
\end{aligned}\right.
\end{equation}
where covariant derivative $\d_t^B=\d_t +iB$ and $B=\<\d_t \nu_1,\nu_2\>$ is the temporal
component of the connection in the normal bundle. 

From this we obtain the evolution equation for the metric $g$. By the definition of the induced metric $g$ \eqref{g_metric}  and \eqref{mo-frame} , we have
\begin{align}         \label{g_dt}
\d_t g_{\al\be}
=&\ 2\Im(\psi\bar{\la}_{\al\be})+\nab_{\al}V_{\be}+\nab_{\be}V_{\al}.
\end{align}
So far, the choice of $V$ has been unspecified; it depends on the choice of coordinates 
on our manifold as the time varies.

\subsection{The motion of  \texorpdfstring{$A$ and $\lambda$}{} under (SMCF)} Here we use the equations of motion for the frame  in \eqref{mo-frame}  in order to repeat the computations of  
Section~\ref{s:GC} with respect to time differentiation, with the aim of computing the time derivative of both $\lambda$ and $A$.
We start from the commutation relation
\begin{equation*}% \label{com-m}
[\d^{B}_t,\d^A_{\al}]m=i(\d_t A_{\al}-\d_{\al} B)m.
\end{equation*}
In order, for the left-hand side, by \eqref{strsys-cpf}  and \eqref{mo-frame}  we have
\begin{align*}
\d^{B}_t\d^A_{\al} m
=&-[\d^{B}_t\la^{\si}_{\al}+\la^{\ga}_{\al}(\Im(\psi\bar{\la}^{\si}_{\ga})+\nab_{\ga}V^{\si})]F_{\si}+\la^{\ga}_{\al}\Im(\d^A_{\ga}\psi \bar{m}-i\la_{\ga\si}V^{\si}\bar{m}),
\end{align*}
and 
\begin{align*}
\d^A_{\al}\d^{B}_t m
=&-i\nab^A_{\al}(\d^{A,\si} \psi -i\la^{\si}_{\ga}V^{\ga} )F_{\si}-i(\d^{A,\si} \psi -i\la^{\si}_{\ga}V^{\ga} )\Re(\la_{\al\ga}\bar{m}).
\end{align*}
Then by the above three equalities, equating the coefficients of the tangent vectors and the normal vector $m$, we obtain the evolution equation for $\la$
\begin{equation}\label{main-eq-abst}
\d^{B}_t\la^{\si}_{\al}+\la^{\ga}_{\al}(\Im(\psi\bar{\la}^{\si}_{\ga})+\nab_{\ga} V^{\si})=i\nab^A_{\al}(\d^{A,\si} \psi -i\la^{\si}_{\ga}V^{\ga} ),
\end{equation}
as well as the compatibility condition (curvature relation)
\begin{equation}\label{Cpt-A&B}
\d_t A_{\al}-\d_{\al} B = \Re(\la_{\al}^{\ga}\bar{\d}^A_{\ga}\bar{\psi})-\Im (\la^\ga_\al \bar{\la}_{\ga\si})V^\si.
\end{equation}

\subsection{ The equations for the connection \texorpdfstring{$A$}{} in the Coulomb gauge and the heat gauge} 

Here we take the first step towards fixing the gauge, and consider 
the choice of the orthonormal frame in $N\Sigma$. Our starting point 
consists of the curvature relations \eqref{cpt-AiAj-2} at fixed time, 
respectively \eqref{Cpt-A&B} dynamically, together with the gauge group \eqref{gauge-A}. We will fix the gauge in two steps, first in a static, elliptic fashion at the initial time, and then dynamically, using a heat flow, for later times.  

At the initial time $t=0$ we obtain an elliptic system for $A$ by imposing the Coulomb gauge condition
\begin{equation}\label{Coulomb}
    \nab^\al A_\al=0.
\end{equation}
As in \cite{HT21}, this yields
\begin{lemma}[Div-curl system for $A$]\label{Ell-A}
Under the Coulomb gauge condition \eqref{Coulomb}, the connection $A$ solves
	\begin{equation} \label{Ellp-A-pre}
	\nab^\al A_\al=0,\quad
	\nab_\al A_\be-\nab_\be A_\al=\Im(\la^\ga_\al \bar{\la}_{\be\ga}).
	\end{equation}
\end{lemma}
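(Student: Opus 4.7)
The proof of this lemma is essentially immediate from the work already done in Sections~\ref{complex-mc}--\ref{s:GC}, so the plan is just to assemble the ingredients rather than introduce new arguments.

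First, the divergence equation $\nabla^\alpha A_\alpha = 0$ is not something to be derived: it is precisely the Coulomb gauge condition \eqref{Coulomb} that is being imposed as a hypothesis. So the first line of \eqref{Ellp-A-pre} requires no work at all, other than recording the gauge choice.

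For the second line, the plan is to invoke the compatibility condition \eqref{cpt-AiAj-2}, which was derived in Section~\ref{s:GC} from the commutation of mixed partials $\partial_\alpha \partial_\beta m = \partial_\beta \partial_\alpha m$ applied to the complexified normal frame. That relation states
\[
\nabla_\alpha A_\beta - \nabla_\beta A_\alpha = \Im(\lambda^\gamma_\alpha \bar\lambda_{\beta\gamma}),
\]
which is exactly the curl equation claimed in \eqref{Ellp-A-pre}. Note also that since the Christoffel symbols are symmetric in their lower two indices, one has $\nabla_\alpha A_\beta - \nabla_\beta A_\alpha = \partial_\alpha A_\beta - \partial_\beta A_\alpha$, so the curl identity can equivalently be read at the level of ordinary partial derivatives, which is how it was actually obtained in the derivation leading to \eqref{cpt-AiAj-2}.

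Putting the two pieces together yields the stated div-curl system. There is no real obstacle here; the only thing worth emphasizing in the proof is that the lemma is structural — the Coulomb gauge choice \eqref{Coulomb} combined with the geometric identity \eqref{cpt-AiAj-2} furnishes an elliptic div-curl problem for $A$ in terms of the second fundamental form $\lambda$, which is precisely what will be used later (at the initial time $t = 0$) to solve for $A_0$ given $\lambda_0$ and the metric $g_0$.
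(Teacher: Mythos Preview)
Your proposal is correct and matches the paper's own (implicit) treatment: the paper gives no detailed proof here, simply remarking ``As in \cite{HT21}, this yields'' the lemma, which amounts precisely to recording the Coulomb gauge condition \eqref{Coulomb} together with the already-derived Ricci equation \eqref{cpt-AiAj-2}.
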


In our previous work \cite{HT21}, the connection coefficients $A$ and $B$ were  determined via the Coulomb gauge condition \eqref{Coulomb} at all times. Instead, in this article we only enforce the Coulomb gauge condition at the initial time $t = 0$, while for $t > 0$ we adopt from \cite{LST} a different gauge condition called the \emph{parabolic gauge} or \emph{heat gauge}. This is defined by the relation
\begin{equation}   \label{heat-gauge}
    \nab^\al A_\al=B,
\end{equation}
which in turn yields a parabolic equation for $A$:

\begin{lemma}[Parabolic equations for $A$] Under the heat gauge condition~\eqref{heat-gauge}, the connection $A$ solves
    \begin{equation}\label{Heat-A-pre}
        (\d_t -\nab_\si\nab^\si) A_\al=\nab^\si\Im(\la^\ga_\al\bar{\la}_{\si\ga})-\Ric_{\al\de}A^\de+\Re(\la^\ga_\al\overline{\nab^A_\ga\psi})-\Im(\la^\ga_\al\bar{\la}_{\ga\si})V^\si.
    \end{equation}
\end{lemma}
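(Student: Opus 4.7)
The plan is to derive the parabolic equation for $A$ by combining the dynamical curvature relation \eqref{Cpt-A&B} with the static constraint \eqref{cpt-AiAj-2} and the heat gauge \eqref{heat-gauge}. The idea is exactly parallel to how one converts, via a gauge fixing and a div-curl identity, a first order compatibility condition for a connection into a second order parabolic evolution; the principal part will emerge from $\partial_\alpha B = \nabla_\alpha(\nabla^\sigma A_\sigma)$ once covariant derivatives are commuted.

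First I would start from \eqref{Cpt-A&B},
\[
\partial_t A_\alpha - \partial_\alpha B = \Re(\lambda_\alpha^\gamma \overline{\nabla^A_\gamma \psi}) - \Im(\lambda^\gamma_\alpha \bar\lambda_{\gamma\sigma}) V^\sigma,
\]
and substitute the heat gauge condition $B = \nabla^\sigma A_\sigma$. Since $B$ is a scalar, $\partial_\alpha B = \nabla_\alpha B = \nabla_\alpha \nabla^\sigma A_\sigma$, so the left-hand side becomes $\partial_t A_\alpha - \nabla_\alpha \nabla^\sigma A_\sigma$.

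Next, I would rewrite $\nabla_\alpha \nabla^\sigma A_\sigma$ in terms of the rough Laplacian $\nabla_\sigma \nabla^\sigma A_\alpha$. Commuting the two covariant derivatives on the $(0,1)$-tensor $A$ produces the standard curvature correction
\[
[\nabla_\alpha, \nabla_\mu] A_\sigma = - R^\nu_{\ \sigma\alpha\mu} A_\nu,
\]
and contracting with $g^{\sigma\mu}$ yields $-\mathrm{Ric}_{\alpha\delta} A^\delta$. Therefore
\[
\nabla_\alpha \nabla^\sigma A_\sigma = \nabla^\sigma \nabla_\alpha A_\sigma - \mathrm{Ric}_{\alpha\delta} A^\delta.
\]
Then the Ricci-type compatibility relation \eqref{cpt-AiAj-2} gives $\nabla_\alpha A_\sigma = \nabla_\sigma A_\alpha + \Im(\lambda^\gamma_\alpha \bar\lambda_{\sigma\gamma})$, so that
\[
\nabla^\sigma \nabla_\alpha A_\sigma = \nabla_\sigma \nabla^\sigma A_\alpha + \nabla^\sigma \Im(\lambda^\gamma_\alpha \bar\lambda_{\sigma\gamma}).
\]

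Combining the two identities, $\nabla_\alpha \nabla^\sigma A_\sigma = \nabla_\sigma \nabla^\sigma A_\alpha + \nabla^\sigma \Im(\lambda^\gamma_\alpha \bar\lambda_{\sigma\gamma}) - \mathrm{Ric}_{\alpha\delta} A^\delta$, and moving the Laplacian to the left-hand side in \eqref{Cpt-A&B} yields exactly \eqref{Heat-A-pre}. There is no substantial obstacle here beyond careful index bookkeeping; the only nontrivial input is that the gauge choice $B = \nabla^\sigma A_\sigma$ is precisely what is needed so that $\partial_\alpha B$ supplies the missing divergence term converting the curl relation \eqref{cpt-AiAj-2} into a full rough-Laplacian on $A_\alpha$, with all remaining pieces absorbed into source terms quadratic in $\lambda$ (or coupling to $V$ via the coordinate choice).
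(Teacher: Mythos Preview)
Your proposal is correct and follows essentially the same approach as the paper: both start from \eqref{Cpt-A&B}, insert the heat gauge $B=\nabla^\sigma A_\sigma$, and rewrite $\nabla_\alpha\nabla^\sigma A_\sigma$ as $\nabla_\sigma\nabla^\sigma A_\alpha$ plus the Ricci commutator term and the curl contribution from \eqref{cpt-AiAj-2}. The paper simply compresses your two steps into the single identity $\nabla_\alpha\nabla^\sigma A_\sigma = [\nabla_\alpha,\nabla^\sigma]A_\sigma + \nabla^\sigma(\nabla_\alpha A_\sigma - \nabla_\sigma A_\alpha) + \nabla^\sigma\nabla_\sigma A_\alpha$.
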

\begin{proof}
    Since by \eqref{cpt-AiAj-2} we have
    \begin{align*}
        \nab_\al \nab^\si A_\si=&[\nab_\al,\nab^\si ]A_\si+\nab^\si(\nab_\al A_\si-\nab_\si A_\al)+\nab^\si\nab_\si A_\al\\
        =&-\Ric_{\al\si}A^\si+\nab^\si\Im(\la^\ga_\al\bar{\la}_{\si\ga})+\nab^\si\nab_\si A_\al
    \end{align*}
    Then the equations \eqref{Heat-A-pre} is obtained from \eqref{Cpt-A&B} and the heat gauge \eqref{heat-gauge}.
\end{proof}

\subsection{The equations for the metric \texorpdfstring{$g$}{} in harmonic coordinates and heat coordinates}    % \label{Sec-gauge}
Here we take the next step towards fixing the gauge, by choosing to work in harmonic coordinates at $t=0$ and heat coordinates for $t>0$. Precisely,
at the initial time $t=0$ we will require the coordinate functions $\{x_{\al},\al=1,\cdots,d\}$ to be globally Lipschitz solutions of the elliptic equations 
\begin{equation} \label{h-gauge}
\Delta_g x_{\al}=0.
\end{equation}
This determines the coordinates uniquely modulo time dependent affine transformations. This remaining ambiguity will be removed later on by imposing suitable boundary conditions at infinity. After this, the only remaining degrees of freedom in the choice of coordinates at $t = 0$ will be  given by  translations and
rigid rotations.

Here we interpret the above harmonic coordinate condition at fixed time as an elliptic equation for the metric $g$.
The equations \eqref{h-gauge} may be expressed in terms of the Christoffel symbols $\Ga$, which  must satisfy the condition
\begin{equation}           \label{hm-coord}
	g^{\al\be}\Ga^{\ga}_{\al\be}=0,\quad {\rm for}\ \ga=1,\cdots,d.
\end{equation}
This leads to an equation for the metric $g$:

\begin{lemma} [Elliptic equations of $g$, Lemma 2.4 \cite{HT21}]\label{Ell-g}
In harmonic coordinates, the metric $g$ satisfies
	\begin{equation}\label{h-ell}
	\begin{aligned}      
		g^{\al\be}\d^2_{\al\be}g_{\ga\si}=&\ [-\d_{\ga} g^{\al\be}\d_{\be} g_{\al\si}-\d_{\si} g^{\al\be}\d_{\be} g_{\al\ga}+\d_{\ga} g_{\al\be}\d_{\si} g^{\al\be}]\\
		&+2g^{\al\be}\Ga_{\si\al,\nu}\Ga^{\nu}_{\be\ga}-2\Re (\la_{\ga\si}\bar{\psi}-\la_{\al\ga}\bar{\la}_{\si}^{\al}).
	\end{aligned}
	\end{equation}
\end{lemma}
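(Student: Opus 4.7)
The plan is to start from the intrinsic expression of the Ricci tensor in terms of the Christoffel symbols, simplify the principal part using the harmonic coordinate condition \eqref{hm-coord}, and then close the identity using the Gauss equation formulation of $\Ric$ in \eqref{Ric}. In other words, I want to read \eqref{h-ell} as the DeTurck--Ricci identity
\[
-2\Ric_{\gamma\sigma} \;=\; g^{\alpha\beta}\partial^2_{\alpha\beta}g_{\gamma\sigma} + (\text{lower-order quadratic in }\partial g) + \nabla_\gamma V_\sigma + \nabla_\sigma V_\gamma,
\]
where $V^\gamma := g^{\alpha\beta}\Gamma^\gamma_{\alpha\beta}$, specialized to the locus $V\equiv 0$ dictated by \eqref{hm-coord}.

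First, I would compute $\Ric_{\gamma\sigma}=g^{\mu\nu}R_{\mu\gamma\nu\sigma}$ from \eqref{R-2}, expanding $R_{\mu\gamma\nu\sigma}$ in terms of $\partial \Gamma_{\cdot\cdot,\cdot}$ and $\Gamma\cdot\Gamma$. The principal terms are $g^{\alpha\beta}(\partial_\alpha \Gamma_{\beta\gamma,\sigma} - \partial_\sigma \Gamma_{\alpha\gamma,\beta})$, which using $2\Gamma_{\alpha\beta,\gamma} = \partial_\alpha g_{\beta\gamma} + \partial_\beta g_{\alpha\gamma} - \partial_\gamma g_{\alpha\beta}$ produce, modulo commuting partial derivatives and redistributing $g^{\alpha\beta}$'s, the leading term $-\tfrac{1}{2}g^{\alpha\beta}\partial^2_{\alpha\beta}g_{\gamma\sigma}$ together with second derivatives of $g$ that can be packaged as $\tfrac{1}{2}(\nabla_\gamma V_\sigma + \nabla_\sigma V_\gamma)$ plus correction terms quadratic in $\partial g$ that come from differentiating $g^{\alpha\beta}$ and reshuffling indices.

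Second, I would impose the harmonic coordinate condition \eqref{hm-coord}, which says $V^\gamma \equiv 0$, so that the $\nabla V$ contributions vanish. A careful accounting of the quadratic remainders obtained by expanding $\partial g^{\alpha\beta} = -g^{\alpha\mu}g^{\beta\nu}\partial g_{\mu\nu}$ and by extracting $\Gamma^\nu_{\be\ga}$ from the $\Gamma\cdot\Gamma$ terms produces exactly the bracketed expression
\[
-\partial_\gamma g^{\alpha\beta}\partial_\beta g_{\alpha\sigma} - \partial_\sigma g^{\alpha\beta}\partial_\beta g_{\alpha\gamma} + \partial_\gamma g_{\alpha\beta}\partial_\sigma g^{\alpha\beta},
\]
together with the remaining $\Gamma\cdot\Gamma$ term $2 g^{\alpha\beta}\Gamma_{\sigma\alpha,\nu}\Gamma^\nu_{\beta\gamma}$ appearing in \eqref{h-ell}. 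This step is where one must be most careful; the precise coefficients in the bracket arise from the interplay between the two harmonic conditions $g^{\alpha\beta}\Gamma^\gamma_{\alpha\beta}=0$ (used on both the $\gamma$ and $\sigma$ indices) and the symmetry of second partials, and is the one routine but bookkeeping-heavy step I would carry out by hand.

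Finally, having obtained
\[
-2\Ric_{\gamma\sigma} \;=\; g^{\alpha\beta}\partial^2_{\alpha\beta}g_{\gamma\sigma} - \bigl[\text{bracket}\bigr] - 2 g^{\alpha\beta}\Gamma_{\sigma\alpha,\nu}\Gamma^\nu_{\beta\gamma},
\]
I substitute the geometric identity \eqref{Ric}, namely $\Ric_{\gamma\sigma}=\Re(\lambda_{\gamma\sigma}\bar{\psi}-\lambda_{\gamma\alpha}\bar{\lambda}^\alpha_\sigma)$, and rearrange to isolate $g^{\alpha\beta}\partial^2_{\alpha\beta}g_{\gamma\sigma}$ on the left; this yields \eqref{h-ell}. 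The main obstacle is the index-juggling in the second step, where one must verify that the off-principal terms in the standard Ricci expansion regroup exactly into the bracket and the single $\Gamma\cdot\Gamma$ contraction stated in the lemma; everything else is either a direct substitution or an application of the harmonic coordinate condition.
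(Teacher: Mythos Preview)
Your approach is correct and is essentially the same as the paper's. Note that the paper does not reprove this lemma---it cites \cite{HT21}---but the immediately following lemma on the parabolic equation for $g$ carries out exactly the computation you outline: it expresses $g^{\al\be}(\d_\mu\Gamma_{\al\be,\nu}+\d_\nu\Gamma_{\al\be,\mu})$ in terms of $g^{\al\be}\d^2_{\al\be}g_{\mu\nu}$, the Ricci tensor via \eqref{R-2}, and $\Gamma\cdot\Gamma$ terms, which under the harmonic condition \eqref{hm-coord} and the substitution \eqref{Ric} collapses to \eqref{h-ell}.
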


For latter times $t>0$ we will introduce the \emph{heat gauge}, where we
require the coordinate functions $\{x^\al,\al=1,\cdots,d\}$ to be global Lipschitz solutions of the  heat equations
\[
(\partial_t - \Delta_g - V^\gamma \partial_\gamma) x_\alpha = 0.
\]
This can be rewritten as 
\[ 
\De_g x^\ga=- V^\ga,  
\]
and can also be expressed in terms of the Christoffel symbols $\Ga$, namely,
\begin{equation} \label{Heat-coordinate}
g^{\al\be}\Ga^\ga_{\al\be}=V^\ga.
\end{equation}
Once a choice of coordinates is made 
at the initial time, the coordinates will be 
uniquely determined later on by this gauge condition.

With the advection field $V$ fixed via the heat coordinate condition \eqref{Heat-coordinate}, we can derive a parabolic equation for the metric $g$:
\begin{lemma}[Parabolic equations for metric $g$]
     Under the condition~\eqref{Heat-coordinate}, the metric $g$ solves
     \begin{equation} \label{g-Heat-Eq}
          \begin{aligned}  
         \d_t g_{\mu\nu}-g^{\al\be}\d^2_{\al\be}g_{\mu\nu}=&2\Ric_{\mu\nu}+2\Im(\psi\bar{\la}_{\mu\nu})-2g^{\al\be}\Ga_{\mu\be,\si}\Ga^\si_{\al\nu}\\
         &+\d_{\mu}g^{\al\be}\Ga_{\al\be,\nu}+\d_{\nu}g^{\al\be}\Ga_{\al\be,\mu}.
     \end{aligned}
     \end{equation}
\end{lemma}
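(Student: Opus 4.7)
The starting point is the metric evolution identity \eqref{g_dt},
\begin{equation*}
\partial_t g_{\mu\nu} = 2\Im(\psi\bar{\lambda}_{\mu\nu}) + \nabla_\mu V_\nu + \nabla_\nu V_\mu,
\end{equation*}
which holds for any choice of coordinates. The task reduces to re-expressing the symmetrized derivative $\nabla_\mu V_\nu + \nabla_\nu V_\mu$ as the principal part $g^{\alpha\beta}\partial^2_{\alpha\beta}g_{\mu\nu}$ plus $2\Ric_{\mu\nu}$, modulo the explicit quadratic remainder
\begin{equation*}
-2g^{\alpha\beta}\Gamma_{\mu\beta,\sigma}\Gamma^\sigma_{\alpha\nu} + \partial_\mu g^{\alpha\beta}\Gamma_{\alpha\beta,\nu} + \partial_\nu g^{\alpha\beta}\Gamma_{\alpha\beta,\mu}
\end{equation*}
that appears on the right-hand side of \eqref{g-Heat-Eq}. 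Structurally, this is the natural parabolic counterpart of the elliptic identity \eqref{h-ell} proved in \cite[Lemma~2.4]{HT21}.

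The first step is to lower the index in the heat gauge condition \eqref{Heat-coordinate}, which gives $V_\nu = g^{\alpha\beta}\Gamma_{\alpha\beta,\nu}$, and then compute
\begin{equation*}
\nabla_\mu V_\nu = \partial_\mu\bigl(g^{\alpha\beta}\Gamma_{\alpha\beta,\nu}\bigr) - \Gamma^\sigma_{\mu\nu}V_\sigma.
\end{equation*}
A direct application of the Leibniz rule extracts the term $\partial_\mu g^{\alpha\beta}\Gamma_{\alpha\beta,\nu}$ appearing on the right side of \eqref{g-Heat-Eq}, and leaves behind $g^{\alpha\beta}\partial_\mu\Gamma_{\alpha\beta,\nu}$ together with the correction $-\Gamma^\sigma_{\mu\nu}V_\sigma$. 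After expanding $\Gamma_{\alpha\beta,\nu}$ into coordinate derivatives of $g$ and symmetrizing in $(\mu,\nu)$, the second-order content is exactly $g^{\alpha\beta}\partial^2_{\alpha\beta}g_{\mu\nu}$ plus the coordinate form of $2\Ric_{\mu\nu}$.

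The second step is to reassemble the quadratic-in-$\partial g$ remainders. Using the coordinate formula \eqref{R-2} for the Riemann curvature,
\begin{equation*}
R_{\sigma\gamma\alpha\beta}=\partial_{\alpha}\Gamma_{\beta\gamma,\sigma}-\partial_{\beta}\Gamma_{\alpha\gamma,\sigma}+\Gamma^{m}_{\beta\sigma}\Gamma_{\alpha\gamma,m}-\Gamma^{m}_{\alpha\sigma}\Gamma_{\beta\gamma,m},
\end{equation*}
contracted against $g^{\sigma\gamma}$ to produce $\Ric_{\alpha\beta}$, one identifies the surviving Christoffel-squared contribution as exactly $-2g^{\alpha\beta}\Gamma_{\mu\beta,\sigma}\Gamma^\sigma_{\alpha\nu}$, modulo the second-order pieces already accounted for. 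Rearranging everything and adding back the $2\Im(\psi\bar\lambda_{\mu\nu})$ contribution from \eqref{g_dt} then yields \eqref{g-Heat-Eq}.

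The main obstacle is the careful bookkeeping of the quadratic-in-$\partial g$ terms. Relative to the harmonic-coordinate derivation of \eqref{h-ell} in \cite{HT21}, we can no longer use $g^{\alpha\beta}\Gamma^\gamma_{\alpha\beta}=0$ to eliminate divergence terms, so the $V$-dependent remainders $\partial_\mu g^{\alpha\beta}\Gamma_{\alpha\beta,\nu}$ and $\partial_\nu g^{\alpha\beta}\Gamma_{\alpha\beta,\mu}$ must be tracked explicitly through the symmetrization. Beyond this bookkeeping, the algebra closely mirrors that of \cite[Lemma~2.4]{HT21}, and no substantially new idea is required.
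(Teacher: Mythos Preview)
Your proposal is correct and follows essentially the same approach as the paper: both start from \eqref{g_dt}, substitute the heat gauge relation $V_\nu=g^{\alpha\beta}\Gamma_{\alpha\beta,\nu}$ into $\nabla_\mu V_\nu+\nabla_\nu V_\mu$, expand the Christoffel symbols, and use the curvature identity \eqref{R-2} to extract $g^{\alpha\beta}\partial^2_{\alpha\beta}g_{\mu\nu}+2\Ric_{\mu\nu}$ together with the quadratic remainder. The paper carries out the intermediate symmetrization $g^{\alpha\beta}(\partial_\mu\Gamma_{\alpha\beta,\nu}+\partial_\nu\Gamma_{\alpha\beta,\mu})$ a bit more explicitly, but the strategy and the bookkeeping you describe match line for line.
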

\begin{proof}
    By the relation~\eqref{Heat-coordinate} we have
    \begin{align*}
        \nab_\mu V_\nu= g^{\al\be}\d_\mu\Ga_{\al\be,\nu}+\d_\mu g^{\al\be}\Ga_{\al\be,\nu}-g^{\al\be}\Ga_{\mu\nu}^\si \Ga_{\al\be,\si}
    \end{align*}
    Using the expression for $\Ga$ and for the Riemannian curvature \eqref{R-2} we have
    \begin{align*}
        g^{\al\be}(\d_\mu\Ga_{\al\be,\nu}+\d_\nu\Ga_{\al\be,\mu})=&g^{\al\be}[\d_\mu(\d_\al g_{\be\nu}-\frac{1}{2}\d_\nu g_{\al\be})+\d_\nu(\d_\al g_{\be\mu}-\frac{1}{2}\d_\mu g_{\al\be})]\\
        =& g^{\al\be}[\d_\al(\d_\mu g_{\be\nu}+\d_\nu g_{\be\mu}-\d_\be g_{\mu\nu})-\d_\mu(\d_\nu g_{\al\be}+\d_\al g_{\nu\be}-\d_\be g_{\al\nu})\\
        &+\d^2_{\al\be}g_{\mu\nu}]\\
        =&g^{\al\be}[2\d_\al \Ga_{\mu\nu,\be}-2\d_{\mu}\Ga_{\al\nu,\be}
        +\d^2_{\al\be}g_{\mu\nu}]\\
        =& 2g^{\al\be}(R_{\be\nu\al\mu}-\Ga_{\nu\be,\si}\Ga^\si_{\al\mu}+\Ga_{\al\be,\si}\Ga^\si_{\mu\nu})+g^{\al\be}\d^2_{\al\be}g_{\mu\nu}.
    \end{align*}
    We then obtain
    \begin{align*}
        \nab_\mu V_\nu+\nab_\nu V_\mu=g^{\al\be}\d^2_{\al\be}g_{\mu\nu}+2\Ric_{\mu\nu}+\d_\mu g^{\al\be}\Ga_{\al\be,\nu}+\d_\nu g^{\al\be}\Ga_{\al\be,\mu}-2g^{\al\be}\Ga_{\nu\be,\si}\Ga^\si_{\al\mu}.
    \end{align*}
    Combined with \eqref{g_dt}, this implies \eqref{g-Heat-Eq}.
\end{proof}

\subsection{Derivation of the modified Schr\"{o}dinger system from SMCF}
Here we carry out the last step in our analysis of the equations, and 
obtain the main Schr\"odinger equation which governs the time evolution of $\lambda$.

Our starting point is the equations \eqref{main-eq-abst}, which are rewritten as
\begin{equation*}
    i\d^{B}_t\la_{\al\be}+\nab^A_{\al}\nab^{A}_\be \psi -i\la^{\ga}_{\al}\Im(\psi\bar{\la}_{\ga\be})-i\la^{\ga}_{\al}\nab_{\be} V_{\ga}-i\la_\be^\ga\nab_\al V_\ga-iV^\ga\nab^A_\ga\la_{\al\be}=0,
\end{equation*}
We use the compatibility conditions \eqref{csmc}, \eqref{cpt-AiAj-2} and \eqref{R-la} to write the second term as
\begin{align*}
    \nab^A_{\al}\nab^{A}_\be \psi=& \ \nab^A_\al \nab^A_{\si}\la^\si_\be=[\nab^A_\al,\nab^A_\si]\la^\si_\be+\nab^A_\si\nab^{A,\si}\la_{\al\be}\\
    =&\ R_{\al\si\si\de}\la^\de_\be+R_{\al\si\be\de}\la^{\si\de}+i\Im(\la_{\al\mu}\bar{\la}^\mu_\si)\la^\si_\be+\nab^A_\si\nab^{A,\si}\la_{\al\be}\\
    =&-\Ric_{\al\de}\la^\de_\be+R_{\al\si\be\de}\la^{\si\de}+i\Im(\la_{\al\mu}\bar{\la}^\mu_\si)\la^\si_\be+\nab^A_\si\nab^{A,\si}\la_{\al\be}\\
    =& -\Re(\la_{\al\de}\bar{\psi})\la^\de_\be+\Re(\la_{\si\de}\bar{\la}_{\al\be}-\la_{\si\be}\bar{\la}_{\al\de})\la^{\si\de}  +\la_{\al\mu}\bar{\la}^\mu_\si\la^\si_\be+\nab^A_\si\nab^{A,\si}\la_{\al\be}
\end{align*}
Since 
\begin{align*}
    \frac{1}{2}[-\Re(\la_{\al\de}\bar{\psi})\la^\de_\be-\Re(\la_{\be\de}\bar{\psi})\la^\de_\al -i\la^{\ga}_{\al}\Im(\psi\bar{\la}_{\ga\be}) -i\la^{\ga}_{\be}\Im(\psi\bar{\la}_{\ga\al})]
    =-\psi\Re(\la_{\al\de}\bar{\la}^\de_\be),
\end{align*}
we obtain the $\la$-equations
\begin{equation}     \label{Sch-la}
    \begin{aligned}
i\d^{B}_t\la_{\al\be}
+\nab^A_\si\nab^{A,\si}\la_{\al\be}
=& \ iV^\ga\nab^A_\ga\la_{\al\be}
+i\la^{\ga}_{\al}\nab_{\be} V_{\ga}
+i\la_\be^\ga\nab_\al V_\ga
+\psi\Re(\la_{\al\de}\bar{\la}^\de_\be)\\
&-\Re(\la_{\si\de}\bar{\la}_{\al\be}-\la_{\si\be}\bar{\la}_{\al\de})\la^{\si\de} 
-\la_{\al\mu}\bar{\la}^\mu_\si\la^\si_\be
.
\end{aligned}
\end{equation}

In conclusion, under the heat coordinate condition $g^{\al\be}\Ga^{\ga}_{\al\be}=V^\ga$ and heat gauge condition $\nab^{\al}A_{\al}=B$, by \eqref{Sch-la}, \eqref{g-Heat-Eq} and \eqref{Heat-A-pre}, we obtain the covariant Schr\"{o}dinger equation for the complex second fundamental form tensor $\la$
\begin{equation}        \label{mdf-Shr-sys-2}
\left\{
\begin{aligned}
    &\begin{aligned}
    (i\d_t+\nab_\si\nab^\si)\la_{\al\be}
    =&i(V-2A)^\si\nab_\si \la_{\al\be}-i\nab_\si A^\si \la_{\al\be}+i\la^{\ga}_{\al}\nab_{\be} V_{\ga}
    +i\la_\be^\ga\nab_\al V_\ga\\
    &+(B+A_\si A^\si-V_\si A^\si)\la_{\al\be}
    +\psi\Re(\la_{\al\de}\bar{\la}^\de_\be)
    \\
    &-\Re(\la_{\si\de}\bar{\la}_{\al\be}-\la_{\si\be}\bar{\la}_{\al\de})\la^{\si\de} 
    -\la_{\al\mu}\bar{\la}^\mu_\si\la^\si_\be,
    \end{aligned}\\
    & \la(0,x) = \la_0(x).
    \end{aligned}
\right.    
\end{equation}
These equations are fully covariant, and do not depend on the gauge choices
made earlier. On the other hand, our gauge choices imply that 
the advection field $V$ and the connection coefficient $B$ are determined by the metric $g$ and connection $A$ via \eqref{Heat-coordinate}, respectively, \eqref{heat-gauge}. In turn,
the metric $g$ and the connection coefficients $A$  are determined in an parabolic fashion via the following equations
\begin{equation}           \label{par-syst}
	\left\{\begin{aligned}
		&\begin{aligned}  
         (\d_t-g^{\al\be}\d^2_{\al\be})g_{\mu\nu}=&2\Ric_{\mu\nu}+2\Im(\psi\bar{\la}_{\mu\nu})-2g^{\al\be}\Ga_{\mu\be,\si}\Ga^\si_{\al\nu}\\
         &+\d_{\mu}g^{\al\be}\Ga_{\al\be,\nu}+\d_{\nu}g^{\al\be}\Ga_{\al\be,\mu}.
        \end{aligned}\\
	    &(\d_t -\nab_\si\nab^\si) A_\al=-\nab^\si\Im(\la^\ga_\al\bar{\la}_{\si\ga})-\Ric_{\al\de}A^\de+\Re(\la^\ga_\al\overline{\nab^A_\ga\psi})-\Im(\la^\ga_\al\bar{\la}_{\ga\si})V^\si,\\
	    &V^\ga=g^{\al\be}\Ga_{\al\be}^\ga,\quad B=\nab^\al A_\al,
	\end{aligned}\right.
\end{equation}
with initial data 
\begin{equation}    \label{ini-gA}
    g(0,x)=g_0(x),\quad A(0,x) = A_0(x).
\end{equation}
These are determined at the initial time by choosing harmonic coordinates on $\Sigma_0$,  respectively the Coulomb gauge for $A$.

Fixing the remaining degrees of freedom (i.e. the affine group for the choice of the 
coordinates as well as the time dependence of the $SU(1)$ connection) 
 we can assume that the  following conditions hold at infinity in an averaged sense:
 \begin{equation*}  %\label{BC-infty}
g(\infty) = I_d,\quad A(\infty) = 0.     
 \end{equation*}
These are needed to insure  the unique solvability of the above parabolic equations
in a suitable class of functions. For the metric $g$ it will be useful to use the representation 
\begin{equation*}
g = I_d + h    
\end{equation*}
so that $h$ vanishes at infinity.

We have arrived at the main Schr\"odinger-Parabolic system \eqref{mdf-Shr-sys-2}-\eqref{par-syst}, whose solvability is the primary 
objective of the rest of the paper. This system is accompanied by
a family of compatibility conditions as follows:
\begin{enumerate}[label=(\roman*)]
    \item The Gauss equations \eqref{R-la} connecting the curvature $R$ of $g$ and $\lambda$.
    \item The Codazzi equations  \eqref{la-commu} for $\lambda$.
    \item The Ricci equations \eqref{cpt-AiAj-2} for the curvature of $A$.
   \item The compatibility condition \eqref{Cpt-A&B} for the $B$.
\end{enumerate}
We will solve the system irrespective of these compatibility conditions, 
but then show them be satisfied for small solutions to the nonlinear system \eqref{mdf-Shr-sys-2}-\eqref{par-syst}, by propagating them from the initial time $t=0$.

Now we can restate here the small data local well-posedness result for the (SMCF) system in 
Theorem~\ref{LWP-thm} in terms of the above system:

\begin{thm}[Small data local well-posedness in the good gauge]   \label{LWP-MSS-thm}
	Let $d\geq 2$ and $s>\frac{d}{2}$. Then there exists $\ep_0>0$ sufficiently small such that, for all initial data $(\la_0, h_0,A_0)$
	satisfying the constraints \eqref{la-commu}, \eqref{R-la} and \eqref{cpt-AiAj-2} and with 
\begin{equation} \label{small-data-MS}
	\lV \la_0\rV_{H^s}+\lV h_0\rV_{\bY_0^{s+2}} 
	+ \lV  A_0\rV_{H^{s+1}} \leq \ep_0,
\end{equation}
the modified Schr\"odinger system \eqref{mdf-Shr-sys-2}, coupled with the
parabolic system \eqref{par-syst} for $(h,A)$
is locally well-posed in $l^2X^s\times\bEE^s$ on the time interval $I=[0,1]$. Moreover, the second fundamental form $\la$, the metric $g$ and the connection coefficients $A$ satisfy
the bounds
	\begin{equation}\label{psi-full-reg}
	\lV \la\rV_{l^2 X^s} +  \lV (h,A)\rV_{\bEE^s}\lesssim \lV \la_0\rV_{H^s} + \lV h_0\rV_{\bY_0^{s+2}} 
	+ \lV  A_0\rV_{H^{s+1}}.
	\end{equation}
In addition, the functions $(\la, g,A)$ satisfy 
	the constraints \eqref{R-la}, \eqref{la-commu}, \eqref{cpt-AiAj-2} and \eqref{Cpt-A&B}.
\end{thm}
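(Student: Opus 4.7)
The plan is to treat the system \eqref{mdf-Shr-sys-2}--\eqref{par-syst} as a coupled Schr\"odinger--parabolic problem and to decouple the two components via iteration. Starting from $\la^{(0)} = 0$ and the corresponding $(h^{(0)}, A^{(0)})$ obtained as the parabolic solution with zero forcing but with data $(h_0, A_0)$, at step $n$ I would first use the current $\la^{(n)}$ to produce $(h^{(n+1)}, A^{(n+1)})$ via the parabolic system in \eqref{par-syst}, and then insert these coefficients into \eqref{mdf-Shr-sys-2} to solve the resulting linear Schr\"odinger equation for $\la^{(n+1)}$. A fixed point in a small ball of $l^2 X^s \times \bEE^s$ is then expected, provided that both steps produce small outputs from small inputs with a favourable coupling structure. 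The constraints will be ignored during the iteration and propagated from $t=0$ afterwards.

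For the parabolic step, I would invoke the solvability theory for \eqref{par-syst} developed in Section~\ref{Sec-Para}. Viewed with $\la$ fixed, the equations for $h = g - I_d$ and $A$ form a strictly parabolic system, with semilinear and quasilinear nonlinearities that are at least quadratic in $(\la, h, A, \nab h, \nab A)$. The smallness of the initial data together with parabolic smoothing in $\bEE^s$ yields a unique solution with
\begin{equation*}
\lV (h, A) \rV_{\bEE^s} \lesssim \lV h_0 \rV_{\bY_0^{s+2}} + \lV A_0 \rV_{H^{s+1}} + \lV \la \rV_{l^2 X^s}^2.
\end{equation*}
Crucially, $\bEE^s$ is strong enough to feed the metric and connection into the variable-coefficient Schr\"odinger operator of the next step, and in particular to furnish the local energy type bounds for $h$ that are needed there.

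The Schr\"odinger step is the main technical obstacle. Given $(h, A) \in \bEE^s$ small, I would view \eqref{mdf-Shr-sys-2} as a perturbation of a paradifferential Schr\"odinger operator with metric $g = I_d + h$ and magnetic potential $A$. Using the multilinear estimates of Section~\ref{Sec-mutilinear} to control the nonlinear terms such as $V \cdot \nab \la$, $B\la$, and the cubic expressions in $\la$, the problem reduces to $l^2 X^s$ bounds for the linear paradifferential flow and for its full linearization. These I would obtain from the local energy decay results of Section~\ref{Sec-LED}. The subtle interplay between local energy norms and low-frequency decay, particularly in $d = 2$ where the hypothesis \eqref{extra-2d} on $g_0$ is needed in order to propagate suitable low-frequency information through the parabolic flow, is the reason for the extra $Y_0^{lo}$ assumption at the initial time. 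Assembling the parabolic and Schr\"odinger bounds then closes the fixed-point argument and delivers \eqref{psi-full-reg}.

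Uniqueness and continuous dependence follow by running the same analysis on the linearized system for differences $(\dla, \dh, \dA)$ of two solutions, accepting the standard one-derivative loss in the topology in which convergence is shown. Once the solution is constructed, I would verify the constraints \eqref{R-la}, \eqref{la-commu}, \eqref{cpt-AiAj-2}, and \eqref{Cpt-A&B} by propagation: writing each as an identity $\mathcal{C}_i = 0$, direct manipulation of \eqref{mdf-Shr-sys-2}, \eqref{par-syst}, and the structure equations shows that each $\mathcal{C}_i$ satisfies a homogeneous linear evolution (heat or transport type) with coefficients controlled by the small solution. Uniqueness for these evolutions, combined with the vanishing of the constraints at $t = 0$, then forces $\mathcal{C}_i \equiv 0$ throughout $[0,1]$.
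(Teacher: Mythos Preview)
Your proposal is correct and follows essentially the same approach as the paper: iterate between the parabolic solve for $(h,A)$ (Section~\ref{Sec-Para}) and the linear Schr\"odinger solve for $\la$ (Sections~\ref{Sec-mutilinear}--\ref{Sec-LED}), ignore the constraints during the iteration, and propagate them afterward from $t=0$ via a homogeneous linear system. Two minor refinements worth noting: the paper does not close a contraction in $l^2 X^s$ but instead proves uniform $l^2 X^s$ bounds and convergence in the weaker $l^2 X^{s-2}$ topology (two derivatives of loss, not one), recovering continuous $H^s$ dependence via frequency envelope bounds rather than directly from the linearized estimate; and the constraint system is a coupled parabolic/Schr\"odinger/elliptic system (the Codazzi defect $T^3$ obeys a Schr\"odinger-type equation and the Gauss defect $T^2$ is determined elliptically via Bianchi identities), not purely of heat or transport type.
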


Here the solution $\la$ satisfies in particular the expected bounds
\[
\| \la \|_{C[0,1;H^s]} \lesssim \|\la_0\|_{H^s}.
\]
The spaces $l^2 X^s$ and  $\bEE^s$, defined in the next section, contain a more complete description of the  full set of variables $\la,h,A$, which includes both Sobolev regularity and local energy bounds.

In the above theorem,  by well-posedness we mean a full Hadamard-type well-posedness, including the following properties:
\begin{enumerate}[label=\roman*)]
    \item Existence of solutions $\la \in C[0,1;H^s]$, with the additional regularity 
    properties \eqref{psi-full-reg}.
    \item Uniqueness in the same class.
    \item Continuous dependence of solutions with respect to the initial data 
    in the strong $H^s$ topology.
    \item Weak Lipschitz dependence  of solutions with respect to the initial data 
    in the weaker $L^2$ topology.
    \item Energy bounds and propagation of higher regularity.
\end{enumerate}

We conclude this section with several remarks concerning 
the result in Theorem~\ref{LWP-MSS-thm}:

\begin{rem}[The variable $\lambda$ vs $\psi$]
In our earlier paper \cite{HT21} we have worked with $\psi$ as the main dynamic variable for the Schr\"odinger flow, and the full second fundamental form $\lambda$ was obtained from $\psi$ by solving an elliptic div-curl system derived from the Codazzi relations \eqref{la-commu}. Here we work directly with $\lambda$, because solving this elliptic system 
has issues at the $L^2$ level in two\footnote{ However, in three and higher dimensions one could still work with $\psi$ if desired.} space dimensions. The downside is that the components of $\lambda$ are not independent, and are instead connected via the compatibility relations \eqref{la-commu}. Thus, these relations will have to be propagated dynamically. 
\end{rem}

\begin{rem}[Initial data sets]
The harmonic/Coulomb gauge condition at the initial time plays no role 
in Theorem~\ref{LWP-MSS-thm}, where smallness is assumed for both $\lambda_0$, $h_0$ and $A_0$. However, it is useful in order to connect 
Theorem~\ref{LWP-MSS-thm} with the earlier statement in Theorems~\ref{LWP-thm}, \ref{LWP-thmd=2}. 
\end{rem}

\bigskip

\section{Function spaces and notations}   \label{Sec3}
The goal of this section is to define the function spaces where we aim to solve 
the (SMCF) system  in the good gauge, given by \eqref{mdf-Shr-sys-2}.
Both the spaces and the notation presented in this section are similar to those introduced in \cite{MMT3,MMT4,MMT5}. 

We begin with some constants. 
Let regularity index $s>d/2$ and $\de>0$ be a small\footnote{ Ideally here one would like to set $\delta = 0$, but this is only possible in dimensions three and higher.} constant satisfying
\begin{equation*}
    0<\de\ll s-s_d.
\end{equation*}
We then define the constant $\si_d$ depending on dimensions $d$ as 
\begin{equation}       \label{si_d}
    \si_d=d/2-\de.
\end{equation}

For a function $u(t,x)$ or $u(x)$, let $\hat{u}=\FF u$ and $\check{u}=\FF^{-1}u$ denote the Fourier transform and inverse Fourier transform in the spatial variable $x$, respectively. Fix a smooth radial function $\varphi:\R^d \rightarrow [0,1] $ supported in $[-2,2]$ and equal to 1 in $[-1,1]$, and for any $i\in \Z$, let
\begin{equation*}
	\varphi_i(x):=\varphi(x/2^i)-\varphi(x/2^{i-1}).
\end{equation*}
We then have the spatial Littlewood-Paley decomposition,
\begin{equation*}
	\sum_{i=-\infty}^{\infty}P_i (D)=1, \quad \sum_{i=0}^{\infty}S_i (D)=1,
\end{equation*}
where $P_i$ localizes to frequency $2^i$ for $i\in \Z$, i.e,
\begin{equation*}
	 \FF(P_i u)=\varphi_i(\xi)\hat{u}(\xi),
\end{equation*}
and 
\[S_0(D)=\sum_{i\leq 0}P_i(D),\quad S_i(D)=P_i(D),\ {\rm for}\ i>0.\]
For simplicity of notation, we set
\[
u_j=S_j u,\quad u_{\leq j}=\sum_{i=0}^j S_i u,\quad u_{\geq j}=\sum_{i=j}^{\infty} S_i u.
\]

For each $j\in\N$, let $\QQ_j$ denote a partition of $\R^d$ into cubes of side length $2^j$, and let $\{\chi_Q\}$ denote an associated partition of unity. For a translation-invariant Sobolev-type space $U$, set $l^p_j U$ to be the Banach space with associated norm 
\begin{equation*}
	\lV u\rV_{l^p_j U}^p=\sum_{Q\in\QQ_j}\lV \chi_Q u\rV_U^p
\end{equation*}
with the obvious modification for $p=\infty$.

Next we define the $l^2X^s$ and $l^2N^s$ spaces, which will be used for the primary variable 
$\la$, respectively for the source term in the Schr\"odinger equation for $\la$.
Following \cite{MMT3,MMT4,MMT5}, we first define the $X$-norm as
\begin{equation*}
	\lV u\rV_{X}=\sup_{l \in \N} \sup_{Q\in\QQ_l} 2^{-\frac{l}{2}}\lV u\rV_{L^2L^2([0,1]\times Q)}.
\end{equation*}
Here and throughout, $L^pL^q$ represents $L^p_tL^q_x$. To measure the source term, we use an atomic space $N$ satisfying $X=N^{\ast}$. A function $a$ is an atom in $N$ if there is a $j\geq 0$ and a $Q\in \QQ_j$ such that $a$ is supported in $[0,1]\times Q$ and 
\begin{equation*}
	\lV a\rV_{L^2([0,1]\times Q)}\lesssim 2^{-\frac{j}{2}}.
\end{equation*}
Then we define $N$ as linear combinations of the form 
\begin{equation*}
	f=\sum_k c_k a_k,\ \ \sum_k|c_k|<\infty,\ \ a_k\ {\rm atom},
\end{equation*}
with norm 
\begin{equation*}
	\lV f\rV_N=\inf\big\{\sum_k |c_k|: f=\sum_k c_k a_k,\ a_k\ {\rm atoms}\big\}.
\end{equation*}

For solutions which are localized to frequency $2^j$ with $j \geq 0$, we will work in the space
\begin{equation*}
	X_j=2^{-\frac{j}{2}}X\cap L^{\infty}L^2,
\end{equation*} 
with norm 
\begin{equation*}
	\lV u\rV_{X_j}=2^{\frac{j}{2}}\lV u\rV_X+\lV u\rV_{L^{\infty}L^2}.
\end{equation*}
One way to assemble the $X_j$ norms is via the $X^s$ space
\begin{equation*}
\lV u\rV_{X^s}^2=\sum_{j\geq 0} 2^{2js}\lV S_j u\rV_{X_j}^2.
\end{equation*}
But we will also add the $l^p$ spatial summation on the $2^j$ scale to $X_j$,
in order to obtain the space $l^p_j X_j$ with norm
\[
\lV u\rV_{l^p_j X_j} =(\sum_{Q\in \QQ_j} \lV \chi_Q u\rV_{X_j}^p)^{1/p}.
\] 
We then define the space $l^p X^s$ by 
\begin{equation*}
	\lV u\rV_{l^p X^s}^2=\sum_{j\geq 0}2^{2js}\lV S_j u\rV_{l^p_j X_j}^2.
\end{equation*}
For the solutions of Schr\"{o}dinger equation in \eqref{mdf-Shr-sys-2}, we will be working primarily in $l^2 X^s$.

We analogously define 
\begin{equation*}
	N_j=2^{\frac{j}{2}}N+L^1L^2,
\end{equation*}
which has norm 
\begin{equation*}
	\lV f\rV_{N_j}=\inf_{f=2^{\frac{j}{2}}f_1+f_2} \big(\lV f_1\rV_N+\lV f_2\rV_{L^1L^2}\big),
\end{equation*}
and 
\begin{equation*}
	\lV f\rV_{l^pN^s}^2=\sum_{j\geq 0}2^{2js}\lV S_j f\rV_{l^p_j N_j}^2.
\end{equation*}
Here we shall be working primarily with $l^2N^s$.

We also note that for any $j$, we have
\begin{equation*}
	\sup_{Q\in\QQ_j} 2^{-\frac{j}{2}}\lV u\rV_{L^2L^2([0,1]\times Q)}\leq \lV u\rV_{X},
\end{equation*}
hence
\begin{equation*}
	\lV u\rV_N\lesssim 2^{j/2}\lV u\rV_{l^1_jL^2L^2}.
\end{equation*}
This bound will come in handy at several places later on.

For the parabolic system \eqref{par-syst}, it is natural to work in spaces of the form
$L^\infty H^s$.  However, in order to obtain frequency envelope bounds it is more convenient to slightly 
strengthen this norm. Precisely, we  define the $Z^{\si,s}$ norm as
\begin{equation*}
\| h\|_{Z^{\si,s}}^2
=\||D|^\si S_0 h\|_{L^\infty L^2}^2+\sum_{j\geq 1} 2^{2sj} \| S_j h\|_{L^\infty L^2}^2.
\end{equation*}
Compared to $L^\infty H^s$, here we just commute the $L^\infty_t$ and $l^2$ frequency summation. For simplicity of notation, we denote $Z^s:=Z^{0,s}$. In particular we have  
\[
Z^s \subset L^\infty H^s.
\]

With these notations, we will seek  the solution $(h,A)$ to the parabolic system \eqref{par-syst} in the space $\EE^s$ defined by 
\begin{equation*}
\lV (h,A)\rV_{\mathcal E^s}=\lV h\rV_{Z^{\si_d,s+2}}+\lV  A\rV_{Z^{s+1}}.
\end{equation*}

Correspondingly, at fixed time we define the space $\HH^s$ as
\begin{equation*}
\lV (h,A)\rV_{\HH^s}=\lV |D|^{\si_d}h\rV_{H^{s+2-\si_d}}+\lV  A\rV_{H^{s+1}}.
\end{equation*}

In addition to the above standard norms, for the study of the Schr\"odinger equation for $\la$ we will also need to control a stronger norm $\bY^{s+2}$ for the metric $h=g-I_d$; this will be defined in what follows.

First, similarly to the $l^p_j X_j$ norms above, we also add the $l^p$ spatial summation on the $2^j$ scale to $Z_j$, in order to obtain the space $l^p Z^{\si,s}$ with norm
\begin{equation*}
\| h\|_{l^pZ^{\si,s}}^2=\sum_{j\in \mathbb Z} 2^{2\si j^-+2sj^+} \| P_j h\|_{l^p_{|j|}Z_j}^2=\sum_{j\in \mathbb Z} 2^{2\si j^-+2sj^+} \| P_j h\|_{l^p_{|j|}L^\infty L^2}^2.
\end{equation*}
Here we need to decompose the low frequency part, this allows us to obtain a estimate of $h$ in $l^2 Z^{\si_d,s+2}$ in Proposition \ref{cor-4.3}.
Correspondingly, we will strengthen the $Z^{\si_d,s+2}$ norm of $h$ to $l^2 Z^{\si_d,s+2}$.

More importantly, we will also  introduce some additional structure which is associated to spatial scales larger than the frequency.
Precisely, to measure the portion of $h$ which is localized to frequency $2^j$, this time with $j\in\Z$, we decompose $P_j h$ as an atomic summation of components $h_{j,l}$ associated to spatial scales $2^l$
with $l\geq |j|$, i.e.
\begin{equation*}
P_jh=\sum_{l\geq |j|}h_{j,l}.
\end{equation*}  
Then we define the $Y_j$-norm by
\begin{equation*}
\lV P_j h \rV_{Y_j} =\inf_{P_jh=\sum_{l\geq |j|}h_{j,l}} \sum_{l\geq |j|} 2^{l-|j|} \lV h_{j,l}\rV_{l^1_lL^{\infty}L^2}.
\end{equation*}
In the decomposition of $P_j h$ we may project and assume that all terms are
also localized at frequency $2^j$. However in the definition of the $Y_j$ 
norms we make no such assumption.

Assembling together the dyadic pieces in an $l^2$ Besov fashion, we obtain  the $Y^{s}$ 
space with norm given by
\begin{equation*}
\lV h\rV_{Y^{s}}^2=\sum_{j\in \Z} 2^{2(\frac{d}{2}-\de) j^-+2sj^+}\lV P_jh\rV_{Y_j}^2.
\end{equation*}
Then for $h$-equation in \eqref{mdf-Shr-sys-2}, we will be working primarily in 
$\bY^{s+2}$,
whose norm is defined by
\begin{align*} %\label{bYs}
\lV h\rV_{\bY^{s+2}}=\lV  h\rV_{l^2Z^{\si_d,s+2}}+\lV h\rV_{Y^{s+2}}.
\end{align*}

Collecting all the components defined above, for the parabolic system \eqref{par-syst} we define the final $\bEE^s$ norm as
\begin{equation*}%\label{bEs}
    \lV (h,A)\rV_{\bEE^s}=\lV h\rV_{\bY^{s+2}} +\lV  A\rV_{Z^{s+1}}.
\end{equation*}

At fixed time, we can remove the $L^\infty_t$ in $\bY^{s+2}$ and $\bEE^s$, and obtain the function spaces $\bY_0^{s+2}$ and $\bEE^s_0$ respectively. Precisely, we define the $Y_{0j}$ norm corresponding to $Y_j$ as
\begin{equation*}
\lV P_j h \rV_{Y_{0j}} =\inf_{P_jh=\sum_{l\geq |j|}h_{j,l}} \sum_{l\geq |j|} 2^{l-|j|} \lV h_{j,l}\rV_{l^1_lL^2}.
\end{equation*}
and obtain the $Y^s_0$ space with norm given by
\begin{equation*}
\lV h\rV_{Y_0^{s}}^2=\sum_{j\in \Z} 2^{2(\frac{d}{2}-\de) j^-+2sj^+}\lV P_jh\rV_{Y_{0j}}^2.
\end{equation*}
Then we obtain the space $\bY^{s+2}_0$ with norm defined by
\begin{equation*}
    \|h\|_{\bY^{s+2}_0}=\||D|^{\si_d}h\|_{H^{s+2-\si_d}}+\| h\|_{Y^{s+2}_0},
\end{equation*}
and the space $\bEE^s_0$ defined by
\begin{equation*}
\lV (h,A)\rV_{\bEE^s_0}=\lV h\rV_{\bY^{s+2}_0} +\lV  A\rV_{H^{s+1}}.
\end{equation*}

Finally, to capture only the low frequency information in the $Y_0^s$ spaces, 
we introduce the $Y_0^{lo}$ norm, which is used in our main two dimensional result in Theorem~\ref{LWP-thmd=2}:
\begin{equation*}%\label{Y-lo}
\lV h\rV_{Y_0^{lo}}^2=\| P_{\geq 0} h\|_{Y_{00} \cap L^\infty}+ \sum_{j < 0} 2^{2(\frac{d}{2}-\de) j^-}\lV P_jh\rV_{Y_{0j}}^2.    
\end{equation*}

\medskip

Next, we define the frequency envelopes as in \cite{MMT3,MMT4,MMT5} which will be used in multilinear estimates. Consider a Sobolev-type space $U$ for which we have
\begin{equation*}
\lV u\rV_U^2=\sum_{k=0}^{\infty} \lV S_k u\rV_U^2.
\end{equation*}
A frequency envelope for a function $u\in U$ is a positive $l^2$-sequence, $\{ a_j\}$, with 
\begin{equation*}
\lV S_j u\rV_U\leq a_j.
\end{equation*}
We shall only permit slowly varying frequency envelopes. Thus, we require $a_0\approx \lV u\rV_U$ and 
\begin{equation}      \label{FreEve-relation}
a_j\leq 2^{\delta|j-k|} a_k,\quad j,k\geq 0,\ 0<\delta\ll s-d/2.
\end{equation}
The constant $\delta$ shall be chosen later and only depends on $s$ and the dimension $d$. Such frequency envelopes always exist. For example, one may choose
\begin{equation}         \label{Freq-envelope}
a_j=2^{-\de j}\lV u\rV_U+\max_k 2^{-\de|j-k|} \lV S_k u\rV_U.
\end{equation}

Since we often use Littlewood-Paley decompositions, the next lemma is a convenient tool 
to see that our function spaces are invariant under the action of some standard classes of multipliers:

\begin{lemma}
	For any Schwartz function $f\in\SS$, multiplier $m(D)$ with $\lV \FF^{-1}(m(\xi))\rV_{L^1}<\infty$, and translation-invariant Sobolev-type space $U$, we have
	\begin{equation*}
		\lV m(D)f\rV_{U}\lesssim \lV \FF^{-1}(m(\xi))\rV_{L^1}\lV f\rV_U.
	\end{equation*}
\end{lemma}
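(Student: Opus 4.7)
The plan is to realize the multiplier $m(D)$ as convolution with its kernel and then exploit translation invariance of the target space together with a Minkowski-type inequality. Setting $K=\FF^{-1}(m)$, for any Schwartz $f$ we have the identity
\begin{equation*}
    m(D)f(x)=(K\ast f)(x)=\int_{\R^d} K(y)f(x-y)\,dy=\int_{\R^d} K(y)\,(\tau_y f)(x)\,dy,
\end{equation*}
where $\tau_y$ denotes the translation $(\tau_y f)(x)=f(x-y)$. Thus $m(D)f$ is expressed as a continuous average of translates of $f$ weighted by $K$.

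Next I would invoke the translation invariance hypothesis on $U$, which gives $\lV \tau_y f\rV_U=\lV f\rV_U$ uniformly in $y\in\R^d$. All the function spaces introduced in the excerpt ($X^s$, $N^s$, $Z^{\sigma,s}$, $Y^s$, $l^p_j X_j$, etc.) are built out of translation-invariant pieces (Littlewood-Paley blocks, cube partitions, mixed Lebesgue norms), and the cube decompositions are summed in a way that is itself translation invariant on the continuous scale even though they use a discrete lattice, so this hypothesis is genuinely available. Applying Minkowski's integral inequality for the Banach space $U$ to the representation above yields
\begin{equation*}
    \lV m(D) f\rV_U\leq \int_{\R^d}|K(y)|\,\lV \tau_y f\rV_U\,dy=\lV K\rV_{L^1}\,\lV f\rV_U,
\end{equation*}
which is exactly the stated bound.

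The only subtle point—and the main thing to verify once rather than for each space separately—is that $U$ is Banach-valued Bochner integrable with respect to $y$, so that Minkowski's inequality in integral form applies. Since $f$ is Schwartz and $K\in L^1$, the map $y\mapsto K(y)\tau_y f$ is strongly measurable and absolutely Bochner integrable in any of the $U$ spaces appearing in the paper (these are built from $L^p$-type norms in which translation is strongly continuous on Schwartz functions). Thus the proof reduces to these two ingredients: the convolution identity and Minkowski in the Banach norm of $U$. No obstacle of substance is expected; the lemma is essentially the abstract statement that translation-invariant norms are convolution algebras with $L^1$.
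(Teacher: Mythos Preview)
Your argument is correct and is the standard one: write $m(D)f=K\ast f$ with $K=\FF^{-1}m\in L^1$, express the convolution as an average of translates, and apply Minkowski together with translation invariance of $U$. The paper in fact states this lemma without proof, treating it as a routine tool; your write-up supplies exactly the expected justification. Your remark that the lattice-based cube norms $l^p_j U$ are only translation invariant up to a fixed dimensional constant (since a translated cube meets boundedly many lattice cubes) is the right caveat, and it is absorbed into the implicit constant $\lesssim$.
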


Finally, we state a Bernstein-type inequality and two estimates.

\begin{lemma}[Bernstein-type inequality, Lemma 3.2 \cite{HT21}]
	For any $j,k\in\Z$ with $j+k\geq 0$, $1\leq r<\infty$ and $1\leq q\leq p\leq \infty$, we have
	\begin{align*}   %  \label{Bern-ineq}
		&\lV P_k f\rV_{l^r_jL^p}\lesssim 2^{kd(\frac{1}{q}-\frac{1}{p})}\lV P_k f\rV_{l^r_jL^q}.
	\end{align*}
\end{lemma}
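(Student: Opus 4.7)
The plan is to reduce the inequality to a Schur-type summation over cubes, combined with the classical Bernstein bound, in which the hypothesis $j+k\geq 0$ controls the decoupling between the cube scale $2^j$ and the frequency scale $2^{-k}$.

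First, using a fattened Littlewood--Paley multiplier $\tilde{P}_k$ with Schwartz kernel $K_k(x)=2^{kd}K(2^k x)$, write $P_k f = K_k * P_k f$, split $P_k f = \sum_{m}\chi_{Q_m}P_k f$ via the partition of unity, and multiply by $\chi_{Q_n}$ to obtain
$$
\chi_{Q_n} P_k f(x) = \sum_{m} T_{n,m}(\chi_{Q_m}P_k f)(x), \qquad T_{n,m}v(x):= \chi_{Q_n}(x) \int K_k(x-y)\chi_{Q_m}(y)v(y)\,dy.
$$
It then suffices to establish an operator bound $\lV T_{n,m}\rV_{L^q\to L^p}\lesssim 2^{kd(1/q-1/p)}\, c_{n-m}$ for some $\ell^1$ sequence $c$ on $\Z^d$, after which the $\ell^r_n$-Young inequality applied with $a_m=\lV \chi_{Q_m}P_k f\rV_{L^q}$ and $b_n=\lV \chi_{Q_n}P_k f\rV_{L^p}$ yields the lemma.

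For nearby cubes ($|n-m|\lesssim 1$), apply Young's convolution inequality with $1+1/p=1/r+1/q$, observing that the scaling $K_k(x)=2^{kd}K(2^k x)$ gives $\lV K_k\rV_{L^r}\lesssim 2^{kd(1-1/r)}=2^{kd(1/q-1/p)}$. For distant cubes ($|n-m|\gg 1$), use the pointwise Schwartz bound $|K_k(x-y)|\lesssim_N 2^{kd}(1+2^{j+k}|n-m|)^{-N}$ valid for $x\in \supp\chi_{Q_n}$, $y\in\supp\chi_{Q_m}$, combined with H\"older in $y$ (picking up $|Q_m|^{1/q'}=2^{jd/q'}$) and the $L^p$ size of $\chi_{Q_n}$ ($2^{jd/p}$). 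After collecting exponents this gives
$$
\lV T_{n,m}\rV_{L^q\to L^p}\lesssim 2^{kd(1/q-1/p)}\cdot 2^{(j+k)d(1+1/p-1/q)}(1+2^{j+k}|n-m|)^{-N}.
$$

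The decisive (and only slightly delicate) point is that the hypothesis $j+k\geq 0$ ensures $2^{j+k}\geq 1$, so the polynomial prefactor $2^{(j+k)d(1+1/p-1/q)}$ is absorbed by the rapid decay in $|n-m|$, producing an $\ell^1$ sequence once $N$ is chosen large. All other ingredients---Young's inequality in $\R^d$ for the near-diagonal part, and $\ell^r$-Young on $\Z^d$ for the cube summation---are standard. Without the constraint $j+k\geq 0$ the polynomial prefactor would not be controlled, and this is precisely why the lemma requires $j+k\geq 0$.
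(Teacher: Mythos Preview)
Your argument is correct. The paper does not supply its own proof of this lemma but simply cites Lemma~3.2 of \cite{HT21}; your kernel-splitting plus Schur/discrete Young argument is the standard way to prove such a statement and is presumably what appears in \cite{HT21} as well. The key mechanism you identify---that $j+k\geq 0$ makes the convolution kernel $K_k$ effectively localized at the cube scale $2^j$, so the off-diagonal tails $(1+2^{j+k}|n-m|)^{-N}$ absorb the prefactor $2^{(j+k)d(1+1/p-1/q)}$ once $N>2d$---is exactly right.
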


\begin{prop}[Algebra property]
    For any $f,\ g\in Y^{lo}_0$ we have 
    \begin{equation}\label{algebra-Ylo}
        \| fg\|_{Y^{lo}_0}\lesssim \| f\|_{Y^{lo}_0}\| g\|_{Y^{lo}_0}.
    \end{equation}
\end{prop}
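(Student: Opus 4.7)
\bigskip

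\noindent\textbf{Proof plan.} The main ingredient is the embedding $Y_0^{lo} \hookrightarrow L^\infty$, followed by a standard paraproduct analysis. First I would establish the embedding. The high-frequency part $P_{\geq 0} h$ is in $L^\infty$ by definition. For each low-frequency piece with $j < 0$, the trivial choice $h_{j,|j|} = P_j h$ in the atomic decomposition gives $\lV P_j h\rV_{L^2} \leq \lV P_j h\rV_{Y_{0j}}$, so by Bernstein
\[
\lV P_j h\rV_{L^\infty} \lesssim 2^{jd/2} \lV P_j h\rV_{Y_{0j}}.
\]
Cauchy-Schwarz against the weight $2^{(d/2 - \delta)j}$ and summation over $j<0$ is absolutely convergent because $\delta > 0$, yielding $\lV h\rV_{L^\infty} \lesssim \lV h\rV_{Y_0^{lo}}$.

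Next I would introduce the Littlewood-Paley paraproduct decomposition $fg = T_f g + T_g f + \Pi(f,g)$ and bound each piece at each output frequency. For the high-frequency output $P_{\geq 0}(fg)$, the $L^\infty$ component is immediate from the algebra property of $L^\infty$ together with the embedding above. For the $Y_{00}$ component, the low-high paraproduct $T_f g$ contributes $\sum_{k \geq 0} P_k(P_{\ll k}f \cdot P_k g)$; using the atomic decomposition $P_k g = \sum_{l \geq k} h^g_{k,l}$ and the pointwise bound $\lV P_{\ll k} f\rV_{L^\infty} \lesssim \lV f\rV_{Y_0^{lo}}$, the product $P_{\ll k}f \cdot h^g_{k,l}$ inherits the $l^1_l L^2$ atomic structure from $h^g_{k,l}$, and one sums in $k$ with the frequency-envelope weights. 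The case of $T_g f$ is symmetric, and the high-high piece is treated by pairing both factors through their $L^\infty$ and $Y_{00}$ controls.

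The main work is in bounding the low-frequency output $P_n(fg)$ for $n < 0$. Here I split the interactions into: (a) low-low-low interactions $P_n(P_m f \cdot P_{m'} g)$ with $m, m' \lesssim n$, where the atomic decompositions of both inputs multiply nicely, the output scale being the larger of the two input scales and the $l^1_l L^2$ norm being preserved using $\lV h^g_{m',l'}\rV_{L^\infty} \lesssim 2^{m' d/2} \lV h^g_{m',l'}\rV_{l^1_{l'} L^2}$; and (b) high-high-low interactions $P_n(P_k f \cdot P_{\sim k} g)$ with $k \gtrsim n$, where one factor is placed in $L^\infty$ via the embedding of Step~1, and the other is expanded atomically. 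In all cases the atomic structure at scale $2^{|n|}$ for the output is produced either from the input with the largest spatial scale $\geq |n|$, or (when both inputs have smaller spatial scale) by grouping their decompositions into the coarser cubes of scale $2^{|n|}$, an operation which preserves $l^1_{l} L^2$ norms up to the obvious inclusion constants.

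The chief obstacle is the high-high interaction producing a very low output frequency $n \ll 0$ from inputs at frequency $k \geq 0$, since the input spatial atoms live at scales $\sim 1$, far smaller than the required output scale $2^{|n|}$. This is overcome by observing that the output product $P_k f \cdot P_{\sim k} g$ is then bounded pointwise using the $L^\infty$ embedding, after which the $l^1_{|n|} L^2$ content at scale $2^{|n|}$ is recovered trivially since the output is $P_n$-localized and the relevant measure of the atomic norm $\lV \cdot \rV_{Y_{0n}}$ is produced by choosing $l = |n|$ and using Bernstein to convert $L^2$ mass into $l^1_{|n|} L^2$. Summing the resulting bilinear bounds against the square-summable weights $2^{(d/2-\delta)n}$, the $\delta$ room in the weight absorbs all the frequency mismatches and closes \eqref{algebra-Ylo}.
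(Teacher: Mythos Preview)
Your overall plan is correct and follows the same route as the paper: first the embedding $Y_0^{lo}\subset L^\infty$, then a paraproduct decomposition, handling the low--high pieces by multiplying the atomic decomposition of the high-frequency factor by the $L^\infty$ bound on the low-frequency factor. The high-frequency output is also treated essentially as in the paper.

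The gap is in your high--high $\to$ low-output step. You write that after putting the product in $L^\infty$ pointwise, the $l^1_{|n|}L^2$ structure is ``recovered trivially \dots\ using Bernstein to convert $L^2$ mass into $l^1_{|n|}L^2$.'' But Bernstein does not do this: for a function localized at frequency $2^n$ there is no bound of the form $\lV P_n h\rV_{l^1_{|n|}L^2}\lesssim C(n)\lV P_n h\rV_{L^2}$; a sum of $N$ disjoint bumps at scale $2^{|n|}$ has $L^2$ norm $\sim\sqrt N$ but $l^1_{|n|}L^2$ norm $\sim N$. So an $L^\infty\times L^2\to L^2$ product followed by ``Bernstein'' cannot produce the needed atomic control.

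The paper's fix for this case is to use $L^2\times L^2\to L^1$ instead. The point is that $l^1_{|n|}L^1=L^1$ (the $l^1$ cube sum of $L^1$ norms is just the global $L^1$ norm), and then Bernstein at the output frequency, applied cube by cube, gives
\[
\lV P_n(P_l f\,P_l g)\rV_{Y_{0n}}\leq \lV P_n(P_l f\,P_l g)\rV_{l^1_{|n|}L^2}\lesssim 2^{dn/2}\lV P_l f\,P_l g\rV_{L^1}\leq 2^{dn/2}\lV P_l f\rV_{L^2}\lV P_l g\rV_{L^2}.
\]
Together with $\lV P_l f\rV_{L^2}\leq\lV P_l f\rV_{Y_{0l}}$, this closes with the weight $2^{(d/2-\de)n}$ since the extra factor $2^{dn/2}$ from Bernstein gives $2^{(d-\de)n}$, summable over $n<0$. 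An alternative that is closer to your intended argument is to keep one factor in $L^\infty$ and use the \emph{atomic} decomposition of the other factor (not just its $L^2$ norm), so that the $l^1_l L^2$ structure is inherited directly; but either way, the step as you wrote it needs to be replaced by one of these two mechanisms.
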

\begin{proof}
We first note that by Bernstein's inequality we have 
$Y^{lo}_0 \subset L^\infty$. Then for the high-low and low-high interactions we 
can estimate
    \begin{align*}
        \|P_j (P_j f P_{<j} g)\|_{Y_{0j}}+\|P_j (P_{<j} f P_{j} g)\|_{Y_{0j}}\lesssim \| P_j f\|_{Y_{0j}} \|P_{<j}g\|_{L^\infty}+\| P_{<j} f\|_{L^\infty} \|P_{j}g\|_{Y_{0j}}.
    \end{align*}
    For the high-high interactions, we have
    \begin{align*}
        &2^{(\frac{d}{2}-\de)j^-}\|P_j (\sum_{0>l>j}P_l f P_l g+P_{\geq 0}f P_{\geq 0}g)\|_{Y_{0j}}\\
        \lesssim&  2^{(d-\de)j^-}(\sum_{0>l>j}\|P_j (P_l f P_l g)\|_{l^1_{|j|}L^1}+\|P_{\geq 0}f P_{\geq 0}g\|_{l^1_{|j|}L^1})\\
        \lesssim & \sum_{0>l>j} 2^{(d-\de)(j^--l)} 2^{(d-\de)l}\|P_l f\|_{L^2} \|P_l g\|_{L^2}+2^{(d-\de)j^-}\|P_{\geq 0}f\|_{L^2}\|P_{\geq 0}g\|_{L^2}\\
        \lesssim & \sum_{0>l>j} 2^{(d-\de)(j^--l)} 2^{(d-\de)l}\|P_l f\|_{Y_{0j}} \|P_l g\|_{Y_{0j}}+2^{(d-\de)j^-}\|P_{\geq 0}f\|_{Y_{00}}\|P_{\geq 0}g\|_{Y_{00}}.
    \end{align*}
    These two bounds imply that
    \begin{equation*}
        \|P_{<0}(fg)\|_{Y^{lo}_0}\lesssim \|f\|_{Y^{lo}_0}\|g\|_{Y^{lo}_0}.
    \end{equation*}
    
    For the high-frequency part $P_{\geq 0}(fg)$, we bound its $L^\infty$ norm by
    \begin{equation*}
        \|P_{\geq 0}(fg)\|_{L^\infty} \lesssim \|f\|_{L^\infty}\|g\|_{L^\infty}\lesssim \|f\|_{Y^{lo}_0}\|g\|_{Y^{lo}_0}.
    \end{equation*}
    To bound its $Y_{00}$ norm, we further decompose it as 
    \begin{equation*}
        P_{\geq 0}(fg)=P_{\geq 0}(P_{\geq 0}f \cdot g)+P_{\geq 0}(P_{<0}f\cdot P_{\geq 0}g)+P_{\geq 0}(P_{<0}f\cdot P_{[-3,-1]}g).
    \end{equation*}
    The first term is bounded by
    \begin{equation*}
        \|P_{\geq 0}(P_{\geq 0}f \cdot g)\|_{Y_{00}} \lesssim \|P_{\geq 0}f\|_{Y_{00}} \|g\|_{L^\infty}\lesssim \|f\|_{Y^{lo}_0}\|g\|_{Y^{lo}_0}.
    \end{equation*}
    The second term is bounded similarly. We bound the last term by
    \begin{equation*}
        \|P_{\geq 0}(P_{< 0}f \cdot P_{[-3,-1]} g)\|_{Y_{00}} \lesssim \|P_{< 0}f \|_{L^\infty}\|P_{[-3,-1]}g\|_{Y_{00}} \lesssim  \|f\|_{Y^{lo}_0}\|g\|_{Y^{lo}_0}.
    \end{equation*}
    This completes the bound for high frequency part, and thus the proof of the proposition.
\end{proof}

\begin{lemma}
    For any Schwartz function $f$, $j\in \N$ and $1\leq r\leq \infty$, we have
    \begin{gather}\label{lin-heat}
    \| e^{t\De}f\|_{l^r_jL^\infty_tL^2}\lesssim \| f\|_{l^r_j L^2},\\    \label{NLin-heat}
    \|\int_0^t e^{(t-s)\De} S_j f\ ds\|_{l^r_j L^\infty L^2}\lesssim 2^{-2j^+}\| S_j f\|_{l^r_jL^\infty L^2}.
    \end{gather}
\end{lemma}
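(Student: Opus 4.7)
The plan is to deduce both estimates from the almost-diagonal action of the heat semigroup on the cube partition $\{\chi_Q\}_{Q\in\QQ_j}$. The key observation is that on the time interval $[0,1]$ and for $j\geq 0$, the Gaussian scale $\sqrt{t}\leq 1\leq 2^j$ is at most the cube side length, so convolution with the heat kernel $G_t(x) = (4\pi t)^{-d/2}e^{-|x|^2/4t}$ essentially preserves $2^j$-scale spatial localization.

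To prove \eqref{lin-heat}, I would first decompose $f = \sum_{Q\in\QQ_j}\chi_Q f$ and bound, for each target cube $Q'\in\QQ_j$,
\[
\|\chi_{Q'}\, e^{t\Delta} f\|_{L^2} \leq \sum_{Q\in\QQ_j}\|\chi_{Q'}\bigl(G_t*(\chi_Q f)\bigr)\|_{L^2}.
\]
Setting $N = 2^{-j}\,\mathrm{dist}(Q,Q')$, the Gaussian tail bound
\[
\int_{|z|\geq c N 2^j} G_t(z)\,dz \lesssim_M (1+N)^{-M}, \qquad t\in[0,1],\ j\geq 0,
\]
together with Schur's test at the kernel level yields
\[
\|\chi_{Q'}\,G_t*(\chi_Q f)\|_{L^2} \lesssim_M (1+N)^{-M}\,\|\chi_Q f\|_{L^2}
\]
uniformly in $t$. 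A second application of Schur's test on the cube lattice, which only requires $\sum_{Q'}(1+N)^{-M}<\infty$ for $M$ large, then gives the $l^r_j L^2$ bound for all $1\leq r\leq \infty$, and the supremum over $t$ can be pulled inside at no cost.

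For \eqref{NLin-heat}, I would split into $j=0$ and $j\geq 1$. When $j\geq 1$, the frequency localization of $S_j f$ to $|\xi|\sim 2^j$ converts the heat symbol $e^{-(t-s)|\xi|^2}$ into the operator-norm decay $e^{-c(t-s)2^{2j}}$ on $L^2$; inserting this temporal factor into the almost-diagonal spatial estimate gives
\[
\|\chi_{Q'}\,e^{(t-s)\Delta}S_j f(s)\|_{L^2} \lesssim_M (1+N)^{-M}\,e^{-c(t-s)2^{2j}}\,\|\chi_Q S_j f(s)\|_{L^2}.
\]
Bounding the source by $\|\chi_Q S_j f\|_{L^\infty_s L^2}$, integrating over $s\in[0,t]$ using $\int_0^t e^{-c(t-s)2^{2j}}\,ds \lesssim 2^{-2j}$, and summing over cubes via Schur's test produces the factor $2^{-2j}=2^{-2j^+}$. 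For $j=0$, the trivial contraction $\|e^{(t-s)\Delta}\|_{L^2\to L^2}\leq 1$ and the bounded time window $t\leq 1$ deliver the unit factor $2^{-2\cdot 0^+}=1$, with the same cube-Schur argument handling the $l^r_j$ summation.

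The only mildly delicate bookkeeping is ensuring the Gaussian off-diagonal estimate is uniform across $t\in[0,1]$: for $\sqrt{t}\ll 2^j$ the kernel is sharply peaked near the diagonal, while for $\sqrt{t}\sim 1\leq 2^j$ non-adjacent cubes remain separated by $\gtrsim 2^j$, so the Gaussian still decays super-polynomially in $N$. I do not anticipate a substantive obstacle beyond the routine Schur's-test computations.
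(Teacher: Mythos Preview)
Your approach is correct and essentially the same as the paper's: both rest on the almost-diagonal action of the heat propagator on the $\QQ_j$ partition for $t\in[0,1]$, $j\geq 0$, together with the frequency-localized temporal decay for \eqref{NLin-heat}; the paper simply packages this by decomposing the \emph{kernel} (of $e^{t\Delta}$, respectively $S_j e^{(t-s)\Delta}$) over $\QQ_j$ and invoking the Young-type bound $l^1_j * l^r_j \to l^r_j$, rather than decomposing the source and using Schur's test. The one step to tighten is your displayed bound for $j\geq 1$: since $\chi_Q S_j f$ is no longer frequency-localized, the product $(1+N)^{-M}e^{-c(t-s)2^{2j}}$ does not follow from separately citing the Gaussian tail and the Plancherel operator norm, but is cleanly obtained by working with the kernel of the composite operator $e^{(t-s)\Delta}\tilde S_j$, whose pointwise estimate $|K_j(\tau,x)|\lesssim_M e^{-c\tau 2^{2j}}2^{dj}(1+2^j|x|)^{-M}$ encodes both decays at once --- this is precisely what the paper does.
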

\begin{proof}
We use the heat kernel 
\[
K(t,x) = (4\pi t)^{-\frac{d}2} e^{-\frac{x^2}{4t}} 
\]
which we decompose with respect to cubes $Q \in \QQ_j$. Then from the 
corresponding decomposition
\[
e^{t\Delta} f = \sum_{Q \in \QQ_j} (\chi_Q(x) K(t,x)) \ast_x f 
\]
we obtain 
\[
\|  e^{t\De}f\|_{l^r_jL^\infty_tL^2} \lesssim \|K\|_{l^1_j L^\infty_t L^1_x} 
\| f\|_{L^2}
\]
Since $t \in [0,1]$ and $r \geq 0$, we can use the exponential 
off-diagonal decay for $K$ on the unit scale to conclude that 
\[
\|K\|_{l^1_j L^\infty_t L^1_x} \lesssim 1,
\]
and thus \eqref{lin-heat} follows.

For the  second bound, we separate the low frequencies and use the kernel $K_0$
for $S_0e^{(t-s)\De}$ with a similar cube decomposition to estimate
\[
\|\int_0^t e^{(t-s)\De} S_0 f\ ds\|_{l^r_0 L^\infty L^2}
\lesssim \| K_0 \|_{l^1_j L^1_{t,x}} \| f\|_{l^r_0 L^\infty L^2}
\]
where the $K_0$ norm is easily estimates using the rapid kernel decay on the unit scale.

Similarly, for high frequencies $j > 0$ we use the kernel $K_j$
for $S_j e^{(t-s)\De}$ with a similar cube decomposition to estimate
\[
\|\int_0^t e^{(t-s)\De} S_j f\ ds\|_{l^r_0 L^\infty L^2}
\lesssim \| K_j \|_{l^1_j L^1_{t,x}} \| f\|_{l^r_0 L^\infty L^2}
\]
For fixed $t$ we use the exponential symbol decay to obtain
\[
\|  K_j \|_{l^1_j L^1_{x}} \lesssim  e^{-c 2^{2j} t},
\]
and now the time integration yields the desired $2^{-2j}$ decay.
This concludes the proof of \eqref{NLin-heat}.
\end{proof}

\bigskip

\section{The initial data}\label{s:elliptic}

Our evolution begins at time $t = 0$, where we need to make a good gauge choice for the initial submanifold $\Sigma_0$. This has two components,

(i) a good set of coordinates on $\Sigma_0$, namely the global harmonic coordinates, represented via the map $F : \R^d \rightarrow \R^{d+2}$.

(ii) a good orthonormal frame in $N \Sigma_0$, where we will use the Coulomb gauge.

Once this is done, we have the frame  in the tangent space and the frame $m$ in the normal bundle. In turn, as described in Section \ref{Sec-gauge}, these generate the metric $g$, the second fundamental form $\la$ with trace $\psi$ and the connection $A$, all at the initial time $t = 0$.

We will first carry out the construction of the 
global harmonic coordinates, and use them to prove 
bounds for the parametrization $F$ and for the metric $g_0 = I_d + h_0$. Then we introduce the Coulomb gauge, which in turn determines $\lambda_0$ and $A_0$. 

The final objective of this section will be  to describe the regularity and size of $(\lambda_0,g_0,A_0)$, and thus justify the smallness condition \eqref{small-data-MS}
for the Schr\"odinger-Parabolic system\eqref{mdf-Shr-sys-2}-\eqref{par-syst}. 
The main result of this section is stated below in  Proposition~\ref{Global-harmonic} for dimensions $d\geq 3$ and \propref{Global-harmonic-d2} for dimension $2$, respectively.

In order to state the following propositions, we define some notations. Let $F:\R^d_x \rightarrow(\R^{d+2},g_{\R^{d+2}})$ be an immersion with induced metric $g(x)$. For any change of coordinate $y=x+\phi(x)$, we denote 
\begin{equation*}
    \tilde{F}(y)=F(x(y)),
\end{equation*}
and its induced metric by $\tilde g_{\al\be}(y)=\<\d_{y_\al}\tF,\d_{y_\be}\tF\>$. We also denote its Christoffel symbol as $\tilde \Ga$ and $\tilde h(y)=\tilde g(y)-I_d$. The main results are summarized as follows:

\begin{prop}[Harmonic coordinates and initial data in dimensions $d\geq 3$]\label{Global-harmonic}
Let $d\geq 3$, $s>\frac{d}{2}$. Let
$
F:(\R^d_x,g)\rightarrow (\R^{d+2},g_{\R^{d+2}})
$
be an immersion with induced metric $g=I_d+h$. 
Assume that the metric $h$ and the mean curvature $\mathbf H$ satisfy the smallness conditions
\begin{equation}       \label{h0}
\| |D|^{\si_d} h\|_{H^{s+1-{\si_d}}}\leq \ep_0,\quad  \|\mathbf H\|_{H^s}\leq \ep_0.
\end{equation}
Then there exists a unique change of coordinates 
$y=x+\phi(x)$ with $\lim_{x\rightarrow\infty}\phi(x)=0$ and $\nab\phi$ uniformly small,
such that the new coordinates $\{y_1,\cdots,y_d\}$ are global harmonic coordinates.  Moreover, we have the bound 
\begin{align}        \label{bound-phi}
\| |D|^{\si_d} \nabla \phi\|_{H^{s+1-{\si_d}}} \lesssim \| |D|^{\si_d} h\|_{H^{s+1-{\si_d}}}
\end{align}
and, in the new coordinates $\{y_1,\cdots,y_d\}$,
for the metric and mean curvature we have
\begin{equation}   \label{bound-tildeg}
    \| |D_y|^{\si_d}\tilde h\|_{H^{s+1-\sigma_d}(dy)}+\| \mathbf H\|_{H^{s}(dy)}\lesssim \ep_0.
\end{equation}
In addition, under the  harmonic coordinate condition \eqref{hm-coord} for $g$, respectively the Coulomb gauge \eqref{Coulomb} for $A$, we have the following bounds for complex second fundamental form $\la$, metric $h = g - I_d$ and $A$:
\begin{equation}   \label{ini}
\|\la\|_{H^s}+\| h\|_{\bY_0^{s+2}} +\|  A \|_{H^{s+1}} \lesssim \epsilon_0.    
\end{equation}
\end{prop}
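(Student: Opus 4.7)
The strategy is to extend the approach from our earlier work \cite{HT21} to the borderline dimension $d=3$, proceeding in four stages. For Stage 1, the plan is to construct the diffeomorphism $\phi$ realizing the harmonic coordinates by writing out $\Delta_g(x_\alpha + \phi_\alpha) = 0$ as the quasilinear elliptic system
\[
g^{\mu\nu}\partial^2_{\mu\nu}\phi_\alpha - g^{\mu\nu}\Gamma^\sigma_{\mu\nu}\partial_\sigma\phi_\alpha = g^{\mu\nu}\Gamma^\alpha_{\mu\nu},
\]
viewed as a perturbation $\Delta\phi_\alpha = F_\alpha(h, \partial h, \nabla\phi)$ of the flat Laplacian, where the right hand side is linear in $\partial h$ at leading order and higher order in $(h,\nabla\phi)$. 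A standard contraction mapping in the ball $\{\||D|^{\sigma_d}\nabla\phi\|_{H^{s+1-\sigma_d}}\leq C\epsilon_0\}$ then produces existence, uniqueness, and the bound \eqref{bound-phi}. The Riesz transform identity $\partial\phi \sim |D|^{-2}\partial^2 h$ controls $\nabla\phi$ by $h$ at the same regularity level, while the $|D|^{\sigma_d}$ weight with $\sigma_d = d/2-\delta$ captures just enough low-frequency integrability in $d\geq 3$ to guarantee that $\phi$ itself decays at infinity.

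For Stage 2, since $\nabla\phi$ is uniformly small by Sobolev embedding, the map $y = x + \phi$ is a global bilipschitz diffeomorphism close to the identity, and a direct chain rule computation transfers \eqref{h0} and \eqref{bound-phi} into \eqref{bound-tildeg}. In Stage 3, once working in harmonic coordinates, the metric $h = \tilde g - I_d$ satisfies the semilinear elliptic equation \eqref{h-ell}, schematically of the form $\Delta h = Q(h,\partial h) + R(\lambda,\bar\lambda)$ with both terms quadratic. Inverting $\Delta$ and plugging in the already established $H^{s+2-\sigma_d}$ bound on $|D|^{\sigma_d} h$ upgrades the Sobolev control to the stronger $\bY_0^{s+2}$ norm; this requires tracking the spatial localization of $\Delta^{-1}$ applied to a quadratic nonlinearity via bilinear dyadic decompositions, combined with the atomic $l^1_l$ summation on spatial scales $2^l \geq 2^{|j|}$ that defines the $Y_0$ component.

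For Stage 4, given any smooth orthonormal frame $\{\nu_1,\nu_2\}$ of $N\Sigma_0$, the gauge transformation law \eqref{gauge-A} reduces the Coulomb condition $\nabla^\alpha A_\alpha = 0$ to solving $\Delta_g\theta = -\nabla^\alpha A_\alpha^\mathrm{in}$, which is uniquely solvable with decay at infinity in $d \geq 3$. With Coulomb in place, Lemma~\ref{Ell-A} provides the elliptic div-curl system for $A$ with source quadratic in $\lambda$, yielding $\|A\|_{H^{s+1}} \lesssim \|\lambda\|_{H^s}^2$. The components of $\lambda$ are then controlled through the Codazzi relation \eqref{la-commu} (a curl-type identity) coupled with the trace identity $\psi = g^{\alpha\beta}\lambda_{\alpha\beta}$: together they form an elliptic div-curl system for $\lambda$ with source the complex mean curvature $\psi$ and quadratic corrections, delivering $\|\lambda\|_{H^s} \lesssim \|\mathbf H\|_{H^s}$ modulo higher-order terms absorbed by smallness.

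The main obstacle is Stage 3, specifically producing the $Y_0^{s+2}$ component of $\bY_0^{s+2}$ for the metric $h$. The $Y_0$ norm demands an atomic decomposition with $l^1$ summation on spatial dyadic scales at or above the inverse frequency, which is borderline to extract from $\Delta^{-1}$ applied to a quadratic source precisely in three dimensions. This is the same mechanism that borderline fails when $d=2$, which is why the analogous \propref{Global-harmonic-d2} demands the extra low-frequency assumption \eqref{extra-2d}; here in $d \geq 3$ the $|D|^{\sigma_d}$ weight in \eqref{h0} provides just enough margin to close the argument without additional hypotheses.
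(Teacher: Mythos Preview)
Your Stages 1 and 2 match the paper. Your Stage 4, however, departs from the paper's argument and has a gap. You propose to recover $\|\lambda\|_{H^s}$ from $\psi$ by solving the div-curl system derived from the Codazzi relations \eqref{la-commu}; this is the approach of \cite{HT21}, which the present paper deliberately replaces (see the remark following Theorem~\ref{LWP-MSS-thm}). Instead, the paper first proves $\|\partial^2 F\|_{H^s} \lesssim \epsilon_0$ in harmonic coordinates (Lemma~\ref{d2F-lem}) by writing $\Delta F = \mathbf H + h^{\alpha\beta}\partial^2_{\alpha\beta}F - g^{\alpha\beta}\Gamma^\gamma_{\alpha\beta}\partial_\gamma F$ and absorbing the perturbative terms. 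This directly controls the full second fundamental form $\mathbf h$ in $H^s$, and also provides the quantitative regularity needed to build the normal frame: one projects a constant transversal vector onto $N\Sigma_0$ and normalizes, obtaining $\|\partial\tilde\nu_j\|_{H^s} \lesssim \|\partial^2 F\|_{H^s}$, then rotates to Coulomb. Projecting $\mathbf h$ onto this frame yields $\|\lambda\|_{H^s} \lesssim \epsilon_0$ with no div-curl system for $\lambda$ needed.

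The concrete gap in your scheme is that you begin Stage 4 from ``any smooth orthonormal frame'' without quantitative regularity; to bound $A^{\mathrm{in}}$ and to solve $\Delta_g\theta = -\nabla^\alpha A^{\mathrm{in}}_\alpha$ in $H^s$-type spaces you need $\|\partial\nu_j\|_{H^s}$, which comes precisely from the $\partial^2 F$ bound you omit. In addition, your ordering places Stage 3 (the $\bY_0^{s+2}$ bound for $h$ via \eqref{h-ell}) before any control on $\lambda$, yet the source in \eqref{h-ell} contains the quadratic term $\Re(\lambda_{\gamma\sigma}\bar\psi - \lambda_{\alpha\gamma}\bar\lambda^\alpha_\sigma)$; the paper establishes this bound only after $\|\lambda\|_{H^s} \lesssim \epsilon_0$ is in hand. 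Even granting a good frame, your $\lambda$ and $A$ systems are coupled (the Codazzi system involves $\nabla^A$, while Lemma~\ref{Ell-A} has source quadratic in $\lambda$), so they must be solved jointly by a contraction you have not set up.
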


\bigskip

Compared to the above higher dimensions cases, in dimensions $2$ we would work in a smaller function space.

\begin{prop}[Harmonic coordinates and initial data in dimension 2]\label{Global-harmonic-d2}
Let $d= 2$, $s>\frac{d}{2}$, and $\si_d$ be as in \eqref{si_d}. Let
$
F:(\R^d_x,g)\rightarrow (\R^{d+2},g_{\R^{d+2}})
$
be an immersion with induced metric $g=I_d+h$. 
Assume that the metric $h$ and mean curvature $\mathbf H$ 
satisfy the smallness conditions
\begin{equation}      \label{h0-d2}
\| |D|^{\si_d} h\|_{H^{s+1-{\si_d}}}\leq \ep_0,\quad \|h\|_{Y^{lo}_0}\leq \ep_0, \quad  \|\mathbf H\|_{H^s}\leq \ep_0.  
\end{equation}
Then there exists a change of coordinates 
$y=x+\phi(x)$, with  $\nab\phi$ uniformly small
and with $\lim_{x\rightarrow\infty}\nabla \phi(x)=0$, unique modulo constants,
such that the new coordinates $\{y_1,\cdots,y_d\}$ are global harmonic coordinates.  Moreover, we have the bound
\begin{align}        \label{bound-phi-d2}
    \||D|^{\si_d}\nabla \phi\|_{H^{s+1-\si_d}}\lesssim \||D|^{\si_d} h\|_{H^{s+1-\si_d}},
\end{align}
and, in the new coordinates $\{y_1,\cdots,y_d\}$,
for the metric and mean curvature we have
\begin{equation}   \label{bound-tildeg-d2}
    \| |D_y|^{\si_d}\tilde h\|_{H^{s+1-\si_d}(dy)}+\|\tilde h\|_{Y^{lo}_0}+\| \mathbf H\|_{H^{s}(dy)}\lesssim \ep_0\,.
\end{equation}
In addition, under the  harmonic coordinate condition \eqref{hm-coord} for $g$, respectively the Coulomb gauge \eqref{Coulomb} for $A$, we have the following bounds for complex second fundamental form $\la$, metric $h = g - I_d$ and $A$:
\begin{equation}   \label{ini-d2}
\|\la\|_{H^s}+\| h\|_{\bY_0^{s+2}} +\| A \|_{H^{s+1}} \lesssim \epsilon_0.    
\end{equation}
\end{prop}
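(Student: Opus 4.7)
The plan is to first produce the harmonic coordinate change $y = x + \phi(x)$ by solving an elliptic equation for $\phi$, then push everything forward to the $y$-coordinates, fix the Coulomb gauge on the normal bundle, and finally collect the bounds on $(\lambda_0, h_0, A_0)$. Imposing $\Delta_g y^\gamma = 0$ and using the identity $g^{\alpha\beta}\Gamma^\gamma_{\alpha\beta} = -|g|^{-1/2}\partial_\alpha(|g|^{1/2} g^{\alpha\gamma})$ rewrites the harmonic coordinate condition as a semilinear elliptic equation of the schematic form
\[
\Delta \phi^\gamma = Q(h, \nabla \phi),
\]
with $Q$ at least bilinear once $g = I_d + h$ is substituted. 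I would solve this by a contraction argument at the level of the norm $\||D|^{\sigma_d}\nabla\phi\|_{H^{s+1-\sigma_d}}$, with smallness provided by \eqref{h0-d2}; the choice $\sigma_d > 0$ is what allows $\Delta^{-1}$ to close in this range at low frequencies in two dimensions, and uniqueness holds only modulo an additive constant in $\phi$ as stated. Running the same contraction in parallel for the $Y^{lo}_0$ norm delivers $\|\nabla\phi\|_{Y^{lo}_0} \lesssim \|h\|_{Y^{lo}_0}$, using \eqref{algebra-Ylo} to place $Q(h,\nabla\phi)$ in $Y^{lo}_0$ and the cube-scale summability defining $Y^{lo}_0$ to invert $\Delta$ at low frequencies.

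With $\phi$ in hand, the metric and mean curvature are pushed forward via $\tilde g(y) = (I + D\phi)^{-T} g (I + D\phi)^{-1}$ and the chain rule; standard product estimates at the $\||D|^{\sigma_d}\cdot\|_{H^{s+1-\sigma_d}}$ level combined with \eqref{algebra-Ylo} then produce \eqref{bound-tildeg-d2}. Working in the $y$-coordinates from here on, I would fix the normal frame by using the gauge freedom \eqref{gauge-A}: solving $\Delta\theta = \nabla^\alpha A_\alpha^{(0)}$ relative to an arbitrary reference frame rotates it into the Coulomb gauge \eqref{Coulomb}. The resulting div-curl system \eqref{Ellp-A-pre} reads $\Delta A_\alpha = \nabla^\beta \Im(\lambda^\gamma_\alpha \bar\lambda_{\beta\gamma})$ modulo curvature corrections, and yields $\|A\|_{H^{s+1}} \lesssim \epsilon_0^2$ by elliptic regularity. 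For $h$ in harmonic coordinates, Lemma~\ref{Ell-g} provides an equation $\Delta h = Q_2(\nabla h) + R(\lambda)$ with $Q_2, R$ bilinear; combining the elliptic bound at the level of $\||D|^{\sigma_d} h\|_{H^{s+2-\sigma_d}}$ with the $Y^{lo}_0$ low-frequency input then yields $\|h\|_{\bY_0^{s+2}} \lesssim \epsilon_0$. Finally, $\lambda$ is read off from the structure equations \eqref{strsys-cpf} once the frame is chosen, and the Codazzi symmetry \eqref{la-commu} coupled with the trace relation $\psi = \mathbf{H}$, viewed as an elliptic div-curl system for $\lambda$, produces $\|\lambda\|_{H^s} \lesssim \|\mathbf H\|_{H^s}$ modulo quadratic corrections absorbed into the fixed point iteration.

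The main technical obstacle is the low-frequency bookkeeping in dimension two. On $\R^2$ the Laplacian does not gain two derivatives at the $L^2$ level because its fundamental solution grows only logarithmically, so the ordinary Sobolev closure of the elliptic estimates fails at low frequencies. This is exactly the origin of the extra hypothesis $\|h\|_{Y^{lo}_0} \leq \epsilon_0$ in \eqref{h0-d2}, and the algebra property \eqref{algebra-Ylo} is the single tool that makes the $Y^{lo}_0$ bootstrap close, propagating low-frequency control through each of the bilinear right-hand sides for $\phi$, $A$, $h$, and $\lambda$. Verifying this closure is the only substantive departure from the $d \geq 3$ argument of Proposition~\ref{Global-harmonic}; the high-frequency estimates and the algebraic manipulations in the harmonic and Coulomb gauges are essentially unchanged from that setting.
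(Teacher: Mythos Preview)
Your outline tracks the paper's structure closely for the construction of $\phi$, the Sobolev bounds on $\tilde h$ and $\mathbf H$, the Coulomb gauge, and the elliptic treatment of $A$ and of $h$ at high frequency. There is, however, one genuine gap.

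You propose to obtain $\|\lambda\|_{H^s}\lesssim\|\mathbf H\|_{H^s}$ by viewing the Codazzi relations \eqref{la-commu} together with the trace $\psi=\operatorname{tr}_g\lambda$ as a div--curl system for $\lambda$. In two dimensions this does not close at the $L^2$ level: the relevant elliptic inversion carries the same low-frequency obstruction that forces the $Y^{lo}_0$ hypothesis on $h$, and in fact this is precisely the reason the paper abandons $\psi$ as the primary variable in favor of $\lambda$ (see the remark following Theorem~\ref{LWP-MSS-thm}). The paper circumvents this by a different route: it first proves $\|\partial^2 F\|_{H^s}\lesssim\epsilon_0$ directly from $\Delta_g F=\mathbf H$ (Lemma~\ref{d2F-lem}), which immediately yields $\|\mathbf h\|_{H^s}\lesssim\epsilon_0$ for the full second fundamental form, and then obtains $\|\lambda\|_{H^s}$ simply by projecting $\mathbf h$ onto the Coulomb frame. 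No div--curl system for $\lambda$ is solved.

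A second point worth flagging: transferring the $Y^{lo}_0$ bound for $\tilde h$ from $x$- to $y$-coordinates is not a matter of product estimates, because $Y^{lo}_0$ is defined via a Littlewood--Paley decomposition and is therefore not manifestly coordinate-invariant. The paper handles this with a separate representation lemma characterizing $Y^{lo}_0$ through decompositions $f=\sum_{j\le 0}f_j$ with derivative control, a description that survives the diffeomorphism $y=x+\phi(x)$ because $\phi$ is small in $C^2$. Your sketch does not address this step.
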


We remark that the bounds \eqref{ini} respectively 
\eqref{ini-d2} are the only way the harmonic/Coulomb gauge condition  at $t = 0$ enters this paper. Later, in the study of the parabolic system 
\eqref{par-syst}, we simply assume that the initial data $(\lambda_0,h_0,A_0)$
satisfies the above smallness condition.

Of the three components of the initial data, $\la_0$ 
may be thought of as the fundamental one.
 Indeed, the initial data $(g_0,A_0)$ for the heat flow \eqref{par-syst} is determined by $\lambda_0$ via the harmonic coordinate condition \eqref{hm-coord} for $g$, respectively the Coulomb gauge \eqref{Coulomb} for $A$, which yield the elliptic equations in Lemmas~\ref{Ell-g} and \ref{Ell-A}. This was the point of view adopted in our previous paper \cite{HT21} in high dimension, and it largely applies here as well. 
  The only exception to this is in two space dimensions, where we a-priori make an additional low frequency assumption on the metric $g$, namely the $Y_0^{lo}$ bound, which cannot be recovered from the $\lambda_0$ bounds.

\subsection{Global Harmonic coordinates} 
    Here we make a change of coordinates to gain the harmonic coordinates, and then prove that in the new coordinates, the metric $h$ and mean curvature $\mathbf H$ are also small.
  \medskip
  
    \emph{Step 1: Solve the $\phi$ equation and prove the bounds \eqref{bound-phi} and \eqref{bound-phi-d2}.} To obtain harmonic coordinates, we start with the bound for metric 
    \begin{equation}   \label{bound-metric-re}
        \||D|^{\si_d}h\|_{H^{s+1-\si_d}}\lesssim \ep_0.
    \end{equation}
    We make a change of coordinates $x+\phi(x)=y$ with $\nab\phi$ small such that the new coordinates are harmonic. Since the operator $\De_g$ does not depend on the coordinates, by \eqref{hm-coord} we have
    \begin{equation*}
        \De_g (x+\phi(x))=0,
    \end{equation*}
    which implies
    \begin{equation}  \label{varphi-eq}
        \De_g \phi_\ga= g^{\al\be}\Ga_{\al\be}^\ga. 
    \end{equation}
    which we write schematically in the form
    \begin{equation*}
        \De \phi=h\nab^2 \phi+g\nab h \nab \phi+g\nab h.
    \end{equation*}
    Since the leading order term in the right hand side is $\nab h$, by the assumption on the metric $|D|^{\si_d}h \in H^{s+1-\si_d}$ we will work in the space 
    \begin{equation*}
        \{ \phi: \||D|^{1+\si_d}\phi\|_{H^{s+1-\si_d}}<\infty   \}
    \end{equation*}
    Then by Sobolev embeddings and the smallness of $h$ we can uniquely solve 
    the equation \eqref{varphi-eq} in this space using 
    the  contraction principle, obtaining a solution $\phi$ which satisfies 
    the  bound
    \begin{equation}    \label{bound-phi-re}
        \||D|^{\si_d} \nabla \phi\|_{H^{s+1-\si_d}}\lesssim \||D|^{\si_d}h\|_{H^{s+1-\si_d}}\lesssim \ep_0.
    \end{equation}
which is exactly \eqref{bound-phi} and \eqref{bound-phi-d2} in Theorem~\ref{Global-harmonic}, respectively Theorem~\ref{Global-harmonic-d2}.

    \medskip  
    
    \emph{Step 2: Prove the bounds \eqref{bound-tildeg} and \eqref{bound-tildeg-d2} for $\tilde h$ and $\mathbf H$ in Sobolev spaces.}
    First we prove that the desired $\tilde h$ bound holds in the $x$-coordinates,  
    \begin{equation}    \label{bound-tildeg-key1}
    \||D_x|^{\si_d}\tih(y(x))\|_{H^{s+1-\si_d}(dx)}\lesssim \||D|^{\si_d} h\|_{H^{s+1-\si_d}(dx)}.
    \end{equation}
    By the above change of coordinate and \eqref{bound-phi} we have $\frac{\d x}{\d y}=I_d+\PP(x)$ where $\PP$ is an algebraic function of $\nabla \phi$. 
    Hence by algebra and Moser estimates we have
    \begin{equation}    \label{estimate-P}
        \||D|^{\si_d} \PP\|_{H^{s+1-\si_d}}\lesssim \||D|^{\si_d} \nabla \phi\|_{H^{s+1-\si_d}}\lesssim \ep_0.
    \end{equation}
    Then the desired bound \eqref{bound-tildeg-key1} follows from the relation 
    \[
    \tilde{g}_{\al\be}(y(x))=g_{\mu\nu}(x)(\de^\mu_\al+\PP^{\mu}_\al)(\de^\nu_\be+\PP^{\nu}_\be),
    \]
    again by using algebra bounds in the same space.

    In order to complete the proof of \eqref{bound-tildeg} and \eqref{bound-tildeg-d2}, we need to be able to transfer the Sobolev norms 
    from the $x$ to the $y$ coordinates. For this we will apply the following lemma:
    \begin{lemma}\label{l:Sobolev-equiv}
    	Let the change of coordinates $x+\phi(x)=y$ be as in Proposition \ref{Global-harmonic}. 
        Define the linear operator $T$ as $T(f)(y)=f(x(y))$	
        for any function $f\in L^2(dx)$. Then we have
    	\begin{align}    \label{Equ-f(y)&F(x)}
    	&\| T(f)(y)\|_{H^\sigma(dy)}\lesssim \|f(x)\|_{H^\sigma(dx)}, \qquad
    	\sigma \in [0,s+1],\\    \label{frac-T}
    	&\|T(f)(y)\|_{\dot H^{\al}(dy)}\lesssim \|f(x)\|_{\dot H^{\al}(dx)},\qquad \al\in [0,\frac{d}2).
    	\end{align}
    \end{lemma}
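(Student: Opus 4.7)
\bigskip

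\noindent\textbf{Proof plan for Lemma~\ref{l:Sobolev-equiv}.}
The strategy is to exploit the fact that, by \eqref{bound-phi-re} together with Sobolev embedding (since $s+1-\si_d > d/2-\si_d \geq 0$ and $\si_d > 0$), the Jacobian matrix $I_d+\nabla\phi$ is uniformly close to the identity, so the change of variables $y=x+\phi(x)$ is a global bilipschitz diffeomorphism of $\mathbb R^d$ with $|\det(I_d+\nabla\phi)|$ bounded above and below. This immediately gives the $L^2$ case of \eqref{Equ-f(y)&F(x)} by a direct change of variable. For $\sigma=k$ a positive integer, I would differentiate $T(f)(y)=f(x(y))$ by the chain rule; each $\partial_{y}$ produces a factor involving entries of $(I_d+\mathcal P)^{-1}=I_d-\mathcal P+\mathcal P^2-\cdots$, and iterating $k$ times one expresses $\nabla_y^k T(f)$ as a finite sum of terms of the form
\[
\bigl(\nabla_x^j f\bigr)\!\circ x(y) \cdot \prod_{i} \nabla_x^{m_i}\mathcal P\!\circ x(y),\qquad j\leq k,\ \sum m_i\leq k-j.
\]
Since $\||D|^{\si_d}\mathcal P\|_{H^{s+1-\si_d}}\lesssim \epsilon_0$ by \eqref{estimate-P}, together with Sobolev embedding this places $\mathcal P$ and its derivatives in enough $L^p$ spaces to apply the standard Moser product estimates, and then undo the change of variable in $y$ to return to integrals in $x$.

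For non-integer $\sigma\in[0,s+1]$, the bound \eqref{Equ-f(y)&F(x)} will follow by complex interpolation between the integer cases just established. Alternatively for $\sigma\in(0,1)$ one may use the Gagliardo seminorm
\[
\|T(f)\|_{\dot H^\sigma(dy)}^2 \sim \iint \frac{|f(x(y))-f(x(y'))|^2}{|y-y'|^{d+2\sigma}}\,dy\,dy',
\]
changing variables and noting that $|y-y'|\sim |x-x'|$ by the bilipschitz property; higher non-integer $\sigma$ are handled by combining chain rule with this base case.

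For the homogeneous estimate \eqref{frac-T} with $\alpha\in[0,d/2)$, the $L^2$ case ($\alpha=0$) is again the change of variables, and interpolation reduces the general statement to endpoint bounds just below $d/2$. The subtle point is that $\dot H^\alpha$ is not preserved by multiplication by generic bounded functions, so one must use that $\mathcal P\in L^\infty$ with a quantitative smallness. I would therefore argue via the Littlewood--Paley characterization of $\dot H^\alpha$: for frequencies $\lesssim 1$, $\dot H^\alpha$ with $\alpha<d/2$ embeds into a locally integrable space and the change of variable is harmless; for high frequencies one again applies chain rule and controls the resulting products using that $\||D|^{\si_d}\mathcal P\|_{H^{s+1-\si_d}}$ controls $\mathcal P$ in any $\dot H^\beta$ with $0\leq \beta\leq s+1$.

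The main obstacle is precisely the homogeneous bound \eqref{frac-T}, since the lack of a Leibniz-type identity for fractional powers of $|D|$ forces either a paradifferential decomposition or an integral kernel argument; both work, but must carefully exploit the restriction $\alpha<d/2$ to avoid a logarithmic loss at low frequencies coming from the low-frequency part of $\nabla\phi$.
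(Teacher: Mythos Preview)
Your treatment of \eqref{Equ-f(y)&F(x)} is essentially the same as the paper's (which in fact just cites the corresponding lemma in \cite{HT21}): integer orders by chain rule and Moser estimates, then interpolation.

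For the homogeneous bound \eqref{frac-T}, however, the paper takes a much simpler route than your Littlewood--Paley sketch. It first establishes the endpoints $\alpha=0$ and $\alpha=1$ directly: $L^2$ by the Jacobian change of variables, and $\dot H^1$ by writing $\partial_y T(f)=T\bigl((I+\PP)\partial_x f\bigr)$ and changing variables again. Interpolation then gives all of $\alpha\in[0,1]$, which already suffices when $d=2$. For $d\geq 3$ and $\alpha>1$, the paper reduces $\alpha$ to $\alpha-1$ inductively: one has
\[
\|T(f)\|_{\dot H^\alpha(dy)}=\|\partial_y T(f)\|_{\dot H^{\alpha-1}(dy)}
=\|T((I+\PP)\partial_x f)\|_{\dot H^{\alpha-1}(dy)}
\lesssim \|(I+\PP)\partial_x f\|_{\dot H^{\alpha-1}(dx)},
\]
the last inequality being the induction hypothesis, and then $\|(I+\PP)\partial_x f\|_{\dot H^{\alpha-1}}\lesssim\|\partial_x f\|_{\dot H^{\alpha-1}}$ follows from \eqref{estimate-P} since $\alpha-1<d/2$. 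This avoids any paradifferential decomposition entirely.

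Your proposed low/high frequency splitting is not obviously wrong, but it is vaguer than it should be: the assertion that ``for frequencies $\lesssim 1$ \ldots\ the change of variable is harmless'' glosses over the fact that composition with $x(y)$ does not preserve frequency localization, so you cannot simply treat low and high frequencies of $f$ separately after the change of variables. Making that approach rigorous would likely require a genuine paraproduct argument, which is more work than the paper's three-line induction. I would recommend adopting the inductive reduction instead.
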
 
    \begin{proof}
        The first bound is obtained from \eqref{estimate-P} and \eqref{bound-phi} using the same argument as in Lemma 8.5 in \cite{HT21},  It remains to prove the second bound \eqref{frac-T}. 
        
        By the smallness of $\phi$ \eqref{bound-phi} we have
        \begin{align*}
            \|T(f)(y)\|_{L^2(dy)}\lesssim &\| f(x) \sqrt{I+\d_x\phi} \|_{L^2(dx)}\\
            \lesssim & (1+\||D|^{1+\si_d}\|_{H^{s-\si_d}})^N\|f(x)\|_{L^2(dx)}\lesssim \|f(x)\|_{L^2(dx)}.
        \end{align*}
        Similarly, by \eqref{bound-phi} and \eqref{estimate-P} we also have
        \begin{align*}
            \|\d_y T(f)(y)\|_{L^2(dy)}\lesssim &\| (1+\PP) \d_x f(x) \sqrt{I+\d_x\phi} \|_{L^2(dx)}
            \lesssim  \|\d_x f(x)\|_{L^2(dx)}.
        \end{align*}
        Then  by interpolation we obtain \eqref{frac-T} for $\alpha \in [0,1]$. This suffices in dimension $d = 2$. In higher dimension, 
        we inductively increase the range of $\alpha$ by differentiating.
        Precisely, for $\alpha > 1$ we have
        \[
        \|T(f)(y)\|_{\dot H^{\al}(dy)} = \|\partial_y T(f)(y)\|_{\dot H^{\al-1}(dy)}
        \]
        Here 
        \[
        \partial_y T(f)(y) = T( (I+\PP) \partial_x f)
        \]
        and, by \eqref{estimate-P},  
        \[
        \| (I+\PP) \partial_x f\|_{\dot H^{\alpha-1}(dx)} \lesssim 
        \| \partial_x f\|_{\dot H^{\alpha-1}(dx)}
        \]
        Hence we have reduced the $\dot H^\alpha$ bound to the $\dot H^{\alpha-1}$ bound.
    \end{proof}
    
    \medskip
    
    Given this lemma, by \eqref{bound-tildeg-key1}, \eqref{Equ-f(y)&F(x)} with $\sigma = s+1$ and \eqref{frac-T} we obtain
    \begin{equation*}
        \||D|^{\si_d} \tih\|_{H^{s+1-\si_d}(dy)}\lesssim 
        \||D|^{\si_d} h\|_{H^{s+1-\si_d}(dx)}
    \end{equation*}
    Hence the $\tilde h$ bounds in  \eqref{bound-tildeg} and \eqref{bound-tildeg-d2} follow. Similarly, the $\mathbf H$ bound is also directly transferred to the 
$y$ coordinates by Lemma~\ref{l:Sobolev-equiv}. 
   
 \medskip

 \emph{Step 3: Prove bounds for $\partial^2 F$ in 
 harmonic coordinates.}
While this bound was not explicitely stated in 
Propositions~\ref{Global-harmonic}, \ref{Global-harmonic-d2}, it will play an important role later in the proof of the bounds \eqref{ini} 
and \eqref{ini-d2}.

\begin{lemma}[] \label{d2F-lem}
Let $d\geq 2$, $s>\frac{d}{2}$, and 
$
F:(\R^d,g)\rightarrow (\R^{d+2},g_{\R^{d+2}})
$
be an immersion with metric $\||D|^{\si_d}h\|_{H^{s+1-\si_d}}\lesssim \ep_0$ and mean curvature $\|\mathbf H\|_{H^s}\lesssim \ep_0$ in some coordinates. Then we have
\begin{equation}     \label{d2F}
    \|\d^2 F\|_{H^s}\lesssim \ep_0.
\end{equation}
\end{lemma}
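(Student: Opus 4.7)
The plan is to view the immersion equation, restricted to the harmonic coordinates provided by Propositions~\ref{Global-harmonic}, \ref{Global-harmonic-d2}, as a semilinear elliptic equation for $F$ whose principal part is the flat Laplacian. Indeed, \eqref{hm-coord} gives $g^{\alpha\beta}\Gamma^\gamma_{\alpha\beta}=0$, so that $\mathbf H=\Delta_g F=g^{\alpha\beta}\partial^2_{\alpha\beta}F$ rearranges into
\[
\Delta F \;=\; \mathbf H \;+\; k^{\alpha\beta}\,\partial^2_{\alpha\beta}F, \qquad k^{\alpha\beta}:=\delta^{\alpha\beta}-g^{\alpha\beta},
\]
with $k$ an algebraic function of $h=g-I_d$ vanishing at the origin.

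The first step is to invert the flat Laplacian via Riesz transforms, which are bounded on $H^s$: writing $\partial_\alpha\partial_\beta F = -R_\alpha R_\beta \Delta F$ at the level of second derivatives produces the a priori bound
\[
\|\partial^2 F\|_{H^s} \;\lesssim\; \|\mathbf H\|_{H^s} + \|k\,\partial^2 F\|_{H^s}.
\]
Since $F$ itself need not lie in $L^2$, this identity is to be read at the level of the square-integrable quantity $\partial^2 F$, any constant-at-infinity ambiguity being fixed by the normalization of the harmonic coordinates.

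The second step is to propagate smallness from $h$ to $k$. The hypothesis $\||D|^{\sigma_d}h\|_{H^{s+1-\sigma_d}}\lesssim\epsilon_0$ controls $h$ simultaneously in $\dot H^{\sigma_d}$ at low frequencies and in $\dot H^{s+1}$ at high frequencies. Using $\sigma_d<d/2$, Bernstein gives
\[
\|P_{<0}h\|_{L^\infty} \;\lesssim\; \sum_{j\le 0}2^{j\delta}\||D|^{\sigma_d}P_jh\|_{L^2} \;\lesssim\; \epsilon_0,
\]
while the high-frequency Sobolev embedding $H^{s+1}\hookrightarrow L^\infty$ (valid since $s+1>d/2$) yields the complementary bound, so that $\|h\|_{L^\infty}\lesssim\epsilon_0$. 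Because $k$ is a smooth algebraic function of $h$ vanishing at the origin, a Moser composition estimate transfers the $L^\infty$ smallness and the Sobolev bound to $k$.

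The final step is a standard Moser product estimate
\[
\|k\,\partial^2 F\|_{H^s} \;\lesssim\; \|k\|_{L^\infty}\|\partial^2 F\|_{H^s} + \|k\|_{H^s}\|\partial^2 F\|_{L^\infty},
\]
combined with the embedding $H^s\hookrightarrow L^\infty$ (since $s>d/2$) to control the last factor by $\|\partial^2 F\|_{H^s}$. With $\|k\|_{L^\infty}+\|k\|_{H^s}\lesssim\epsilon_0$ sufficiently small, absorption into the left-hand side yields the desired $\|\partial^2 F\|_{H^s}\lesssim\|\mathbf H\|_{H^s}\lesssim\epsilon_0$. The main obstacle is the rigorous justification of the Riesz inversion: because $F$ itself is not in $L^2$, one needs a preliminary continuity/regularization argument to know that $\partial^2 F\in L^2$ qualitatively, after which the quantitative closure above is routine.
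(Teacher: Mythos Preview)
Your argument has a genuine gap: you impose the harmonic-coordinate condition \eqref{hm-coord}, but the lemma is stated for \emph{arbitrary} coordinates in which the metric and mean-curvature smallness hold. Invoking Propositions~\ref{Global-harmonic}, \ref{Global-harmonic-d2} to reduce to harmonic coordinates is circular here, since this lemma is itself a step in their proofs. Without \eqref{hm-coord} the correct splitting is
\[
\Delta F \;=\; \Delta_g F \;-\; h^{\alpha\beta}\partial^2_{\alpha\beta}F \;+\; g^{\alpha\beta}\Gamma^\gamma_{\alpha\beta}\,\partial_\gamma F,
\]
so an additional first-order term survives and must be estimated.

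The paper handles that term with one observation you do not use: from $g_{\alpha\beta}=\partial_\alpha F\cdot\partial_\beta F$ one gets $\|\partial F\|_{L^\infty}\lesssim\|g\|_{L^\infty}^{1/2}\lesssim 1$ \emph{for free}, with no a priori control on $\partial^2 F$. Combining this with $\|\partial_\gamma F\|_{\dot H^s}\le\|\partial^2 F\|_{H^s}$ and a product estimate gives
\[
\|g^{\alpha\beta}\Gamma^\gamma_{\alpha\beta}\,\partial_\gamma F\|_{H^s}\;\lesssim\;\epsilon_0\bigl(1+\|\partial^2 F\|_{H^s}\bigr),
\]
after which absorption closes exactly as in your scheme. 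With this single correction, the rest of your argument (Riesz inversion $\partial^2 F=\mathcal R\Delta F$ and the Moser product bound on $h\,\partial^2 F$) coincides with the paper's proof; the qualitative issue you flag about $F\notin L^2$ is a non-issue since the Riesz identity is applied only at the level of $\partial^2 F$.
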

We note that, as a corollary, it follows that we also have the bound
\begin{equation}\label{th-bd}
\| \nabla \tilde h\|_{H^s} \lesssim \ep_0.
\end{equation}
This bound in effect superseeds the $\tilde h$
bound in \eqref{bound-tildeg}, \eqref{bound-tildeg-d2}, with one exception, namely 
in two dimensions at low frequency.  

Another corollary of this is the corresponding bound 
for the second fundamental form $\mathbf h$, namely
\begin{equation}\label{bfh-bd}
\| \mathbf h\|_{H^s} \lesssim \ep_0.
\end{equation}

\begin{proof}[Proof of \propref{d2F-lem}]
By the smallness of $|D|^{\si_d}(g-I_d)$ and Sobolev embedding, we have
\begin{align*}
    \| g^{\al\be}\Ga_{\al\be}^\ga \d_\ga F\|_{H^s}
    \lesssim & (1+\| |D|^{\si_d} h\|_{H^{s+1-\si_d}})\| |D|^{\si_d} h\|_{H^{s+1-\si_d}}(\|\d_\ga F\|_{L^\infty \cap \dot{H}^s})\\
    \lesssim &\ep_0 (\|g\|_{L^\infty}^{1/2}+\|\d^2 F\|_{H^s})
    \lesssim  \ep_0 (1+\|\d^2 F\|_{H^s}).
\end{align*}
Then we can bound $\d^2 F$ by
\begin{align*}
    \|\d^2 F\|_{H^s}= &\|\mathcal R \De F\|_{H^s}\lesssim \|\De F\|_{H^s}\\
    \lesssim & \| \De_g F\|_{H^s}+\|h^{\al\be}\d^2_{\al\be}F\|_{H^s}+\| g^{\al\be} \Ga_{\al\be}^\ga \d_\ga F \|_{H^s}\\
    \lesssim & \| \mathbf{H}\|_{H^s}+\ep_0(1+ \|\d^2 F\|_{H^s})\\
    \lesssim & \ep_0(1+ \|\d^2 F\|_{H^s}),
\end{align*}
which implies \eqref{d2F}, and thus completes the proof of lemma.
\end{proof}

\medskip

\emph{Step 4: Prove the $Y_{0}^{lo}$ bound for the metric $h$ in \eqref{bound-tildeg-d2} in two dimensions.}
To transfer the $Y_0^{lo}$ bounds to $\tilde h$, our starting point is the estimate
\[
\| h\|_{Y_0^{lo}}  \lesssim \epsilon_0
\]
Next we show that $\nabla \phi$ satisfies a similar bound,
\begin{equation}    \label{phi-Y}
\| \nabla \phi\|_{\tilde Y_0^{lo}}  \lesssim \epsilon_0
\end{equation}

\medskip

\begin{proof}[Proof of \eqref{phi-Y}]
We use the $\phi$-equations \eqref{varphi-eq}, which have the form
\begin{equation*}
    \De \phi=\nab h+h\nab^2 \phi+g\nab h \nab \phi+h\nab h.
    \end{equation*}
To get \eqref{phi-Y} via the contraction principle it suffices 
to estimate the right hand side above in order to prove that
\begin{align*}
    \|\nab \phi\|_{Y^{lo}_0}\lesssim \| h\|_{Y^{lo}_0}+\ep_0(\| h\|_{Y^{lo}_0}+\| \nab \phi\|_{Y^{lo}_0})+\ep_0^2
\end{align*}

First, we bound the $Y_j$ norm of $\nab\phi$. For the $\nab h$, we easily have 
\begin{align*}
    \|P_j\nab^{-1} \nab h\|_{Y_j}\lesssim \|P_j h\|_{Y_j},
\end{align*}
which is acceptable. We will next show how to bound the most umbalanced term $h\nab^2 \phi$; the rest of the terms are estimated similarly. For the high-low interactions: $P_j h \nab^2 P_{<j} \phi$, by \eqref{bound-phi-re} we have
\begin{align*}
    \|P_j \nab^{-1}(P_j h  \nab^2 P_{<j}\phi)\|_{Y_{0j}} \lesssim &  \|P_j h\|_{Y_{0j}} \|\nab P_{<j}\phi\|_{L^\infty}\lesssim \ep_0\|P_j h\|_{Y_{0j}}.
\end{align*}
Similarly, for the low-high interactions: $P_{<j} h \nab^2 P_j\phi$, by \eqref{bound-metric-re} we have
\begin{align*}
    \|P_{j} \nab^{-1}(P_{<j} h  \nab^2 P_j\phi)\|_{Y_{0j}} \lesssim &  \|P_{<j} h\|_{L^\infty} \|\nab P_{j}\phi\|_{Y_{0j}}\lesssim \ep_0 \|\nab P_{j}\phi\|_{Y_{0j}}.
\end{align*}
Finally we consider the high-high interactions, 
$\sum_{l> j} P_j( P_{l} h \nab^2 P_l\phi)$. Here we use Bernstein's inequality to obtain
\begin{align*}
    2^{(\frac{d}{2}-\de)j}\|\sum_{l> j} \nab^{-1}  P_j( P_{l} h \nab^2 P_l\phi)\|_{Y_{0j}}
    \lesssim &\ 2^{(\frac{d}{2}-1-\de)j}\sum_{l>j}\| P_j( P_{l} h \nab^2 P_l\phi)\|_{l^1_{|j|}L^2}\\
    \lesssim & \ 2^{(d-1-\de)j}\sum_{l>j}\|  P_{l} h\|_{L^2}\| \nab^2 P_l\phi\|_{L^2}\\
    \lesssim &\ \sum_{l>j} 2^{(d-1-\de)(j-l)} \| |D|^{\frac{d}{2}-\de} P_l h\|_{L^2}\||D|^\frac{d}{2} P_l \nabla \phi\|_{L^2},
\end{align*}
which in view of the 
bound \eqref{bound-phi-re}
gives
\begin{align*}
    \left(\sum_{j<0}2^{2(\frac{d}{2}-\de)j^-}\|\sum_{l> j} \nab^{-1}  P_j( P_{l} h \nab^2 P_l\phi)\|_{Y_{0j}}^2\right)^{1/2}\lesssim \||D|^{\si_d} h\|_{L^2} \| |D|^{\frac{d}{2}} \nab\phi\|_{L^2}\lesssim \ep_0^2.
\end{align*}

Secondly, we bound the $Y_{00}\cap L^\infty$ norm for the high frequency part $P_{\geq 0} \nab \phi$. The $L^\infty$ bound follows from the $H^s$ bound for $\phi$ and Sobolev embeddings. It remains to estimate its $Y_{00}$ norm.
Since the operator $P_{>0} \nabla^{-1} \nabla$ has the kernel localized to the unit spatial scale, we have
\begin{equation*}
    \|P_{\geq 0}\nab^{-1}\nab h\|_{Y_{00}}\lesssim \|P_{\geq 0}\|_{Y_{00}}.
\end{equation*}
Here we also only discuss the term $h\nab^2\phi$; the contributions 
of the other terms are estimated similarly. We first divide this term as
\begin{equation*}
    P_{\geq 0}\nab^{-1}(P_{\geq 0 }h \nab^2 \phi)+P_{\geq 0}\nab^{-1}(P_{< 0 }h \nab^2 P_{\geq 0} \phi)+P_{\geq 0}\nab^{-1}(P_{< 0 }h \nab^2 P_{[-3,-1]}\phi).
\end{equation*}
For the first term, we directly have
\[ 
\|P_{\geq 0}\nab^{-1}(P_{\geq 0 }h \nab^2 \phi)\|_{Y_{00}}\lesssim \|P_{\geq 0}h\|_{Y_{00}}\|\nab^2 \phi\|_{L^\infty}\lesssim \ep_0\|P_{\geq 0}h\|_{Y_{00}}.  \]
The second term, we further divide it as
\[
P_{\geq 0}\nab^{-1}(P_{< 0 }h \nab^2 P_{\geq 0} \phi)=P_{\geq 0}\mathcal R (P_{< 0 }h \nab P_{\geq 0} \phi)+P_{\geq 0}\nab^{-1}(\nab P_{< 0 }h \nab P_{\geq 0} \phi),
\]
where $\mathcal R$ is Riesz transform.
Then we bound this by 
\[ \|P_{\geq 0}\nab^{-1}(P_{< 0 }h \nab^2 P_{\geq 0} \phi)\|_{Y_{00}}\lesssim \|P_{<0}h\|_{L^\infty}\|P_{\geq 0}\nab\phi\|_{Y_{00}}\lesssim \ep_0 \|P_{\geq 0}\nab\phi\|_{Y_{00}}. \]
Finally, we bound the last term by
\[ \|P_{\geq 0}\nab^{-1}(P_{< 0 }h \nab^2 P_{[-3,-1]} \phi)\|_{Y_{00}}\lesssim \|P_{<0}h\|_{L^\infty}\|P_{[-3,-1]}\nab\phi\|_{Y_{00}}\lesssim \ep_0 \|P_{[-3,-1]}\nab\phi\|_{Y^{lo}_0}. \]
This concludes the proof of the $Y_{00}\cap L^\infty$ norm for $P_{\geq 0}\nab\phi$.
\end{proof}

\medskip

The new metric $\tilde h$ expressed in the $x$ 
coordinates has the cubic polynomial form
\[
\tilde h = P(h,\nabla \phi).
\]
Using the algebra property \eqref{algebra-Ylo} for $Y^{lo}_0 $ and \eqref{phi-Y}, 
we conclude that 
\[
\| \tilde h\|_{\tilde Y^x_0}  \lesssim \epsilon_0
\]
It remains to switch this  bound to the $y$ coordinates,
i.e. show that 
\begin{equation}\label{switch-y}
 \| \tilde h\|_{\tilde Y^x_0} \approx  \| \tilde h\|_{\tilde Y^y_0}  
\end{equation}
where the difficulty is that we need to use a Littlewood-Paley decomposition.
We will circumvent this by using the following 
representation of $Y_0^{lo}$ functions:

\begin{lemma}
A function $f$ is in $Y_0^{lo}$ iff it admits a representation
\[
f = \sum_{j \leq 0} f_j, \qquad f_j \in Y_{0j}
\]
so that the following norm is finite:
\[
\tri(f_j)\tri^2 = \| f_0\|_{Y_{00} \cap L^\infty} + \sum_{j < 0} 2^{2(1-\delta)j} \left(\| f_j\|_{Y_{0j}}^2 + 2^{-2j} \|\nabla  f_j\|_{Y_{0j}}^2+ 2^{-4 j} \|\nabla^2  f_j\|_{Y_{0j}}^2\right)
\]
Further, we have 
\[
\| f\|_{Y_0^{lo}} \approx \inf \{ \tri(f_j)|\tri; f =\sum_{j \leq 0} f_j \}
\]
\end{lemma}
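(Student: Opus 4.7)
The plan is to prove the two directions of the characterization separately, both based on Littlewood--Paley analysis together with the boundedness of multipliers with $L^1$ Fourier inverse on the $Y_{0j}$ scale.

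\emph{Direction one} (given $f \in Y_0^{lo}$, produce the decomposition) is the easy direction. The natural choice is $f_j = P_j f$ for $j<0$ and $f_0 = P_{\geq 0} f$. Since for $j<0$ the operator $P_j$ is spectrally localized, $\nabla^n P_j = 2^{nj} m_{j,n}(D)$ where the symbol $m_{j,n}$ has uniformly bounded $L^1$ Fourier inverse kernel. By the multiplier invariance of the $Y_{0j}$ and $L^\infty$ norms, this yields
\[
\|\nabla^n f_j\|_{Y_{0j}} \lesssim 2^{nj}\|P_j f\|_{Y_{0j}}, \qquad n=0,1,2,
\]
and analogously for $f_0$ in $Y_{00}\cap L^\infty$. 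Comparing this term by term with the definitions of $\tri(f_j)\tri$ and $\|f\|_{Y_0^{lo}}$ immediately gives $\tri(f_j)\tri \lesssim \|f\|_{Y_0^{lo}}$.

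\emph{Direction two} (from the decomposition recover the $Y_0^{lo}$ bound) is where the real work is. Given $f = \sum_{j\leq 0} f_j$, we must estimate $\|P_k f\|_{Y_{0k}}$ for each $k<0$ and $\|P_{\geq 0} f\|_{Y_{00}\cap L^\infty}$ in terms of $\tri(f_j)\tri$. Writing $P_k f = \sum_{j\leq 0} P_k f_j$, we split into the regimes $j\geq k$ and $j<k$. In the first regime ($|j|\leq |k|$), $f_j$ admits an atomic decomposition at scales $l\geq |j|$; grouping together the subcollection of atoms at scales $|j|\leq l \leq |k|$ into a single scale-$|k|$ atom (using that coarser $l^1_l$ norms dominate finer ones at larger spatial scales), and keeping the atoms at scales $l>|k|$ as they are, one obtains
\[
\|P_k f_j\|_{Y_{0k}} \lesssim \|f_j\|_{Y_{0j}}.
\]
In the second regime ($j<k$, i.e.\ $|j|>|k|$), each $Y_{0j}$ atom is already a $Y_{0k}$ atom but with weight ratio $2^{|j|-|k|}$; to compensate, we use the factorization $P_k = (\nabla^{-2}P_k)\nabla^2$, where $\nabla^{-2}P_k = 2^{-2k}\tilde m_k(D)$ has a bounded $L^1$ Fourier inverse, giving
\[
\|P_k f_j\|_{Y_{0k}} \lesssim 2^{-2k}\|\nabla^2 f_j\|_{Y_{0k}} \lesssim 2^{-2k}\cdot 2^{|j|-|k|}\|\nabla^2 f_j\|_{Y_{0j}} = 2^{-k-j}\|\nabla^2 f_j\|_{Y_{0j}}.
\]
Inserting the bounds $\|f_j\|_{Y_{0j}}\lesssim 2^{-(1-\delta)j}\tri(f_j)\tri$ and $\|\nabla^2 f_j\|_{Y_{0j}}\lesssim 2^{(1+\delta)j}\tri(f_j)\tri$ and summing over $j$ in each regime reduces the estimate to two geometric sums
\[
\sum_{j\geq k} 2^{-(1-\delta)j} \lesssim 2^{-(1-\delta)k}, \qquad \sum_{j<k} 2^{\delta j - k}\lesssim 2^{(\delta-1)k},
\]
both of which give the desired weight $2^{-(1-\delta)k}$ for $\|P_k f\|_{Y_{0k}}$, matching the $Y_0^{lo}$ norm up to a constant after taking $\ell^2$ in $k$. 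The $L^\infty$ bound on $P_{\geq 0} f$ for the high-frequency part is handled analogously: for $j<0$ we rewrite $P_{\geq 0}f_j = (\nabla^{-2}P_{\geq 0})\nabla^2 f_j$ and use Bernstein to pass from $Y_{0j}$ to $L^\infty$, while for $j=0$ the bound is built into $\|f_0\|_{Y_{00}\cap L^\infty}$.

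\emph{Main obstacle.} The technical heart of the argument is the bookkeeping in Direction two, in particular verifying that the weights $2^{2(1-\delta)j}$ in $\tri(f_j)\tri$ balance the scale-mismatch factors $2^{|j|-|k|}$ coming from transferring atomic decompositions between $Y_{0j}$ and $Y_{0k}$. This forces $\delta<1$, which holds in our setting. Once these geometric sums are verified to be uniformly controlled by $2^{-(1-\delta)k}$ in both regimes, the two-sided equivalence $\|f\|_{Y_0^{lo}}\approx \inf \tri(f_j)\tri$ follows by combining the two directions and passing to the infimum over admissible decompositions.
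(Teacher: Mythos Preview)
Your approach matches the paper's: both handle the reverse direction by bounding $\|P_k f_j\|_{Y_{0k}}$ in the two regimes $j\gtrless k$, using the atomic structure of $Y_{0j}$ when $|j|\leq|k|$ and the factorization $P_k=(\nabla^{-2}P_k)\nabla^2$ when $|j|>|k|$, and then invoking the resulting off-diagonal decay. The paper writes the combined estimate as $\|P_k f_j\|_{Y_{0k}}\lesssim 2^{-|j-k|}(\|f_j\|_{Y_{0j}}+2^{-2j}\|\nabla^2 f_j\|_{Y_{0j}})$ and simply says ``due to the off-diagonal decay''.

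There is, however, a genuine gap in your final summation. By inserting the scalar bounds $\|f_j\|_{Y_{0j}}\lesssim 2^{-(1-\delta)j}\tri(f_j)\tri$ and $\|\nabla^2 f_j\|_{Y_{0j}}\lesssim 2^{(1+\delta)j}\tri(f_j)\tri$, you throw away the $\ell^2_j$ structure of the triple norm. Your two geometric sums then only produce the pointwise bound $2^{(1-\delta)k}\|P_k f\|_{Y_{0k}}\lesssim\tri(f_j)\tri$, uniformly in $k$; this controls $\sup_k$ but not the $\ell^2_k$ sum required by the definition of $Y_0^{lo}$, and ``taking $\ell^2$ in $k$'' of a constant diverges. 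The repair is immediate once you keep the $\ell^2$ structure: set $a_j=2^{(1-\delta)j}\bigl(\|f_j\|_{Y_{0j}}+2^{-2j}\|\nabla^2 f_j\|_{Y_{0j}}\bigr)$, so $(a_j)_{j<0}\in\ell^2$ with norm $\lesssim\tri(f_j)\tri$. Your two regime bounds then read $2^{(1-\delta)k}\|P_k f_j\|_{Y_{0k}}\lesssim 2^{(1-\delta)(k-j)}a_j$ for $j\geq k$ and $\lesssim 2^{\delta(j-k)}a_j$ for $j<k$, hence
\[
2^{(1-\delta)k}\|P_k f\|_{Y_{0k}}\lesssim\sum_{j\leq 0}2^{-c|j-k|}a_j,\qquad c=\min(\delta,1-\delta)>0,
\]
and Young's inequality for discrete convolutions gives the $\ell^2_k$ bound. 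This is precisely the ``off-diagonal decay'' step the paper alludes to.
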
    
Since by Sobolev embeddings $\phi$ is small in $C^2$, the  triple norms are easily seen to be equivalent in the $x$ and the $y$ coordinates, therefore
the relation \eqref{switch-y} follows. It remains to prove the Lemma. 

\begin{proof}
In one direction, we directly see that the 
decomposition 
\[
f_0 = P_{\geq 0} f, \qquad
f_j = P_j f, \quad j < 0
\]
yields
\[
 \| f\|_{\tilde Y_0} \approx  \tri(f_j)\tri
\]

Conversely, if $f = \sum f_j$,
then we need to show that
\begin{equation} \label{est-sum-f}
\| f\|_{\tilde Y_0} \lesssim \tri (f_j)\tri
\end{equation}
For this we estimate for $k < 0$
\[
\begin{aligned}
\| P_k f\|_{Y_{0k}} \lesssim & \ 
\| P_k f_0\|_{Y_{0k}} +  \sum_{j< 0}  \| P_k f_j\|_{Y_{0k}}
\\
\lesssim  & \
2^k \| f_0\|_{Y_{00}}+
\sum_{j < 0} 2^{-|j-k|} (\|f_j\|_{Y_j}
+ 2^{-2j} \|\nabla^2 f_j\|_{Y_j})
\end{aligned}
\]
Due to the off-diagonal decay, this implies \eqref{est-sum-f}.
\end{proof}

\medskip

\subsection{The initial data $(\la_0,h_0,A_0)$}
These are determined by the initial manifold $\Sigma_0$ given a gauge choice, which consists 
of choosing (i) a good set of coordinates
on $\Sigma_0$, namely the harmonic coordinates,
and (ii) a good orthonormal frame in $N \Sigma_0$,
where we will use the Coulomb gauge.

In the previous subsection we have discussed the construction of harmonic coordinates and proved the Sobolev bound \eqref{th-bd}
for $h_0$. Here we begin by constructing a Coulomb frame in the normal bundle. Then we can define $\lambda_0$ and $A_0$ and directly prove 
$H^s$ bounds for them.

However, it turns out that the $H^s$ bounds
tell only part of the story for $h_0$ and $A_0$, by treating
them as linear objects. Instead, in our chosen gauge both 
$h_0$ and $A_0$ should be seen as quadratic objects, 
via the equations \eqref{hm-coord}, respectively \eqref{Coulomb}.
In the last part of the section we use these equations 
to improve the bounds for both $h_0$ and $A_0$.

\medskip

\emph{Step 1: The Coulomb frame in $N \Sigma_0$ and the $H^s$ bound for $\lambda$ and $A$.} 
 To obtain the Coulomb gauge, we choose $\tilde{\nu}$ constant uniformly transversal to $T \Sigma_0$; such a $\tilde \nu$ exists because, by Sobolev embeddings, $\partial_x F$
has a small variation in $L^\infty$.
Projecting $\tilde{\nu}$ on the normal bundle $N \Sigma_0$ and normalizing we obtain a normalized section $\tilde{\nu}_1$ of the normal bundle
with the same regularity as $\partial F$. Then we choose $\tilde{\nu}_2$ in $N \Sigma_0$ perpendicular to $\tilde{\nu}_1$. We obtain the orthonormal frame $(\tilde{\nu}_1,\tilde{\nu}_2)$ in $N \Sigma_0$, which again has the same regularity and bounds as $\partial_x F$, namely (see Lemma~\ref{d2F-lem})
\begin{equation}\label{reg-tnu}
\| \partial \tilde \nu_j\|_{H^s} \lesssim \epsilon_0.
\end{equation}
This in particular implies that the associated connection $\tilde A$ also satisfies
\begin{equation}\label{reg-tA}
\|\tilde A\|_{H^s} \lesssim \epsilon_0.
\end{equation}
Then we rotate the frame to get a Coulomb frame $(\nu_1,\nu_2)$, i.e. where the Coulomb gauge condition $\nabla^\al A_\al=0$ is satisfied.
In our complex notation, this corresponds to 
\[
\nu_1 + i \nu_2 = e^{ib} (\tilde \nu_1+ i\tilde \nu_2), \qquad A_j = \tilde A_j -\partial_j b,
\]
where the rotation angle $b$ must solve 
\[
\Delta_g b = \nabla^\alpha \tilde A_\alpha.  
\]
This is an elliptic equation, where the metric $g_0= I_d+h_0$
satisfies \eqref{th-bd}. Using the variational formulation at the $H^1$
level and then perturbative analysis at higher regularity, the solution
is easily seen to satisfy 
\[
\| \partial b\|_{H^s} \lesssim \|A\|_{H^s}
\]
It directly follows that $\nu_1, \nu_2$ and $A$
also satisfy the bounds in \eqref{reg-tnu}, \eqref{reg-tA},
\begin{equation}\label{reg-A}
\| \partial \nu_j\|_{H^s} + \| A\|_{H^s} \lesssim \epsilon_0.    
\end{equation}

Projecting the second fundamental form $\mathbf h$ and the mean curvature $\mathbf H$ on the Coulomb frame as in Section~\ref{complex-mc} we obtain 
the complex second fundamental form $\lambda$ and the
complex mean curvature $\psi$. In view of \eqref{h0}, \eqref{h0-d2} and \eqref{bfh-bd} both of them have
the same regularity,
\begin{equation*}
\| \lambda \|_{H^s} + \|\psi \|_{H^s} \lesssim \epsilon_0. 
\end{equation*}

\bigskip

\emph{Step 2: Prove the bounds in \eqref{ini} and \eqref{ini-d2} for the metric $h$.} For this we rely on the equation \eqref{h-ell}.
The main result is as follows:

\begin{lemma}
     Let $d\geq 2$, $s>\frac{d}{2}$ and $\si_d$ be as in \eqref{si_d}. Assume that $h$ is a solution of \eqref{h-ell} satisfying
     \begin{equation*} %\label{h-start}
     \||D|^{\si_d}h\|_{H^{s+1-\si_d}}\leq \ep_0, \qquad \|\la\|_{H^s}\leq \ep_0.
     \end{equation*}
     Then for $d\geq 3$ we have
     \begin{equation}    \label{h-t0}
         \|\partial h\|_{H^{s+1}}+\|h\|_{Y_0^{s+2}}\lesssim \ep_0.
     \end{equation}
    Under the additional assumption 
    \begin{equation}\label{h-start-extra}
    \|h\|_{Y^{lo}_0}\leq \ep_0,
    \end{equation}
    in dimension $d=2$ we have
     \begin{equation}    \label{h-t02}
         \||D|^{\si_d}h\|_{H^{s+2-\si_d}}+\|P_{\geq 0} h\|_{Y_0^{s+2}}\lesssim \ep_0.     \end{equation}
\end{lemma}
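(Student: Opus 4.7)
The equation \eqref{h-ell} may be rewritten schematically as
\begin{equation}\label{plan-ell}
\Delta h = h\cdot\nabla^2 h + \nabla h\cdot\nabla h + Q(\la,\bar\la),
\end{equation}
where $Q(\la,\bar\la) = -2\Re(\la\bar\psi - \la\bar\la)$ collects the algebraic curvature source; this decomposition follows from separating the Euclidean Laplacian from the quasilinear principal part $g^{\al\be}\d^2_{\al\be} = \Delta + h^{\al\be}\d^2_{\al\be}$. The plan is to solve \eqref{plan-ell} as a fixed point $h = \Delta^{-1}(\mathrm{RHS})$ in a ball of radius $C\ep_0$ in the target space, using the smallness of $\|\la\|_{H^s}$ together with the a priori bound $\||D|^{\si_d} h\|_{H^{s+1-\si_d}} \leq \ep_0$.

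The first step is to understand $\Delta^{-1}$ on our dyadic pieces. At high frequency, $\Delta^{-1} P_j$ is a Fourier multiplier of size $2^{-2j}$, so the $\dot H^{s+2}$ and $Y_{0j}$ estimates follow from $\dot H^s$ and $Y_{0j}$ bounds on the source by pure scaling plus the fact that the multiplier's kernel is Schwartz-localized at scale $2^{-j}$. At low frequency, inversion is performed via the Newton-kernel convolution, and here the dimension matters: in $d\geq 3$ the kernel $|x|^{2-d}$ decays and transmits $L^1$-type spatial decay structure to the output, while in $d = 2$ the logarithmic kernel prevents full low-frequency control of $h$ and is the reason for the extra hypothesis \eqref{h-start-extra}.

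For the source term in \eqref{plan-ell}, the curvature contribution $Q(\la,\bar\la)$ is controlled in $H^s$ by $\lesssim \ep_0^2$ via the algebra property of $H^s$ for $s > d/2$. For the $Y_0$ component of $\bY_0^{s+2}$, I decompose $\la\bar\la$ across cube lattices $\QQ_l$: Cauchy-Schwarz in each cube yields $\|\la\bar\la\|_{l^1_l L^2} \lesssim \|\la\|_{l^2_l L^4}^2$, and the $l^2_l L^4$ bound for $\la$ follows from $H^s \hookrightarrow L^4$ by Sobolev. Applying $\Delta^{-1}$ converts this into the required atomic representation at spatial scales $2^l \geq 2^{|j|}$ at each dyadic frequency $2^j$, producing an $O(\ep_0^2)$ contribution to $\|h\|_{\bY_0^{s+2}}$. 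The quasilinear terms $h\nabla^2 h$ and $\nabla h\cdot\nabla h$ are handled by product estimates in $\bY_0^{s+2}$ and are absorbed into the left-hand side because they carry the small factor $\|h\|_{\bY_0^{s+2}} \leq C\ep_0$ from the contraction ball.

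The main obstacle is the $d = 2$ case at low frequency. Here the logarithmic character of $\Delta^{-1}$ means one cannot recover the $Y_0^{lo}$ norm of $h$ from the quadratic source alone; this is exactly why \eqref{h-t02} only asserts $\|P_{\geq 0}h\|_{Y_0^{s+2}}$ and must take the $Y_0^{lo}$ norm as input via \eqref{h-start-extra}. The propagation of the $Y_0^{lo}$ bound through the quadratic RHS uses the algebra property \eqref{algebra-Ylo}, mirroring the argument already carried out for $\nabla\phi$ in Step 4 of the harmonic-coordinates construction above. Once this low-frequency input is in hand, the fixed-point argument closes in $\bY_0^{s+2}$ for $d\geq 3$, or in the paired space given by $\||D|^{\si_d}h\|_{H^{s+2-\si_d}} + \|P_{\geq 0}h\|_{Y_0^{s+2}}$ together with the assumed $\|h\|_{Y_0^{lo}}$ control for $d = 2$, yielding \eqref{h-t0} and \eqref{h-t02} respectively.
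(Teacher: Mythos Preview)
Your overall strategy---write the equation schematically as $\Delta h = h\nabla^2 h + \nabla h\,\nabla h + \la^2$ and gain regularity through the Laplacian---matches the paper's, and your dimensional discussion of why $d=2$ requires the $Y_0^{lo}$ input is correct. Two points need correction, however.

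First, the framing as a contraction in the strong space is not quite right: the lemma hands you a \emph{given} $h$ satisfying the weak bound $\||D|^{\si_d}h\|_{H^{s+1-\si_d}}\leq\ep_0$, and asks you to upgrade its regularity. A fixed-point argument in $\bY_0^{s+2}$ would produce some small solution $\tilde h$, but identifying $\tilde h$ with the given $h$ requires uniqueness in the weaker class, which you have not established. The paper instead derives directly an inequality of the form $\|h\|_{Y_0^{s+2}}\lesssim \ep_0\|h\|_{Y_0^{s+2}}+\ep_0^2$ and absorbs; this is the correct mechanism here.

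Second, and more concretely, your $Y_0$ estimate for the $\la^2$ source does not work as written. You claim $\|\la\bar\la\|_{l^1_l L^2}\lesssim\|\la\|_{l^2_l L^4}^2$ and then assert $\|\la\|_{l^2_l L^4}\lesssim\|\la\|_{H^s}$ via Sobolev. The first inequality is fine, but the second fails: $H^s$ embeds into $L^4=l^4_l L^4$, which is \emph{weaker} than $l^2_l L^4$ (since $\ell^2\subset\ell^4$). For a function with no spatial decay beyond $L^2$, the $l^2_l L^4$ norm over cubes at large scale $l$ is not controlled. The paper's route avoids this by passing through $L^1$ instead: one has $\|\la\bar\la\|_{l^1_l L^1}\leq\|\la\|_{l^2_l L^2}^2=\|\la\|_{L^2}^2$ for every $l$, and then the Bernstein-type inequality $\|P_j f\|_{l^1_{|j|}L^2}\lesssim 2^{dj/2}\|P_j f\|_{l^1_{|j|}L^1}$ lifts this to the $l^1_{|j|}L^2$ norm needed for $Y_{0j}$. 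Combined with the paraproduct trichotomy (high--low, low--high, high--high in frequency), this closes the estimate; your purely spatial cube argument does not.
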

Here we remark on the key difference between dimensions 
two and higher. In higher dimensions $d \geq 3$, $h$ may be seen as the unique small solution  for the equation \eqref{h-ell}. But in two dimensions, we merely use \eqref{h-ell} to improve the high frequency bound for $h$. At low frequency this no longer works, and instead 
we use the low frequency bounds on the initial metric $h_0$ as an assumption in our main result. We note that the assumption \eqref{h-start-extra} in the two dimensional case could be avoided,
at the expense of a considerably longer proof.

\begin{proof}
     By \eqref{h-ell}, it suffices to write the equation for $h$ in the shorter form
     \begin{align*}
         \De h=h\nab^2 h+\nab h\nab h+\la^2.
     \end{align*}
     From this and Sobolev embedding, we easily have
     \begin{align*}
         \|P_{\geq 0}\De h\|_{H^s}\lesssim & \ \||D|^{\si_d}h\|_{H^{s+1-\si_d}}\||D|^{\si_d}h\|_{H^{s+2-\si_d}}+\|\d_x h\|_{H^{s}}^2+\|\la\|_{H^s}^2\\
         \lesssim & \ \ep_0 \|P_{\geq 0}|D|^{\si_d}h\|_{H^{s+2-\si_d}}+\ep_0^2.
     \end{align*}
     This implies that 
     \begin{equation}    \label{h-s+2}
         \||D|^{\si_d}h\|_{H^{s+2-\si_d}}\lesssim \ep_0.
     \end{equation}
     In dimension three and higher, a similar argument applies
     in order to improve the low frequency bound. This argument is already in \cite{HT21}, and we do not repeat it here.

     Next, we bound the $Y^{s+2}_0$ norm of $h$. For the low-frequency part, we only need to consider the higher dimensional case $d \geq 3$, as in the case $d = 2$ the low 
     frequency bound is assumed in Theorem~\ref{LWP-thmd=2}. We bound the high-low or low-high interactions by
     \begin{equation*}     %   \label{h-lh}
         \begin{aligned}
         \|\De^{-1}P_j (P_{\leq j-3}h \nab^2 h)\|_{Y_j}\lesssim & \ 2^{-2j}\sum_{l\geq |j|}2^{l-|j|}\|P_j (P_{\leq j-3}h \nab^2 h_{j,l})\|_{l^1_{l}L^2}\\
         \lesssim & \ \|h\|_{L^\infty}\|P_j h\|_{Y_j}\\
         \lesssim & \ \||D|^{\si_d} h\|_{H^{s-\si_d}}\|P_j h\|_{Y_j},
     \end{aligned}
     \end{equation*}
     For the high-high interactions $P_j (\nab P_l h \nab P_l h)$, we have
     \begin{align*}
         &\|\sum_{l\geq j}\De^{-1}P_j (\nab P_{l}h \nab P_l h)\|_{Y_j}\\
         \lesssim & \ 2^{-2j}(\sum_{|j|\geq l\geq j}\|P_j (\nab P_{l}h \nab P_l h)\|_{l^1_{|j|}L^2}
         +\sum_{l> |j|}2^{l-|j|}\|P_j (\nab P_{l}h \nab P_l h)\|_{l^1_{l}L^2})\\
         \lesssim &\ \sum_{|j|\geq l\geq j}2^{(d/2-2)j}\|\nab P_{l}h \|_{L^2} \|\nab P_l h\|_{L^2}
         +\sum_{l> |j|}2^{(d/2-2)j}2^{l-|j|}\|\nab P_{l}h\|_{L^2}\| \nab P_l h\|_{L^2},
     \end{align*}
     From these two bounds, for $d\geq 3$ we obtain 
     \begin{align}   \label{h-d3}
         \|\De^{-1}(h\nab^2 h+\nab h\nab h)\|_{Y^{s+2}_0}\lesssim \||D|^{\si_d}h\|_{H^{s+2-\si_d}}\| h\|_{Y^{s+2}_0}+\||D|^{\si_d}h\|_{H^{s+2-\si_d}}^2,
     \end{align}
     and for $d=2$ we obtain 
     \begin{align}    \label{h-d2}
         \|P_{\geq 0}\De^{-1}(h\nab^2 h+\nab h\nab h)\|_{Y^{s+2}_0}\lesssim \||D|^{\si_d}h\|_{H^{s+2-\si_d}}\| h\|_{Y^{s+2}_0}+\||D|^{\si_d}h\|_{H^{s+2-\si_d}}^2.
     \end{align}
    
    We then bound the contribution of the $\la^2$ source term. For the high-low or low-high interactions we have
    \begin{align*}
         \| \De^{-1} P_j(P_{<j}\la P_j \la)\|_{Y_j}
         \lesssim &\ 2^{-2j}\|P_j ( P_{<j}\la P_j \la)\|_{l^1_{|j|}L^2}\\
         \lesssim &\ 2^{dj^-/2-2j} \|P_{<j}\la\|_{H^s}\|P_j \la\|_{L^2},
     \end{align*}
     For the high-high interactions we have
     \begin{align*}
         \|\sum_{l\geq j} \De^{-1} P_j(P_{l}\la P_l \la)\|_{Y_j}
         \lesssim \ \sum_{|j|\geq l\geq j}2^{(d/2-2)j}\|P_{l}\la \|_{L^2}^2
         +\sum_{l> |j|}2^{(d/2-2)j}2^{l-|j|}\|P_{l}\la\|_{L^2}^2.
     \end{align*}
     These two bounds also imply
     for $d\geq 3$
     \begin{align}\label{la-d3}
         \|\De^{-1}(\la^2)\|_{Y^{s+2}_0}\lesssim \|\la\|_{H^{s}}^2,
     \end{align}
     and for $d=2$
     \begin{align} \label{la-d2}
         \|P_{\geq 0}\De^{-1}(\la^2)\|_{Y^{s+2}_0}\lesssim \|\la\|_{H^{s}}^2.
     \end{align}
     Using \eqref{h-s+2}, $\|h\|_{Y^{lo}_0}\leq \ep_0$ and $\|\la\|_{H^s}\leq \ep_0$, by $h$-equation, \eqref{h-d3} and \eqref{la-d3} we obtain for $d\geq 3$
     \begin{equation*}
         \|h\|_{Y^{s+2}_0}\lesssim \ep_0 \|h\|_{Y^{s+2}_0}+\ep_0^2.
     \end{equation*}
     and by \eqref{h-d2} and \eqref{la-d2} we obtain for $d=2$
     \begin{equation*}
         \|P_{\geq 0}h\|_{Y^{s+2}_0}\lesssim \ep_0 \|h\|_{Y^{s+2}_0}+\ep_0^2\lesssim \ep_0 \|P_{\geq 0}h\|_{Y^{s+2}_0}+\ep_0^2.
     \end{equation*}
     This concludes the proof of the  bounds \eqref{h-t0} and  \eqref{h-t02}.
\end{proof}

\medskip

\emph{Step 3: Prove the bound \eqref{ini-d2} for $A$.} This is obtained by \eqref{reg-A} and the following proposition. Here we solve the initial data $A_0$ from the elliptic div-curl system \eqref{Ellp-A-pre}.

\begin{prop}[Initial data $A_0$]     \label{Ini-A}
Let $d\geq 2$, $s>d/2$ and $\de_d=\de$ if $d=2$ and $\de_d=0$ if $d\geq 3$. Assume that 
\begin{equation} \label{A-start}
\|\la\|_{H^s}\leq \ep_0, \qquad \||D|^{\si_d} h\|_{H^{s+1-\si_d}}\leq\ep_0
\end{equation}
Then the elliptic system \eqref{Ellp-A-pre} for $A$  admits a unique small solution with
\begin{equation}\label{Ellip-A0}
    \| |D|^{\de_d} A\|_{H^{s+1-\de_d}}\lesssim  \|\la\|_{H^s}.
\end{equation}
Moreover, assume that $p_{0k}$ is an admissible frequency envelope for $\la\in H^s$. Then we have the frequency envelope bounds
\begin{equation}      \label{A0-envelope}
\|S_k |D|^{\de_d} A\|_{H^{s+1-\de_d}}\lesssim \ep_0  p_{0k}.
\end{equation}
In addition, for the linearization of the solution map above we also have the bound:
\begin{equation}               \label{A0-linearize}
    \| |D|^{\de_d} \dA\|_{H^{\si+1-\de_d}}\lesssim \ep_0(\||D|^{\si_d}\dh\|_{H^{\si+1-\si_d}}+ \|\dla\|_{H^\si}),\qquad \si\in(d/2-2,s].
\end{equation}
\end{prop}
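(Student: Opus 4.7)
The plan is to convert the first-order div-curl system \eqref{Ellp-A-pre} for $A$ into a perturbed Poisson equation, and then to solve it by contraction in the weighted Sobolev space $X = \{A: \||D|^{\de_d} A\|_{H^{s+1-\de_d}}<\infty\}$. Taking the divergence in $\beta$ of the curl equation and commuting covariant derivatives with the help of the Ricci identity, the Coulomb gauge condition $\nabla^\alpha A_\alpha=0$ yields the second order equation
\begin{equation*}
\De_g A_\al + \Ric_\al^{\ \be} A_\be = -\nab^\be \Im(\la^\ga_\al \bar\la_{\be\ga}).
\end{equation*}
Using $\De_g = g^{\al\be}(\d^2_{\al\be}-\Ga^\ga_{\al\be}\d_\ga)$ with $g=I_d+h$ and substituting the Ricci curvature from \eqref{Ric}, this can be rewritten schematically as
\begin{equation*}
\De A = -\d(\la\cdot\la) + h\d^2 A + \d h\cdot\d A + (\la\cdot\la)\cdot A,
\end{equation*}
which I will invert as $A = \De^{-1}(\text{RHS})$ and iterate.

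The first step is to bound the leading inhomogeneous term $\De^{-1}\d(\la\cdot\la)$ in $X$. At high frequencies, Moser/algebra estimates in $H^s$ together with the two-derivative gain of $\De^{-1}\d$ give control in $H^{s+1}$, which is more than enough. At low frequencies, I write
\begin{equation*}
\lV P_j |D|^{\de_d} \De^{-1}\d(\la\cdot\la)\rV_{L^2} \lesssim 2^{(\de_d-1)j} \lV P_j(\la\cdot\la)\rV_{L^2}
\end{equation*}
and use Bernstein together with $\la\in H^s\hookrightarrow L^2\cap L^\infty$ to gain factors of $2^{\eta j}$ with $\eta>0$, which is precisely where the shift $\de_d=\de$ in dimension $d=2$ is needed: without it, the low-frequency sum fails to be $\ell^2$. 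In dimension $d\ge 3$ one can take $\de_d=0$ since even $\De^{-1}\d(\la^2)$ is controlled in $L^2$ at low frequency. The perturbative terms $h\d^2 A$, $\d h\cdot\d A$, $(\la\cdot\la)\cdot A$ are handled by standard paraproduct/Moser bounds using the hypothesis $\||D|^{\si_d}h\|_{H^{s+1-\si_d}}\le\ep_0$ and $\la\in H^s$; they contribute terms of size $\ep_0\||D|^{\de_d}A\|_{H^{s+1-\de_d}}$, which close the contraction by smallness.

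For the frequency envelope bound \eqref{A0-envelope}, I apply $S_k$ to the fixed-point equation and run the standard paraproduct bookkeeping: the high--high and high--low interactions produce envelopes slowly varying in $k$, while the contribution of $\De^{-1}\d(\la\cdot\la)$ is dominated by $p_{0k}\cdot\lV\la\rV_{H^s}$ via the envelope product rule. Slow variation of $p_{0k}$ as in \eqref{FreEve-relation} ensures all convolutions in frequency close. For the linearization bound \eqref{A0-linearize}, I differentiate the fixed-point equation and obtain a similar elliptic equation for $\dA$ with a right-hand side schematically of the form $\d(\la\cdot\dla) + \dh\cdot\d^2 A + h\cdot\d^2\dA + (\text{l.o.t.})$, where the coefficients are bounded by the smallness of $A$ and $h$. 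The same estimates then give \eqref{A0-linearize}, noting that the range $\si\in(d/2-2,s]$ is dictated by the need to keep the low-frequency argument above within the Bernstein admissible window.

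The main technical obstacle is the two-dimensional low-frequency analysis: the operator $\De^{-1}\d$ barely fails to map $L^1$ into $L^2$ at frequency zero, and in $d=2$ this forces the $|D|^{\de_d}$ shift in the target norm. The bookkeeping for the perturbative term $h\d^2 A$ at low frequency in $d=2$ is the place where one must carefully exploit that both $\||D|^{\si_d} h\|$ and $\||D|^{\de_d} A\|$ come with positive homogeneous weights, so that the low-frequency losses from $\De^{-1}\d$ are absorbed by these weights. All other steps are routine elliptic perturbation arguments based on the smallness hypothesis \eqref{A-start}.
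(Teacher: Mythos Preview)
Your strategy matches the paper's: derive a second-order elliptic equation from \eqref{Ellp-A-pre} and solve it by contraction in the space $\{|D|^{\delta_d}A \in H^{s+1-\delta_d}\}$, with the $|D|^{\delta_d}$ shift in $d=2$ dictated exactly by the low-frequency behaviour of $\Delta^{-1}\partial$ on products. The paper's derivation is cleaner, however. Rather than working covariantly and picking up the Ricci term, the paper first uses the harmonic-coordinate condition $g^{\alpha\beta}\Gamma^\gamma_{\alpha\beta}=0$ to rewrite the div-curl system entirely in partial derivatives ($\partial^\alpha A_\alpha=0$, $\partial_\alpha A_\beta-\partial_\beta A_\alpha=\Im(\lambda_\alpha^{\ \gamma}\bar\lambda_{\beta\gamma})$), and then computes $\partial_\gamma\partial^\gamma A_\alpha$ directly to obtain the schematic form
\[
\Delta A = \partial(\lambda^2) + \partial(h\,\partial A),
\]
with \emph{both} source terms in divergence form and no Ricci or $(\lambda^2)A$ term at all. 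The payoff is that the fixed-point map only ever involves $|D|^{-1+\delta_d}$ applied to bilinear products, so the low-frequency bookkeeping is uniform across all terms. Your non-divergence term $h\partial^2 A$ can of course be rewritten as $\partial(h\partial A)-\partial h\cdot\partial A$, so your route still closes, just with more cases to check.

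One small correction: the expression $\Delta_g = g^{\alpha\beta}(\partial^2_{\alpha\beta}-\Gamma^\gamma_{\alpha\beta}\partial_\gamma)$ you quote is the Laplace--Beltrami operator on \emph{scalars}. On the one-form $A_\alpha$ the rough Laplacian $g^{\mu\nu}\nabla_\mu\nabla_\nu A_\alpha$ carries additional Christoffel contributions, schematically $\partial h\cdot\partial A$ and $(\partial^2 h+(\partial h)^2)A$, which should be included in your expansion. They are lower order and harmless for the contraction, but the omission should be fixed.
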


\begin{proof}
    Using the definition of covariant derivatives and the harmonic coordinate condition \eqref{hm-coord} we can rewrite the div-curl
    system  \eqref{Ellp-A-pre} for $A$ as
 \[
 \d^\al A_\al= 0, \qquad
\d_\al A_\be-\d_\be A_\al=\Im(\la_{\al\ga}\bar{\la}^\ga_\be).
\]   
Using these equations we derive a  second order elliptic equation for $A$, namely
\[
\begin{aligned}
\partial_\gamma \partial^\gamma A_\alpha = & \  
\partial_\gamma g^{\gamma \beta} \partial_\alpha A_\beta  
- \partial_\gamma g^{\gamma \beta} (\partial_\alpha A_\beta -\partial_\beta A_\alpha) 
\\ 
= & \ (\partial_\gamma g^{\gamma \beta} \partial_\alpha
- \partial_\alpha g^{\gamma \beta} \partial_\gamma)
A_\beta - \partial_\gamma \Im(\la_{\al\sigma}\bar{\la}^{\ga\sigma})
\end{aligned}
\]
Here we have a leading order cancellation in the first term on the 
right, but we prefer to keep the divergence structure and rewrite
this equation schematically in the form
\[
\Delta A = \partial (\lambda^2) + \partial ( h \partial A).
\]
This will be well suited in order to solve this equation via the contraction principle. 
    
Precisely,  we define the map $A\rightarrow \mathcal T(A)$ with $\mathcal T(A)$ satisfying
\begin{equation*}
\De \mathcal T(A) = \partial (\lambda^2) +\d( h\partial A).
\end{equation*}
so that the solution $A$ may be seen as a fixed point for $\mathcal T$.  To use the contraction principle, it suffices to show that, under the assumption \eqref{A-start}, this map  is Lipschitz in the ball $\{ A:\||D|^{\de_d} A\|_{H^{s+1-\de_d}}\leq C\|\la\|_{H^s} \}$
with a small Lipschitz constant. 
This would yield  the existence and uniqueness of solutions for $A$-equations and the bound \eqref{Ellip-A0}.
    
To establish the contraction property,
we consider the linearization of  $\mathcal T$,
\begin{align*}
\De {\bf \delta}\mathcal T(A)=\nab(\la \dla)+\nab(\dh \nab A+h\nab\dA),
\end{align*}
under the assumptions 
\[
\||D|^{\de_d} A\|_{H^{s+1-\de_d}} + \||D|^{\si_d}h\|_{H^{s+1-\si_d}}+ \|\la\|_{H^s}\lesssim \ep_0.
\]
Here we denote by $\tia_{0k}$, $\tc_{0k}$ and $\tp_{0k}$ are admissible frequency envelopes for $|D|^{\de_d} \dA\in H^{\si+1-\de_d}$, $|D|^{\si_d}\dh\in H^{\si+1-\si_d}$ and $\dla\in H^\si$ respectively. Under these assumptions  we will prove that
the above linearization  satisfies the bound
    \begin{equation}      \label{dA0-delta}
    \| S_k |D|^{\de_d} {\bf \delta}\mathcal T(A)\|_{H^{\si+1-\de_d}}\lesssim \ep_0 ( \tia_{0k}+\tc_{0k}+\tp_{0k}),
    \end{equation}
If the bound \eqref{dA0-delta} is true, then by the contraction principle we immediately get a unique small solution $A$ to our 
equations,  as well as the  linearized bound \eqref{A0-linearize}.

    We can also use \eqref{dA0-delta} in order to prove
    the frequency envelope bounds \eqref{A0-envelope}. Indeed, by \eqref{Freq-envelope} and \eqref{Ellip-A0} we have
    \begin{align*}
    a_{0k}= &\ 2^{-\de k}\||D|^{\de_d} A\|_{H^{s+1-\de_d}}+\max_{j}2^{-\de|j-k|}\|S_j|D|^{\de_d} A\|_{H^{s+1-\de_d}}\\
    \lesssim &\  2^{-\de k} (\ep_0\||D|^{\de_d} A\|_{H^{s+1-\de_d}}+\ep_0\|\la\|_{H^s})+\max_{j}2^{-\de|j-k|}(\ep_0 a_{0j}+\ep_0 p_{0j})\\
    \lesssim &\ \ep_0  a_{0k}+\ep_0 p_{0k}.
    \end{align*}
    This implies \eqref{A0-envelope} for $\ep_0$ sufficiently small.
    
    \medskip

    It remains to prove the bound \eqref{dA0-delta}.
    We have
    \begin{align*}
        \|S_k |D|^{\de_d} {\bf \delta}\mathcal T(A) \|_{H^{\si+1-\de_d}}\lesssim \| S_k |D|^{-1+\de_d}(\dh\nab A+h\nab\dA
        +\la\dla)\|_{H^{\si+1-\de_d}}.
    \end{align*}
    Here we only estimate the term $\la\dla$; the others are similar.  Precisely, when $k=0$, using a Littlewood-Paley decomposition, Bernstein's inequality and \eqref{FreEve-relation} we obtain
    \begin{align*}
    \| |D|^{-1+\de_d} S_0(\la\dla)\|_{L^2}\lesssim&\ \|\la\|_{L^2}\|S_0\dla\|_{L^2}+\sum_{j\geq 0} 2^{-\si j}\|\la_j\|_{L^2} 2^{\si j}\|\dla_j\|_{L^2}\\
    \lesssim & \ \|\la\|_{L^2}\tp_{00}+ \sum_{j\geq 0} 2^{(\de-\si)j}\|\la_j\|_{L^2} \tp_{00}\\
    \lesssim & \  \|\la\|_{H^s}\tp_{00}.
    \end{align*}
    When $k>0$, we use Bernstein's inequality to bound the high-low and low-high interactions by $\|\la\|_{H^s}\tp_{0k}$. For the high-high interaction we have
    \begin{align*}
    \| |D|^{-1+\de_d} S_k(\sum_{l>k}\la_l \dla_l)\|_{H^{\si+1-\de_d}}\lesssim & \ \sum_{l>k} 2^{(\si+\frac{d}{2})k}\|\la_l\|_{L^2}\|\dla_l\|_{L^2}\\
    \lesssim & \ \textbf{1}_{\leq -d/2}(\si)\sum_{l>k} 2^{(\si+\frac{d}{2}-\de)k}2^{(\de-\si)l}\|\la_l\|_{L^2}2^{\si l}\|\dla_l\|_{L^2}\\
    &+\textbf{1}_{> -d/2}(\si)\sum_{l>k} 2^{(\si+\frac{d}{2}+\de)(k-l)}2^{(\frac{d}{2}+\de)l}\|\la_l\|_{L^2}2^{\si l}\|\dla_l\|_{L^2}\\
    \lesssim & \ \|\la\|_{H^s}\tp_{0k}.
    \end{align*}    
    This concludes the proof of bound \eqref{dA0-delta}, and completes the proof of the lemma.
\end{proof}

\bigskip

\section{Estimates of parabolic equations}\label{Sec-Para}

Here we consider the solvability of the parabolic system \eqref{par-syst}.
For this purpose we view $\lambda \in l^2X^s$ as a parameter, and 
show that the solution $(h,A) \in \bEE^s$ exists, it is small and has 
a Lipschitz dependence on both the initial data and on $\lambda$.

\begin{thm}\label{para-thm}
a) Let $d \geq 2$, $s > d/2$. Assume that $\|h_0\|_{\bY_0^{s+2}}\leq  \ep$, $\||D|^{\de_d} A_0\|_{H^{s+1-\de_d}}\leq \ep$ and $\|\la\|_{l^2Z^s}\leq \ep$. Then the parabolic system \eqref{par-syst}-\eqref{ini-gA} admits a unique small solution $\SS= (h,A)$ in $\bEE^s$,
with
\begin{equation}   \label{para-bd0}
\|\SS\|_{\bEE^s} \lesssim 
\|\SS_0\|_{\bEE^s_0}
+ \| \la\|_{l^2Z^s}.    
\end{equation}
In addition 
this solution has a Lipschitz dependence on both 
$\SS_0$ in $\bEE^s_0$ and 
$\la$ in $l^2Z^s$.
Moreover, assume that $s_{0k}$ and $p_k$ are admissible frequency envelopes for $(h_0, A_0)\in \bEE_0^{s}$, $\la\in l^2Z^s$ respectively, we have the frequency envelope version 
\begin{equation}   \label{para-env0}
	  \| \SS_k \|_{\bEE^s} \lesssim s_{0k} +\ep p_k.
\end{equation}

b) In addition, for the linearization of the  parabolic system \eqref{par-syst} we have 
the bounds
\begin{equation}         \label{para-s}
\|   \dSS \|_{\bEE^s} \lesssim \|\dSS_0\|_{\bEE_0^{s}}+ \ep \|\dla\|_{l^2Z^s},
\end{equation}
and
\begin{equation}         \label{para-delta0}
\|   \dSS \|_{\EE^\si} \lesssim \|\dSS_0\|_{\HH^{\si}}+ \ep \|\dla\|_{Z^\sigma},
\end{equation}
for $\sigma \in (\frac{d}{2}-2,s]$. 
\end{thm}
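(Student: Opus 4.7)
The plan is to solve the parabolic system \eqref{par-syst} by a contraction mapping argument, viewing it as the heat equation $(\partial_t - \Delta)$ perturbed by nonlinear source terms driven by $(h,A)$ and by the external datum $\la$. Expanding the quasilinear Laplacian as $g^{\al\be}\partial^2_{\al\be} = \Delta + h^{\al\be}\partial^2_{\al\be}$, expanding the covariant Laplacian $\nab_\si\nab^\si$ analogously, and substituting the Gauss formula \eqref{Ric} for the Ricci tensor (so that $\Ric$ becomes $O(\la^2)$), the system takes the schematic form
\begin{align*}
(\partial_t - \Delta) h &= h\,\partial^2 h + (\partial h)^2 + \la^2, \\
(\partial_t - \Delta) A &= h\,\partial^2 A + \partial h\cdot \partial A + \nab(\la^2) + \la\,\nab^A\la + (A+V)\la^2.
\end{align*}
One then defines $\mathcal T(h,A)$ as the Duhamel solution of these equations with the right-hand side evaluated at the input and with data $\SS_0 = (h_0,A_0)$, and shows that $\mathcal T$ contracts on a small ball in $\bEE^s$ of radius $C(\|\SS_0\|_{\bEE^s_0} + \|\la\|_{l^2 Z^s})$.

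The linear piece is controlled directly by the heat flow estimates \eqref{lin-heat}, \eqref{NLin-heat}, which bound $\|e^{t\Delta}h_0\|_{l^2 Z^{\si_d,s+2}}$ and $\|e^{t\Delta}A_0\|_{Z^{s+1}}$ in terms of $\|\SS_0\|_{\bEE^s_0}$. For the $Y^{s+2}$ component of $\bY^{s+2}$ the key observation is that the heat kernel on $[0,1]$ has Gaussian off-diagonal decay at the unit spatial scale (and faster at higher frequency), so the atomic decomposition underlying the $Y_j$ norm is preserved by $e^{t\Delta}$: an atom at spatial scale $2^l$ is mapped essentially to another atom at the same scale. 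The nonlinear contributions reduce to a set of bilinear/trilinear source estimates showing, after Duhamel, that $h\partial^2 h$ and $(\partial h)^2$ are controlled by $\|h\|_{\bY^{s+2}}^2$, that $\la^2$ and its derivatives are controlled by $\|\la\|_{l^2 Z^s}^2$, and that $h\partial^2 A$ is controlled by $\|h\|_{\bY^{s+2}}\|A\|_{Z^{s+1}}$, with analogous bounds for the remaining $A$-source terms. Each such estimate is proved via Littlewood--Paley decomposition: one factor is placed in $L^\infty_{t,x}$ via Bernstein combined with either Sobolev embedding (using $s > d/2$) or the low-frequency weight $2^{\si_d j^-}$ built into $l^2 Z^{\si_d,s+2}$, while the other factor stays in $L^\infty_t L^2_x$; the Duhamel gain $2^{-2j^+}$ from \eqref{NLin-heat} precisely compensates the two derivatives in $\partial^2 h$ and $\partial^2 A$ at high frequency. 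Smallness of $\ep$ then closes the contraction and yields \eqref{para-bd0}; the frequency envelope refinement \eqref{para-env0} follows by running the same estimates on frequency-localized pieces and invoking the slow variation property \eqref{FreEve-relation}.

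For part (b), the linearized equations for $\dSS = (\dh,\dA)$ retain the bilinear structure above, with one factor in each product replaced by $\dSS$ or $\dla$. Estimate \eqref{para-s} at the same regularity $s$ follows from the identical contraction argument. For \eqref{para-delta0} at lower regularity $\si \in (d/2-2,s]$, $\dSS$ is measured only in $\EE^\si$, which drops the refined $Y$-norm on $\dh$; each $\dSS$ or $\dla$ factor in the multilinear estimates must therefore be paired with a background $\SS$ or $\la$ factor at the higher regularity $s$ that supplies the needed $L^\infty$ control via Sobolev embedding, and the admissible range of $\si$ is dictated by these pairings. The main obstacle I anticipate is controlling the $Y^{s+2}$ norm of $h$ at low frequencies in dimension $d=2$, where the Duhamel estimate \eqref{NLin-heat} provides no gain for $j \leq 0$ and the Bernstein losses on the $l^1$-atomic structure become borderline; closing the low-frequency bilinear estimates hinges on the weight $2^{\si_d j^-}$ in $l^2 Z^{\si_d,s+2}$ and on the $Y_0^{lo}$ algebra bound \eqref{algebra-Ylo}.
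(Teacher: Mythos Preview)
Your proposal is essentially correct and follows the same approach as the paper: rewrite \eqref{par-syst} as a perturbation of the flat heat flow with schematic sources $h\partial^2 h + (\partial h)^2 + \la^2$ and $h\partial^2 A + \partial h\,\partial A + \nabla(\la^2) + \cdots$, then close via Duhamel using \eqref{lin-heat}--\eqref{NLin-heat} and Littlewood--Paley bilinear estimates.

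The paper organizes the argument differently, in three stages rather than a single contraction in $\bEE^s$: first (Proposition~\ref{para-prop}) it solves in the weaker space $\EE^s$ via an energy argument on each dyadic piece, establishing \eqref{para-bd}, \eqref{para-envelope}, and \eqref{para-delta}; second (Proposition~\ref{cor-4.3}) it upgrades the $h$ bound from $Z^{\si_d,s+2}$ to $l^2Z^{\si_d,s+2}$ using \eqref{lin-uQ}; third (Proposition~\ref{para-h_thm}) it further upgrades to $Y^{s+2}$. This layered approach lets the $\EE^s$ contraction be done with elementary $L^\infty L^2$ estimates, and the finer $l^2Z$ and $Y$ structures are then obtained as a priori bounds for the already-constructed solution. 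Your single-step contraction would require carrying all three norm structures simultaneously through every bilinear estimate, which is feasible but heavier.

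One correction: the $Y_0^{lo}$ algebra bound \eqref{algebra-Ylo} is not the right tool for the low-frequency $Y^{s+2}$ estimate. The source terms are $h\partial^2 h$ and $\la^2$, not products of $Y^{lo}$ functions, and $\la$ lives only in $l^2Z^s$. The paper handles the low-frequency $Y_j$ bound by direct case analysis on the Littlewood--Paley trichotomy: for low-high/high-low it uses $\|P_{<j}h\|_{L^\infty} \lesssim \||D|^{\si_d}h\|_{H^{s-\si_d}}$ together with the $Y_j$ atomic structure on the high-frequency factor; for high-high at output frequency $j$ it splits the inner sum as $\sum_{|j|>l>j} + \sum_{l\geq|j|}$, uses Bernstein plus the $l^1_{|j|}L^2$ or $l^1_l L^2$ structure, and absorbs the result via the $l^2Z^{\si_d,s+2}$ norm (this is where your correctly identified weight $2^{\si_d j^-}$ enters). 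So the closing mechanism at low frequency is Bernstein combined with the $l^2_l$ spatial summability in $l^2Z^{\si_d,s+2}$, not the $Y^{lo}_0$ algebra.
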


We will do this in two steps. First  we prove that this system is solvable in the larger space $\EE^s$. Then we improve the space-time bounds for the metric $h$ to the stronger norm $\bY^{s+2}$; the latter will be needed in the study of the Schr\"odinger evolution \eqref{mdf-Shr-sys-2}.

\begin{lemma}    \label{eq-Lem1}
	Let $g=I_d+h$. Assume that $\| h\|_{ Z^{\si_d,s+1}}\leq\ep$ for $s>d/2$ and $d\geq 2$. Let $c_k$ and $a_k$ be admissible frequency envelopes for $h\in  Z^{\si_d,s+1}$, respectively $A\in Z^{s}$. Then for any $d/2-2< \si\leq s$ and a linearization operator $\bm \de$ we have
	\begin{gather}  \label{eq-hsi}
	\| \bm \de(h g)\|_{Z^{\si_d,\si+1}}\lesssim  \|\dh\|_{Z^{\si_d,\si+1}},\\
	  \label{eq-Asi}
	\| \bm \de(A g)\|_{Z^{\si}}\lesssim  \|\dA\|_{Z^{\si}}+ \| A\|_{ Z^{s}} \|\dh\|_{Z^{\si_d,\si}},
	\end{gather}
	and hence we have
	\begin{align}   \label{eq-hs} 
	\|S_k(hg)\|_{Z^{\si_d, s+1}}\lesssim  c_k,\\	\label{eq-As}
	\| S_k (A g)\|_{Z^{s}}\lesssim a_k,
	\end{align}
\end{lemma}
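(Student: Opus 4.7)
The plan is to reduce the stated bounds to a short list of bilinear product estimates. Since $g = I_d + h$, the product rule yields
\[
\bm\de(hg) = \dh + \dh \cdot h + h \cdot \dh,
\qquad
\bm\de(Ag) = \dA + \dA \cdot h + A \cdot \dh,
\]
so \eqref{eq-hsi} reduces to the bilinear bound $\|h\dh\|_{Z^{\si_d,\si+1}} \lesssim \|h\|_{Z^{\si_d,s+1}} \|\dh\|_{Z^{\si_d,\si+1}}$, with the smallness $\|h\|_{Z^{\si_d,s+1}} \leq \ep$ absorbing the constant into the leading $\dh$; likewise \eqref{eq-Asi} reduces to the pair
\[
\|\dA \cdot h\|_{Z^\si} \lesssim \|h\|_{Z^{\si_d,s+1}} \|\dA\|_{Z^\si}, \qquad \|A\dh\|_{Z^\si} \lesssim \|A\|_{Z^s} \|\dh\|_{Z^{\si_d,\si}}.
\]

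Each of these bilinear estimates will be proven by a Littlewood--Paley paraproduct decomposition, splitting each product into low-high, high-low, and high-high pieces and controlling the output in both regimes of the $Z$-norm. For high-frequency output, the low-high and high-low pieces are estimated via an $L^\infty \cdot L^2$ pairing placing the low-frequency factor in $L^\infty$, which is legitimate for both $h$ (whose high-frequency part is in $H^{s+1} \subset L^\infty$ and whose $\dot H^{\si_d}$ low-frequency part embeds into $L^\infty$ via the summable Bernstein bounds $\|P_j h\|_{L^\infty} \lesssim 2^{\de j}\||D|^{\si_d}P_j h\|_{L^2}$ for $j \leq 0$) and for $A$ (which sits in $L^\infty$ since $s > d/2$); the high-high pieces sum through the output weight $2^{(\si+1)k}$ or $2^{\si k}$ against the frequency envelopes. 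The low-frequency output from high-high interactions is controlled via the Plancherel bound $\|S_0 f\|_{L^2} \lesssim \|f\|_{L^1}$, reducing to $L^2 \cdot L^2$ pairings summed dyadically.

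The main obstacle is the low-frequency output of $A \cdot \dh$, where both $A$ and $\dh$ carry only weak regularity. The high-low and low-high pieces involve the low-frequency part of $\dh$, which by the $Z^{\si_d,\si}$ bound is merely in $\dot H^{\si_d}$ and hence not in $L^2$. To handle this I will pair it in $L^{d/\de}$ via the embedding $\dot H^{\si_d} \hookrightarrow L^{d/\de}$ (valid since $\si_d = d/2-\de$), with $A$ placed in the dual $L^q$ via Sobolev embedding of $Z^s$; the hypothesis $s > d/2$ makes this pairing admissible. For the high-high piece $\|S_0(P_k A \cdot P_k \dh)\|_{L^2} \lesssim 2^{-(s+\si)k} a_k d_k$, summability requires $s + \si > 0$, which holds in all dimensions $d \geq 2$ under $s > d/2$ and $\si > d/2 - 2$; this is where the lower bound on $\si$ is used.

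Finally, the frequency envelope estimates \eqref{eq-hs}--\eqref{eq-As} are obtained by rerunning the paraproduct analysis on the quadratic pieces $h^2$ and $Ah$, with the envelopes $c_k$ and $a_k$ tracked in place of the norms. At each stage the projection $S_k$ forces the output scale so that the matching envelope appears naturally, and the slow variation \eqref{FreEve-relation} combined with $c_k \leq \ep$ ensures the auxiliary $k$-sums are absorbed into $c_k$ or $a_k$, yielding the claimed envelope bounds.
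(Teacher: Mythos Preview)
Your proposal is correct and follows essentially the same approach as the paper: reduce via $g=I_d+h$ to bilinear products, then estimate these via a Littlewood--Paley paraproduct decomposition together with Bernstein's inequality. The paper's proof is extremely terse (just citing ``Littlewood--Paley decomposition, Bernstein's inequality and the smallness of $h$'' and recording the resulting frequency-envelope bounds), so your write-up supplies more detail than the paper does---in particular your explicit treatment of the low-frequency output of $A\cdot\dh$ via the Sobolev embedding $\dot H^{\si_d}\hookrightarrow L^{d/\de}$ and the $L^1\to L^2$ Bernstein bound for the high--high piece, where the constraint $\si>d/2-2$ enters through $s+\si>0$, is a point the paper leaves implicit.
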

\begin{proof}
	Assume that $\tc_k(\si)$ and $\tia_k$ are admissible frequency envelopes for $\dh \in  Z^{\si_d,\si}$ and $\dA\in  Z^{\si}$. Using a Littlewood-Paley decomposition, Bernstein's inequality and the smallness of $h$ we obtain
	\begin{align*}
	\| S_k(\dh h)\|_{ Z^{\si_d,\si+1}}\lesssim \ep \tc_k(\si+1)+\|\dh\|_{ Z^{\si_d,\si+1}}c_k.
	\end{align*}
	This implies  \eqref{eq-hsi} and \eqref{eq-hs} immediately.
    For $A$ we have
	\begin{align*}
	\| S_k(\dA h)\|_{ Z^{\si}}\lesssim \ep \tia_k,
	\end{align*}
	and
	\begin{align*}
	\| S_k(A \dh)\|_{Z^{\si}}\lesssim a_k \|\dh\|_{ Z^{\si_d,\si}} +\| A\|_{ Z^{s}} \tc_k(\si).
	\end{align*}
	These give \eqref{eq-Asi} and \eqref{eq-As}.
\end{proof}

Now, we begin to solve the parabolic system \eqref{par-syst} with initial data \eqref{ini-gA} as follows:

\begin{prop}\label{para-prop}
a) Assume that $\|(h_0,A_0)\|_{\HH^s}\leq \ep$ and $\|\la\|_{Z^s}\leq \ep$ for $s > d/2$ and $d \geq 2$. Then the parabolic system \eqref{par-syst}-\eqref{ini-gA} admits a unique small solution $\SS= (h,A)$ in $\mathcal E^s$,
with
\begin{equation}   \label{para-bd}
\|\SS\|_{\mathcal E^s} \lesssim \|\SS_0\|_{\HH^{s}}+ \| \la\|_{Z^s}.    
\end{equation}
In addition 
this solution has a Lipschitz dependence on
$\SS_0$ in $\HH^{s}$ and
$\la$ in $Z^s$.
Moreover, assume that $s_{0k}$ and $p_k$ are admissible frequency envelopes for $\SS_0\in \HH^{s}$, $\la\in Z^s$ respectively, then we have the frequency envelope version 
\begin{equation}   \label{para-envelope}
	  \| \SS_k \|_{\mathcal E^s} \lesssim s_{0k} +\ep p_k.
\end{equation}

b) In addition, for the linearization of the  parabolic system \eqref{par-syst} we have 
the bounds
\begin{equation}         \label{para-delta}
\|   \dSS \|_{\mathcal E^\si} \lesssim \|\dSS(0)\|_{\HH^{\si}}+ \ep \|\dla\|_{Z^\sigma},
\end{equation}
for $\sigma \in (\frac{d}{2}-2,s]$. 
\end{prop}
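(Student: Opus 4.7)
The plan is a Banach fixed-point argument in a small ball of $\mathcal E^s$. Rewrite each equation in \eqref{par-syst} so that the flat heat operator $\partial_t - \Delta$ is on the left, moving the perturbative part $(g^{\alpha\beta} - \delta^{\alpha\beta})\partial^2_{\alpha\beta}$ of the variable-coefficient Laplacian, and (for $A$) the commutator piece in $\nab_\sigma \nab^\sigma - \partial_\sigma \partial^\sigma$, to the right. Schematically this yields
\[
(\partial_t - \Delta) h = h \cdot \partial^2 h + \partial h \cdot \partial h + \lambda \cdot \bar\lambda, \qquad (\partial_t - \Delta) A = \partial(\lambda \cdot \bar\lambda) + \text{l.o.t.},
\]
where the remaining terms are schematic products of $h$, $\partial h$, $A$, $\lambda$, and $V = g \cdot \partial h$, $B = \nab^\alpha A_\alpha$. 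Define the solution map $\mathcal T$ that sends $(h, A) \in \mathcal E^s$ to the unique solution of the linear heat equations with these sources and the prescribed initial data, using \eqref{lin-heat}--\eqref{NLin-heat} to pass from $L^\infty_t L^2$ bounds on the source at two derivatives below to the $\mathcal E^s$-norm.

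First show that $\mathcal T$ preserves a ball of radius $C(\|\SS_0\|_{\HH^s} + \|\lambda\|_{Z^s})$. Products involving $g = I_d + h$ are absorbed by \lemref{eq-Lem1}; pure $\lambda$-products and mixed $A \cdot \lambda$ products are handled by the Littlewood-Paley trichotomy (high-low, low-high, high-high), with the high-high case treated via Bernstein's inequality in dimension $d \geq 2$. Because all nonlinear sources are at least quadratic in small variables (and every quadratic term has one factor placed in $L^\infty$ via Sobolev embedding, using $s > d/2$), the image size gains a factor $\epsilon$ beyond the linear contribution, yielding \eqref{para-bd}. Contraction follows by differencing two input pairs: the difference satisfies the same heat system with sources given by the linearization, and the same trichotomy estimates give an $\mathcal O(\epsilon)$ Lipschitz constant, which simultaneously yields the Lipschitz dependence on $(\SS_0, \lambda)$. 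The frequency-envelope bound \eqref{para-envelope} is then proved by applying these same bilinear estimates to $S_k \mathcal T$, using $s_{0k}$ for the linear heat propagation of data (which is diagonal in frequency up to tails from \eqref{lin-heat}--\eqref{NLin-heat}) and $p_k$ for the $\lambda$-quadratic sources, while the $h$- and $A$-dependent sources are controlled by the envelopes $c_k, a_k$ from \lemref{eq-Lem1} and absorbed into the left-hand side via smallness.

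The principal technical obstacle is the linearization bound \eqref{para-delta} at regularity $\sigma$ strictly below $s$, since the background factors must be controlled in their high-regularity spaces while the linearized factors live in the weaker $\mathcal E^\sigma \times Z^\sigma$. The threshold $\sigma > d/2 - 2$ is dictated by the quadratic term $\lambda \cdot \dla$ appearing in $\bm\delta h$: after absorbing the two derivatives from the inverse heat operator, the high-high interaction requires the Bernstein gain $2^{(\sigma + d/2)k}$ to beat the $\sum 2^{-\sigma l}$ summation, which is exactly the stated range. All other products are easier: tame products $h \cdot \partial^2 \dh$, $\dh \cdot \partial^2 h$, etc., are handled by placing the factor containing the background $h$ or $A$ in $Z^{\sigma_d, s+1} \cap L^\infty$ and the linearized factor in $\mathcal E^\sigma$, invoking the same trichotomy as in part (a). The structural identity with part (a) then closes the argument via a linear heat estimate in $\mathcal E^\sigma$, completing the proof of \propref{para-prop}.
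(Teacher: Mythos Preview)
Your proposal is correct and follows essentially the same approach as the paper: a contraction argument in $\mathcal E^s$ based on frequency-localized linear heat estimates combined with Littlewood--Paley trichotomy bounds on the nonlinear sources, with the key frequency-envelope bound \eqref{par-h_keybd} serving simultaneously to close the fixed point, to obtain \eqref{para-envelope}, and to prove the linearization estimate \eqref{para-delta}. The only minor methodological difference is that the paper derives the linear heat bound directly via an energy estimate (multiplying by $u_k$ and using Bernstein to get \eqref{lin-u}) rather than invoking \eqref{lin-heat}--\eqref{NLin-heat}, but these are interchangeable here since the $Z^{\sigma,s}$ norm carries no spatial $l^r_j$ structure.
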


\begin{proof}
    First, we consider a linear equation and prove a linear estimate. Precisely, assume that the frequency localized function $u_k$ is solution of the linear equation
	\begin{equation*}%\label{heat-paradiff}
	\d_t u_k-\De u_k=f_k,\qquad
	u_k(0)=u_{0k}.
	\end{equation*}
	Then by Bernstein's inequality we have the linear estimates
	\begin{align*}
	\frac{1}{2}\frac{d}{dt}\| u_k\|_{L^2}^2
	\leq &- c 2^{2k} \| u_k\|_{L^2}^2+\|u_k\|_{L^2}\|f_k\|_{L^2}.
	\end{align*}
	We cancel one $\|u_k\|_{L^2}$, then multiply both sides by $e^{c 2^{2k}t}$ and integrate in time to obtain
	\begin{equation}     \label{lin-u}
	\begin{aligned}
	\| u_k(t)\|_{L^2}
	\lesssim &e^{-c 2^{2k}t}\|u_{0k}\|_{L^2}+2^{-2k} \|f_k(s)\|_{L^\infty L^2}.
	\end{aligned}
	\end{equation}

    In order to solve \eqref{par-syst} with small initial data, it suffices to consider the following linearized equations
    \begin{align*}
        &\d_t \dh_k -\De \dh_k=\mathcal N_1,\qquad
        \d_t \dA_k -\De\dA_k=\mathcal N_2,
    \end{align*}
    where the nonlinearities $\mathcal N_1$ and $\mathcal N_2$ are
    \begin{align*}
	\mathcal N_1=&S_k(\dh\nab^2 h+h\nab^2\dh+\dh\nab h\nab h+g\nab h \nab\dh+\la\dla),\\
    \mathcal N_2=&S_k(h\nab^2 \dA+\dh \nab^2 A+\nab h\nab \dA+\nab\dh\nab A\\
    &+\nab^2 h\ \dA+\nab^2\dh\  A
    +\nab h\nab h \dA+\nab h\nab\dh A
    +\la\nab\dla\\
    &+\dla\nab\la
    +\la\dla(\nab h+A)+\la^2(\nab\dh+\dA)),
    \end{align*}
    with $h$, $A$ and $\la$ satisfying $\|(h,A)\|_{\EE^s}\lesssim \ep$, $\|\la\|_{Z^s}\leq \ep$.
    Then we will prove the bound
    \begin{equation}\label{par-h_keybd}
    \begin{aligned}   
    \| S_k\dSS\|_{\EE^{\si}}
    \lesssim  \ts_{0k}+\ep(\ts_{k}+\tp_k)+(s_k+p_k)(\| \dSS\|_{\EE^{\si}}+\|\dla\|_{Z^\si}),
    \end{aligned}
    \end{equation}
    where $\ts_{0k}$, $\ts_k$, $\tp_k$ and $s_k$ are admissible frequency envelopes for $\dSS(0)\in \HH^\si$, $\dSS\in \EE^\si$, $\dla\in Z^\si$ and $\SS\in \EE^s$ respectively.
    
    \medskip 
    
Assuming the bound \eqref{par-h_keybd} is true, then we can use the contraction mapping principle to solve the parabolic system \eqref{par-syst} in the space
    \[ \big\{ \SS=(h,A)\in \EE^s:\|\SS\|_{\EE^{s}}\leq C(\|\SS_0\|_{\HH^{s}}+ \| \la\|_{Z^s})\leq 2C\ep \big\}, \]
    which also implies the bound \eqref{para-bd}.
    
    By the definition of frequency envelopes \eqref{Freq-envelope} and \eqref{para-bd}, the bound \eqref{par-h_keybd} with $\si=s$ and ${\bm\de}=Id$ implies
    \begin{align*}
    s_k    \lesssim  s_{0k}+\ep (s_k+p_k).
    \end{align*}
    Thus the bound \eqref{para-envelope} follows.
    By \eqref{para-bd}, the bound \eqref{par-h_keybd} also gives \eqref{para-delta}.
    
    \medskip
    
    We now return to the proof of \eqref{par-h_keybd}. By the energy estimates in \eqref{lin-u} we have
    \begin{align*}
    \| S_k\dSS(t)\|_{\EE^{\si}}
    \lesssim \|S_k\dSS_0\|_{\HH^{\si}}
    +\| |D|^{\si_d} S_k\mathcal N_1\|_{L^\infty H^{\si-\si_d}}
    +\| S_k\mathcal N_2\|_{L^\infty H^{\si-1}}.
    \end{align*}
    The estimates for the nonlinearities are similar, here we only estimate the following terms. 
    
    \medskip 
    \emph{A. The estimate for the terms  $h\nab^2\dh$ and $\la\dla$ in $\mathcal N_1$.} Using a Littlewood-Paley decomposition we have
    \begin{align*}
    \|S_k(h\nab^2\dh)\|_{L^\infty H^\si}
    \lesssim &\ 2^{\si k} \| h_{\leq k}\|_{L^\infty L^\infty}\|\nab^2\dh_k\|_{L^\infty L^2}+\sum_{l\leq k} 2^{\si k+(d/2+2)l}\| h_{k}\|_{L^\infty L^2}\|\dh_l\|_{L^\infty L^2}\\
    &+\sum_{l> k} 2^{(\si+d/2) (k-l)+(\si+d/2+2)l}\| h_{l}\|_{L^\infty L^2}\|\dh_l\|_{L^\infty L^2}\\
    \lesssim &\ \||D|^{\si_d} h\|_{L^\infty H^{s-\si_d}}\tc_k+2^{sk+2\de k} \|h_k\|_{L^\infty L^2}\tc_k
    +\tc_k \||D|^{\si_d}h\|_{L^\infty H^{s+1-\si_d}}\\
    \lesssim &\ \tc_k \||D|^{\si_d}h\|_{L^\infty H^{s+1-\si_d}},
    \end{align*}
    and
    \begin{align*}
    \|S_k(\la\dla)\|_{L^\infty H^\si}
    \lesssim &\ 2^{\si k} \| \la_{\leq k}\|_{L^\infty L^\infty}\|\dla_k\|_{L^\infty L^2}+\sum_{l\leq k} 2^{\si k+dl/2}\| \la_{k}\|_{L^\infty L^2}\|\dla_l\|_{L^\infty L^2}\\
    &+\sum_{l> k} 2^{(\si+d/2) (k-l)+(\si+d/2)l}\| \la_{l}\|_{L^\infty L^2}\|\dla_l\|_{L^\infty L^2}\\
    \lesssim &\ \| \la\|_{L^\infty H^s}\tp_k+p_k \|\dla\|_{L^\infty H^\si}+\sum_{l> k} 2^{(\si+d/2-\de) (k-l)}\| \la_{l}\|_{L^\infty H^{d/2}}\tp_k\\
    \lesssim &\ \tp_k\| \la\|_{L^\infty H^s}+p_k \|\dla\|_{L^\infty H^\si}.
    \end{align*}
    
    \medskip 
    \emph{B. The estimate for the terms $\nab^2 h\dA$ and $\la\nab\dla$ in $\mathcal N_2$}. The second term is estimated in the same manner as the above bound for $\la\dla$ in $\mathcal N_1$. For the first term $\nab^2 h\dA$ we have
    \begin{align*}
    &\| S_k(\nab^2 h\dA)\|_{L^\infty H^{\si-1}}\\
    \lesssim &\ \| |D|^{\si_d} h\|_{L^\infty H^{s+1-\si_d}}\| \dA_k\|_{L^\infty H^{\si}}+\sum_{l\leq k} 2^{\si k+k}\| h_{k}\|_{L^\infty L^2}\|\dA_l\|_{L^\infty H^{d/2}}\\
    &+\sum_{l> k} 2^{(\si+d/2-1) (k-l)+(\si+d/2+1)l}\| h_{l}\|_{L^\infty L^2}\|\dA_l\|_{L^\infty L^2}\\
    \lesssim &\ \ep\tia_k+c_k\|\dA\|_{L^\infty H^{\si+1}}+\sum_{l> k} 2^{(\si+d/2-1) (k-l)}\| h_{l}\|_{L^\infty H^{d/2+\de}}\tia_k\\
    \lesssim &\ \ep\tia_k+c_k\|\dA\|_{L^\infty H^{\si+1}}.
    \end{align*}
    This concludes the proof of the bound \eqref{par-h_keybd}, and completes the proof of the theorem.
\end{proof}

\medskip

We continue with the bound for the $l^2Z^{\si_d,s+2}$-norm of the metric $h$.

\begin{prop}\label{cor-4.3}
Assume that $\|(h_0,A_0)\|_{\HH^s}\leq \ep$ and $\|\la\|_{l^2Z^s}\leq \ep$ for $s>d/2$ and $d\geq 2$. Then the solution $h$ also belongs to $l^2Z^{s+2}$ and satisfies the bounds
\begin{equation}   \label{h-lZ}
\| h\|_{l^2Z^{\si_d,s+2}} \lesssim \||D|^{\si_d}h_0\|_{H^{s+2-\si_d}}+ \| \la\|_{l^2 Z^s}.
\end{equation}
Assume that $c_{0k}$ and $p_k$ are admissible frequency envelopes for $|D|^{\si_d}h_0\in H^{s+2-\si_d}$, $\la\in l^2Z^s$ respectively. Then we have the frequency envelope bounds
\begin{equation}  \label{h-lZ_envelope}
\|  S_k  h\|_{l^2 Z^{\si_d,s+2}} \lesssim  c_{0k}+\ep p_k.
\end{equation}
Finally, for the linearization of the $h$-equations we have the bounds
\begin{equation}  \label{h-lZ_sigma}
\|  \dh\|_{l^2Z^{\si_d,s+2}} \lesssim  \|   |D|^{\si_d}\dh_0\|_{H^{s+2-\si_d}}+\ep\|\dla\|_{l^2Z^{s}}.
\end{equation}
\end{prop}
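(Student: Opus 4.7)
The plan is to recast the metric equation in \eqref{par-syst} as a perturbation of the constant-coefficient heat flow by writing $g^{\alpha\beta}\partial^2_{\alpha\beta} h = \Delta h + h^{\alpha\beta}\partial^2_{\alpha\beta} h$ and absorbing the perturbation on the right, obtaining the schematic equation
\[
(\partial_t - \Delta) h = \mathcal{N}_h, \qquad \mathcal{N}_h \sim h \nabla^2 h + \nabla h \cdot \nabla h + \lambda\bar\lambda,
\]
and then using Duhamel's formula together with the frequency-localized heat kernel estimates \eqref{lin-heat}--\eqref{NLin-heat}. Since Proposition \ref{para-prop} already supplies the $\EE^s$-bound for $h$, the only missing ingredient is the $l^2$ spatial summation at scale $2^{|k|}$ that distinguishes $l^2 Z^{\si_d,s+2}$ from $Z^{\si_d,s+2}$, and it is precisely the content of those two heat kernel lemmas that this $l^2_{|k|}$-structure is propagated by the heat flow. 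Applying them dyadically yields the linear bound
\[
\|P_k h\|_{l^2_{|k|}L^\infty L^2} \lesssim \|P_k h_0\|_{l^2_{|k|}L^2} + 2^{-2k^+}\|P_k \mathcal{N}_h\|_{l^2_{|k|}L^\infty L^2},
\]
reducing the entire proof to bilinear estimates on $\mathcal{N}_h$ measured in the $l^2_{|k|}$-summed norm.

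The bilinear estimates for each piece of $\mathcal{N}_h$ proceed by a standard Littlewood--Paley trichotomy. In the unbalanced (low-high and high-low) cases the low-frequency factor is placed in $L^\infty$ via Bernstein using the $\EE^s$-bound for $h$ or the Sobolev embedding for $\lambda \in l^2 Z^s$, while the high-frequency factor carries the $l^2_{|k|}$-structure directly from the envelope $c_k$ (for $h$) or $p_k$ (for $\lambda$). In the balanced (high-high) case, with both factors localized at a common frequency $2^l$ for $l \geq |k|^+$, one applies Cauchy--Schwarz at the level of cubes of side $2^{|k|}$ combined with Bernstein at output frequency $2^k$ to convert the resulting $L^1$-norm back to $L^2$; this produces off-diagonal decay in $l - |k|$.

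The main technical obstacle lies in the high-high interactions producing output at low frequency $k < 0$: the partition cubes have large side $2^{|k|}$, the low-frequency weight $2^{\si_d k^-}$ in the definition of $l^2 Z^{\si_d, s+2}$ is nontrivial, and one must sum convergently over internal scales $l \geq |k|^+$. The needed gain is ultimately a consequence of the strict inequality $\si_d < d/2$, which, once paired against the $l^2 Z^s$-norm of $\lambda$ (or the $l^2 Z^{\si_d, s+2}$-norm of $h$) through Bernstein, delivers a small positive exponential factor $2^{-\delta(l - |k|)}$ making the sum converge. Once these bilinear bounds are in hand, the three conclusions \eqref{h-lZ}, \eqref{h-lZ_envelope}, and \eqref{h-lZ_sigma} follow by the same frequency-envelope/contraction iteration already used in the proof of Proposition \ref{para-prop}, but with all nonlinear inputs and outputs now measured in the stronger $l^2$-summed norms.
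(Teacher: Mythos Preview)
Your proposal is correct and follows essentially the same approach as the paper: recast the $h$-equation as a perturbation of the constant-coefficient heat flow, apply Duhamel together with the frequency-localized heat kernel bounds \eqref{lin-heat}--\eqref{NLin-heat} to obtain exactly the linear estimate you wrote (this is the paper's ``Step~1''), and then close via a Littlewood--Paley trichotomy on the schematic nonlinearity $h\nabla^2 h + \nabla h\nabla h + \lambda\bar\lambda$, with the high-high/low-output case handled using $\si_d < d/2$ and the three conclusions deduced from the resulting frequency-envelope inequality just as in Proposition~\ref{para-prop}.
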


\smallskip

\begin{proof}[Proof of Proposition \ref{cor-4.3}]
We split the proof into two steps, where we first prove the 
appropriate bound for the linear constant coefficient heat flow and then we apply that bound to solve the nonlinear problem
perturbatively.

\smallskip

\emph{Step 1. Here we consider the linear equations 
\begin{equation}     \label{linEq-uQ}
    \d_t P_k u-\De P_k u=P_k f,
\end{equation}
with $P_k u$ localized at frequency $2^k$ for $k\in \mathbb Z$, and prove that
\begin{align}   \label{lin-uQ}
    \|P_k u\|_{l_{|k|}^2L^\infty L^2}
    \lesssim &\|P_k u(0)\|_{L^2}+2^{-2k^+} \| P_k f\|_{l_{|k|}^2L^\infty L^2}.
\end{align}
}

By Duhamel's formula, we have 
\begin{equation*}
    \|P_k u\|_{l^2_{|k|}L^\infty L^2}\lesssim \|e^{t\De} P_k u_0\|_{l^2_{|k|}L^\infty L^2}+\|\int_0^t  e^{(t-s)\De} P_k f ds\|_{l^2_{|k|}L^\infty L^2}.
\end{equation*}
Then we use \eqref{lin-heat} and \eqref{NLin-heat} to bound the above two terms respectively, then we obtain \eqref{lin-uQ}.

\bigskip

\emph{Step 2.}
	Here it suffices to write the linearized $h$ equation in the form
	\begin{equation*}
	\d_t \dh-\De\dh =\dh\nab^2 h+h\nab^2\dh+\dh\nab h\nab h+g\nab h \nab\dh+\la\dla:=\mathcal N,
	\end{equation*}
	and to prove that
	\begin{equation}    \label{h-lZ-key}
	\begin{aligned}
	\|S_k\dh\|_{l^2 Z^{\si_d,s+2}}\lesssim & \||D|^{\si_d}S_k\dh_0\|_{H^{s+2-\si_d}}+\ep (\tc_k+\tp_k) \\
	&+ (c_k+p_k)( \|\dh\|_{l^2Z^{\si_d,s+2}}+\|\dla\|_{l^2Z^{s}}),
	\end{aligned}
	\end{equation}
	where $\tc_k,\ \tp_k$ and $c_k$ are admissible frequency envelopes for $\dh \in l^2 Z^{\si_d,s+2}$, $\dla\in l^2Z^{s}$ and $h\in l^2 Z^{\si_d,s+2}$ respectively.
	
	If the bound \eqref{h-lZ-key} is true, then we choose the operator ${\bm \de}=Id$ to obtain \eqref{h-lZ}. Then by \eqref{h-lZ-key} and \eqref{Freq-envelope} we also obtain \eqref{h-lZ_envelope}. The bound \eqref{h-lZ-key} combined with \eqref{h-lZ} also implies \eqref{h-lZ_sigma}.

	We now continue with the proof of \eqref{h-lZ-key}.
	By \eqref{lin-uQ} we have
	\begin{align*}
	\| S_k \dh\|_{l^2Z^{\si_d,s+2}}
	\lesssim &\| |D|^{\si_d}\dh_{0k}\|_{H^{s+2-\si_d}}+\|S_k  \mathcal N\|_{l^2Z^{\si_d,s}}.
	\end{align*}
	For the nonlinearities, we only estimate $h\nab^2 \dh$ and $\la\dla$, the others are estimated similiarly. 
	Indeed, using a Littlewood-Paley decomposition we have 
	\begin{align*}
	\|P_k(h\nab^2\dh)\|_{l^2_{|k|}L^\infty L^2}\lesssim &\ \| h\|_{L^\infty L^\infty}2^{2k}\|P_k\dh\|_{l^2_{|k|}L^\infty L^2}
	+\| P_k h\|_{l^2_{|k|}L^\infty L^2}\| \nab^2\dh\|_{L^\infty L^\infty}\\
	&+\sum_{|k|\geq l>k}  2^{\frac{d}{2}k+2l} \| P_l h\|_{L^\infty L^2}\| P_l\dh\|_{l^2_lL^\infty L^2}\\
	&+\sum_{l>|k|}  2^{(\frac{d}{2}+2)l} \| P_l h\|_{L^\infty L^2}\| P_l\dh\|_{l^2_lL^\infty L^2}.
	\end{align*}
	By this estimate and Sobolev embedding we obtain
	\begin{align*}
	    \|S_k(h\nab^2\dh)\|_{l^2 Z^{\si_d,s}}\lesssim &\  \||D|^{\si_d}h\|_{L^\infty H^{s-\si_d}} \tc_k+c_k \||D|^{\si_d}h\|_{L^\infty H^{s+2-\si_d}}.
	\end{align*}
	For the term $\la\dla$, we also have
	\begin{align*}
	2^{s k}\|S_k(\la\dla)\|_{l^2_kL^\infty L^2}\lesssim &\ \| \la\|_{l^2Z^s}\|\dla_k\|_{L^\infty H^{s}}+
	\| \la_k\|_{l^2_kL^\infty H^s}\| \dla\|_{Z^{s}}\\
	&+\sum_{l>k} 2^{s k} 2^{\frac{d}{2}l} \| \la_l\|_{L^\infty L^2}\| \dla_l\|_{l^2_lL^\infty L^2}\\
	\lesssim &\ \ep \tp_k + p_k \|\dla\|_{l^2Z^{s}}.
	\end{align*}
	This completes the proof of Proposition \ref{cor-4.3}.
\end{proof}

\bigskip

Finally, we carry out the last step in the proof of Theorem~\ref{para-thm}, and establish bounds for the solutions $h$ in the $Y^{s+2}$ spaces: 

\begin{prop}\label{para-h_thm}
Let $d\geq 2$, $s>d/2$. Assume that $\|(h_0,A_0)\|_{\HH^s}\leq \ep$ and $\|\la\|_{l^2Z^s}\leq \ep$. Then we have the bound
\begin{equation}  \label{h-bdd_Y}
\|   h\|_{Y^{s+2}} \lesssim  \| h_0\|_{\bY_0^{s+2}}+\|\la\|_{l^2Z^s}.
\end{equation}
with Lipschitz dependence on the initial data in these topologies. Moreover, assume that $c_{0k}$ and $p_k$ are admissible frequency envelope for $h(0)\in \bY_0^{s+2}$ and $\la\in l^2 Z^{s}$, then we have the frequency envelope version 
\begin{equation}   \label{h-Y_envelope}
	  \| S_k h \|_{Y^{s+2}} \lesssim c_{0k}+ \ep p_k   .
\end{equation}
In addition, for the linearization of the  elliptic system \eqref{par-syst} we have 
the bounds
\begin{equation}         \label{h-Y_linear}
\|   \dh \|_{Y^{s+2}} \lesssim \|   \dh_0 \|_{\bY_0^{s+2}} +\ep\|\dla\|_{l^2Z^{s}}.
\end{equation}
\end{prop}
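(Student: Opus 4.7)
\medskip

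\textbf{Proof plan.} The strategy parallels the proof of Proposition~\ref{cor-4.3}, but now working in the refined $Y^{s+2}$ norm rather than $l^2 Z^{\sigma_d,s+2}$. As before, by establishing the frequency-envelope version of the linearized estimate
\begin{equation*}
\lV S_k \dh \rV_{Y^{s+2}} \lesssim \lV S_k \dh_0 \rV_{\bY_0^{s+2}} + \ep(\tilde c_k + \tilde p_k) + (c_k + p_k)(\lV \dh\rV_{Y^{s+2}} + \lV \dla \rV_{l^2 Z^{s}}),
\end{equation*}
where $\tilde c_k, \tilde p_k, c_k, p_k$ are admissible envelopes for $\dh, \dla, h, \la$ in the appropriate spaces, all three conclusions \eqref{h-bdd_Y}, \eqref{h-Y_envelope} and \eqref{h-Y_linear} follow by the same combination of the contraction principle, the definition \eqref{Freq-envelope} of frequency envelopes, and linearization that we used in Section~\ref{Sec-Para}.

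The first task is a linear heat estimate adapted to the $Y_j$ space: if $u$ is frequency-localized at $2^k$ (with $k\in\Z$) and solves $(\d_t-\De)u = f$ with $u(0)=u_0$, then on the interval $[0,1]$ we need
\begin{equation*}
\lV P_k u \rV_{Y_k} \lesssim \lV P_k u_0 \rV_{Y_{0k}} + 2^{-2k^+} \lV P_k f \rV_{Y_k}.
\end{equation*}
This is proved by working atomically: for each atom $h_{k,l}$ in the decomposition $P_k u_0 = \sum_{l\geq |k|} h_{k,l}$ we apply $e^{t\Delta}$ and use the same unit-scale-cube decomposition of the heat kernel as in the proof of \eqref{lin-heat}, noting that because $t\in[0,1]$ the heat flow does not mix spatial scales larger than $2^{|k|}$ in an essential way (the kernel has rapid decay beyond $\max(2^{|k|}, \sqrt t)$). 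For the Duhamel term, the same cube decomposition plus the exponential bound $\| K_k \|_{l^1_{|k|} L^1_x} \lesssim e^{-c 2^{2k^+} t}$ from \eqref{NLin-heat} supplies the $2^{-2k^+}$ gain at high frequency, with no loss at low frequency.

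Given the linear bound, the heart of the matter is estimating the nonlinearity
$\mathcal N = \dh\nabla^2 h + h\nabla^2\dh + \dh\nabla h\nabla h + g\nabla h\nabla\dh + \la\dla$
in $Y^s$. This is done by a paradifferential trichotomy at the level of each Littlewood-Paley piece $P_k \mathcal N$, exactly as in the proof of \eqref{h-lZ-key}. The high-low/low-high interactions such as $h_{<k} \nabla^2 \dh_k$ are controlled by placing $h_{<k}$ in $L^\infty$ (via Bernstein applied to $|D|^{\sigma_d} h \in H^{s-\sigma_d}$) and $\nabla^2 \dh_k$ in $Y_k$; the high-high terms $\sum_{l>|k|} P_k(h_l \nabla^2 \dh_l)$ are the source of the delicate $2^{l-|k|}$ weighting, but since $P_k$ has kernel localized on scale $2^{|k|}$ one can place the bilinear output in an $l^1$-cube norm on the scale $2^l$ at the cost of a Bernstein factor that is absorbed by the frequency gap. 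The $\la\dla$ term is handled by the same split, using the $l^2 Z^s$ control on $\la$.

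The main obstacle will be the low-frequency range $j<0$ in the $Y^{s+2}$ summation, where the weight $2^{2(d/2-\delta)j^-}$ is sharp: one must check that every bilinear interaction in $\mathcal N$ produces at least this much decay at the output frequency $2^j$. In dimension $d\geq 3$ this is straightforward, while for $d=2$ the $\delta>0$ margin in $\sigma_d = d/2-\delta$ is precisely what is needed to absorb the logarithmic losses coming from the high-high$\to$low cascades; this is the same mechanism that forced the low-frequency assumption \eqref{extra-2d} in Theorem~\ref{LWP-thmd=2}, and no further loss is incurred here because the atomic $Y_j$ structure is preserved by $\Delta^{-1}$ acting on products, due to the $2^{l-|j|}$ weight matching the scale on which the paraproduct output lives.
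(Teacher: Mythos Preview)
Your plan is essentially the paper's own argument: write the linearized $h$-equation, prove a frequency-envelope estimate via Duhamel plus the heat bounds \eqref{lin-heat}--\eqref{NLin-heat} adapted to the $Y_j$ atomic structure, and run a paradifferential trichotomy on $\mathcal N$. Your abstraction of the linear step as a single lemma $\|P_k u\|_{Y_k}\lesssim \|P_k u_0\|_{Y_{0k}}+2^{-2k^+}\|P_k f\|_{Y_k}$ is a clean equivalent of what the paper does piecewise inside each nonlinear term.

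Two points need correcting. First, in your displayed key inequality the right-hand side should carry $\|\dh\|_{\bY^{s+2}}$, not $\|\dh\|_{Y^{s+2}}$. In the high--high interactions $\sum_{l>|j|}P_j(h_l\nabla^2\dh_l)$ the natural estimate places each factor in $l^2_l L^\infty L^2$ (Cauchy--Schwarz on the $2^l$ lattice after Bernstein), and this is the $l^2 Z^{\sigma_d,s+2}$ component of $\bY^{s+2}$ established in Proposition~\ref{cor-4.3}, not the $Y^{s+2}$ component; the $Y_l$ norm alone does not control $\|P_l h\|_{l^2_l L^\infty L^2}$. The paper explicitly combines \eqref{h-Y-key} with \eqref{h-lZ} (resp.\ \eqref{h-lZ_envelope}, \eqref{h-lZ_sigma}) to close. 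Second, your final sentence invoking ``$\Delta^{-1}$ acting on products'' is misplaced: there is no elliptic inversion here, only the parabolic Duhamel gain $2^{-2j^+}$, which vanishes for $j\le 0$. The low-frequency part of $Y^{s+2}$ is handled instead by the Bernstein factor $2^{dj/2}$ coming from $P_j(L^1)\to L^2$ on the high--high output, together with the off-diagonal decay in $l$; no extra smoothing is available or needed there.
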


\begin{proof}
    Again it suffices to write the $h$ equation in the form:
	\begin{equation*}
	\d_t \dh-\De \dh=\dh\nab^2 h+h\nab^2\dh+\dh\nab h\nab h+g\nab h \nab\dh+\la\dla:=\mathcal N,
	\end{equation*}
	and to prove that
	\begin{equation}    \label{h-Y-key}
	\begin{aligned}
	\|\dh_k\|_{Y^{s+2}}\lesssim & \|\dh_{0k}\|_{Y_0^{s+2}}
	+\ep (\tc_k+\tp_k)+(c_k+p_k)(\|\dh\|_{\bY^{s+2}}+\|\dla\|_{l^2Z^s}) ,
	\end{aligned}
	\end{equation}
	where $\tc_k$ and $\tp_k$ are admissible frequency envelopes for $\dh \in \bY^{s+2}$ and $\dla\in l^2Z^{s}$ respectively.
	
	If \eqref{h-Y-key} is true, then the bound \eqref{h-bdd_Y} is obtained by \eqref{h-Y-key} with the operator ${\bm\de}=Id$ and the bound \eqref{h-lZ}. We also obtain \eqref{h-Y_envelope} by \eqref{h-Y-key} and \eqref{h-lZ_envelope}. The bound \eqref{h-Y-key} combined with \eqref{h-lZ_sigma} also implies \eqref{h-Y_linear}.
	
	\medskip
	
    We now return to prove the bound \eqref{h-Y-key}. By Duhamel's formula, \eqref{lin-heat} and \eqref{NLin-heat}, we have
    \begin{align*}
    \| S_k\dh\|_{Y^{s+2}}\lesssim \|e^{t\De}\dh_{0k}\|_{Y^{ s+2}}+\|\int_0^t e^{(t-s)\De} S_k\mathcal N\ ds\|_{Y^{ s+2}}.
    \end{align*}
    We estimate the first term in the right hand side. For  any decomposition $P_j \dh(0)=\sum_{l\geq |j|} \dh_{j,l}(0)$, by \eqref{lin-heat} we have
    \begin{align*}
    \| e^{t\De} P_j \dh(0)\|_{Y_j}\lesssim& \inf_{P_j \dh(0)=\sum_{l\geq |j|} h_{j,l}(0)}\sum_{l\geq |j|} 2^{l-|j|}\| e^{t\De} \dh_{j,l}(0)\|_{l^1_{|l|}L^\infty L^2}\\
    \lesssim & \inf_{P_j \dh(0)=\sum_{l\geq |j|} \dh_{j,l}(0)}\sum_{l\geq |j|} 2^{l-|j|}\|  \dh_{j,l}(0)\|_{l^1_{|l|} L^2}\\
    = & \| \dh(0)\|_{Y_{0j}}.
    \end{align*}
    This gives the bound for the first term.

    Next, for the nonlinearities, we only estimate the Duhamel contributions of $h\nab^2\dh$ and $\la\dla$ in detail. 
    In order to bound the contribution of term $h\nab^2\dh$, we use the Littlewood-Paley trichotomy to decompose it into three cases:
   
    \medskip
    
    \emph{a) Low-high interactions: $P_j(P_{<j}h\nab^2 P_{j}\dh)$}.  By \eqref{NLin-heat}, for any decomposition $P_j \dh=\sum_{l\geq |j|} \dh_{j,l}$ we have
    \begin{align*}
        \|\int_0^t e^{(t-s)\De}P_j(P_{<j}h\nab^2 P_j\dh) ds\|_{Y_j}
        =&\
        \sum_{l\geq |j|} 2^{l-|j|} 2^{-2j^+}\| P_j(P_{<j}h\nab^2 \dh_{j,l}) \|_{l^1_{l}L^\infty L^2}\\
        \lesssim & \ 2^{2j-2j^+}
        \|P_{<j}h\|_{L^\infty L^\infty}\sum_{l\geq |j|} 2^{l-|j|}\| \dh_{j,l} \|_{l^1_l L^\infty L^2}\\
        \lesssim & \ 2^{2j-2j^+}\| |D|^{\si_d}h\|_{L^\infty H^{s-\si_d}}\|P_j\dh\|_{Y_j}.
    \end{align*}
    This implies both the low-frequency part bound 
    \begin{align*}
    \|\int_0^t e^{(t-s)\De} \sum_{j\leq 0} P_j(P_{<j}h\nab^2 P_j\dh) ds\|_{Y^{ s+2}}\lesssim \ep\| S_0\dh\|_{Y^{ s+2}},
    \end{align*}
    and the high frequency part bound
    \begin{align*}
    \|\int_0^t e^{(t-s)\De}S_j(P_{<j}h\nab^2 P_j\dh) ds\|_{Y^{ s+2}}\lesssim \ep \| \dh_j\|_{Y^{ s+2}}.
    \end{align*}
    
    \medskip 
    
    \emph{b) The high-low interactions $P_j(P_j h\nab^2 P_{<j+O(1)}\dh)$} are estimated in the same manner as the above \emph{low-high} case, so we omit the computations.
    
    \medskip
    
    \emph{c) High-high interactions: $\sum_{l>j}P_j(P_lh\nab^2 P_l \dh)$}. This sum can be further decomposed as $\sum_{l>j}=\sum_{|j|>l>j}+\sum_{l\geq |j|}$. Then by \eqref{NLin-heat} we bound the contribution of the first term by
    \begin{align*}
        2^{(\frac{d}{2}-\de)j^-}\|\int_0^t e^{(t-s)\De}\sum_{|j|>l>j}P_j(P_lh & \nab^2 P_l \dh) ds\|_{Y_j}
        \lesssim  2^{(\frac{d}{2}-\de)j^-} \sum_{|j|>l>j}
        \| P_j(P_lh\nab^2 P_l \dh) \|_{l^1_{|j|}L^\infty L^2}\\
        \lesssim &\  \sum_{|j|>l>j} 2^{(d-\de)(j-l)+(d+2-\de)l} 
        \| P_lh\|_{l^2_{|j|}L^\infty L^2}\| P_l\dh\|_{l^2_{|j|}L^\infty L^2}\\
        \lesssim & \ \sum_{|j|>l>j} 2^{(d-\de)(j-l)} 
        \| P_lh\|_{l^2 Z^{\si_d,s}}\tc_0.
    \end{align*}
    Also by \eqref{NLin-heat} we bound the contribution of second term by
    \begin{equation}    \label{5.23-hh}
        \begin{aligned}
        &2^{(\frac{d}{2}-\de)j^-+( s+2)j^+}\|\int_0^t e^{(t-s)\De}\sum_{l>|j|}P_j(h_{l}\nab^2 \dh_l) ds\|_{Y_j}\\
        \lesssim &\ 2^{(\frac{d}{2}-\de)j^-+( s+2)j^+}
        \sum_{l>|j|}2^{l-|j|}\|\int_0^t  e^{(t-s)\De} P_j(h_{l}\nab^2 \dh_{l}) ds\|_{l^1_{l}L^\infty L^2}\\
        \lesssim &\  2^{(\frac{d}{2}-\de)j^-+( s+2)j^+}\sum_{l>|j|}2^{l-|j|+dj/2-2j^+ +2l}
        \|h_{l} \|_{l^2_{|j|}L^\infty L^2}\|\dh_{l} \|_{l^2_{|j|}L^\infty L^2}
    \end{aligned}
    \end{equation}
     This term is further controlled by
    \begin{align*}
        \text{LHS\eqref{5.23-hh}}\lesssim &\ 2^{(d+1-\de)j^-+( s+\frac{d}{2}-1)j^+}
        \sum_{l>|j|}2^{3l}\|h_{l}\|_{l^2_{l}L^\infty L^2}\|\dh_{l}\|_{l^2_{l}L^\infty L^2}\\
        \lesssim &\ 2^{(d+1-\de)j^-} \|h \|_{l^2 Z^{\si_d,s+1}} \tc_0+ \mathbf{1}_{>0}(j)\| h\|_{l^2Z^{\si_d,s+1}} \tc_j.
    \end{align*}
    This concludes the proof of the bound for the contribution of
    $h\nab^2\dh$. Next we consider the term $\la\dla$. We also split its  analysis into three cases:
    
    \medskip
    
    \emph{a) Low-high interactions: $P_j(P_{<j}\la P_j\dla)$ and high-low interactions: $P_j(P_j\la P_{<j}\dla)$}. These two cases are similar, we only estimate the first term. By \eqref{NLin-heat}, we have
    \begin{align*}
        \|\int_0^t e^{(t-s)\De}P_j(P_{<j}\la P_j\dla) ds\|_{Y_j}
        \lesssim &\ 2^{-2j^+}
        \| P_j(P_{<j}\la P_j\dla) \|_{l^1_{|j|}L^\infty L^2}\\
        \lesssim &\ 2^{-2j^+}\| \la\|_{l^2Z^s} \|P_j\dla\|_{l^2_{|j|}L^\infty L^2},
    \end{align*}
    which is acceptable.
    
    \medskip 
    
    \emph{b) High-high interactions: $\sum_{l>j}P_j(P_l\la\cdot P_l\dla)$}. This sum can be further decomposed as $\sum_{l>j}=\sum_{|j|>l>j}+\sum_{l\geq |j|}$. By \eqref{NLin-heat} we bound the contribution of the first sum by
    \begin{align*}
        2^{(\frac{d}{2}-\de)j^-}\|\int_0^t e^{(t-s)\De}\sum_{|j|>l>j}P_j(P_l\la\cdot P_l\dla) ds\|_{Y_j}
        \lesssim &\ 2^{(\frac{d}{2}-\de)j^-} \sum_{|j|>l>j}
        \| P_j(P_l\la\cdot P_l\dla) \|_{l^1_{|j|}L^\infty L^2}\\
        \lesssim &\ 2^{(d-\de)j^-} \sum_{|j|>l>j} 
        \| \la_{l}\|_{l^2_{|j|}L^\infty L^2}\| \dla_{l}\|_{l^2_{|j|}L^\infty L^2}\\
        \lesssim &\ 2^{(d-2\de)j^-}  \| \la\|_{l^2Z^{s}}\tp_0.
    \end{align*}
    Next we bound the contribution of the second sum by
     \begin{equation*}   % \label{5.23-hh}
         \begin{aligned}
        &2^{(\frac{d}{2}-\de)j^-+( s+2)j^+}\|\int_0^t e^{(t-s)\De}\sum_{l>|j|}P_j(\la_{l}\dla_l) ds\|_{Y_j}\\
        \lesssim &\ 2^{(\frac{d}{2}-\de)j^-+( s+2)j^+}
        \sum_{l>|j|}2^{l-|j|}\|\int_0^t  e^{(t-s)\De} P_j(\la_{l}\dla_l) ds\|_{l^1_{l}L^\infty L^2}\\
        \lesssim &\  2^{(\frac{d}{2}-\de)j^-+( s+2)j^+}\sum_{l>|j|}2^{l-|j|+dj/2-2j^+}
        \|\la_{l} \|_{l^2_{l}L^\infty L^2}\|\dla_{l} \|_{l^2_{l}L^\infty L^2}\\
        \lesssim & \  2^{(d+1-\de)j^-+( s+\frac{d}{2}-1)j^+}
        \sum_{l>|j|}2^{l}\|\la_{l}\|_{l^2_{l}L^\infty L^2}\|\dla_{l}\|_{l^2_{l}L^\infty L^2}\\
        \lesssim & \  2^{(d+1-\de)j^-}\| \la\|_{l^2Z^s}\tp_0+\mathbf{1}_{>0}(j) \| \la\|_{l^2Z^s}\tp_j.
    \end{aligned}
     \end{equation*}
    This concludes the proof of the bound \eqref{h-Y-key}, and  completes the proof of the proposition.
\end{proof}

\bigskip

\section{Multilinear and nonlinear estimates} \label{Sec-mutilinear}

This section contains our main multilinear estimates which are needed for the analysis  of the Schr\"{o}dinger equation in \eqref{mdf-Shr-sys-2}. We begin with the following low-high bilinear estimates of $\nab h\nab \la$. 

\begin{prop}   \label{bilinear-est}
	Let $s>\frac{d}{2}$, $d\geq 2$ and $k\in \N$. Suppose that $\nab a(x)\lesssim \<x\>^{-1}$, $ h\in  Y^{s+2}$ and $\la_k\in l^2X^s$. Then for $-s\leq  \si\leq s$ we have
	\begin{align}          \label{qdt_d h<k dpsi}
	&\lV \nab h_{\leq k}\cdot\nab\la_k\rV_{l^2N^{\si}}\lesssim \min\{ \lV  h\rV_{Y^{\si+2}}\lV \la_k\rV_{l^2X^s},\lV h\rV_{Y^{s+2}}\lV\la_k\rV_{l^2X^{\si}}\},\\\label{h<0-naba}
	&\lV h_{\leq k}\nab a\nab\la_k\rV_{l^2N^{\si}}\lesssim \min\{ \lV  h\rV_{Y^{\si+2}}\lV \la_k\rV_{l^2X^s},\lV h\rV_{Y^{s+2}}\lV\la_k\rV_{l^2X^{\si}}\}. 
	\end{align}
	In addition, if $d/2-2<\si\leq s-1$ then we have 
	\begin{align}     \label{BiL-si}
	\lV \nab h_{\leq k}\cdot\nab\la_k\rV_{l^2N^{\si}}\lesssim &\min\{ \lV  h\rV_{Z^{\si_d,\si+2}}\lV \la_k\rV_{Z^s},\lV h\rV_{Z^{\si_d,s+2}}\lV\la_k\rV_{Z^{\si}}\},
	\end{align} 
	and if $d/2-2<\si\leq s-2$ then we have 
	\begin{align}     \label{qdt_h-d2psi}
	\lV h_{\leq k}\nab^2\la_k\rV_{l^2N^{\si}}\lesssim & \|h\|_{Z^{\si_d,\si+2}}\|\la_k\|_{l^2X^s}.
	\end{align} 
\end{prop}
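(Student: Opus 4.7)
\medskip

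The four inequalities all concern low-high bilinear interactions where the high-frequency factor is $\la_k$ (or $\nabla\la_k$, $\nabla^2\la_k$) and the low-frequency factor is $h_{\leq k}$ (or $\nabla h_{\leq k}$), possibly multiplied by a spatially decaying function $\nabla a$. The output is measured in the atomic norm $l^2 N^\sigma$. The overarching strategy is to match the spatial atoms appearing in the $N$-characterization to the spatial scales in the $Y_j$-decomposition of $h$, using the local energy ($X$) structure of $\la_k$ to provide the missing integrability on each cube.

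I would treat \eqref{qdt_d h<k dpsi} first. Since $\nabla h_{\leq k}\cdot\nabla\la_k$ is frequency-localized at $\sim 2^k$, it suffices to bound $2^{\sigma k}\|S_k(\nabla h_{\leq k}\cdot\nabla\la_k)\|_{l^2_k N_k}$ (plus the analogous piece when $k=0$ for low frequencies). Decomposing dyadically $\nabla h_{\leq k}=\sum_{j\leq k}\nabla P_j h$, for each $j$ I unfold the $Y_j$-atomic decomposition $P_j h=\sum_{l\geq |j|}h_{j,l}$ with $\|h_{j,l}\|_{l^1_l L^\infty L^2}\leq 2^{|j|-l}\|P_j h\|_{Y_j}$. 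I then invoke the atomic-$N$ inequality at spatial scale $2^l$,
\[
\|u\|_N\lesssim 2^{l/2}\|u\|_{l^1_l L^2 L^2},
\]
and apply H\"older on each cube $Q\in\QQ_l$ to peel off $\nabla h_{j,l}$ in $L^\infty L^2$, keeping $\nabla\la_k$ in $L^2 L^\infty(Q)$. The latter factor is controlled by combining Bernstein (which costs $2^{k(1+d/2)}$) with the $X$-norm on cubes: $\|\la_k\|_{L^2 L^2(Q)}\lesssim 2^{l/2}\|\la_k\|_X\lesssim 2^{(l-k)/2}\|\la_k\|_{X_k}$. Collecting factors, the $2^{|j|-l}$ weight from $Y_j$ absorbs the $2^l$ loss accumulated from the $N$-atoms and the local energy, leaving a geometric series in $l\geq |j|$. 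Summing in $j\leq k$ then gives the two $\min$ versions by distributing the $\sigma$-weight on either $h$ or $\la_k$ via slowly varying envelopes. The two inequivalent regimes $j<0$ and $j\geq 0$ in the $Y$-norm are handled by the same scheme: in the low-frequency regime the larger spatial scales $l\geq |j|$ are exactly what makes the Bernstein/local-energy matching consistent, and the weights $2^{(d/2-\delta)j^-+(s+2)j^+}$ sum without loss.

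The bound \eqref{h<0-naba} follows from exactly the same argument once one absorbs $\nabla a$ as a pointwise multiplier: on a cube $Q\in\QQ_l$ located at distance $2^m$ from the origin, $\|\nabla a\|_{L^\infty(Q)}\lesssim \langle 2^m\rangle^{-1}$, and the Cauchy--Schwarz in $m\geq l$ gains a factor $2^{-l/2}$ which exactly makes up for the missing $\nabla$ on $h$ relative to \eqref{qdt_d h<k dpsi}. For \eqref{BiL-si} and \eqref{qdt_h-d2psi}, the $Y$-norm for $h$ is replaced by the weaker $Z^{\si_d,\cdot}$-norm, so the $l$-summation trick is unavailable; instead I dyadically decompose $h$ in $j$ and use Bernstein directly to pass from the $Z$ norm to $L^\infty$, then combine with the $X$/local-energy bound for $\la_k$ (or its $Z$ analogue) as before. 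The restriction $\sigma>d/2-2$ ensures convergence of the high-high portion $\sum_{j>k}$, which produces factors of the form $2^{(d/2-2-\sigma)(k-j)}$; the additional tightening to $\sigma\leq s-2$ in \eqref{qdt_h-d2psi} accounts for the extra derivative falling on $\la_k$.

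The main technical obstacle is bookkeeping the $j<0$ regime uniformly with the $j\geq 0$ regime, in particular verifying that the $Y_j$ weight $2^{l-|j|}$ is exactly what is needed to absorb the $2^{l/2}$ loss from the atomic $N$ bound combined with the $2^{l/2}$ loss from the local energy cube estimate; this is the one place where the precise form of the $Y$-norm matters, and it explains why the argument collapses when one tries to replace $Y$ by a plain $Z$-type space in \eqref{BiL-si}, thereby necessitating the extra regularity assumption $\sigma>d/2-2$ there.
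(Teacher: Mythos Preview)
Your approach to \eqref{qdt_d h<k dpsi} is essentially what the paper does (by citing the dyadic estimate $\|\nabla P_j h\,\nabla\la_k\|_{l^2_k N_k}\lesssim 2^{dj/2+2j^+}\|P_j h\|_{Y_j}\|\la_k\|_{X_k}$ from \cite{HT21}), and your treatment of \eqref{BiL-si} and \eqref{qdt_h-d2psi} via the trivial inclusion $L^2L^2\subset N_k$ plus Bernstein is also the paper's route. Two points, however, are not right.

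For \eqref{h<0-naba}, the claim that ``Cauchy--Schwarz in $m\geq l$ gains a factor $2^{-l/2}$'' does not hold. The decay $|\nabla a|\lesssim\langle x\rangle^{-1}$ is borderline: if you try to pass from $l^2$ cube norms to $l^1$ by Cauchy--Schwarz against $\langle x\rangle^{-1}$, the weight $\sum_{Q\in\QQ_l}\langle d(0,Q)\rangle^{-2}$ diverges (logarithmically when $d=2$, worse for $d\geq 3$). The paper instead argues by duality, pairing against $z_k\in l^2_k X_k$, and for each atom $h_{j,l}$ splits $\{|x|<2^l\}$ into dyadic shells $\{|x|\sim 2^{l_1}\}$, $0\leq l_1\leq l$; on each shell one uses the $l^\infty_{l_1}L^2L^2$ control of both $\nabla\la_k$ and $z_k$ coming from the $X$ norm, together with $|\nabla a|\lesssim 2^{-l_1}$. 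The shell sum contributes $\sim l$ equal terms, producing a $\log|j|$ loss that is then absorbed by the $\delta$ slack in the weight $2^{(d/2-\delta)j^-}$ defining $Y^{s+2}$. You should expect and account for this logarithmic loss rather than a clean power gain.

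For \eqref{BiL-si}--\eqref{qdt_h-d2psi} there is no high-high portion: $h$ is truncated to frequencies $\leq k$, so every interaction is low-high. The constraint $\sigma>d/2-2$ is not about a sum $\sum_{j>k}$ but about the \emph{low-high} sum $\sum_{0\leq j\leq k}$: after placing $P_j h$ in $L^\infty_{t,x}$ via Bernstein, the $j$-summand carries a factor $2^{(d/2-\sigma-2)j}$, and this sums over $0\leq j\leq k$ only when $\sigma>d/2-2$. Your explanation of the upper restriction $\sigma\leq s-2$ in \eqref{qdt_h-d2psi} is correct.
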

\begin{proof}
	\emph{a) The estimate \eqref{qdt_d h<k dpsi}}. 	This is obtained by a Littlewood-Paley decomposition and the following estimate
	\begin{align*}       
	\|\nab P_j h\nab \la_k\|_{l^2_kN_k}\lesssim 2^{\frac{d}{2}j+2j^+}\|P_j h\|_{Y_j}\|\la_k\|_{X_k},\qquad j\leq k,\ j\in\Z,\ k\in \mathbb{N},
	\end{align*}
	which has been proved in \cite[Lemma 5.1]{HT21}.
	
	\medskip
	
	\emph{b) The estimate \eqref{h<0-naba}.} Compared to  \cite[(5.2)]{HT21}, the estimate \eqref{h<0-naba} is improved by decomposing physical space dyadically.  By duality, it suffices to prove that
	\begin{equation}     \label{IIj}
	II_j= \int_0^1\< P_jh\nab a\nab\la_k,z_k\>dt\lesssim 2^{dj/2}\log |j|\|P_j h\|_{Y_j}\|\la_k\|_{X_k},\ \ j\leq k,\ j\in\Z,
	\end{equation}
	for any $z_k\in l^2_k X_k$ with $\lV z_k\rV_{l^2_k X_k}\leq 1$. For any decomposition $P_jh=\sum_{l\geq |j|}h_{j,l}$, using the bound $|\nab a(x)|\lesssim \<x\>^{-1}$, we consider the two cases $|x|\geq 2^{l}$ and $|x|<2^{l}$ respectively and then obtain
	\begin{align*}
	II_j\lesssim & \ \sum_{l\geq |j|} \sup_{\lV z_k\rV_{l^2_k X_k}\leq 1}\sum_{0\leq l_1\leq l}\int_0^1\<  h_{j,l}\<x\>^{-1}\mathbf{1}_{ [2^{l_1-1},2^{l_1}]}(x)\nab\la_k,z_k\>dt\\
	&+\sum_{l\geq |j|} \sup_{\lV z_k\rV_{l^2_k X_k}\leq 1}\int_0^1\<  h_{j,l}\<x\>^{-1}\mathbf{1}_{>2^{l}}(x)\nab\la_k,z_k\>dt\\
	=& \  II_{j1}+II_{j2}.
	\end{align*}
	By Bernstein's inequality we bound the first term by
	\begin{align*}
	II_{j1}\lesssim & \  \sum_{l\geq |j|} \sup_{\lV z_k\rV_{l^2_k X_k}\leq 1}\sum_{0\leq l_1\leq l}2^{-l_1}\lV  h_{j,l}\rV_{L^{\infty}L^{\infty}} \lV \nab \la_k\rV_{l^{\infty}_{l_1}L^2L^2}\lV z_k\rV_{l^{\infty}_{l_1}L^2L^2}\\
	\lesssim & \ \sum_{l\geq |j|} \sum_{0\leq l_1\leq l}2^{\frac{dj}{2}}\lV  h_{j,l}\rV_{L^{\infty}L^{2}} \lV  \la_k\rV_{X_k}\\
	\lesssim & \ 2^{dj/2}\log |j| \sum_{l\geq |j|}  \frac{\log l}{\log |j|} \lV  h_{j,l}\rV_{l^1_l L^{\infty}L^2} \lV  \la_k\rV_{X_k}.
	\end{align*}
	The second term is bounded by
	\begin{align*}
	II_{j2}\lesssim & \ \sum_{l\geq |j|} 2^{-l}\sup_{\lV z_k\rV_{l^2_k X_k}\leq 1}\lV  h_{j,l}\rV_{l^1_l L^{\infty}L^{\infty}} \lV \nab \la_k\rV_{l^{\infty}_{l}L^2L^2}\lV z_k\rV_{l^{\infty}_{l}L^2L^2}\\
	\lesssim & \ \sum_{l\geq |j|} \lV  h_{j,l}\rV_{l^1_l L^{\infty}L^{\infty}} \lV  \la_k\rV_{X_k}\\
	\lesssim & \  2^{dj/2}\sum_{l\geq |j|} \lV  h_{j,l}\rV_{l^1_l L^{\infty}L^2} \lV  \la_k\rV_{X_k}.
	\end{align*}
	Finally we take the infimum over the decompositions of $P_j h$ to get the bound \eqref{IIj}, which in turn implies the estimate \eqref{h<0-naba}.

\medskip

\emph{c) The estimates \eqref{BiL-si} and \eqref{qdt_h-d2psi}}. By duality and Sobolev embedding, for any $j<k$ we have
\[ \| P_j h \nab^2 \la_k\|_{l^2 N^\si} \lesssim 2^{\si k} \| P_j h \nab^2 \la_k\|_{L^2L^2} \lesssim  2^{(\si+2) k} \| |D|^{\si_d }P_j h\|_{L^\infty H^{d/2-\si_d}} \| \la_k\|_{L^2L^2},\]
which gives the bound \eqref{qdt_h-d2psi}. We can also obtain the bound \eqref{BiL-si} similarly. This completes the proof of the lemma.	
\end{proof}

We  next prove the remaining  bilinear estimates and trilinear estimates.
\begin{prop}[Nonlinear estimates]      \label{Non-Est}
	Let $ s>\frac{d}{2}$ and $d\geq 2$.
	Assume that $p_k,\ \tp_k$, $s_k$ and $\ts_k$ are admissible frequency envelopes for $\la\in Z^s$, $\la\in Z^\si$, $\SS\in \EE^{s}$ and $\SS\in \EE^{\si}$ respectively. Then we have
	\begin{gather}    \label{cb-B-s}
	\lV S_k(B\la)\rV_{l^2N^{s}}\lesssim s_k\lV \la\rV_{Z^{s}}+p_k\lV B\rV_{L^2 H^{s}} ,\\\label{cb-AA-s}
	\lV S_k(A^2\la)\rV_{l^2N^{s}}\lesssim s_k\lV  A\rV_{Z^{s}}\lV\la\rV_{Z^{s}}+ p_k\lV  A\rV_{Z^{s}}^2,\\
	\label{cb_lam2-psi-s}
	\lV S_k(\lambda^3)\rV_{l^2N^{s}}\lesssim p_k \lV  \lambda\rV_{Z^{s}}^2.
	\end{gather}
	For $-s\leq \si\leq s$ we have
	\begin{gather}    		\label{qdt-gdpsi_hl&hh}
		\lV S_k\nab(h_{\geq k-4}\nab\la)\rV_{l^2N^{\si}}\lesssim  \min\{\ts_k\lV \la\rV_{Z^s},\ \tp_k\lV  h\rV_{Z^{\si_d,s+2}}\},\\ \label{qdt-Ag-dpsi}
		\lV S_k(A_{\geq k-4}\nab\la)\rV_{l^2N^{\si}}\lesssim  \min\{\ts_k\lV \la\rV_{Z^{s}},\ \tp_k\lV A\rV_{Z^{s+1}}\},
	\end{gather}
	and for $-s\leq \si\leq s-\de$ we have
	\begin{gather}     \label{cb-B}
	\lV S_k(B\la)\rV_{l^2N^{\si}}\lesssim \min\{\ts_k\lV \la\rV_{Z^{s}},\ \tp_k\lV B\rV_{L^2 H^{s}}\},\\\label{cb-AA}
	\lV S_k(A^2\la)\rV_{l^2N^{\si}}\lesssim \min\{\ts_k\lV A\rV_{Z^{s}}\lV\la\rV_{Z^{s}},\ \tp_k\lV  A\rV_{Z^{s}}^2\},\\
	\label{cb_lam2-psi}
	\lV S_k(\lambda^3)\rV_{l^2N^{\si}}\lesssim \ts_k \lV  \lambda\rV_{Z^{s}}^2.
	\end{gather}
	If $-s\leq \si\leq s-1$, then
	\begin{gather}\label{qdt-Ag-dpsi_lh}
	\lV S_k(A_{< k-4}\nab\la)\rV_{l^2N^{\si}}\lesssim p_k\lV  A\rV_{Z^{\si+1}}.
	\end{gather}
\end{prop}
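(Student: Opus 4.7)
The plan is to prove all of these estimates via a Littlewood-Paley trichotomy (low-high, high-low, high-high), reducing each $N$-norm bound to $L^2L^2$ estimates through the atomic characterization $\|u\|_{N_k}\lesssim 2^{k/2}\|u\|_{l^1_k L^2L^2}$ noted after the definition of $N$, and the Duhamel piece $\|u\|_{N_k}\lesssim \|u\|_{L^1L^2}$. From there, standard tools apply: Bernstein on frequency-localized factors, H\"older in time (time interval $[0,1]$, so $L^2_t \hookrightarrow L^1_t$), and the Sobolev embedding $H^s\hookrightarrow L^\infty$ which is available since $s>d/2$. The frequency-envelope versions then follow by summing the dyadic bounds against the slowly-varying envelopes $s_k,\ts_k,p_k,\tp_k$ using the off-diagonal decay \eqref{FreEve-relation}. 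Wherever a $\min$ appears on the right-hand side, the two bounds come from the same computation, differing only in which factor carries the envelope --- so essentially each estimate needs to be proved only once with a generic envelope assignment.

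For the cubic terms \eqref{cb-B-s}--\eqref{cb_lam2-psi-s} and their $\sigma$-analogues \eqref{cb-B}--\eqref{cb_lam2-psi}, every factor lives in a translation-invariant $L^\infty$-in-time Sobolev-type norm (with $B$ instead controlled in $L^2H^s$, paired with an $L^\infty L^2$ factor and one $L^\infty L^\infty$ factor via Sobolev). A standard paraproduct expansion places the highest-frequency factor in $L^2$ in space and the remaining factors in $L^\infty$, with cross terms summing using the $s-d/2>\delta$ gap. This is routine and the small $\delta$ loss in \eqref{cb-B}--\eqref{cb_lam2-psi} simply reflects the envelope slow-variation tolerance when descending from regularity $s$ to $\sigma\leq s-\delta$.

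The more delicate estimates are the bilinear ones \eqref{qdt-gdpsi_hl&hh}, \eqref{qdt-Ag-dpsi}, \eqref{qdt-Ag-dpsi_lh}. For \eqref{qdt-gdpsi_hl&hh} and \eqref{qdt-Ag-dpsi}, frequency localization forces the metric/connection factor to live at frequency $\gtrsim 2^k$, so only balanced (high-high) and high-low interactions appear. The high-low case $P_k h\cdot \nabla \lambda_{<k}$ is handled by placing $\lambda_{<k}$ in $L^\infty L^\infty$ via Sobolev from $Z^s$ and pairing with $P_k h$ in $l^1_k L^\infty L^2$, invoking the $Y_k$ bound via the trivial $l\,{=}\,|k|$ slice of the decomposition $P_k h=\sum_{l\geq |k|} h_{k,l}$. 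The balanced case $\sum_{l>k} P_l h\cdot \nabla P_l \lambda$ is where the full $Y_l$ atomic structure is needed: we unpack $P_l h=\sum_{l'\geq l} h_{l,l'}$, place $h_{l,l'}$ in $l^1_{l'}L^\infty L^2$ and $\nabla \lambda_l$ in $l^\infty_{l'} L^2L^2$ via the $X_l$ norm, and sum the resulting geometric series in $l'$ against the $l^2_k$ cube-norm on the output --- mirroring the proof of \eqref{IIj} above. Finally, \eqref{qdt-Ag-dpsi_lh} is the paradifferential term where $\nabla$ lands on the high-frequency factor $\lambda$: the $2^k$ loss is compensated exactly by the constraint $\sigma\leq s-1$ after placing $A_{<k-4}$ in $L^\infty L^\infty$ via Sobolev from $Z^{s+1}$.

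The main technical obstacle will be the balanced part of \eqref{qdt-gdpsi_hl&hh} at low output frequency (negative $k^-$), where the $Y^{s+2}$ norm carries a $2^{(d/2-\delta)j^-}$ weight at negative $j$, and the interplay between the spatial $l^1_l$ summation in the $Y_l$ decomposition, the Bernstein factors from frequency imbalance, and the $l^2_k$ output summability must be tracked carefully to avoid a logarithmic divergence. Once this bookkeeping is completed in parallel to Proposition~\ref{bilinear-est}, all the envelope versions follow automatically from \eqref{FreEve-relation} and the dyadic gains $2^{-\delta|j-k|}$ provided by the gap $s>d/2$.
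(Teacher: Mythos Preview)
Your general strategy --- paraproduct decomposition, reduction to $L^2L^2$ via the embedding $\|f_k\|_{l^2N^\sigma}\lesssim 2^{\sigma k}\|f_k\|_{L^2L^2}$, Bernstein, and envelope summation via \eqref{FreEve-relation} --- is correct and matches the paper's proof for the cubic terms and for \eqref{qdt-Ag-dpsi_lh}. However, you have misread the norm in \eqref{qdt-gdpsi_hl&hh}: the right-hand side carries the envelope $\ts_k$ for $\SS\in\EE^\sigma$, in which $h$ is measured in $Z^{\sigma_d,s+2}$, \emph{not} in $Y^{s+2}$ or $\bY^{s+2}$. Consequently the $Y_l$ atomic decomposition you propose for the balanced case is unnecessary. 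The paper dispatches both the high-low and high-high parts of \eqref{qdt-gdpsi_hl&hh} with the same elementary device used for the cubic terms: bound $\|S_k\nabla(h_{\geq k-4}\nabla\lambda)\|_{l^2N^\sigma}\lesssim 2^{\sigma k}\|S_k\nabla(h_{\geq k-4}\nabla\lambda)\|_{L^2L^2}$ and then apply H\"older and Bernstein, placing $h_l$ in $L^2_tL^2_x$ and $\nabla\lambda_j$ in $L^\infty_tL^\infty_x$ (or $L^\infty L^2$ with a Bernstein gain in the high-high sum). The $Y$-structure machinery is confined to Proposition~\ref{bilinear-est} and never enters this proposition.

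Relatedly, your anticipated ``main technical obstacle at low output frequency (negative $k^-$)'' does not arise: throughout this proposition $S_k$ is indexed by $k\in\mathbb{N}$, so $k\geq 0$ always, and there is no low-frequency output to worry about. Once you recognize the correct norm on $h$, \eqref{qdt-gdpsi_hl&hh} is no harder than \eqref{qdt-Ag-dpsi}, and the proof is short.
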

\begin{proof}
	
	We first prove \eqref{qdt-gdpsi_hl&hh} and \eqref{qdt-Ag-dpsi}. These two bounds are proved by H\"older's inequality and Bernstein's inequality, here we only prove the first bound in detail. For the high-low case, by duality we have
	\begin{equation*}  % \label{5.4-hl}
	\begin{aligned}
	\sum_{j\leq k+C}\lV S_k \nab( h_{k}\nab \la_{j})\rV_{l^2N^{\si}}\lesssim &\sum_{j\leq k+C}2^{\si k}\lV S_k \nab( h_{k}\nab \la_{j})\rV_{L^2L^2}\\
	\lesssim &\sum_{j\leq k+C} 2^{(\si+1) k}\lV  h_{k}\rV_{L^2L^2}\lV \nab\la_{j}\rV_{L^{\infty}L^{\infty}}\\
	\lesssim &\sum_{j\leq k+C} 2^{(\si+1) k+(d/2+1)j}\lV  h_k\rV_{L^2L^2}\lV \la_{j}\rV_{L^{\infty}L^2}.
	\end{aligned}
	\end{equation*}
	Then by $-s\leq \si\leq s$ and \eqref{FreEve-relation}, we can bound this by $\min\{\ts_k\lV \la\rV_{Z^s},\ \tp_k\lV  h\rV_{L^2 H^{s+2}}\}$.
	For the high-high case, when $\si+d/2+1>0$ we have
	\begin{align*}
		&\sum_{j>k}\lV S_k \nab( h_{j}\nab \la_{j})\rV_{l^2N^{\si}}\\
		\lesssim &\sum_{j_1= j_2+O(1),j_1>k}2^{(\si+1) k+dk/2}\lV S_k ( h_{j_1}\nab \la_{j_2})\rV_{L^2L^1}\\
		\lesssim & \sum_{j_1= j_2+O(1),j_1>k}2^{(\si+1+d/2+\de)(k-j_1)+(\si+2+d/2+\de)j_1}\lV  h_{j_1}\rV_{L^2L^2}\lV \la_{j_2}\rV_{L^{\infty}L^2}\\
		\lesssim & \min\{\ts_k\lV \la\rV_{Z^s},\tp_k\lV  h\rV_{Z^{1,s+2}}\},
	\end{align*}
	and when $\si+d/2+1\leq 0$ we have
	\begin{align*}
		&\sum_{j_1= j_2+O(1),j_1>k}\lV S_k \nab( h_{j_1}\nab \la_{j_2})\rV_{l^2N^{\si}}\\
		\lesssim & \sum_{j_1= j_2+O(1),j_1>k}2^{(\si+1+d/2-\de)k+(\de+1) j_1}\lV  h_{j_1}\rV_{L^2L^2}\lV \la_{j_2}\rV_{L^{\infty}L^2}\\
		\lesssim & \min\{\ts_k\lV \la\rV_{Z^s},\tp_k\lV  h\rV_{Z^{1,s+2}}\}.
	\end{align*}
	
	Next, we prove the bounds \eqref{cb-B-s}-\eqref{cb_lam2-psi-s} and \eqref{cb-B}-\eqref{cb_lam2-psi}. These are all similar, so we only prove \eqref{cb-B-s} and \eqref{cb-B} in detail. Indeed, by duality we have
	\begin{align*}
	\lV S_k(B\la)\rV_{l^2N^{\si}}\lesssim 2^{\si k}\lV S_k(B\la)\rV_{L^2L^2}.
	\end{align*}
	Then using Littlewood-Paley dichotomy to divide this into \emph{low-high}, \emph{high-low} and \emph{high-high} cases. 	
	For the low-high interactions, by Sobolev embedding we have for $-s\leq \si\leq s$
	\begin{align*}
	2^{\si k}\lV S_k(B_{<k}\la_k)\rV_{L^2L^2}\lesssim & \lV B_{<k}\rV_{L^2L^{\infty}}2^{\si k}\lV \la_k\rV_{L^{\infty}L^2}\lesssim \tp_k\lV B\rV_{L^2H^{s}}.
	\end{align*}
	If $-s\leq \si \leq s-\de$, we use $L^2 H^\si$ for $B_l=S_l B$. Then by $\|B_l\|_{L^2H^\si}\lesssim 2^{\de(k-l)}\ts_k$ we also have 
	\begin{align*}
	    2^{\si k}\lV S_k(B_{<k}\la_k)\rV_{L^2L^2}\lesssim \ts_k \lV\la\rV_{Z^{s}}.
	\end{align*}
	The high-low interactions can be estimated similarly. For the high-high interactions, by Sobolev embedding when $ -d/2-\de\leq \si\leq s$ we have
	\begin{align*}
	2^{\si k}\lV \sum_{l>k} S_k(B_l\la_l)\rV_{L^2L^2}\lesssim &\sum_{l>k} 2^{(\si+d/2+2\de)(k-l)} 2^{(\si+d/2+2\de)l} \lV B_l\rV_{L^2L^2}\lV \la_l\rV_{L^{\infty}L^2}\\
	\lesssim & \min\{\ts_k\lV \la\rV_{Z^s},\tp_k\lV B\rV_{L^2 H^s} \},
	\end{align*}
	and when $-s\leq \si<-d/2-\de$ we have
	\begin{align*}
	&2^{\si k}\lV \sum_{l>k} S_k(B_l\la_l)\rV_{L^2L^2}\lesssim \sum_{l>k} 2^{(\si+d/2)k}  \lV B_l\rV_{L^2L^2}\lV \la_l\rV_{L^{\infty}L^2}\\
	\lesssim & 2^{-\de k} \min\{\|B\|_{L^2H^\si}\|\la\|_{Z^{-\si}},\|B\|_{L^2H^{-\si}}\|\la\|_{Z^{\si}}\} \lesssim  \min\{\ts_k\lV \la\rV_{Z^s},\tp_k\lV B\rV_{L^2 H^s} \},
	\end{align*}
	These imply the bounds \eqref{cb-B-s} and \eqref{cb-B}. 
	
	Finally, we prove the bound \eqref{qdt-Ag-dpsi_lh}. If $d/2-1+\de\leq \si\leq s-1$, by duality and Sobolev embedding, we have
	\begin{align*}
	\lV A_{<k}\nab\la_k\rV_{l^2N^\si}\lesssim 2^{(\si+1) k}\|A_{<k} \|_{L^2L^\infty} \|\la_k\|_{L^\infty L^2}\lesssim p_k \lV  A\rV_{Z^{\si+1}}.
	\end{align*}
	If $\si< d/2-1+\de$, we have
	\begin{align*}
	2^{\si k}\lV A_{<k}\nab\la_k\rV_{L^2L^2}\lesssim &\sum_{0\leq l<k} 2^{(d/2-1-\si+2\delta)(l-k)}\lV  A\rV_{L^2 H^{\si+1}} 2^{(d/2+2\delta)k} \lV \la_k\rV_{L^\infty L^2}\\
	\lesssim &\ p_k\lV A\rV_{Z^{\si+1}}.
	\end{align*}
	Then the bound \eqref{qdt-Ag-dpsi_lh} follows. Hence this completes the proof of the lemma.
\end{proof}

We shall also require the following bounds for commutators.

\begin{prop}[Commutator bounds]\label{Comm-est-Lemma} Let $d\geq 2$ and $s>\frac{d}{2}$. Let $m(D)$
be a multiplier with symbol $m\in S^0$. Assume $ h\in Y^{s+2}$, $ \d_x A\in L^2 H^s$ and $\la_k \in l^2X^s$, frequency localized at frequency $2^k$. If $-s\leq \si \leq s$ then we have
	\begin{gather}          \label{Comm-bd}
	\lV \nab[S_{<k-4}h,m(D)]\nab\la_k\rV_{l^2N^{\si}}\lesssim \min\{ \lV  h\rV_{Y^{\si+2}}\lV \la_k\rV_{l^2X^s},\lV h\rV_{Y^{s+2}}\lV\la_k\rV_{l^2X^{\si}}\},\\
	\label{Com_Ag-dpsi}
	\lV [S_k,A_{<k-4}]\nab\la_k\rV_{l^2N^{\si}}\lesssim \min\{\lV \d_x A\rV_{L^2H^{s}}\lV \la_k\rV_{L^\infty H^\si},\lV \d_x A\rV_{L^2 H^{\si}}\lV \la_k\rV_{L^\infty H^s}\}.
	\end{gather}
\end{prop}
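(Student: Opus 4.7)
My plan is to prove both commutator estimates by first expanding the commutator via standard symbol calculus (equivalently, a kernel Taylor expansion), thereby reducing each bound to either the bilinear estimate \eqref{qdt_d h<k dpsi} from Proposition~\ref{bilinear-est} or to direct H\"older/Bernstein arguments in the spirit of Proposition~\ref{Non-Est}. The structural point that makes the whole scheme work is that the commutator effectively trades one derivative from the frequency-$2^k$ factor to the low-frequency factor, so $[a,m(D)]f$ with $a$ at low frequency and $m\in S^0$ behaves like $\nabla a$ times a bounded multiplier of $f$.

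For \eqref{Comm-bd}, I would start from the kernel representation of $m(D)$, writing
\[
[S_{<k-4}h, m(D)] f(x) = \int K(x-y)\bigl(S_{<k-4}h(x) - S_{<k-4}h(y)\bigr) f(y)\, dy,
\]
and using $S_{<k-4}h(x) - S_{<k-4}h(y) = (x-y)\cdot\int_0^1 \nabla S_{<k-4}h(y+t(x-y))\,dt$ to identify the factor $(x-y)K(x-y)$ with the symbol $-i\nabla_\xi m(\xi)\in S^{-1}$. Distributing the outer $\nabla$ via Leibniz,
\[
\nab[S_{<k-4}h,m(D)]\nab\la_k = [\nab S_{<k-4}h,m(D)]\nab\la_k + [S_{<k-4}h,m(D)]\nab^2\la_k,
\]
each commutator on the right reduces, to leading order in the expansion, to a schematic bilinear expression $\nab S_{<k-4}h\cdot M(D)\nab\la_k$ with $M$ a bounded multiplier localized at frequency $2^k$. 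Since the proof of \eqref{qdt_d h<k dpsi} is based on frequency-envelope Littlewood--Paley estimates that are insensitive to the action of such zero-order multipliers on the $\la_k$ factor, the bound \eqref{Comm-bd} follows from \eqref{qdt_d h<k dpsi}. The higher-order terms in the symbol expansion carry factors $\partial^\alpha S_{<k-4}h$ paired with multipliers in $S^{-|\alpha|}$; the gain $2^{-|\alpha|k}$ from the symbol, combined with Bernstein applied dyadically on the frequencies $j<k-4$ of $h$, reduces these remainders to the same bilinear bound.

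For \eqref{Com_Ag-dpsi}, the analogous expansion applied to $S_k=\phi_k(D)$ yields, at leading order,
\[
[S_k,A_{<k-4}]\nab\la_k \;\simeq\; 2^{-k}\,\nab A_{<k-4}\cdot \tilde P_k\nab\la_k,
\]
where $\tilde P_k$ is a bounded multiplier localized at frequency $2^k$; the factor $2^{-k}$ cancels the inner $\nab$, leaving a schematic expression of the form $\nab A_{<k-4}\cdot\tilde P_k\la_k$. By the standard embedding $\|f\|_{l^2N^\si}\lesssim 2^{\si k}\|f\|_{L^2L^2}$ for $f$ frequency-localized at $2^k$, together with H\"older in $L^2L^\infty\cdot L^\infty L^2$ and a dyadic decomposition of $A_{<k-4}$ with Bernstein $\|P_j\nab A\|_{L^\infty}\lesssim 2^{jd/2}\|P_j\nab A\|_{L^2}$, one obtains
\[
\|[S_k,A_{<k-4}]\nab\la_k\|_{l^2N^\si}\lesssim \sum_{j<k-4}2^{\si k+jd/2}\|P_j\nab A\|_{L^2L^2}\|\la_k\|_{L^\infty L^2}.
\]
Distributing the weights to put $s$-regularity on $A$ yields the first form of the bound using $\sum_j 2^{j(d/2-s)}<\infty$ (guaranteed by $s>d/2$); distributing them to put $\si$-regularity on $A$ yields the second form via a brief case analysis in $\si$ relative to $d/2$ (the three sub-cases $\si>d/2$, $\si=d/2$, $\si<d/2$ each absorb the $k$-sum using the $s-d/2>0$ gap). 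The main technical obstacle is the bookkeeping of the outer $\nab$ in \eqref{Comm-bd} together with the case analysis in $\si$ for \eqref{Com_Ag-dpsi} at small or negative regularity; once the commutator gain is harvested, both estimates are balanced bilinear bounds of a type already handled by the earlier propositions.
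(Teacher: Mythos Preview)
Your proposal is correct and follows the same underlying idea as the paper: harvest the derivative gain in the commutator and reduce to the bilinear estimate \eqref{qdt_d h<k dpsi} for the first bound, and to a $\nabla A\cdot\lambda$-type bilinear bound for the second.

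The execution differs slightly. For \eqref{Comm-bd}, the paper avoids your Leibniz splitting and remainder bookkeeping by invoking the exact representation from \cite[Proposition~3.2]{MMT3},
\[
\nab[S_{<k-4}h,m(D)]\nab S_k\la \;=\; L(\nab S_{<k-4}h,\nab S_k\la),\qquad L(f,g)(x)=\int f(x+y)\,g(x+z)\,\tilde m(y+z)\,dy\,dz,\ \ \tilde m\in L^1,
\]
which reduces \eqref{Comm-bd} to \eqref{qdt_d h<k dpsi} in one step, with no higher-order terms to track. For \eqref{Com_Ag-dpsi}, the paper writes the same integral representation of the commutator you use, but then simply observes that by translation invariance the resulting bilinear form is bounded exactly as in the proof of \eqref{cb-B}, rather than redoing the H\"older/Bernstein case analysis. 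Your route works but is more laborious; the paper's shortcut via the $L$-form and the reference to \eqref{cb-B} is worth knowing.
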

\begin{proof} This is similar to Proposition 5.3 in \cite{HT21}.
	First we estimate \eqref{Comm-bd}. In \cite[Proposition 3.2]{MMT3}, it was shown that 
	\begin{equation*}
	\nab[S_{<k-4}g,m(D)]\nab S_k \la=L(\nab S_{<k-4}g,\nab S_k \la),
	\end{equation*}
	where $L$ is a translation invariant operator satisfying
	\begin{equation*}
	L(f,g)(x)=\int f(x+y)g(x+z)\tilde{m}(y+z)\ dydz,\ \ \ \tilde{m}\in L^1.
	\end{equation*}
	Given this representation, as we are working in translation-invariant spaces, by \eqref{qdt_d h<k dpsi} the bound \eqref{Comm-bd} follows.

	Next, for the bound \eqref{Com_Ag-dpsi}. Since 
	\begin{equation*}
	[S_k,A_{<k}]\nab\la=\int_0^1 \int 2^{kd}\check{\varphi}(2^k y)2^k y \nab A_{<k}(x-sy) 2^{-k}\nab\la_{[k-3,k+3]}(x-y)\ dyds,
	\end{equation*}
	By translation-invariance and the similar argument to \eqref{cb-B}, the bound \eqref{Com_Ag-dpsi} follows. This completes the proof of the lemma.
\end{proof}

\bigskip

\section{Local energy decay and the linearized problem}\label{Sec-LED}

In this section, we consider a linear  Schr\"odinger equation 
\begin{equation}\label{Lin-eq0}
\left\{
\begin{aligned}      
&i\d_t \la+\d_{\al}(g^{\al\be}\d_{\be}\la)+2i A^{\al}\d_{\al}\la
 =F,\\
&\la(0)=\la_0,
\end{aligned}\right.
\end{equation}
and, under suitable assumptions on the coefficients,
we prove that the solution satisfies suitable energy and local energy bounds.

\subsection{The linear paradifferential Schr\"odinger flow}
As an intermediate step, here we  prove energy and local energy bounds for a frequency localized linear paradifferential Schr\"odinger equation
\begin{equation}        \label{LinSch}
	i\d_t\la_k+\d_{\al}(g^{\al\be}_{<k-4}\d_{\be}\la_k)+2iA^{\al}_{<k-4}\d_{\al}\la_k=f_k.
\end{equation}
We begin with the energy estimates, which are fairly standard:

\begin{lemma}[Energy-type estimate]
	Let $d\geq 2$. Assume that $\la_k$ solves the equation \eqref{LinSch} with initial data $\la_k(0)$ in the time interval $[0,1]$. For a fixed $s>\frac{d}{2}$, assume that $\d_x A\in L^2 H^{s}$, $\la_k\in l^2_kX_k$, and $f_k=f_{1k}+f_{2k}$
with $f_{1k}\in N$ and $f_{2k}\in L^1L^2$. Then we have
	\begin{equation}       \label{Eng-Estm}
		\begin{aligned}
			\lV \la_k\rV_{L^{\infty}_tL^2_x}^2\lesssim & \  \lV\la_k(0)\rV_{L^2}^2+\lV  \d_x A\rV_{L^2 H^{s}}\lV \la_k\rV_{X_k}^2	+\lV\la_k\rV_{X_k}\lV f_{1k}\rV_{N_k}
			+\lV \la_k\rV_{L^{\infty}L^2}\lV f_{2k}\rV_{L^1L^2}.
		\end{aligned}
	\end{equation}
\end{lemma}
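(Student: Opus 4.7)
The approach is a classical $L^2$ energy estimate. Pair \eqref{LinSch} with $\bar\la_k$, integrate over $\R^d$, and compute $\frac{d}{dt}\|\la_k\|_{L^2}^2 = 2\,\text{Re}\int \d_t\la_k\cdot\bar\la_k\, dx$ using the equation. Since $g^{\al\be}_{<k-4}$ is real and symmetric in $\al,\be$, one integration by parts turns the principal part into $-\int g^{\al\be}_{<k-4}\d_\be\la_k\d_\al\bar\la_k\, dx$, which is real by the symmetry of $g$; hence after multiplication by $i$ its real part vanishes. Since $A^\al_{<k-4}$ is real, $2\,\text{Re}(A^\al_{<k-4}\d_\al\la_k\cdot\bar\la_k) = A^\al_{<k-4}\d_\al|\la_k|^2$, so a second integration by parts yields the divergence term $-\int(\d_\al A^\al)_{<k-4}|\la_k|^2\, dx$. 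Collecting everything gives the differential identity
\begin{equation*}
\frac{d}{dt}\|\la_k\|_{L^2}^2 = 2\int(\d_\al A^\al)_{<k-4}|\la_k|^2\, dx + 2\,\text{Im}\int f_k\bar\la_k\, dx.
\end{equation*}

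Integrating on $[0,t]$ and taking a supremum over $t\in[0,1]$, the magnetic contribution is bounded by H\"older's inequality together with the Sobolev embedding $H^s\hookrightarrow L^\infty$ (valid since $s>d/2$) and the boundedness of the low-frequency projector in $L^\infty$:
\begin{equation*}
\int_0^1\int|(\d_x A)_{<k-4}||\la_k|^2\, dx\,dt\lesssim \|\d_x A\|_{L^2L^\infty}\|\la_k\|_{L^\infty L^2}^2\lesssim \|\d_x A\|_{L^2H^s}\|\la_k\|_{X_k}^2,
\end{equation*}
where the last step uses $\|\la_k\|_{L^\infty L^2}\leq \|\la_k\|_{X_k}$, immediate from the definition $\|\la_k\|_{X_k} = 2^{k/2}\|\la_k\|_X + \|\la_k\|_{L^\infty L^2}$.

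For the source-term contribution, decompose $f_k = f_{1k} + f_{2k}$ as in the hypothesis and exploit the duality $(X_k)^* = N_k$ built into the definitions of these spaces: pairing $f_{1k}$ with $\la_k$ gives $|\int_0^1\int f_{1k}\bar\la_k\,dx\,dt|\leq \|f_{1k}\|_{N_k}\|\la_k\|_{X_k}$, while H\"older's inequality in time pairs $f_{2k}\in L^1L^2$ with $\la_k\in L^\infty L^2$. Summing the three pieces yields \eqref{Eng-Estm}. The argument uses only the reality and symmetry of the coefficients $g,A$, with no smallness or ellipticity required at this stage. The only bookkeeping subtlety (and the closest thing to an obstacle) is to arrange the source-term pairing so the $N_k/X_k$ duality appears in exactly the stated form, which is why the splitting of $f_k$ is built into the hypothesis rather than absorbed into a single $N_k$ estimate.
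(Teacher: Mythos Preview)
Your argument is correct and is precisely the standard $L^2$ energy computation that the paper has in mind (the paper simply refers to Lemma~6.1 of \cite{HT21} with the assumption on $A$ replaced by $\partial_x A\in L^2H^s$). One small technical point you glossed over: the $N_k$--$X_k$ duality is stated on the full interval $[0,1]$, whereas after integrating the differential identity you need $\bigl|\int_0^t\!\int f_{1k}\bar\la_k\,dx\,ds\bigr|\lesssim\|f_{1k}\|_{N_k}\|\la_k\|_{X_k}$ uniformly in $t$; this follows since the time truncation $\la_k\mathbf 1_{[0,t]}$ only decreases the $X_k$ norm.
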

\begin{proof}
     For the proof, we refer the readers to Lemma 6.1 in \cite{HT21}. Here we just replace the condition $A\in Z^{1,s+1}$ in \cite{HT21} by the assumption $\d_x A\in L^2H^s$. 
\end{proof}

Next, we prove the  main result of this section, namely the local energy estimates
for solutions to \eqref{LinSch}:

\begin{prop} [Local energy decay]     \label{Local-Energy-Decay}
	Let $d\geq 2$. Assume that the coefficients $h=g-I_d$ and $A$ in \eqref{LinSch} satisfy
	\begin{equation}\label{small-ass}
	\lV h\rV_{\bY^{s+2}}+ \lV  A\rV_{Z^{s+1}}\lesssim \ep
	\end{equation}
	for some $s>\frac{d}{2}$ and $\ep>0$ small. Let $\la_k$ be a solution to \eqref{LinSch} which is localized at frequency $2^k$. Then the following estimate holds:
	\begin{equation}      \label{energy-decay}
		\lV\la_k\rV_{l^2_k X_k}\lesssim \lV\la_{0k}\rV_{L^2}+\lV f_k\rV_{l^2_kN_k}
	\end{equation}
\end{prop}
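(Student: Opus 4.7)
The $X_k$ norm has two components: an $L^\infty L^2$ part and a local energy part $\sup_{l\geq 0}\sup_{Q\in\QQ_l} 2^{-l/2}\|\cdot\|_{L^2L^2([0,1]\times Q)}$. The $L^\infty L^2$ part is handled directly by the energy estimate \eqref{Eng-Estm}, after absorbing the first term on its right using smallness \eqref{small-ass} and interpolation with $f_k\in N+L^1L^2 \hookrightarrow N_k$. The real work lies in proving the local energy bound, together with its $l^2_k$ cube summation.

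My approach is a positive commutator (Doi-type) argument, following the scheme of \cite{MMT3,MMT4,MMT5} and its adaptation in \cite{HT21}, but now with coefficients $g_{<k-4}$ and $A_{<k-4}$ coming from the $\bY^{s+2}\times Z^{s+1}$ gauge. For each frequency $2^k$ I would select a self-adjoint pseudodifferential multiplier $Q_k$ of order zero with symbol of the form $q_k(x,\xi)=\varphi(\xi/2^k)\, \tilde q(x,\xi/|\xi|)$, designed so that the principal symbol of $i[\De,Q_k]$ on the frequency-$2^k$ shell dominates $2^k\langle x\rangle^{-1}$-type weight which, after a standard dyadic spatial decomposition of $\R^d$ and rescaling, captures exactly the local energy norm $2^{-l/2}\|\chi_Q \la_k\|_{L^2L^2}$ summed over $Q\in\QQ_l$. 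Computing
\[
\frac{d}{dt}\langle Q_k\la_k,\la_k\rangle = \langle i[P_0,Q_k]\la_k,\la_k\rangle + \langle i[P_{\rm var}, Q_k]\la_k,\la_k\rangle + 2\,\Im\langle Q_k\la_k, f_k\rangle,
\]
with $P_0=\De$ and $P_{\rm var}=\partial_\al h^{\al\be}_{<k-4}\partial_\be + 2i A^\al_{<k-4}\partial_\al$, and integrating in time, the first term on the right produces $\|\la_k\|_{\rm LE}^2$, the last is bounded by $\|\la_k\|_{X_k}\|f_k\|_{N_k}+\|\la_k\|_{L^\infty L^2}\|f_k\|_{L^1L^2}$, and the boundary term is controlled by $\|\la_k(0)\|_{L^2}^2$.

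The middle term $\langle i[P_{\rm var},Q_k]\la_k,\la_k\rangle$ is the heart of the matter and is where the small-coefficient hypothesis \eqref{small-ass} is used. Schematically, after expanding and integrating by parts, this term reduces to expressions of the form $\int h_{<k-4}\,\nabla\la_k\,\nabla(Q_k\la_k)\,dx\,dt$ and $\int A_{<k-4}\,\nabla\la_k\cdot(Q_k\la_k)\,dx\,dt$, plus lower-order contributions and commutators $[\nabla,Q_k]$, $[A_{<k-4},Q_k]$. Each of these can be put in $N_k\times X_k$ duality and estimated by the bilinear bounds of Proposition~\ref{bilinear-est} (in particular \eqref{qdt_d h<k dpsi} for the metric piece and \eqref{qdt-Ag-dpsi_lh} for the connection piece), together with the commutator bounds \eqref{Comm-bd}, \eqref{Com_Ag-dpsi}. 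The outcome is a bound of the form $\ep\|\la_k\|_{X_k}^2$, which is absorbed into the left-hand side.

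Finally, to recover the $l^2_k$ cube summability on the $2^k$ scale, I would run the same commutator argument with $\la_k$ replaced by $\chi_Q\la_k$ for $Q\in\QQ_k$ (with $\chi_Q$ a smooth cube partition of unity), square-sum over $Q$, and control the resulting localization errors $[\De,\chi_Q]\la_k$ using the standard $l^2$-orthogonality at the $2^k$ cube scale. The $Y^{s+2}$ structure of $h$ (with its built-in $l^1$ cube summability at scales $\geq|j|$) is precisely what feeds into the bilinear estimates to keep the perturbation bound $l^2_k$-compatible. The main obstacle I expect is in the two-dimensional case, where the Doi multiplier is borderline and where the low-frequency part of $h$ is only controlled in $Y^{lo}_0$; this forces a careful splitting $h=h_{<0}+h_{\geq 0}$ in the commutator term, with the low-frequency piece treated as a long-range zeroth-order perturbation absorbed in the energy step rather than the local energy step.
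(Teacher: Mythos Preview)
Your plan is correct and follows the same positive-commutator strategy as the paper, with essentially the same three ingredients: a multiplier whose commutator with the flat Laplacian produces the local energy weight, perturbative control of the variable-coefficient commutator via the bilinear bounds \eqref{qdt_d h<k dpsi}, \eqref{h<0-naba}, and a cube localization to upgrade to the $l^2_k$ norm.

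Two points of comparison are worth noting. First, the paper does not build a single global pseudodifferential Doi multiplier. Instead it fixes one cube $Q\in\QQ_l$ with $0\le l\le k$, performs an angular decomposition $\la_k=\sum_j\Theta_{k,j}\la_k$ into sectors near each coordinate axis, and on each sector uses the purely differential multiplier $\MM=\frac{1}{i2^k}(m_l(x_1)\partial_1+\partial_1 m_l(x_1))$ with $m_l'(s)=\varphi^2(2^{-l}s)$. This yields the single-cube bound \eqref{LED-pre0}, after which one takes the supremum over $Q$ and $l$ to recover the $X_k$ norm. The angular splitting generates additional commutators $[\Theta_{k,j},\partial_\alpha g^{\alpha\beta}_{<k-4}\partial_\beta]$ and $[\Theta_{k,j},A^\alpha_{<k-4}]\partial_\alpha$, handled by \eqref{Comm-bd} and \eqref{Com_Ag-dpsi}. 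Your global $\langle x\rangle^{-1}$ multiplier would compress these steps but requires more pseudodifferential machinery; the paper's version stays at the level of differential operators with smooth bounded coefficients, which makes the commutator Lemma~\ref{Comm_g-I} and the $l^2_k$ localization step (where one passes to cubes of scale $M2^k$ with $M\gg 1$ in order to absorb the $[\Delta,\chi_Q]$ errors) entirely elementary.

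Second, your anticipated obstacle in $d=2$ does not materialize: the proof is uniform in $d\ge 2$ once the hypothesis $\|h\|_{\bY^{s+2}}+\|A\|_{Z^{s+1}}\lesssim\ep$ is in hand. The $Y^{lo}_0$ norm and the low/high frequency splitting of $h$ you describe play a role only in the construction of the initial data in Section~\ref{s:elliptic}, not in the local energy decay argument itself; here the full $\bY^{s+2}$ norm of $h$ is already assumed small and feeds directly into \eqref{Comm-M} via \eqref{qdt_d h<k dpsi} and \eqref{h<0-naba} without any dimensional distinction.
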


\begin{proof}
	The proof is closely related to that given in \cite{MMT3,MMT4,HT21}. However, here the metric $g=I_d+h$ and magnetic potential $A$ will satisfy some parabolic equations, so 
	we need to modify the assumptions both on $h$ and $A$ to match our main results.
	
	As an intermediate step in the proof, we will establish a local energy decay bound 
in a cube  $Q\in \QQ_l$ with $0\leq l\leq k$:	
\begin{equation}   \label{LED-pre0}
	\begin{aligned}      
	2^{k-l}\lV \la_k\rV_{L^2L^2([0,1]\times Q)}^2\lesssim &\ \lV \la_k\rV_{L^{\infty}L^2}^2+
	\|f_k\|_{N_k} \|\la_k \|_{X_k}  \\ 
	&+(2^{-k}+\lV  A\rV_{Z^{1-\de,s+1}}+\lV  h\rV_{\bY^{s+2}})\lV \la_k\rV_{l^2_kX_k}^2.
	\end{aligned}
	\end{equation}
	
The proof of this bound is based on a positive commutator argument using a well chosen 
multiplier $\MM$. This will be first-order differential operator with smooth coefficients which are localized at frequency $\lesssim 1$. Precisely, we will use a multiplier $\MM$ which is 
a  self-adjoint differential operator having the form 
\begin{equation}           \label{M-def}
		i2^k\MM=a^{\al}(x)\d_{\al}+\d_{\al}a^{\al}(x)
\end{equation}
with uniform bounds on $a$ and its derivatives.

	Before proving \eqref{energy-decay}, we need the following lemma which is used to dismiss the $(g-I)$ contribution to the commutator $[\d_{\al}g^{\al\be}\d_{\be},\MM]$.

	\begin{lemma}\label{Comm_g-I}
		Let $d\geq 2$ and $s>\frac{d}{2}$. Assume that $ h\in \bY^{s+2}$, $A\in Z^{1-\de,s+1}$ and $\la\in l^2_kX_k$, and let $\MM$ be as \eqref{M-def}. Then we have
		\begin{gather}           \label{Comm-M}
		\int_0^1\< [\d_{\al}h^{\al\be}_{\leq k}\d_{\be},\MM]\la_k,\la_k\>dt\lesssim \lV  h\rV_{\bY^{s+2}}\lV \la_k\rV_{l^2_kX_k}^2,\\\label{non-(Ag)dpsi,Mpsi}
		\int_0^1\Re\<A^{\al}_{<k-4}\d_{\al}\la_k,\MM\la_k \>dt\lesssim \lV  A\rV_{Z^{1-\de,s+1}}\lV\la_k\rV_{X_k}^2.
		\end{gather}
	\end{lemma}
	\begin{proof}[Proof of Lemma \ref{Comm_g-I}]
		By \eqref{M-def} and direct computations, we get
		\begin{align*}
		[\d_{\al}h^{\al\be}\d_{\be},\MM]\approx 2^{-k}[\nab(h\nab a+a\nab h)\nab+\nab h\nab^2 a+h\nab^3 a].
		\end{align*} 
		Then it suffices to estimate
		\begin{align*}
		2^{-k} \int_0^1\<(h_{\leq k}\nab a+a\nab h_{\leq k})\nab\la_k,\nab\la_k\>dt+2^{-k}\int_0^1\< (\nab h_{\leq k}\nab^2 a+h_{\leq k}\nab^3 a)\la_k,\la_k\>dt.
		\end{align*}
		The first integral is estimated by \eqref{qdt_d h<k dpsi} and \eqref{h<0-naba}, while the second integral is bounded by Sobolev embeddings. Hence, the bound \eqref{Comm-M} follows.
		
		For the second bound \eqref{non-(Ag)dpsi,Mpsi}, by \eqref{M-def} and integration by parts we rewrite the left-hand side of \eqref{non-(Ag)dpsi,Mpsi} and bound it by
		\begin{align*}
		\Re\int_0^1\<A^{\al}_{<k-4}\d_{\al}\la_k,\MM\la_k \>dt
		\lesssim &2^{-k} \int_0^1\int_{\R^d} |\<\nab\> A_{<k} \la_k \nab \la_k |dx dt\\
		\lesssim &\lV |\nab|^{1-\de} A\rV_{L^2H^{s+\de}}\lV\la_k\rV_{L^\infty L^2}^2.
		\end{align*}
		This implies the bound \eqref{non-(Ag)dpsi,Mpsi}, and hence completes the proof of the lemma.
	\end{proof}
	
	\medskip
	
	Returning to the proof of \eqref{LED-pre0}, for the  self-adjoint multiplier $\MM$ we compute
	\begin{align*}
		\frac{d}{dt}\<\la_k,\MM\la_k\>=&2\Re\<\d_t\la_k,\MM\la_k\>\\
		=&2\Re\< i\d_{\al}(g^{\al\be}_{<k-4}\d_{\be}\la_k)-2A^{\al}_{<k-4}\d_{\al}\la_k-if_k,\MM\la_k \>\\
		=&i\< [-\d_{\al}g^{\al\be}_{<k-4}\d_{\be},\MM]\la_k,\la_k\>
		+2\Re\<-2A^{\al}_{<k-4}\d_{\al}\la_k-if_k,\MM\la_k \>
	\end{align*}
	We then use the multiplier $\MM$ as in \cite{MMT3,MMT4} so that the following three properties hold:
	\begin{enumerate}
	\item[(1)] Boundedness on frequency $2^k$ localized functions,
	\[
	\lV\MM u\rV_{L^2_x}\lesssim \lV u\rV_{L^2_x}.
	\]
	
	\item[(2)] Boundedness in $X$, 
	\[
	\lV\MM u\rV_{X}\lesssim \lV u\rV_{X}.
	\]
	
	\item[(3)] Positive commutator,
	\[
	i\< [-\d_{\al}g^{\al\be}_{<k-4}\d_{\be},\MM]u,u\>\gtrsim 2^{k-l}\lV u\rV^2_{L^2_{t,x}([0,1]\times Q)}-O(2^{-k}+\lV h\rV_{\bY^{s+2}})\lV u\rV_{l^2_kX_k}^2.
	\]
\end{enumerate}
	
	\noindent If these three properties hold for $u=\la_k$, then by \eqref{non-(Ag)dpsi,Mpsi} and \eqref{small-ass} the bound \eqref{LED-pre0} follows.

	We first do this when the Fourier transform of the solution $\la_k$ is restricted to a small angle
	\begin{equation}       \label{xi1-near}
		\text{supp}\ \widehat{\la}_k\subset \{|\xi|\lesssim \xi_1\}.
	\end{equation}
	Without loss of generality due to translation invariance, $Q=\{|x_j|\leq 2^l:j=1,\ldots,d\}$, and we set $m$ to be a smooth, bounded, increasing function such that $m'(s)=\varphi^2(s)$ where $\varphi$ is a Schwartz function localized at frequencies $\lesssim 1$, and $\varphi\approx 1$ for $|s|\leq 1$. We rescale $m$ and set $m_l(s)=m(2^{-l}s)$. Then, we fix
	\[
	\MM=\frac{1}{i2^k} (m_l(x_1)\d_1+\d_1 m_l(x_1)).
	\]
	
	The properties $(1)$ and $(2)$ are immediate due to the frequency localization of $u=\la_k$ and $m_l$ as well as the boundedness of $m_l$. By \eqref{Comm-M} it suffices to consider the property $(3)$ for the operator 
	\[
	-\De=-\d_{\al}g^{\al\be}_{<k-4}\d_{\be}+\d_{\al}h^{\al\be}_{<k-4}\d_{\be}.
	\]
	This yields
	\begin{align*}
		i2^k[-\Delta,\MM]=-2^{-l+2}\d_1 \varphi^2(2^{-l}x_1)\d_1+O(1),
	\end{align*}
	and hence
	\begin{equation*}
		i2^k\<[-\Delta,\MM]\la_k,\la_k\>=2^{-l+2}\lV \varphi(2^{-l}x_1)\d_1\la_k\rV_{L^2L^2}^2+O(\lV\la_k\rV_{L^2L^2}^2)
	\end{equation*}
	Utilizing our assumption \eqref{xi1-near}, it follows that
	\begin{equation*}
		2^{k-l}\lV \varphi(2^{-l}x_1)\la_k\rV_{L^2L^2}^2\lesssim i\< [-\Delta,\MM]\la_k,\la_k\>+2^{-k}O(\lV\la_k\rV_{L^2L^2}^2)
	\end{equation*}
	which yields $(3)$ when combined with \eqref{Comm-M}.
	
	We proceed to reduce the problem to the case when \eqref{xi1-near} holds. We let $\{ \theta_j (\om) \}_{j=1}^d$ be a partition of unity,
	\begin{equation*}
		\sum_{j}\theta_j(\om)=1,\ \ \ \ \om\in\S^{d-1},
	\end{equation*}
	where $\theta_j(\om)$ is supported in a small angle about the $j$-th coordinate axis. Then, we can set $\la_{k,j}=\Theta_{k,j}\la_k$ where
	\begin{equation*}
		\FF\Theta_{k,j}\la=\theta_j(\frac{\xi}{|\xi|})\sum_{k-1\leq l\leq k+1}\varphi_l(\xi)\widehat{\la}(t,\xi).
	\end{equation*}
	We see that 
	\begin{align*}
		&(i\d_t+\d_{\al}g^{\al\be}_{<k-4}\d_{\be})\la_{k,j}+2iA^{\al}_{<k-4}\d_{\al}\la_{k,j}\\
		=&\Theta_{k,j}f_k-\d_{\al}[\Theta_{k,j},g^{\al\be}_{<k-4}]\d_{\be}\la_k-2i[\Theta_{k,j},A^{\al}_{\leq k-4}]\d_{\al}\la_k.
	\end{align*}
	
	By applying $\MM$, suitably adapted to the correct coordinate axis, to $\la_{k,j}$ and summing over $j$, we obtain
	\begin{align*}
		&2^{k-l}\lV \la_k\rV_{L^2L^2([0,1]\times Q)}^2\\
		\lesssim &\ \lV\la_k\rV_{L^{\infty}L^2}^2+\sum_{j=1}^d\int_0^1\<-\Theta_{k,j}f_k,\MM\la_{k,j}\>ds\\
		&\ +\sum_{j=1}^d\int\< [\Theta_{k,j},\d_{\al}g^{\al\be}_{<k-4}\d_{\be}]\la_k+[\Theta_{k,j},2iA^{\al}_{<k-4}]\d_{\al}\la_k,\MM\la_{k,j}\>ds\\
		&\ +(2^{-k}+\lV |D|^{1-\de} A\rV_{L^2 H^{s+\de}}+\lV  h\rV_{\bY^{s+2}})\lV \la_k\rV_{l^2_kX_k}^2\\
		\lesssim &\  \lV\la_k\rV_{L^{\infty}L^2}^2+\lV f_k\rV_{N_k}\lV \la_k\rV_{X_k} 
		+(2^{-k}+\lV |D|^{1-\de} A\rV_{L^2 H^{s+\de}}+\lV  h\rV_{\bY^{s+2}})\lV \la_k\rV_{l^2_kX_k}^2.
	\end{align*}
	The commutator is done via \eqref{Comm-bd} and \eqref{Com_Ag-dpsi}. Then \eqref{LED-pre0} follows.

	Next we use the bound \eqref{LED-pre0} to complete the proof of Proposition \ref{Local-Energy-Decay}. 	Taking the supremum in \eqref{LED-pre0} 
	over $Q\in\QQ_l$ and over $l$, we obtain
	\begin{equation*}
	\begin{aligned}
	2^k\lV \la_k\rV_{X}^2\lesssim &\ \lV \la_k\rV_{L^{\infty}L^2}^2+\lV f_{1k}\rV_{N_k}\lV \la_k\rV_{X_k}+\lV f_{2k}\rV_{L^1L^2}\lV \la_k\rV_{L^{\infty}L^2}\\
	&\ +(2^{-k}+\lV |D|^{1-\de} A\rV_{L^2 H^{s+\de}}+\lV  h\rV_{\bY^{s+2}})\lV \la_k\rV_{l^2_kX_k}^2\\
	\lesssim &\ \lV \la_k\rV_{L^{\infty}L^2}^2+\lV f_{1k}\rV_{N_k}\lV \la_k\rV_{X_k}+\lV f_{2k}\rV_{L^1L^2}^2\\
	&\ +(2^{-k}+\lV |D|^{1-\de} A\rV_{L^2 H^{s+\de}}+\lV  h\rV_{\bY^{s+2}})\lV \la_k\rV_{l^2_kX_k}^2.
	\end{aligned}
	\end{equation*}
	Combined with \eqref{Eng-Estm}, we get
	\begin{equation}          \label{psiXk}
	\begin{aligned}
	\lV \la_k\rV_{X_k}^2\lesssim &\  \lV\la_k(0)\rV_{L^2}^2
	+\lV f_{1k}\rV_{N_k}^2+\lV f_{2k}\rV_{L^1L^2}^2\\
	&\ +(2^{-k}+\lV |D|^{1-\de} A\rV_{L^2 H^{s+\de}}+\lV  h\rV_{\bY^{s+2}})\lV \la_k\rV_{l^2_kX_k}^2.
	\end{aligned}
	\end{equation}

We now finish the proof by incorporating the summation over cubes. We let $\{\chi_Q\}$ denote a partition via functions which are localized to frequencies $\lesssim 1$ which are associated to cubes $Q$ of scale $M2^k$. We also assume that $|\nab^l\chi_Q|\lesssim (2^k M)^{-l}$, $l=1,2$. Thus,
	\begin{align*}
	&(i\d_t+\d_{\al}g^{\al\be}_{<k-4}\d_{\be})\chi_Q \la_k+2iA^{\al}_{<k-4}\d_{\al}\chi_Q \la_k\\
	=&\ \chi_Q f_k+[\d_{\al}g^{\al\be}_{<k-4}\d_{\be},\chi_Q]\la_k+2iA^{\al}_{<k-4}\d_{\al}\chi_Q\cdot \la_k
	\end{align*}
	Applying \eqref{Eng-Estm} to $\chi_Q\la_k$, we obtain
	\begin{align*}
	&\sum_Q \lV \chi_Q\la_k\rV_{L^{\infty}L^2}^2\\
	\lesssim & \sum_Q \lV \chi_Q\la_k(0)\rV_{L^2}^2
	+\lV \d_x A\rV_{L^2 H^{s}}\sum_Q\lV \chi_Q\la_k\rV_{X_k}^2\\
	&+(\sum_Q\lV \chi_Qf_k\rV_{N_k}^2)^{1/2}(\sum_Q\lV \chi_Q\la_k\rV_{X_k}^2)^{1/2}\\
	&+\sum_Q\lV[\d_{\al}g^{\al\be}_{<k-4}\d_{\be},\chi_Q]\la_k+2iA^{\al}_{<k-4}\d_{\al}\chi_Q\cdot \la_k\rV_{L^1L^2}^2.
	\end{align*}
	But by \eqref{small-ass} we have
	\begin{equation}\label{chiQ-comm-1}
	\begin{aligned}
	\sum_Q\lV[\nab g\nab,\chi_Q]\la_k \rV_{L^1L^2}^2 \lesssim &\sum_Q\lV \nab g\cdot\nab \chi_Q\cdot\la_k+g\nab(\nab\chi_Q\cdot\la_k)\rV_{L^1L^2}^2\\
	\lesssim &\ (1+\lV |D|^{\si_d} h\rV_{L^\infty H^{s+1-\si_d}}) M^{-2}\sum_Q \lV\chi_Q\la_k\rV_{L^{\infty}L^2}^2,
	\end{aligned}
	\end{equation}
	and also
	\begin{align}\label{chiQ-comm-2}
	\sum_Q\lV2iA^{\al}_{<k-4}\d_{\al}\chi_Q\cdot \la_k\rV_{L^1L^2}^2\lesssim (1+\lV |D|^{1-\de} A\rV_{L^2 H^{s+\de}}) M^{-2}\sum_Q \lV\chi_Q\la_k\rV_{L^{\infty}L^2}^2.
	\end{align}
	For $M$ sufficiently large, we can bootstrap the commutator terms, and, after a straightforward transition to cubes of scale $2^k$ rather than $M2^k$, we observe that
	\begin{equation}      \label{energy-l^2L^infL^2}
	\begin{aligned}
	\lV \la_k\rV_{l^2_kL^{\infty}L^2}^2
	\lesssim   \lV \la_k(0)\rV_{L^2}^2
	+\lV |D|^{1-\de} A\rV_{L^2 H^{s+\de}}\lV \la_k\rV_{l_k^2X_k}^2
	+\lV f_k\rV_{l^2_kN_k}\lV \la_k\rV_{l^2_kX_k}.
	\end{aligned}  
	\end{equation}
	
	We now apply \eqref{psiXk} to $\chi_Q\la_k$, and then by \eqref{chiQ-comm-1} and \eqref{chiQ-comm-2} we see that
	\begin{align*}
	\sum_Q \lV\chi_Q \la_k\rV_{X_k}^2\lesssim & \lV\la_k(0)\rV_{L^2}^2+\sum_Q\lV\chi_Q f_k\rV_{N_k}^2+M^{-2}\sum_Q\lV \chi_Q\la_k\rV_{X_k}^2\\
	&+(2^{-k}+\lV h\rV_{\bY^{s+2}}+\lV |D|^{1-\de} A\rV_{L^2 H^{s+\de}})\sum_Q\lV \chi_Q\la_k\rV_{l^2_kX_k}^2.
	\end{align*}
	For $M\gg 1$, we have
	\begin{align*}
	M^{-d}\lV \la_k\rV_{l^2_kX_k}^2\lesssim  \lV\la_k(0)\rV_{L^2}^2+\lV f_k\rV_{l^2_kN_k}^2
	+(2^{-k}+\lV h\rV_{\bY^{s+2}}+\lV |D|^{1-\de} A\rV_{L^2 H^{s+\de}})\lV \la_k\rV_{l^2_kX_k}^2.
	\end{align*}
	By \eqref{small-ass}, for $k$ sufficiently large (depending on $M$), we may absorb the the last term in the right-hand side into the left, i.e
    \begin{equation*}
    \lV \la_k\rV_{l^2_kX_k}^2\lesssim \lV\la_k(0)\rV_{L^2}^2+\lV f_k\rV_{l^2_kN_k}^2.
    \end{equation*} 
	On the other hand, for the remaining bounded range of $k$, we have
	\begin{equation*}
	\lV \la\rV_{X_k}\lesssim \lV \la\rV_{L^{\infty}L^2},
	\end{equation*}
	and then \eqref{energy-l^2L^infL^2} and \eqref{small-ass} give 
	\begin{align*}
	\lV\la_k\rV_{l^2_kX_k}^2\lesssim &\ \lV \la_k(0)\rV_{L^2}^2
	+\lV |D|^{1-\de}A\rV_{L^2H^{s+\de}}\lV \la_k\rV_{l_k^2X_k}^2
	+\lV f_k\rV_{l^2_kN_k}\lV \la_k\rV_{l^2_kX_k}\\
	\lesssim &\ \lV \la_k(0)\rV_{L^2}^2
	+\lV f_k\rV_{l^2_kN_k}^2,
	\end{align*}
	which finishes the proof of \eqref{energy-decay}. 
\end{proof}

\subsection{ The full linear problem}
Here we use the bounds for the paradifferential equation in the previous subsection 
in order to prove similar bounds for the full equation \eqref{Lin-eq0}:

\begin{prop}[Well-posedness]   \label{p:well-posed}
	Let $s>\frac{d}{2}$, $d\geq 2$ and $h=g-I_d$. Assume that the metric $g$ and the magnetic potential $A$  satisfy
	\begin{equation*}
	\lV h\rV_{\bY^{s+2}},\ \lV |D|^{1-\de} A\rV_{L^2 H^{s+\de}}\ll 1.
	\end{equation*}
Then the equation \eqref{Lin-eq0} is well-posed for initial data $\la_0\in H^{\si}$ with $-s\leq \si\leq s$, and we have the estimate 
	\begin{equation}\label{well-posed-s0}
	\lV \la\rV_{l^2X^{\si}}\lesssim \lV \la_0\rV_{H^{\si}}+\lV F\rV_{l^2N^{\si}}.
	\end{equation}
\end{prop}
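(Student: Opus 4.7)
The plan is to reduce the full linear equation to the paradifferential one treated in Proposition~\ref{Local-Energy-Decay}, by moving all balanced and unbalanced frequency interactions into the source term, and then to assemble the frequency localized bounds via an $l^2$ sum with weights $2^{2\sigma k}$.

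First I would frequency-localize \eqref{Lin-eq0} by applying $S_k$ for each $k \geq 0$, producing
\[
i\d_t\la_k + \d_\al(g^{\al\be}_{<k-4}\d_\be\la_k) + 2iA^\al_{<k-4}\d_\al\la_k = S_k F + R_k,
\]
where the remainder $R_k$ collects the terms where $S_k$ does not fall only on $\la$ (commutators with $g_{<k-4}$ and $A_{<k-4}$) and the interactions in which the coefficient has frequency $\geq k-4$ (high-high and high-low balanced pieces). Then Proposition~\ref{Local-Energy-Decay} applied to $\la_k$, which requires exactly the smallness assumption in the hypothesis, yields
\[
\|\la_k\|_{l^2_k X_k} \lesssim \|S_k\la_0\|_{L^2} + \|S_k F\|_{l^2_k N_k} + \|R_k\|_{l^2_k N_k}.
\]

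The heart of the argument is to show that $R_k$ is actually perturbative at scaling level $\si$. Writing schematically $R_k$ as a sum of $\nab[S_{<k-4}h,S_k]\nab\la$, $[S_k,A_{<k-4}]\nab\la$, $S_k\nab(h_{\geq k-4}\nab\la)$ and $S_k(A_{\geq k-4}\nab\la)$, I would bound each with the multilinear estimates of Section~\ref{Sec-mutilinear}, specifically \eqref{Comm-bd}, \eqref{Com_Ag-dpsi}, \eqref{qdt-gdpsi_hl&hh}, \eqref{qdt-Ag-dpsi} and \eqref{qdt-Ag-dpsi_lh}. Each of these produces a bound of the form
\[
\|R_k\|_{l^2_k N_k} \lesssim \ep\, \ts_k,
\]
where $\ts_k$ is an admissible frequency envelope for $\la\in l^2X^{\si}$, valid in the full range $-s\leq\si\leq s$ in view of our bilinear lemmas. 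Multiplying by $2^{\si k}$, squaring and summing in $k$, and using the smallness of $\ep$ to absorb this term on the left, we obtain \eqref{well-posed-s0} as an a~priori estimate.

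For existence, I would solve smooth, compactly supported approximations of the data (with regularized coefficients) via classical quasilinear Schr\"odinger theory, and then pass to the limit using the uniform bound \eqref{well-posed-s0} together with weak compactness in $l^2X^{\si}$; uniqueness and continuity on the data follow from the same estimate applied to the difference of two solutions, using the reversibility of the Schr\"odinger flow to cover the negative-$\si$ range if needed. The main technical obstacle I anticipate is in the lower-regularity end $\si$ close to $-s$: there the high-high commutator and the balanced $\nabla h_{\geq k-4}\nabla\la$ piece are the tightest, since one loses a derivative on $\la$ and must compensate entirely using the $\bY^{s+2}$ bound on $h$; verifying that \eqref{qdt-gdpsi_hl&hh} is indeed two-sided in $(p_k,s_k)$ (as stated) is what makes this work, and the $Y$-norm rather than $L^\infty H^{s+2}$ is essential here, exactly as in the proof of local energy decay.
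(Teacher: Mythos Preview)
Your approach is essentially the same as the paper's: frequency-localize, apply Proposition~\ref{Local-Energy-Decay} to the paradifferential equation, and treat the remainder $R_k$ (the paper calls it $H_k$) perturbatively using exactly \eqref{qdt-gdpsi_hl&hh}, \eqref{Comm-bd}, \eqref{Com_Ag-dpsi}, and \eqref{qdt-Ag-dpsi}; the citation of \eqref{qdt-Ag-dpsi_lh} is superfluous since the $A_{<k-4}$ piece sits in the paradifferential operator, not in $R_k$. The only notable difference is in existence: the paper obtains it by duality---the adjoint equation has the same structure, so \eqref{well-posed-s0} for the adjoint yields existence directly---whereas you propose regularization and compactness, which also works but is less economical here.
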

\begin{proof}
    The well-posedness follows in a standard fashion from a similar energy estimate for the adjoint equation. Since the adjoint equation has a similar form, with similar bounds on the coefficients, such an estimate follows directly from \eqref{well-posed-s0}. Thus, we now focus on the proof of the bound \eqref{well-posed-s0}. For $\la$ solving \eqref{Lin-eq0}, we see that $\la_k$ solves
	\begin{equation*}
	\left\{
	\begin{aligned}      
	&i\d_t \la_k+\d_{\al}(g^{\al\be}_{<k-4}\d_{\be}\la_k)+2iA^{\al}_{<k-4}\d_{\al}\la_k=F_k+H_k,\\
	&\la_k(0)=\la_{0k},
	\end{aligned}\right.
	\end{equation*}
	where $F_k:=S_k F$ and
	\begin{align*}
	H_k:=&-S_k\d_{\al}(g^{\al\be}_{\geq k-4}\d_{\be}\la)-\d_{\al}[S_k,g^{\al\be}_{< k-4}]\d_{\be}\la-2i[S_k,A^{\al}_{<k-4}]\d_{\al}\la\\
	&-2iS_k( A^{\al}_{\geq k-4}\d_{\al}\la).
	\end{align*}
	If we apply Proposition \ref{Local-Energy-Decay} to each of these equations, we see that
	\begin{equation*}
	\lV\la_k\rV_{l^2X^{\si}}^2\lesssim \lV\la_{0k}\rV_{H^{\si}}^2+\lV F_k\rV_{l^2N^{\si}}^2+\lV H_k\rV_{l^2N^{\si}}^2.
	\end{equation*}
	
	We claim that 
	\begin{gather}  \label{Hk}
	\sum_{k}\lV H_k\rV_{l^2N^{\si}}^2\lesssim (\lV  h\rV_{\bY^{s+2}}+\lV  \d_x A\rV_{L^2 H^{s}})^2\lV \la\rV_{l^2X^{\si}}^2,\ \text{for }-s\leq \si\leq s.
	\end{gather}
	Indeed, the bound for the terms in $H_k$ follows from \eqref{qdt-gdpsi_hl&hh}, \eqref{Comm-bd}, \eqref{Com_Ag-dpsi} and \eqref{qdt-Ag-dpsi}, respectively. Then by the above two bounds and the smallness of $h$ and $A$, we obtain the estimate \eqref{well-posed-s0}.
\end{proof}

\subsection{The linearized problem.} Here we consider the linearized equation:
\begin{equation}\label{Lin-eq}
\left\{
\begin{aligned}      
&i\d_t \La+\d_{\al}(g^{\al\be}\d_{\be}\La)+2iA^{\al}\d_{\al}\La=F+G,\\
&\La(0)=\La_0,
\end{aligned}\right.
\end{equation}
where
\begin{equation*}
G=-\nab(\mathcal{G}\nab\la)-2i\mathcal{A}^{\al}\d_{\al} \la,
\end{equation*}
and we prove the following.
\begin{prop}    \label{well-posedness-lemma}
	Let $s>\frac{d}{2}$, $\frac{d}{2}-2<\si\leq s-2$, $d\geq 2$ and $h=g-I_d\in \bY^{s+2}$, assume that $\La$ is a solution of \eqref{Lin-eq}, the metric $g$ and $A$ satisfy
	\begin{equation*}
	\lV h\rV_{\bY^{s+2}},\ \lV  |D|^{1-\de}A\rV_{L^2 H^{s+\de}}\ll 1.
	\end{equation*}
	Then we have the estimate
	\begin{equation}         \label{well-posed-s-1}
	\lV \La\rV_{l^2 X^{\si}}\lesssim \lV \La_0\rV_{H^{\si}}+\lV F\rV_{l^2N^{\si}}+(\lV\mathcal{G}\rV_{Z^{\si_d,\si+2}}+\lV\mathcal{A}\rV_{Z^{\de_d,\si+1}})\lV\la\rV_{l^2X^{s}}.
	\end{equation}
\end{prop}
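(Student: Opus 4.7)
The strategy is to view \eqref{Lin-eq} as a linear Schr\"odinger equation of type \eqref{Lin-eq0} with source term $F+G$, where $G=-\nab(\mathcal{G}\nab\la)-2i\mathcal{A}^{\al}\d_{\al}\la$ is now treated as a prescribed inhomogeneity because it depends on the background $\la$ rather than on the solution $\La$. The coefficients $g$ and $A$ of the linear operator satisfy exactly the smallness hypotheses of Proposition~\ref{p:well-posed}, so that proposition applies and yields
\begin{equation*}
    \lV \La\rV_{l^2 X^{\si}} \lesssim \lV \La_0\rV_{H^{\si}} + \lV F\rV_{l^2 N^{\si}} + \lV G\rV_{l^2 N^{\si}}.
\end{equation*}
The proof therefore reduces to establishing the bilinear bound
\begin{equation*}
    \lV G\rV_{l^2 N^{\si}} \lesssim \bigl(\lV\mathcal{G}\rV_{Z^{\si_d,\si+2}}+\lV\mathcal{A}\rV_{Z^{\de_d,\si+1}}\bigr)\lV\la\rV_{l^2 X^{s}}
\end{equation*}
in the range $\tfrac{d}{2}-2<\si\leq s-2$.

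For the term $\nab(\mathcal{G}\nab\la)$, I decompose via Littlewood--Paley trichotomy. The low-high and high-low pieces, in which at most one derivative falls on $\la$, are handled by the bilinear bound \eqref{BiL-si} with $\mathcal{G}$ playing the role of $h$; this is precisely where the restriction $\si\le s-1$ enters. The genuinely high-low piece $\mathcal{G}_{\le k}\nab^2\la_k$, where both derivatives land on the high frequency factor, is the main source of difficulty and is exactly treated by \eqref{qdt_h-d2psi}, which is sharp and forces the two-derivative loss $\si\leq s-2$. Finally, the high-high interactions, together with the $S_k \nab(\mathcal G_{\ge k-4}\nabla \la)$ piece, are absorbed via \eqref{qdt-gdpsi_hl&hh} applied with the envelope placed on $\mathcal G$. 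Square-summing the resulting frequency-localized inequalities over $k$ using the frequency-envelope slow-variation property \eqref{FreEve-relation} yields the desired $l^2 N^\si$ bound for $\nab(\mathcal{G}\nab\la)$, with constant controlled by $\lV\mathcal{G}\rV_{Z^{\si_d,\si+2}}\lV\la\rV_{l^2 X^s}$.

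For the term $\mathcal{A}^\al\d_\al\la$, the same dichotomy is used. The low-high part, in which the derivative falls on the high-frequency factor $\la$, is handled by \eqref{qdt-Ag-dpsi_lh} (valid for $\si\le s-1$), producing the factor $\lV\mathcal{A}\rV_{Z^{\si+1}}$ — and in fact $\lV\mathcal A\rV_{Z^{\de_d,\si+1}}$ once low-frequency indices are accounted for, which is where the slightly stronger $Z^{\de_d,\cdot}$ norm is needed. The high-low and high-high interactions, in which the derivative hits $\la$ at comparable or higher frequency, are controlled by \eqref{qdt-Ag-dpsi}. Again squaring and summing in $k$ closes the estimate.

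The main obstacle is the term $\mathcal G_{<k}\nab^2 \la_k$: it carries two derivatives on the high-frequency factor, and this is precisely why the derivative budget forces the restriction $\si\le s-2$ in the statement. The $Z^{\si_d,\cdot}$ and $Z^{\de_d,\cdot}$ norms on $\mathcal G$ and $\mathcal A$ are calibrated exactly so that the low-frequency components of these coefficients are summable against the low-frequency loss coming from Bernstein's inequality in dimension $d=2,3$; without the $|D|^{\si_d}$ prefix the estimate would just barely fail at low frequency, just as in the analogous linear estimates of Section~\ref{Sec-mutilinear}. Once the two bilinear bounds above are combined, the proposition follows.
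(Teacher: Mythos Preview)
Your proposal is correct and follows essentially the same approach as the paper. The paper also reduces everything to bounding $\sum_k\|G_k\|_{l^2N^\si}^2$ and then cites exactly the multilinear estimates \eqref{qdt-gdpsi_hl&hh}, \eqref{qdt_h-d2psi}, \eqref{qdt-Ag-dpsi}, and \eqref{qdt-Ag-dpsi_lh}; your additional use of \eqref{BiL-si} for the $\nabla\mathcal G_{<k}\,\nabla\la_k$ piece is harmless (that term is equally well absorbed by \eqref{qdt_h-d2psi}). One minor structural difference: the paper rewrites the equation for $\La_k$ in paradifferential form with commutator terms $H_k$ and refers back to the proof of Proposition~\ref{p:well-posed} for those, whereas you invoke Proposition~\ref{p:well-posed} directly as a black box on the full equation with source $F+G$ --- this is a legitimate and slightly cleaner shortcut.
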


\begin{proof}
	For $\La$ solving \eqref{Lin-eq}, we see that $\La_k$ solves
	\begin{equation*}
	\left\{
	\begin{aligned}      
	&i\d_t \La_k+\d_{\al}(g^{\al\be}_{<k-4}\d_{\be}\La_k)+2iA^{\al}_{<k-4}\d_{\al}\La_k=F_k+G_k+H_k,\\
	&\La_k(0)=\La_{0k},
	\end{aligned}\right.
	\end{equation*}
	where
	\begin{gather*}
	G_k=-S_k(\nab(\mathcal{G}\nab\la)-2i\mathcal{A}^{\al}\d_{\al} \la),
	\end{gather*}
	\begin{align*}
	H_k=&-S_k\d_{\al}(g^{\al\be}_{\geq k-4}\d_{\be}\La)-\d_{\al}[S_k,g^{\al\be}_{< k-4}]\d_{\be}\La-2i[S_k,A^{\al}_{<k-4}]\d_{\al}\La\\
	&-2iS_k(A^{\al}_{\geq k-4}\d_{\al}\La).
	\end{align*}
	The proof of \eqref{well-posed-s-1} is similar to that of \eqref{Hk}. Here it suffices to prove
	\begin{gather*}
	\sum_{k}\lV G_k\rV_{l^2N^{\si}}^2\lesssim \lV  \mathcal{G}\rV_{Z^{\si_d,\si+2}}^2\lV \la\rV_{l^2X^{s}}^2+\lV  \mathcal{A}\rV_{ Z^{\de_d,\si+1}}^2\lV \la\rV_{l^2X^{s}}^2.
	\end{gather*}
	Indeed, the bound for the terms in $G_k$ follows from \eqref{qdt-gdpsi_hl&hh}, \eqref{qdt_h-d2psi}, \eqref{qdt-Ag-dpsi} and \eqref{qdt-Ag-dpsi_lh}. 
	This completes the proof of the Lemma.
\end{proof}

\bigskip

\section{Well-posedness in the good gauge}
\label{Sec-LWP}

In this section we use the parabolic results in Section~\ref{Sec-Para}, the multilinear estimates in Section~\ref{Sec-mutilinear} and the linear local energy decay bounds in Section~\ref{Sec-LED}
in order to prove the good gauge formulation of our main result,  namely Theorem \ref{LWP-MSS-thm}.

\subsection{The iteration scheme: uniform bounds}
Here we seek to construct solutions to \eqref{mdf-Shr-sys-2} iteratively, based on the scheme
\begin{equation}         \label{system-iteration}
	\left\{\begin{aligned}	
		&i\d_t\la^{(n+1)}+\d_{\al}(g^{(n)\al\be}\d_{\be}\la^{(n+1)})+2iA^{(n)\al}\d_{\al}\la^{(n+1)}=F^{(n)},\\
		&\la^{(n+1)}(0)=\la_0,
	\end{aligned}\right.	
\end{equation}
with the trivial initialization
\begin{equation*}
	\la^{(0)}=0,
\end{equation*}
where the nonlinearities $F^{(n)}$ are the following $F$ with $(\la,h,A)=(\la^{(n)},h^{(n)},A^{(n)})$
\begin{equation}\label{Non-itera}
\begin{aligned}
F
=&\ \d_\mu (g^{\mu\nu}\d_\nu \la_{\al\be})-\nab^\si \nab_\si \la_{\al\be}+iV^\si\nab_\si \la_{\al\be}-i\nab_\si A^\si \la_{\al\be}+i\la^{\ga}_{\al}\nab_{\be} V_{\ga}
    +i\la_\be^\ga\nab_\al V_\ga\\
    &\ +(B+A_\si A^\si-V_\si A^\si)\la_{\al\be}
    +\psi\Re(\la_{\al\de}\bar{\la}^\de_\be)
    -R_{\al\si\be\de}\la^{\si\de}
    -\la_{\al\mu}\bar{\la}^\mu_\si\la^\si_\be,
\end{aligned}
\end{equation}
and $\SS^{(n)}=(h^{(n)},A^{(n)})$ are the solutions of parabolic system \eqref{par-syst} with $\la=\la^{(n)}$ and initial data 
\begin{equation}   \label{initial-nstep}
    h^{(n)}(0,x)=h_0(x), \quad A^{(n)}(0,x)=A_0(x).
\end{equation}

We assume that $(\la_0,h_0)$ is small in $H^s\times \bY^{s+2}$. Due to the above trivial initialization for $\la^{(0)}$, we also 
inductively assume that
\begin{equation}      \label{Ass-itera}  
\lV \la^{(n)}\rV_{l^2 X^s}\leq C\lV\la_0\rV_{H^s},
\end{equation}
where $C$ is a large constant.

Applying the parabolic estimates \eqref{para-bd0} to \eqref{par-syst} with $\la=\la^{(n)}$ and initial data \eqref{initial-nstep} at each step, we obtain
\begin{align}       \label{BD-SS^n}
\lV \SS^{(n)}\rV_{\bEE^{s}}\lesssim \|(h_0,A_0)\|_{\bEE_0^s} +  \lV \la^{(n)}\rV_{l^2Z^s}\lesssim \|(h_0,A_0)\|_{\bEE_0^s}+\|\la_0\|_{H^s}\lesssim \ep_0.
\end{align}

In order to estimate $\la^{(n+1)}$, we bound the nonlinear terms in $F^{(n)}$ first. In the computations we would omit the superscript $(n)$. Precisely, for the first three terms in \eqref{Non-itera}, by covariant derivatives \eqref{co_d} and $V^\ga =g^{\al\be}\Ga^\ga_{\al\be}$ we have the form
\begin{align*}
\d_\mu (g^{\mu\nu}\d_\nu \la_{\al\be})-\nab^\si \nab_\si \la_{\al\be}+iV^\si \nab_\si \la_{\al\be}\approx \nab h \nab\la+\nab h\nab h \la.
\end{align*}
Then the first term $\nab h\nab \la$ is estimated using \eqref{qdt_d h<k dpsi} and \eqref{qdt-gdpsi_hl&hh}, the second term $\nab h\nab h\la$ is estimated using \eqref{cb-AA-s} with its $A=\nab h$. We obtain
\begin{align*}
\|\nab h \nab\la+\nab h\nab h \la\|_{l^2 N^s}\lesssim \| h\|_{\bY^{s+2}}\|\la\|_{l^2 X^s}+\| h\|_{Z^{s+1}}^2\|\la\|_{Z^s}.
\end{align*}
For the fourth to seventh terms in \eqref{Non-itera}, we have the expression
\begin{align*}
&-i\nab_\si A^\si \la_{\al\be}+i\la^{\ga}_{\al}\nab_{\be} V_{\ga}
+i\la_\be^\ga\nab_\al V_\ga
+(B+A_\si A^\si-V_\si A^\si)\la_{\al\be}\\
\approx &\ (\nab^2 h+\nab A)\la+(\nab h+A)^2 \la.
\end{align*}
Then these two terms are estimated using \eqref{cb-B-s} and \eqref{cb-AA-s} respectively. We obtain 
\begin{align*}
\|(\nab^2 h+\nab A)\la+(\nab h+A)^2 \la\|_{l^2N^s}\lesssim (1+\| \SS\|_{\EE^s})\| \SS\|_{\EE^s}\|\la\|_{Z^s}.
\end{align*}
For the last three terms in \eqref{Non-itera}, by \eqref{R-la} we have
\begin{align*}
\psi\Re(\la_{\al\de}\bar{\la}^\de_\be)
-R_{\al\si\be\de}\la^{\si\de}
-\la_{\al\mu}\bar{\la}^\mu_\si\la^\si_\be\approx \la^3.
\end{align*}
Using \eqref{cb_lam2-psi} we obtain 
\begin{align*}
\|\la^3\|_{l^2N^s}\lesssim \|\la\|_{Z^s}^3.
\end{align*}
Hence, by the above estimates, \eqref{BD-SS^n} and \eqref{Ass-itera} we bound the $F^{(n)}$ by 
\begin{align*}
\|F^{(n)}\|_{l^2N^s}\lesssim \ (1+\| \SS^{(n)}\|_{\bEE^s})\| \SS^{(n)}\|_{\bEE^s}\|\la^{(n)}\|_{l^2X^s}
\lesssim  \ \ep_0 \|\la_0\|_{H^s}.
\end{align*}
Now applying at each step the local energy bound \eqref{well-posed-s0} with $\si=s$ we obtain the estimate
\begin{align}      \label{Unif-bound}
\lV\la^{(n+1)}\rV_{l^2 X^s}\lesssim  \lV\la_0\rV_{H^s}+\lV F^{(n)}\rV_{l^2N^s}
\lesssim \lV\la_0\rV_{H^s}+C\ep_0 \|\la_0\|_{H^s}
\leq C\lV\la_0\rV_{H^s},
\end{align}
which closes our induction.

\subsection{The iteration scheme: weak convergence.}
Here we prove that our iteration scheme converges in the weaker $H^{s-2}$ topology. We denote the differences by
\begin{gather*}
\La^{(n+1)}=\la^{(n+1)}-\la^{(n)},\\ \dSS^{(n+1)}=(\GG^{(n+1)},\AA^{(n+1)},\BB^{(n+1)})=\SS^{(n+1)}-\SS^{(n)}
\end{gather*}
Then from \eqref{system-iteration} we obtain the system
\begin{equation*}              
\left\{\begin{aligned}
&i\d_t \La^{(n+1)}+\d_\al( g^{(n)\al\be}\d_{\be}\La^{(n+1)})+2iA^{(n)\al}\d_{\al}\La^{(n+1)}=F^{(n)}-F^{(n-1)}+G^{(n)},\\
&\La^{(n+1)}(0,x)=0,
\end{aligned}\right.
\end{equation*}
where the nonlinearities $G^{(n)}$ have the form
\begin{align*}
G^{(n)}=&-\d_\al(\GG^{(n)}\d_{\be}\la^{(n)})-2i\AA^{(n)\al}\d_{\al}\la^{(n)},
\end{align*}

By \eqref{para-delta0} we obtain 
\begin{equation}      \label{dSSn}
    \lV \dSS^{(n)}\rV_{\EE^{s-2}}\lesssim \lV \La^{(n)}\rV_{l^2 X^{s-2}}.
\end{equation}
Applying \eqref{well-posed-s-1} with $\si=s-2$ for the $\La^{(n+1)}$ equation we have
\begin{align*}
\lV \La^{(n+1)}\rV_{l^2X^{s-2}}\lesssim  \lV F^{(n)}-F^{(n-1)}\rV_{l^2N^{s-2}}+(\lV\GG^{(n)}\rV_{Z^{\si_d,s}}+\lV\AA^{(n)}\rV_{ Z^{\de_d,s-1}})\lV\la^{(n)}\rV_{l^2X^{s}}.
\end{align*}
For the nonlinear terms $F^{(n)}-F^{(n-1)}$, using \eqref{BiL-si}, \eqref{qdt-gdpsi_hl&hh}, \eqref{cb-AA}, \eqref{cb-B} and \eqref{cb_lam2-psi}  we have
\begin{align*}
\lV F^{(n)}-F^{(n-1)}\rV_{l^2N^{s-2}}
\lesssim & \ (1+\|(\SS^{(n)},\SS^{(n-1)})\|_{\EE^s})^N\big(\|\dSS^{(n)}\|_{\EE^{s-2}}\|(\la^{(n)},\la^{(n-1)})\|_{l^2X^s}\\
&\ +\|(\SS^{(n)},\SS^{(n-1)})\|_{\EE^s}\|\La^{(n)}\|_{l^2X^{s-2}}\big).
\end{align*}
Then by \eqref{dSSn} and the uniform bounds \eqref{BD-SS^n}, \eqref{Unif-bound} we bound the right hand side above by
\begin{align*}
\lV \La^{(n+1)}\rV_{l^2X^{s-2}}
\lesssim &\ (1+\| \SS_0\|_{\bEE_0^s}+\|\la_0\|_{H^s})^N \\
&\quad\cdot\big[ \|\La^{(n)}\|_{l^2X^{s-2}} \|\la_0\|_{H^s}+(\| \SS_0\|_{\bEE_0^s}+\|\la_0\|_{H^s})\|\La^{(n)}\|_{l^2 X^{s-2}}  \big] \\
\ll &\ \lV \La^{(n)}\rV_{l^2X^{s-2}}.
\end{align*}
This implies that our iterations $\la^{(n)}$ converge in $l^2X^{s-2}$ to some function $\la$.
Furthermore, by the uniform bound \eqref{Unif-bound} it follows that
\begin{equation}\label{Energy-bound}
\lV\la\rV_{l^2X^s}\lesssim \lV\la_0\rV_{H^s}.
\end{equation}
Interpolating, it follows that $\la^{(n)}$ converges to $\la$ in $l^2X^{s-\epsilon}$ 
for all $\epsilon > 0$. This allows us to conclude that the auxiliary functions 
$\SS^{(n)}$ associated to $\la^{(n)}$ converge to the functions $\SS$ associated to $\la$,
and also to pass to the limit and conclude that $\la$ solves the (SMCF) equation \eqref{mdf-Shr-sys-2}. Moreover, we have the bound for $\SS$
\begin{align}   \label{Ene-SS}
\lV \SS\rV_{\bEE^{s}}\lesssim \| \SS_0\|_{\bY_0^{s+2}}+\|\la_0\|_{H^s}.
\end{align}
Thus we have established the existence part of our main theorem.

\subsection{Uniqueness via weak Lipschitz dependence.}
Consider the difference of two solutions 
\[
(\La,\dSS)=(\la^{(1)}-\la^{(2)},\SS^{(1)}-\SS^{(2)}).
\]
The $\La$ solves an equation of this form
\begin{equation*}              
\left\{\begin{aligned}
&i\d_t \La+\d_{\al}(g^{(1)\al\be}\d_{\be}\La)+2iA^{(1)\al}\d_{\al}\La=F^{(1)}-F^{(2)}+G,\\
&\La(0,x)=\la^{(1)}_0(x)-\la^{(2)}_0(x),
\end{aligned}\right.
\end{equation*}
where the nonlinearity $G$ is 
\begin{align*}
G=&-\d_{\al}(\GG\d_{\be}\la^{(2)})-2i\AA^{\al}\d_{\al}\la^{(2)}.
\end{align*}

By \eqref{para-delta0} we have
\begin{equation*}
    \lV \dSS\rV_{\EE^{s-2}}\lesssim \|\dSS_0\|_{\HH^{s-2}}+ \lV \La\rV_{l^2 X^{s-2}}.
\end{equation*}
Applying \eqref{well-posed-s-1} with $\si=s-2$ to the $\La$ equation, we obtain the estimate
\begin{align*}
\lV \La\rV_{l^2 X^{s-2}}\lesssim &\ \lV\La_0\rV_{H^{s-2}}+\lV F^{(1)}-F^{(2)}\rV_{l^2N^{s-2}}+(\lV\GG\rV_{Z^{\si_d,s}}+\lV\AA\rV_{Z^{\de_d,s-1}})\lV\la^{(2)}\rV_{l^2X^{s}}\\
\lesssim &\  \lV\La_0\rV_{H^{s-2}}+C\lV(\la^{(1)}_0,\la^{(2)}_0)\rV_{H^{s}}\lV (\La,\dSS)\rV_{l^2 X^{s-2}\times\EE^{s-2}}.
\end{align*}
Then, by the above bound for $\dSS$, we further have
\begin{align*}
\lV \La\rV_{l^2X^{s-2}}\lesssim \lV\La_0\rV_{H^{s-2}}+C\lV(\la^{(1)}_0,\la^{(2)}_0)\rV_{H^{s}}(\|\dSS_0\|_{\HH^{s-2}}+ \lV \La\rV_{l^2 X^{s-2}})
\end{align*}
Since the initial data $\la^{(1)}_0$ and $\la^{(2)}_0$ are sufficiently small, we obtain 
\begin{equation}      \label{Uniq-bound}
\lV \La\rV_{l^2X^{s-2}}\lesssim \lV\La_0\rV_{H^{s-2}}+\|\dSS_0\|_{\HH^{s-2}}.
\end{equation}
This gives the weak Lipschitz dependence, as well as the uniqueness of solutions for \eqref{mdf-Shr-sys-2}.

\subsection{Frequency envelope bounds}\label{Freq-envelope-section}
Here we prove a stronger frequency envelope version of estimate \eqref{Energy-bound}.
\begin{prop}\label{Freq-envelope-bounds}
	Let $\la\in l^2X^s$, $\SS\in \bEE^{s}$ be small data solution to \eqref{mdf-Shr-sys-2}-\eqref{par-syst}, which satisfies \eqref{Energy-bound} and \eqref{Ene-SS}. Let $\{p_{0k}\}$, $\{s_{0k}\}$ be admissible frequency envelopes for the initial data $\la_0\in H^s$ and $\SS_0\in \bEE_0^{s}$. Then $\{p_{0k}+s_{0k}\}$ is also frequency envelope for $(\la,\SS)$ in $l^2X^s\times \bEE^{s}$.
\end{prop}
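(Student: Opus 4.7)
The strategy is to bootstrap a coupled pair of frequency envelope bounds by running the paradifferential local energy decay estimate for $\la_k$ in tandem with the frequency envelope bound for the parabolic system already established in Theorem~\ref{para-thm}. Concretely, define
\[
p_k = 2^{-\delta k}\|\la\|_{l^2X^s}+\max_j 2^{-\delta|j-k|}\|S_j\la\|_{l^2X^s},\qquad s_k = 2^{-\delta k}\|\SS\|_{\bEE^s}+\max_j 2^{-\delta|j-k|}\|S_j\SS\|_{\bEE^s},
\]
which are admissible frequency envelopes by construction. The goal is to establish the two coupled estimates
\begin{align}
p_k &\lesssim p_{0k}+\epsilon_0(p_k+s_k),\label{prop-pk}\\
s_k &\lesssim s_{0k}+\epsilon_0 p_k,\label{prop-sk}
\end{align}
from which the conclusion $p_k+s_k\lesssim p_{0k}+s_{0k}$ follows by absorbing the $\epsilon_0$-small terms into the left.

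The bound \eqref{prop-sk} is an immediate consequence of the frequency envelope version \eqref{para-env0} of the parabolic theorem, applied with $\la$ as the given source and with the envelope $p_k$ of $\la$ in $l^2Z^s \subset l^2X^s$. For \eqref{prop-pk}, the plan is to localize the Schr\"odinger system \eqref{mdf-Shr-sys-2} by applying $S_k$ and to rewrite it as a paradifferential equation of type \eqref{LinSch} for $\la_k$, moved to the right-hand side being the collection of: (i) the paradifferential source $S_k F$, where $F$ is the full nonlinearity from \eqref{Non-itera}, and (ii) the usual paradifferential correction terms
\[
-S_k\d_\al(g^{\al\be}_{\geq k-4}\d_\be\la)-\d_\al[S_k,g^{\al\be}_{<k-4}]\d_\be\la-2i[S_k,A^\al_{<k-4}]\d_\al\la-2iS_k(A^\al_{\geq k-4}\d_\al\la).
\]
Applying Proposition~\ref{Local-Energy-Decay} then reduces \eqref{prop-pk} to a frequency-envelope estimate for the right-hand side in $l^2_k N_k$.

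The corrections in (ii) are estimated using Propositions~\ref{Non-Est} and \ref{Comm-est-Lemma} (namely \eqref{qdt-gdpsi_hl&hh}, \eqref{qdt-Ag-dpsi}, \eqref{Comm-bd}, \eqref{Com_Ag-dpsi}), each of which already has the required envelope form with a gain of $\epsilon_0$ from the smallness of $\SS$, producing contributions bounded by $\epsilon_0(p_k+s_k)$. The remaining nonlinearity in $F$ decomposes schematically as $\nabla h\nabla\la+(\nabla h)^2\la+(\nabla^2h+\nabla A)\la+(\nabla h+A)^2\la+\la^3$, and the frequency envelope control for each type is provided by the corresponding estimates already proved in Section~\ref{Sec-mutilinear}: \eqref{qdt_d h<k dpsi} and \eqref{qdt-gdpsi_hl&hh} for the bilinear $\nabla h\nabla\la$ terms, \eqref{cb-B} for the $(\nabla^2 h+\nabla A)\la$ terms with $B\sim\nabla h+A$, \eqref{cb-AA} for the quadratic-in-coefficient terms, and \eqref{cb_lam2-psi} for $\la^3$. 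Each bound gives a contribution of the form $\epsilon_0(p_k+s_k)$ thanks to the uniform smallness \eqref{Ene-SS}, yielding \eqref{prop-pk}.

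The main obstacle, such as it is, lies not in any single estimate but in the organization: one must verify that every nonlinear term admits a frequency envelope version in which the ``bad'' index $k$ always falls on a factor already controlled by either $p_k$ or $s_k$ with the companion factors carrying the $\epsilon_0$ smallness. The bilinear high--high interactions of $\nabla h \nabla \la$, which drive the low frequency behavior in two dimensions, are the most delicate, but they are exactly what the refined $\bY^{s+2}$ norm (and hence $s_k$) was designed for, and the required bound is contained in the proof of \eqref{qdt_d h<k dpsi} and its envelope analogue. Once \eqref{prop-pk}--\eqref{prop-sk} are in hand, the proof of Proposition~\ref{Freq-envelope-bounds} concludes by absorbing the $O(\epsilon_0)$ terms from the right-hand side into the left, which is permitted since $p_k$ and $s_k$ are finite by \eqref{Energy-bound} and \eqref{Ene-SS}.
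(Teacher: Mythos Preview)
Your proof is correct and follows essentially the same approach as the paper's: define solution envelopes $p_k,s_k$, localize the Schr\"odinger equation paradifferentially, apply the local energy decay bound together with the multilinear and commutator estimates of Section~\ref{Sec-mutilinear} and the parabolic envelope bound \eqref{para-env0}, then absorb the $O(\epsilon_0)$ terms. One small slip: the inclusion should read $l^2X^s\subset l^2Z^s$ (not the reverse), so that $p_k$ is automatically an admissible envelope for $\la$ in the weaker $l^2Z^s$ norm required by \eqref{para-env0}.
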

\begin{proof}
	Let $p_k$ and $s_k$ be the admissible frequency envelopes for solution $(\la,\SS)\in l^2X^s\times \bEE^s$. Applying $S_k$ to the modified Schr\"{o}dinger equation in \eqref{mdf-Shr-sys-2}, we obtain the paradifferential equation
	\begin{equation*}         
	\left\{\begin{aligned}
	&i\d_t\la_k+\d_{\al}(g_{<k-4}^{\al\be}\d_{\be}\la_k)+2iA_{<k-4}^{\al}\d_{\al}\la_k=F_k+J_k,\\
	&\la(0,x)=\la_0(x),
	\end{aligned}\right.
	\end{equation*}
	where 
	\begin{align*}
	J_k=&-S_k\d_{\al}(g^{\al\be}_{\geq k-4}\d_{\be}\la)-[S_k,\d_{\al}g^{\al\be}_{< k-4}\d_{\be}]\la\\
	&-2i[S_k,A^{\al}_{<k-4}]\d_{\al}\la
	-2iS_k[ A^{\al}_{\geq k-4}\d_{\al}\la_k],
	\end{align*}
	and $\SS=(h,A)$ is the solution to the parabolic system \eqref{par-syst}.
	We estimate $\la_k=S_k\la$ using Proposition \ref{p:well-posed},
	\begin{align*}
		\lV \la_k\rV_{l^2X^s}\lesssim& \ p_{0k}+\| F_k  \|_{l^2N^s}+\| J_k  \|_{l^2N^s}.
	\end{align*}
	By Proposition \ref{Non-Est}, Lemma \ref{bilinear-est} and Lemma \ref{Comm-est-Lemma} we bound the nonlinear terms by
	\begin{equation*}    \label{freq-envelope-psi}
	\begin{aligned}
	\| F_k  \|_{l^2N^s}+\|J_k\|_{l^2N^s}\lesssim& \ (1+\|\SS\|_{\bEE^s}+\|\la\|_{l^2X^s})^N(\|\SS\|_{\bEE^s}p_k+s_k\|\la\|_{l^2X^s}).
	\end{aligned}
	\end{equation*}
	Then by \eqref{Ene-SS}, \eqref{Energy-bound}, \eqref{para-envelope} and the smallness of initial data we obtain
	\begin{align*}
	\|\la_k\|_{l^2X^s}\lesssim p_{0k}+\ep p_k+\ep(s_{0k}+p_k)\lesssim p_{0k}+s_{0k}+\ep p_k.
	\end{align*}
	For metric $g=I_d+h$, by \eqref{para-env0} we also have
	\begin{align*}
	\| \SS_k\|_{\bEE^{s}}\lesssim s_{0k}+\ep p_k.
	\end{align*}
	From the definition of frequency envelope \eqref{Freq-envelope}, these two bounds imply
	\begin{align*}
	p_k+s_k\lesssim p_{0k}+s_{0k}.
	\end{align*}
	and conclude the proof.
\end{proof}

\subsection{Continuous dependence on the initial data}
Here we show that the map $(\la_0,\SS_0)\rightarrow(\la,\SS)$ is continuous from $H^s\times \bEE_0^{s}$ into $l^2X^s\times\bEE^s$. By \eqref{para-s}, it suffices to prove $(\la_0,\SS_0)\rightarrow \la$ is continuous from $H^s\times \bEE_0^{s}$ to $l^2 X^s$.

Suppose that $(\la_0^{(n)},\SS_0^{(n)})\rightarrow (\la_0,\SS_0)$ in $H^s\times \bEE_0^{s}$. Denote by $(p_{0k}^{(n)},s_{0k}^{(n)})$, respectively $(p_{0k},s_{0k})$ the frequency envelopes associated to $(\la_0^{(n)},\SS_0^{(n)})$, respectively $(\la_0,\SS_0)$, given by \eqref{Freq-envelope}. If $(\la_0^{(n)},\SS_0^{(n)})\rightarrow (\la_0,\SS_0)$ in $H^s\times \bEE_0^{s}$ then $(p_{0k}^{(n)},s_{0k}^{(n)})\rightarrow (p_{0k},s_{0k})$ in $l^2$. Then for each $\ep>0$ we can find some $N_{\ep}$ so that
\[
\lV p_{0,>N_{\ep}}^{(n)}\rV_{l^2}+\lV s_{0,>N_{\ep}}^{(n)}\rV_{l^2}\leq \ep,\ \text{for all }n.
\]
By Proposition \ref{Freq-envelope-bounds} we obtain that
\begin{equation}  \label{high-freq-small}
\lV \la_{>N_{\ep}}^{(n)}\rV_{l^2X^s}\leq \ep,\ \text{for all }n.
\end{equation}
To compare $\la^{(n)}$ with $\la$ we use \eqref{Uniq-bound} for low frequencies and \eqref{high-freq-small} for the high frequencies,
\begin{align*}
\lV \la^{(n)}-\la\rV_{l^2X^s}\lesssim&\ \lV S_{<N_{\ep}}(\la^{(n)}-\la)\rV_{l^2X^s}+\lV S_{>N_{\ep}}\la^{(n)}\rV_{l^2X^s}+\lV S_{>N_{\ep}}\la\rV_{l^2X^s}\\
\lesssim &\  2^{2N_{\ep}}\lV S_{<N_{\ep}}(\la^{(n)}-\la)\rV_{l^2X^{s-2}}+2\ep\\
\lesssim &\ 2^{2N_{\ep}}(\lV S_{<N_{\ep}}(\la^{(n)}_0-\la_0)\rV_{H^{s-2}}+\|S_{<N_\ep}(\SS^{(n)}_0-\SS_0)\|_{\HH^{s-2}})+2\ep.
\end{align*}	
Letting $n\rightarrow\infty$ we obtain
\[
\limsup_{n\rightarrow\infty}\lV \la^{(n)}-\la\rV_{l^2X^s}\lesssim \ep.
\]	
Letting $\ep\rightarrow 0$ we obtain
\begin{equation*}   %   \label{Con-depen-psi}
\lim_{n\rightarrow 0}\lV \la^{(n)}-\la\rV_{l^2X^s}=0,
\end{equation*}
which completes the desired result.

\subsection{Higher regularity}
Here we prove that the solution $(\la,\SS)$ satisfies the bound
\begin{equation}   \label{HiReg}
\lV (\la,\SS)\rV_{l^2X^{\si}\times \bEE^{\si}}\lesssim \lV\la_0\rV_{H^{\si}}+\lV \SS_0\rV_{\bEE_0^{\si}},\quad\si\geq s,
\end{equation}
whenever the right hand side is finite. 

The proof of \eqref{HiReg} is similar to that in \cite[Section 7.6]{HT21}. Here we simply repeat this process. Differentiating the original Schr\"odinger equation \eqref{mdf-Shr-sys-2}, and then using Proposition \ref{p:well-posed}, Lemma \ref{bilinear-est} and Proposition \ref{Non-Est} we easily obtain
\begin{align*}
\lV \nab\la\rV_{l^2 X^s}
\lesssim \lV \nab\la_0\rV_{H^s}+\lV (\nab\la,\nab\SS)\rV_{l^2X^s\times\bEE^s}\lV (\la,\SS)\rV_{l^2X^s\times\bEE^s}(1+\lV (\la,\SS)\rV_{l^2X^s\times\bEE^s})^N.
\end{align*}
For the parabolic equations, by \eqref{para-s} we obtain
\begin{align*}
\lV \nab \SS\rV_{\bEE^s}\lesssim \|\nab \SS_0\|_{\bEE_0^{s}} + \|\la\|_{l^2X^s} \lV \nab\la\rV_{l^2X^s}.
\end{align*}
Hence, by \eqref{Energy-bound} and \eqref{Ene-SS}, these imply \eqref{HiReg} with $\si=s+1$. Inductively, we can further obtain \eqref{HiReg} for any $\si\geq s$.

\subsection{The compatibilities conditions}  % \label{cpt-cdt}
As part of our derivation of the (SMCF) equations \eqref{mdf-Shr-sys-2} for the second fundamental form $\la$ in  the good gauge, coupled with the parabolic system \eqref{par-syst},
we have seen that the compatibility conditions are described by the equations 
\eqref{Ric}, \eqref{R-la}, \eqref{la-commu}, \eqref{cpt-AiAj-2}, \eqref{heat-gauge} and \eqref{Cpt-A&B}.  However, our proof of the well-posedness
result for the Schr\"odinger evolution \eqref{mdf-Shr-sys-2} does not apriori guarantee
that these constraints hold. Here we rectify this 
omission:
\begin{lemma}[Constraint conditions] % \label{constraints}
	Assume that $\la \in C[0,T;H^s]$ solves the SMCF equation \eqref{mdf-Shr-sys-2}
	coupled with the parabolic system \eqref{par-syst}. Then the relations \eqref{Ric}, \eqref{R-la}, \eqref{la-commu}, \eqref{cpt-AiAj-2}, \eqref{heat-gauge} and \eqref{Cpt-A&B} hold.
\end{lemma}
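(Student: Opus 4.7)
The plan is to establish each constraint by a standard propagation argument: check that it holds at $t=0$, derive an evolution equation for the corresponding defect that is linear and homogeneous in the full collection of defects, and then conclude by a uniqueness argument for the resulting coupled system.

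First, I observe that two of the listed constraints are essentially built into the system by construction. The heat gauge condition \eqref{heat-gauge} holds because $B$ is defined in \eqref{par-syst} as $B = \nabla^\alpha A_\alpha$. Similarly $V^\gamma = g^{\alpha\beta}\Gamma^\gamma_{\alpha\beta}$ is a defining relation. The Ricci identity \eqref{Ric} is the trace of \eqref{R-la}, so it is implied by the Gauss constraint. Hence the nontrivial constraints to propagate are the Codazzi relation \eqref{la-commu}, the Gauss relation \eqref{R-la}, the Ricci (curvature) relation \eqref{cpt-AiAj-2}, and the temporal compatibility \eqref{Cpt-A&B}.

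Next, at $t=0$ the data $(\lambda_0, g_0, A_0)$ is constructed from an actual immersion $F_0:\R^d\to \R^{d+2}$ via the harmonic/Coulomb gauge in Section~\ref{s:elliptic}. Hence the structure equations \eqref{strsys-cpf} hold at $t=0$, and therefore so do \eqref{la-commu}, \eqref{R-la}, \eqref{cpt-AiAj-2} by the standard Gauss--Codazzi--Ricci computation recalled in Section~\ref{s:GC}. Thus the defects
\[
\mathcal C_{\alpha\beta\gamma} := \nabla^A_\alpha \lambda_{\beta\gamma} - \nabla^A_\beta \lambda_{\alpha\gamma}, \quad
\mathcal G_{\sigma\gamma\alpha\beta} := R_{\sigma\gamma\alpha\beta} - \Re(\lambda_{\beta\gamma}\bar\lambda_{\alpha\sigma} - \lambda_{\alpha\gamma}\bar\lambda_{\beta\sigma}),
\]
\[
\mathcal K_{\alpha\beta} := \nabla_\alpha A_\beta - \nabla_\beta A_\alpha - \Im(\lambda^\gamma_\alpha\bar\lambda_{\beta\gamma})
\]
all vanish at $t=0$.

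Then I would derive evolution equations for these defects. Since $g$ and $A$ satisfy the parabolic system \eqref{par-syst} and $\lambda$ satisfies the Schr\"odinger system \eqref{mdf-Shr-sys-2}, direct differentiation (using commutator identities $[\nabla_\mu,\nabla_\nu]$ expressed via $R$, and substituting the flow equations) should yield, after cancellations, a coupled linear homogeneous system of the schematic form
\[
(\partial_t - \Delta_g)\mathcal G = \mathcal L_1(\mathcal G,\mathcal C,\mathcal K), \qquad
(\partial_t - \Delta_g)\mathcal K = \mathcal L_2(\mathcal G,\mathcal C,\mathcal K),
\]
\[
(i\partial_t + \Delta_g)\mathcal C = \mathcal L_3(\mathcal G,\mathcal C,\mathcal K),
\]
where the $\mathcal L_i$ are linear in $(\mathcal G,\mathcal C,\mathcal K)$ and their first derivatives, with coefficients controlled in terms of the already established solution $(\lambda,h,A)$. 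Given vanishing initial data, the parabolic energy method from Section~\ref{Sec-Para} handles $\mathcal G$ and $\mathcal K$, while the linearized Schr\"odinger well-posedness (\propref{well-posedness-lemma}) handles $\mathcal C$; together they force $\mathcal G=\mathcal C=\mathcal K\equiv 0$ on $[0,1]$. Finally, with \eqref{cpt-AiAj-2} in hand, the compatibility \eqref{Cpt-A&B} follows by a direct computation: inserting $B=\nabla^\alpha A_\alpha$ into $\partial_\alpha B$, commuting derivatives using \eqref{cpt-AiAj-2} and \eqref{R-la}, and comparing with the parabolic equation \eqref{Heat-A-pre} for $A_\alpha$ yields exactly \eqref{Cpt-A&B}.

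The main obstacle is the algebraic derivation of the closed defect system. Showing that $\partial_t \mathcal G$ and $\partial_t \mathcal K$ reduce, after use of \eqref{mdf-Shr-sys-2}--\eqref{par-syst} and the second Bianchi identity, to an expression linear in $(\mathcal G,\mathcal C,\mathcal K)$ involves delicate cancellations between the nonlinear source terms; similarly $(i\partial_t+\Delta_g)\mathcal C$ requires commuting covariant derivatives with the Schr\"odinger operator and using \eqref{la-commu} symmetrically on two pairs of indices. These are precisely the same cancellations that made the derivation of \eqref{mdf-Shr-sys-2} possible in the first place, so in principle they are guaranteed by the geometric origin of the equations; the work lies in arranging the computation so that the defects genuinely decouple from the nonlinear tails of $(\lambda,h,A)$ themselves.
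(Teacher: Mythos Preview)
Your overall strategy---define defect tensors, show they vanish initially, derive a closed linear homogeneous evolution system, and conclude by uniqueness---is exactly what the paper does. But the way you organize the closed system differs from the paper in one structural point that matters.

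You propose to treat the full Riemann--Gauss defect $\mathcal G_{\sigma\gamma\alpha\beta}$ by a parabolic equation $(\partial_t-\Delta_g)\mathcal G=\mathcal L_1(\mathcal G,\mathcal C,\mathcal K)$. The paper does \emph{not} do this. Instead it introduces the Ricci defect $T^1_{\alpha\beta}=\Ric_{\alpha\beta}-\Re(\lambda_{\alpha\beta}\bar\psi-\lambda_{\alpha\sigma}\bar\lambda^\sigma_{\ \beta})$ as a separate unknown and derives a parabolic equation for $T^1$; the full Riemann defect $T^2=\mathcal G$ is then controlled \emph{elliptically} via differential Bianchi-type identities
\[
\nabla_\delta T^2_{\sigma\gamma\alpha\beta}+\nabla_\sigma T^2_{\gamma\delta\alpha\beta}+\nabla_\gamma T^2_{\delta\sigma\alpha\beta}=T^3\cdot\lambda,\qquad
\nabla^\sigma T^2_{\sigma\gamma\alpha\beta}=\nabla_\alpha T^1_{\gamma\beta}-\nabla_\beta T^1_{\gamma\alpha}+T^3\cdot\lambda,
\]
with $T^1$ and $T^3=\mathcal C$ as sources. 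This sidesteps deriving a parabolic equation for a four-index tensor and makes the cancellations manageable. Your dismissal of the Ricci relation as ``just the trace'' is thus reversed in the paper: the trace is the \emph{primary} evolving quantity, and the full tensor is recovered from it. The paper also introduces the temporal-curvature defect $T^5_\alpha=\mathbf F_{0\alpha}-\tilde{\mathbf F}_{0\alpha}$ and shows algebraically that $T^5_\alpha=\nabla^\sigma T^4_{\sigma\alpha}+T^1_{\alpha\delta}A^\delta$, which is the precise form of your final ``direct computation''. The resulting system is parabolic in $T^1,T^4$, Schr\"odinger in $T^3$, and elliptic/algebraic in $T^2,T^5$.

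Whether a direct parabolic equation for $\mathcal G$ can be made to close linearly in the defects is not obvious; the paper's Bianchi route avoids having to settle that question.
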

\begin{proof}
To shorten the notations, we define
\begin{align*}
&T^1_{\al\be}=\Ric_{\al\be}-\tRic_{\al\be},\ \quad \qquad   \tRic_{\al\be}:=\Re(\lambda_{\al\be}\bar{\psi}-\la_{\al\si}\bar{\la}^\si_{\ \be}),\\
&T^2_{\si\ga\al\be}=R_{\si\ga\al\be}-\tR_{\si\ga\al\be},\qquad \tR_{\si\ga\al\be}:= \Re(\lambda_{\ga\be}\bar{\la}_{\si\al}-\la_{\ga\al}\bar{\la}_{\si\be}), \\
&T^3_{\al\be,\ga}=\nab^A_\al \la_{\be\ga}-\nab^A_\be \la_{\al\ga},\\
&T^4_{\al\be}=\bmF_{\al\be}-\tbmF_{\al\be},\qquad  \bmF_{\al\be}:=\nab_\al A_\be-\nab_\be A_\al,\ \tbmF_{\al\be}:=\Im(\la_\al^\ga \bar{\la}_{\ga\be}),\\
&T^5_{\al}= \bmF_{0\al} - \tbmF_{0\al},\ \qquad \bmF_{0\al}:=\d_t A_\al-\nab_\al B,\ \tbmF_{0\al}:= \Re(\la_{\al}^{\ga}\bar{\d}^A_{\ga}\bar{\psi})+\Im (\la^\ga_\al \bar{\la}_{\ga\si})V^\si.
\end{align*}
Here $T^3$ and $T^4$ are antisymmetric, $T^1$ is symmetric and $T^2$ inherits all the linear symmetries
of the curvature tensor.

Our goal is to show that all these functions vanish,
knowing that they vanish at the initial time. 
We will prove this by showing that
they solve a coupled linear homogeneous evolution system of the form
\begin{equation*}
\left\{\begin{aligned}
&(\d_t-\De_g)T^{1,\be}_{\al}=\la^2 T^4+T^1\nab V+V\nab T^1+T^3\nab\la +\la\nab T^3,\\
&\nab_\delta T^2_{\si\ga\al\be} + \nab_{\si} T^2_{\ga\de\al\be} 
+ \nab_{\ga} T^2_{\de\si\al\be} = T^1\la,\\
&\nab^{\si}T^2_{\si\ga\al\be} = \nabla_{\al} T^1_{\ga\be} - \nabla_{\be} T^1_{\ga\al}
+  T^1\la,\\
&\begin{aligned}
(i\d^B_t -\De^A_g)T^3_{\al\be,\ga}=&\ \la T^5+T^3(\nab V+\la^2+R)+(\nab^A\la+\la V)(T^1+T^2+T^4)\\
&\ +\la\nab (T^2+T^4)+V\nab T^3
\end{aligned}\\
&(\d_t-\De_g)T^4_{\al\be}=\Ric T^4+\nab^A\la T^3+V\la T^3,\\
&T^5_\al=\nab^\si T^4_{\si\al}+T^1_{\al\de}A^\de.
\end{aligned}\right.
\end{equation*}
Then standard energy estimates show that zero is the only solution for this 
system.

The formulas for $T^5$ are obtained directly by the equations for $A$ \eqref{Heat-A-pre} and heat gauge $B=\nab^\al A_\al$. It remains to derive the system for $(T^1,\cdots, T^4)$.
\begin{proof}[The equation for $T^1$]
	This has the form
	\begin{equation*}
	(\d_t-\De_g)T^{1,\be}_{\al}=\la^2 T^4+T^1\nab V+V\nab T^1+T^3\nab\la +\la\nab T^3.
	\end{equation*}
	Using the parabolic equations for $h$ we recover the representation of $\d_t g$ as
	\begin{equation}  \label{dtg-formula}
	\d_t g_{\mu\nu}=2G_{\mu\nu}-2T^1_{\mu\nu},\quad G_{\mu\nu}:=\Im(\psi\bar{\la}_{\mu\nu})+\frac{1}{2}\nab_\mu V_\nu+\frac{1}{2}\nab_\nu V_\mu,
	\end{equation}
	and obtain 
	\begin{equation}   \label{dtGa}
	\d_t \Ga_{\al\be}^\ga=\nab_\al G_{\be}^\ga+\nab_\be G_{\al}^\ga-\nab^\ga G_{\al\be}-(\nab_\al T_{\be}^{1,\ga}+\nab_\be T_{\al}^{1,\ga}-\nab^\ga T^1_{\al\be}).
	\end{equation}
	We then use the two formulas to write 
	\begin{align*}
	\d_t {\Ric_\al}^\be=&\ \d_t g^{\mu\nu} {R_{\mu\al\nu}}^\be- g^{\mu\nu}\d_t {R^\be}_{\mu\nu\al}\\
	=&\ (-2G^{\mu\nu}+2T^{1,\mu\nu}){R_{\mu\al\nu}}^\be+g^{\mu\nu}(\nab_\al \d_t \Ga^\be_{\mu\nu}-\nab_\nu \d_t\Ga^\be_{\mu\al})\\
	=&\ 2T^{1,\mu\nu}{R_{\mu\al\nu}}^\be-2G^{\mu\nu}{R_{\mu\al\nu}}^\be\\
	&\ +\nab_\al[2\nab^\mu (G_{\mu}^\be-T_{\mu}^{1,\be})-\nab^\be (G_{\mu}^\mu-T^{1,\mu}_{\mu})]\\
	&\ -\nab^\mu[\nab_\mu (G_{\al}^\be-T_{\al}^{1,\be})+\nab_\al (G_{\mu}^\be-T_{\mu}^{1,\be})-\nab^\be (G_{\mu\al}-T^1_{\mu\al})]\\
	=&\ \nab^\mu\nab_\mu T_{\al}^{1,\be}+ 2T^{1,\mu\nu}{R_{\mu\al\nu}}^\be+\nab_\al(-2\nab^\mu T_{\mu}^{1,\be}+\nab^\be T^{1,\mu}_{\mu})+\nab^\mu(\nab_\al T_{\mu}^{1,\be}-\nab^\be T^1_{\mu\al})\\
	&\ -2G^{\mu\nu}{R_{\mu\al\nu}}^\be+2[\nab_\al,\nab^\mu] G_{\mu}^\be-\nab_\al\nab^\be G_{\mu}^\mu
	-\nab^\mu(\nab_\mu G_{\al}^\be-\nab_\al G_{\mu}^\be-\nab^\be G_{\mu\al}).
	\end{align*}
	By the relation $\nab^\mu T^1_{\mu\nu}=\frac{1}{2}\nab_\nu T^{1,\mu}_\mu$, the third term in the right hand side vanishes. We can also rewrite the fourth term as 
	\begin{align*}
	\nab^\mu(\nab_\al T_{\mu}^{1,\be}-\nab^\be T^1_{\mu\al})=&\ [\nab^\mu,\nab_\al] T_{\mu}^{1,\be}-[\nab^\mu,\nab^\be] T^1_{\mu\al}+\nab_\al\nab^\mu T_{\mu}^{1,\be}-\nab^\be\nab^\mu T^1_{\mu\al}\\
	=&\ {R^\mu}_{\al\mu\de}T^{1,\de\be}+{R^\mu}_{\al\be\de}T^{1,\de}_\mu-{R^{\mu\be}}_{\mu\de}T^{1,\de}_\al-{R^{\mu\be}}_{\al\de}T^{1,\de}_\mu+[\nab_\al,\nab^\be]T^{1,\mu}_\mu,
	\end{align*}
	where the last term vanishes. Commuting
	we compute the fifth and sixth terms as
	\begin{align*}
	-2G^{\mu\nu}{R_{\mu\al\nu}}^\be+2[\nab_\al,\nab^\mu] G^{\be}_\mu
	=-2G^{\mu\nu}{R_{\mu\al\nu}}^\be+2 R_{\al\mu\mu\nu}G^{\be\nu}+2R_{\al\mu\be\nu}G^{\mu\nu}=-2\Ric_{\al\nu} G^{\be\nu}.
	\end{align*}
	Hence from the above three formulas and the representation of $G_{\mu\nu}$ \eqref{dtg-formula} we rearrange $\d_t {\Ric_\al}^\be$ as
	\begin{align*}
	&\ \d_t {\Ric_\al}^\be-\De_g T^{1,\be}_\al\\
	=&\ RT^1-2\Ric_{\al\nu}G^{\be\nu}-\nab_\al\nab^\be G_{\mu}^\mu
	+\nab^\mu(-\nab_\mu G_{\al}^\be+\nab_\al G_{\mu}^\be+\nab^\be G_{\mu\al})\\
	=&\ RT^1 -(\nab^\mu V^\nu+\nab^\nu V^\mu){R_{\mu\al\nu}}^\be\\  \tag{$I_1$}
	&\ -2\Ric_{\al\nu}\Im(\psi\bar{\la}^{\be\nu})
	+\nab^\mu(-\nab_\mu \Im(\psi\bar{\la}_\al^{\be})+\nab_\al \Im(\psi\bar{\la}_\mu^{\be})+\nab^\be \Im(\psi\bar{\la}_{\mu\al}))\\ \tag{$I_2$}
	&\ +\big[\nab_\al\nab^\mu(\nab_\mu V^\be+\nab^\be V_\mu) -\nab_\al\nab^\be \nab^\mu V_\mu\\
	&\ \quad +\frac{1}{2}\nab^\mu[-\nab_\mu (\nab_\al V^\be+\nab^\be V_\al)-\nab_\al(\nab_\mu V^\be+\nab^\be V_\mu)+\nab^\be (\nab_\mu V_\al+\nab_\al V_\mu)]\big].
	\end{align*}
	We write $I_1$ as
	\begin{align*}
	I_1=&\ -2\Ric_{\al\nu}\Im(\psi\bar{\la}^{\be\nu})\\
	&\ +\Im(-\nab^{A,\mu}\nab^A_{\mu}\psi\bar{\la}^\be_\al-2\nab^{A,\mu}\psi\overline{\nab^A_\mu \la^\be_\al}-\psi\overline{\nab^{A,\mu}\nab^A_\mu \la^\be_\al}\\
	&\ +\nab^{A,\mu}\nab^A_\al\psi \bar{\la}^\be_\mu+\nab^{A,\mu}\psi \overline{\nab^A_\al \la^\be_\mu}+\nab^{A}_\al\psi \overline{\nab^{A,\mu} \la^\be_\mu}+\psi \overline{\nab^{A,\mu}\nab^A_\al\la^\be_\mu}\\
	&\ +\nab^{A,\mu}\nab^{A,\be}\psi \bar{\la}_{\mu\al}+\nab^{A,\mu}\psi \overline{\nab^{A,\be} \la_{\mu\al}}+\nab^{A,\be}\psi \overline{\nab^{A,\mu} \la_{\mu\al}}+\psi \overline{\nab^{A,\mu}\nab^{A,\be}\la^{\mu\al}})\\
	=&\ \nab\psi T^3+\psi\nab T^3-2\Ric_{\al\nu}\Im(\psi\bar{\la}^{\be\nu})\\
	&\ +\Im(-\nab^{A,\mu}\nab^A_{\mu}\psi\bar{\la}^\be_\al
	+\nab^{A,\mu}\nab^A_\al\psi \bar{\la}^\be_\mu
	+\nab^{A,\mu}\nab^{A,\be}\psi \bar{\la}_{\mu\al}+\psi \overline{\nab^{A,\mu}\nab^{A,\be}\la^{\mu\al}})
	\end{align*}
	Here the $I_1$ term will be cancelled by $J_1,J_2$ later modulo $\{ \psi \nab T^3,\la\la T^1,\la\la T^4\}$.
	Using commutators we rearrange $I_2$ as 
	\begin{align*}
	I_2
	=&\ [\nab_\al,\nab^\mu]\nab_\mu V^\be+\frac{1}{2}\nab^\mu[\nab_\al,\nab_\mu]V^\be+\frac{1}{2}\nab^\mu[\nab^\be,\nab_\mu]V_\al\\
	&\ +\nab_\al[\nab^\mu,\nab^\be]V_\mu+\frac{1}{2}\nab^\mu[\nab^\be,\nab_\al]V_\mu.
	\end{align*}
	Then by Riemannian curvature and Bianchi identities we have
	\begin{align*}
	I_2=&\  R_{\al\mu\mu\de}\nab^\de V^\be+R_{\al\mu\be\de}\nab^\mu V^\de+\frac{1}{2}\nab^\mu (R_{\al\mu\be\de}V^\de+R_{\be\mu\al\de}V^\de+R_{\be\al\mu\de}V^\de)+\nab_\al(R_{\mu\be\mu\de}V^\de)\\
	=&\ -\Ric_{\al\de}\nab^\de V^\be+R_{\al\mu\be\de}\nab^\mu V^\de+\nab^\mu(R_{\be\mu\al\de}V^\de)+\nab_\al(\Ric_{\be\de}V^\de)\\
	=&\ -\Ric_{\al\de}\nab^\de V^\be+(R_{\al\mu\be\de}+R_{\be\mu\al\de})\nab^\mu V^\de-\nab_\al R_{\de\mu\be\mu}V^\de-\nab_\de R_{\mu\al\be\mu}V^\de\\
	&\ +\nab_\al \Ric_{\be\de}V^\de+\Ric_{\be\de}\nab_\al V^\de\\
	=&\ -\Ric_{\al\de}\nab^\de V^\be+R_{\al\mu\be\de}(\nab^\mu V^\de+\nab^\de V_\mu)+\nab_\de \Ric_{\al\be}V^\de
	+\Ric_{\be\de}\nab_\al V^\de,
	\end{align*}
	which gives
	\begin{align*}
	I_2-(\nab^\mu V^\nu+\nab^\nu V^\mu){R_{\mu\al\nu}}^\be=-\Ric_{\al\de}\nab^\de V^\be+V^\de\nab_\de \Ric_{\al\be}
	+\Ric_{\be\de}\nab_\al V^\de.
	\end{align*}
	This term will be cancelled by $J_3$ modulo $\{T^1\nab V,V\nab T^1\}$.
	
	Next, we compute the expression for $-\d_t {\tRic_\al}^\be$. From the $\la$-equations \eqref{mdf-Shr-sys-2} and the formula \eqref{dtg-formula} we have the evolution equation for $\la^\si_\al$ 
	\begin{equation}    \label{dtla-rep}
	\begin{aligned}
	i\d_t^B \la_\al^\si+\frac{1}{2}(\nab^A_\al \nab^{A,\si}+\nab^{A,\si}\nab^A_\al)\psi+\la(T^1+T^2+T^4)
	+i\la^\ga_\al (\frac{3}{2}\Im(\psi\bar{\la}^\si_\ga)+\nab_\ga V^\si)&\\
	-i\la^{\ga\si}(\frac{1}{2}\Im(\psi\bar{\la}_{\ga\al})+\nab_\al V_\ga)-iV^\ga \nab^A_\ga \la^\si_\al=&0,
	\end{aligned}
	\end{equation}
	and the evolution equation for the mean curvature $\psi$
	\begin{align*}
	i\d_t^B \psi+\nab^A_\si \nab^{A,\si}\psi+\la(T^1+T^2+T^4)
	+i\la^\ga_\si \Im(\psi\bar{\la}^\si_\ga)-iV^\ga \nab^A_\ga \psi=0.
	\end{align*}
	Then for ${\tRic_\al}^\be=\Re(\la_\al^\be \bar{\psi}-\la_\al^\si \bar{\la}_\si^\be)$, by the above two formulas we have
	\begin{align*}
	-\d_t {\tRic_\al}^\be=&\ -\Re(\d^B_t \la^\be_\al \bar{\psi}+\la^\be_\al \overline{\d^B_t \psi}-\d^B_t\la_{\al}^\mu\bar{\la}^{\be}_\mu-\la_{\al}^\mu\overline{\d^B_t\la^\be_\mu})\\
	=&\ \Im(-i\d^B_t \la^\be_\al \bar{\psi}-\bar{\la}^\be_\al i\d^B_t \psi+i\d^B_t\la_{\al}^\mu\bar{\la}^{\be}_\mu+\bar{\la}_{\al}^\mu i\d^B_t\la^\be_\mu)\\
	=&\ \la^2(T^1+T^2+T^4)+K_1+K_2+K_3+K_4,
	\end{align*}
	where
	\begin{align*}
	K_1=&\ \Im \big[ \big( \frac{1}{2}(\nab^A_\al \nab^{A,\be}+\nab^{A,\be}\nab^A_\al)\psi
	+i\la^\ga_\al (\frac{3}{2}\Im(\psi\bar{\la}^\be_\ga)+\nab_\ga V^\be)\\
	&\quad\quad-i\la^{\ga\be}(\frac{1}{2}\Im(\psi\bar{\la}_{\ga\al})+\nab_\al V_\ga)-iV^\ga \nab^A_\ga \la^\be_\al  \big)\bar{\psi}\big],\\
	K_2=&\ \Im\big[\bar{\la}^\be_\al \big( \nab^A_\si \nab^{A,\si}\psi
	+i\la^\ga_\si\Im(\psi\bar{\la}^\si_\ga)-iV^\ga \nab^A_\ga \psi  \big) \big],\\
	K_3=&\ -\Im \big[ \big( \frac{1}{2}(\nab^A_\al \nab^{A,\si}+\nab^{A,\si}\nab^A_\al)\psi
	+i\la^\ga_\al (\frac{3}{2}\Im(\psi\bar{\la}^\si_\ga)+\nab_\ga V^\si)\\
	&\quad\quad-i\la^{\ga\si}(\frac{1}{2}\Im(\psi\bar{\la}_{\ga\al})+\nab_\al V_\ga)-iV^\ga \nab^A_\ga \la^\si_\al  \big)\bar{\la}^\be_\si\big],\\
	K_4=&\ -\Im \big[ \bar{\la}^\si_\al\big( \frac{1}{2}(\nab^A_\si \nab^{A,\be}+\nab^{A,\be}\nab^A_\si)\psi
	+i\la^\ga_\si (\frac{3}{2}\Im(\psi\bar{\la}^\be_\ga)+\nab_\ga V^\be)\\
	&\quad\quad-i\la^{\ga\be}(\frac{1}{2}\Im(\psi\bar{\la}_{\ga\si})+\nab_\si V_\ga)-iV^\ga \nab^A_\ga \la^\be_\si  \big)\big].
	\end{align*}
	This can be further rearranged as
	\begin{align*}
	-\d_t {\tRic_\al}^\be=\la^2(T^1+T^2+T^4)+J_1+J_2+J_3,
	\end{align*}
	where $J_1$, $J_2$ and $J_3$ are
	\begin{align*}
	J_1=&\ \Im \big[  \frac{1}{2}(\nab^A_\al \nab^{A,\be}\psi+\nab^{A,\be}\nab^A_\al\psi)
	\bar{\psi}+\bar{\la}^\be_\al \nab^A_\si \nab^{A,\si}\psi\\
	&\ \quad -\frac{1}{2}(\nab^A_\al \nab^{A,\si}\psi+\nab^{A,\si}\nab^A_\al\psi)
	\bar{\la}^\be_\si
	-\frac{1}{2}\bar{\la}^\si_\al (\nab^A_\si \nab^{A,\be}\psi+\nab^{A,\be}\nab^A_\si\psi)
	\big],
	\end{align*}
	\begin{align*}
	J_2=&\ \frac{3}{2}\Re(\la^\ga_\al\bar{\psi})\Im(\psi\bar{\la}^\be_\ga)
	-\frac{1}{2}\Re(\la^{\ga\be}\bar{\psi})\Im(\psi\bar{\la}_{\ga\al})
	+\Re(\bar{\la}^\be_\al \la^\ga_\si) \Im(\psi\bar{\la}^\si_\ga) \\
	&\ -\frac{3}{2}\Re(\la^\ga_\al\bar{\la}^\be_\si) \Im(\psi\bar{\la}^\si_\ga)
	+\frac{1}{2}\Re(\la^{\ga\si}\bar{\la}^\be_\si)\Im(\psi\bar{\la}_{\ga\al}) \\
	&\ -\frac{3}{2}\Re(\bar{\la}^\si_\al\la^\ga_\si) \Im(\psi\bar{\la}^\be_\ga)
	+\frac{1}{2}\Re(\bar{\la}^\si_\al\la^{\ga\be})\Im(\psi\bar{\la}_{\ga\si}) ,
	\end{align*}
	\begin{align*}
	J_3=& \ 
	\Re(\la^\ga_\al\bar{\psi}) \nab_\ga V^\be
	-\Re(\la^{\ga\be}\bar{\psi})\nab_\al V_\ga-V^\ga \Re( \nab^A_\ga \la^\be_\al  \bar{\psi})\\
	&\ -V^\ga \Re(\bar{\la}^\be_\al \nab^A_\ga \psi )  \\
	&\ -\Re(\la^\ga_\al \bar{\la}^\be_\si) \nab_\ga V^\si
	+\Re(\la^{\ga\si}\bar{\la}^\be_\si)\nab_\al V_\ga+V^\ga  \Re(\nab^A_\ga \la^\si_\al \bar{\la}^\be_\si)  \\
	&\ -\Re(\bar{\la}^\si_\al\la^\ga_\si)\nab_\ga V^\be
	+\Re(\bar{\la}^\si_\al\la^{\ga\be})\nab_\si V_\ga+V^\ga \Re(\bar{\la}^\si_\al \nab^A_\ga \la^\be_\si).  
	\end{align*}
	Then $I_1+J_1+J_2$ will vanish modulo $\{\psi\nab T^3,\la^2 T^1,\la^2 T^4\}$. Precisely, we have
	\begin{align*}
	I_1+J_1=&\ \nab\psi T^3+\psi\nab T^3-2\Ric_{\al\nu}\Im(\psi\bar{\la}^{\be\nu})\\
	&\ +\Im\big( \frac{1}{2}[\nab^{A,\mu},\nab^A_\al]\psi\bar{\la}^\be_\mu +\frac{1}{2}[\nab^{A,\mu},\nab^{A,\be}]\psi \bar{\la}_{\mu\al}+\psi\overline{[\nab^{A,\mu},\nab^{A,\be}]\la_{\mu\al}}\\
	&\ \quad +\psi \overline{\nab^{A,\be}T_{\mu\al,}^{3\ \mu}}+\frac{1}{2}\psi\overline{[\nab^{A,\be},\nab^A_\al]\psi}  \big)\\
	=&\ \nab\psi T^3+\psi\nab T^3-2\Ric_{\al\nu}\Im(\psi\bar{\la}^{\be\nu})
	+\frac{1}{2}{\bmF^\mu}_\al \Re(\psi\bar{\la}^\be_\mu) -\frac{1}{2}\bmF^{\mu\be}\Re(\psi \bar{\la}_{\mu\al})\\
	&\ 
	+ \Ric^{\mu\be} \Im(\psi\bar{\la}_{\mu\al})+ R_{\mu\be\al\de}\Im(\psi\bar{\la}^{\mu\de}) 
	-\frac{1}{2}|\psi|^2{\bmF^\be}_{\al}.
	\end{align*}
	We rewrite  $J_2$ as 
	\begin{align*}
	J_2
	=& \ \frac{3}{2}\Im(\psi \bar{\la}^{\be\ga})\tRic_{\al\ga}-\frac{1}{2}\Im(\psi\bar{\la}_{\ga\al})\tRic^{\ga\be}+\Im(\psi\bar{\la}^{\si\ga}){\tR^\be}_{\ \si\al\ga}.
	\end{align*}
	Then we obtain 
	\begin{align*}
	I_1+J_1+J_2=& \ \psi\nab T^3+\la^2 (T^1+T^4)
	+\frac{1}{2}\Im(\psi\bar{\la}_{\ga\al})\tRic^{\ga\be}-\frac{1}{2}\Im(\psi \bar{\la}^{\be\ga})\tRic_{\al\ga}\\
	& \ +\frac{1}{2}{\tbmF^\mu}_{\ \al} \Re(\psi\bar{\la}^\be_\mu) -\frac{1}{2}\tbmF^{\mu\be}\Re(\psi \bar{\la}_{\mu\al})-\frac{1}{2}|\psi|^2{\tbmF^\be}_{\ \al}\\
	=& \ \psi\nab T^3+\la^2 (T^1+T^4).
	\end{align*}
	
	We can also show that $I_2-(\nab^\mu V^\nu+\nab^\nu V^\mu){R_{\mu\al\nu}}^\be+J_3$ vanishes modulo $\{T^1\nab V,V\nab T^1\}$. This is because $J_3$ can be written as
	\begin{align*}
	J_3=& 
	\ \Re(\la^\ga_\al\bar{\psi}) \nab_\ga V^\be
	-\Re(\la^{\ga\be}\bar{\psi})\nab_\al V_\ga-V^\ga \Re( \nab^A_\ga \la^\be_\al  \bar{\psi})\\
	&\ +\Re(\bar{\la}^\be_\al \la_\si^\ga) ( 
	\nab_\ga V^\si-\nab^\si V_\ga)-V^\ga \Re(\bar{\la}^\be_\al \nab^A_\ga \psi )  \\
	&\ -\Re(\la^\ga_\al \bar{\la}^\be_\si) \nab_\ga V^\si
	+\Re(\la^{\ga\si}\bar{\la}^\be_\si)\nab_\al V_\ga+V^\ga  \Re(\nab^A_\ga \la^\si_\al \bar{\la}^\be_\si)  \\
	&\ -\Re(\bar{\la}^\si_\al\la^\ga_\si)\nab_\ga V^\be
	+\Re(\bar{\la}^\si_\al\la^{\ga\be})\nab_\si V_\ga+V^\ga \Re(\bar{\la}^\si_\al \nab^A_\ga \la^\be_\si) \\
	=&\ \tRic_{\al\ga}\nab^\ga V^\be -\tRic^{\ga\be}\nab_\al V_\ga-V^\ga \nab_\ga {\tRic_{\al}}^\be.
	\end{align*}
	Then we have
	\begin{align*}
	I_2-(\nab^\mu V^\nu+\nab^\nu V^\mu){R_{\mu\al\nu}}^\be+J_3=-T^1_{\al\ga}\nab^\ga V^\be+T^{1,\ga\be}\nab_\al V_\ga+V^\ga \nab_\ga T^{1,\ \be}_{\al}.
	\end{align*}
	This concludes the proof of the $T^1$-equations.
\end{proof}

\medskip

\begin{proof}[The equation for $T^2$]
	By the second Bianchi identities for the Riemannian curvature and the following equality
	\begin{align*}  
	&\nab_\de \tR_{\si\ga\al\be}+\nab_\si \tR_{\ga\de\al\be}+\nab_\ga \tR_{\de\si\al\be}\\
	=&\ \Re(T^3_{\de\ga,\be}\bar{\la}_{\al\si}+T^3_{\de\si,\al}\bar{\la}_{\be\ga}-T^3_{\de\ga,\al}\bar{\la}_{\be\si}-T^3_{\de\si,\be}\bar{\la}_{\al\ga}+T^3_{\si\ga,\al}\bar{\la}_{\be\de}-T^3_{\si\ga,\be}\bar{\la}_{\al\de}),
	\end{align*}
	we have the counterpart of the second Bianchi identities 
	\begin{equation*}
	\nab_\delta T^2_{\si\ga\al\be} + \nab_{\si} T^2_{\ga\de\al\be} 
	+ \nab_{\ga} T^2_{\de\si\al\be} = T^1\la,
	\end{equation*}
	which combine with the algebraic symmetries of the same tensor to yield 
	an elliptic system for $T^2$. Precisely, 
	using the above relation we have
	\begin{equation*}
	\nab^{\si}T^2_{\si\ga\al\be} = \nabla_{\al} T^1_{\ga\be} - \nabla_{\be} T^1_{\ga\al}
	+  T^1\la,
	\end{equation*}
	which combined with the previous one yields the desired elliptic system, 
	with $T^1$ viewed as a source term.
\end{proof}

\medskip

\begin{proof}[The equations for $T^3$]
	This has the form
	\begin{equation*}
	\begin{aligned}
	(i\d^B_t -\De^A_g)T^3_{\al\be,\ga}=&\ \la T^5+V\nab T^3+T^3(\nab V+\la^2+R)+(\nab^A\la+\la V)(T^1+T^2+T^4)\\
	&+\nab (T^2+T^4)\la.
	\end{aligned}
	\end{equation*}
	Recall the $\la$-equations 
	\begin{align*}
	i\d^B_t \la_{\be\ga}+\nab^A_\mu \nab^{A,\mu} \la_{\be\ga}-\frac{1}{2}(\tRic_{\be\de}\la^\de_\ga+\tRic_{\ga\de}\la^\de_\be)+\tR_{\be\si\ga\de}\la^{\si\de}+\frac{i}{2}(\tbmF_{\be\de}\la^\de_\ga+\tbmF_{\ga\de}\la^\de_\be)&\\
	-\frac{i}{2}\la^\de_\be[\Im(\psi\bar{\la}_{\de\ga})+2\nab_\ga V_\de]-\frac{i}{2}\la^\de_\ga[\Im(\psi\bar{\la}_{\de\be})+2\nab_\be V_\de]-iV^\de\nab^A_\de \la_{\be\ga}&=0.
	\end{align*}
	Applying $\nab^A_\al$ and $\nab^A_\be$ to the above $\la_{\be\ga}$ and $\la_{\al\ga}$-equations respectively, we obtain the difference
	\begin{align*}
	0=&\ \big[\nab^A_\al(i\d^B_t \la_{\be\ga}+\nab^A_\mu \nab^{A,\mu} \la_{\be\ga})-\nab^A_\be(i\d^B_t \la_{\al\ga}+\nab^A_\mu \nab^{A,\mu} \la_{\al\ga})\big]\\
	&\ +\big[\nab^A_\al [-\frac{1}{2}(\tRic_{\be\de}\la^\de_\ga+\tRic_{\ga\de}\la^\de_\be)+\tR_{\be\si\ga\de}\la^{\si\de}+\frac{i}{2}(\tbmF_{\be\de}\la^\de_\ga+\tbmF_{\ga\de}\la^\de_\be)]\\
	&\ \quad-\nab^A_\be[-\frac{1}{2}(\tRic_{\al\de}\la^\de_\ga+\tRic_{\ga\de}\la^\de_\al)+\tR_{\al\si\ga\de}\la^{\si\de}+\frac{i}{2}(\tbmF_{\al\de}\la^\de_\ga+\tbmF_{\ga\de}\la^\de_\al)]\big]\\
	&\ +\big[\nab^A_\al [-\frac{i}{2}\la^\de_\be[\Im(\psi\bar{\la}_{\de\ga})+2\nab_\ga V_\de]-\frac{i}{2}\la^\de_\ga[\Im(\psi\bar{\la}_{\de\be})+2\nab_\be V_\de]-iV^\de\nab^A_\de \la_{\be\ga}]\\
	&\ \quad -\nab^A_\be[-\frac{i}{2}\la^\de_\al[\Im(\psi\bar{\la}_{\de\ga})+2\nab_\ga V_\de]-\frac{i}{2}\la^\de_\ga[\Im(\psi\bar{\la}_{\de\al})+2\nab_\al V_\de]-iV^\de\nab^A_\de \la_{\al\ga}]\big]\\
	:=&\ I+II+III.
	\end{align*}
	
	We first compute the $I$. We commute $\nab^A_\al$ with $\d^B_t$ and $\nab^A_\mu \nab^{A,\mu}$ to give
	\begin{align} \nonumber
	I=&\ (i\d_t-\De^A_g) T^3_{\al\be,\ga}+i[\nab^A_\al,\d^B_t]\la_{\be\ga}+[\nab^A_\al,\nab^A_\mu\nab^{A,\mu}]\la_{\be\ga}\\  \nonumber
	&\ -i[\nab^A_\be,\d^B_t]\la_{\al\ga}-[\nab^A_\be,\nab^A_\mu\nab^{A,\mu}]\la_{\al\ga}\\\nonumber
	=&\ (i\d_t-\De^A_g) T^3_{\al\be,\ga}\\  \tag{$I_{1}$}
	&\ +i\d_t \Ga^\si_{\al\ga}\la_{\be\si}+\bmF_{0\al}\la_{\be\ga}-i\d_t \Ga^\si_{\be\ga}\la_{\al\si}-\bmF_{0\be}\la_{\al\ga}\\\tag{$I_{2}$}
	&\ +\big[[\nab_\al,\nab_\mu]\nab^{A,\mu}\la_{\be\ga}+i\bmF_{\al\mu}\nab^{A,\mu}\la_{\be\ga}
	+\nab^{A,\mu}([\nab_\al,\nab_\mu]+i\bmF_{\al\mu})\la_{\be\ga}\\\nonumber
	&\ \quad -[\nab_\be,\nab_\mu]\nab^{A,\mu}\la_{\al\ga}-i\bmF_{\be\mu}\nab^{A,\mu}\la_{\al\ga}
	-\nab^{A,\mu}([\nab_\be,\nab_\mu]+i\bmF_{\be\mu})\la_{\al\ga}\big].
	\end{align}
	For $I_1$, by the formulas for $\d_t \Ga$ in \eqref{dtGa}, for $G_{\mu\nu}$ in \eqref{dtg-formula} and for the commutators $[\nab_\al,\nab_\be]$ we have
	\begin{align} \nonumber
	I_1=&\  i(\nab_\al G_{\ga\de}+\nab_\ga G_{\al\de}-\nab_\de G_{\al\ga})\la^\de_\be-i(\nab_\be G_{\ga\de}+\nab_\ga G_{\be\de}-\nab_\de G_{\be\ga})\la^\de_\al+\nab T^1 \la\\\nonumber
	&\  +T^5_\al \la_{\be\ga}-T^5_\be \la_{\al\ga}+(\Re(\la^\si_\al \overline{\nab^A_\si \psi})-\tbmF_{\al\si}V^\si)\la_{\be\ga}-(\Re(\la^\si_\be \overline{\nab^A_\si \psi})-\tbmF_{\be\si}V^\si)\la_{\al\ga}\\\nonumber
	=&\   \nab T^1 \la+ T^5_\al \la_{\be\ga}-T^5_\be \la_{\al\ga}+I_{11}+I_{12},
	\end{align}
	where $I_{11}$, $I_{12}$ are the terms containing $\la\la\nab \la$ and $V\la$ respectively,
	\begin{align*}
    I_{11}:=&\  i(\nab_\al\Im(\psi\bar{\la}_{\ga\de})+\nab_\ga\Im(\psi\bar{\la}_{\al\de})-\nab_\de\Im(\psi\bar{\la}_{\al\ga}))\la^\de_\be\\\nonumber
	&\  \quad -i(\nab_\be\Im(\psi\bar{\la}_{\ga\de})+\nab_\ga\Im(\psi\bar{\la}_{\be\de})-\nab_\de\Im(\psi\bar{\la}_{\be\ga}))\la^\de_\al\\\nonumber
	&\  \quad+\Re(\la^\si_\al \overline{\nab^A_\si \psi})\la_{\be\ga}-\Re(\la^\si_\be \overline{\nab^A_\si \psi})\la_{\al\ga},\\
	I_{12}:=&\  \frac{i}{2}[(\nab_\al\nab_\ga +\nab_\ga\nab_\al)V_\de \la^\de_\be+R_{\al\si\ga\de}V^\de \la^\si_\be+R_{\ga\si\al\de}V^\de \la^\si_\be]\\\nonumber
	&\  \quad -\frac{i}{2}[(\nab_\be\nab_\ga +\nab_\ga\nab_\be)V_\de \la^\de_\al+R_{\be\si\ga\de}V^\de \la^\si_\al+R_{\ga\si\be\de}V^\de \la^\si_\al]\\\nonumber
	&\  \quad -\tbmF_{\al\si}V^\si\la_{\be\ga}+\tbmF_{\be\si}V^\si\la_{\al\ga}.
	\end{align*}
	Here, using the expressions for $\tbmF_{\al\be}$ and $\tR_{\be\al\ga\de}$, the expression $I_{11}$  can be rewritten as
	\begin{align*}
	I_{11}= i\nab_\al\Im(\psi \bar{\la}_{\ga\de})\la^\de_\be-i\nab_\be\Im(\psi \bar{\la}_{\ga\de})\la^\de_\al-i\nab^A_\ga\psi \tbmF_{\al\be}+\nab^A_\de \psi \tR_{\be\al\ga\de},
	\end{align*}
	Using commutators $[\nab_\ga,\nab_\al]$ and the Bianchi identities, the $I_{12}$ expression  can be rewritten as
	\begin{align*}
	    I_{12}=i\nab_\al\nab_\ga V_\de \la^\de_\be +i R_{\ga\si\al\de}V^\de \la^\si_\be-i\nab_\be\nab_\ga V_\de \la^\de_\al -i R_{\ga\si\be\de}V^\de \la^\si_\al-\tbmF_{\al\si}V^\si\la_{\be\ga}+\tbmF_{\be\si}V^\si\la_{\al\ga}
	\end{align*}

	For $I_2$, we use the Riemannian curvature tensor to write
	\begin{align}\nonumber
	I_2=&\   R_{\al\mu\mu\de}\nab^{A,\de}\la_{\be\ga}+R_{\al\mu\be\de}\nab^{A,\mu}\la^\de_\ga+R_{\al\mu\ga\de}\nab^{A,\mu}\la^\de_\be+i\bmF_{\al\mu}\nab^{A,\mu}\la_{\be\ga}\\\nonumber
	&\  +\nab^{A,\mu}(R_{\al\mu\be\de}\la^\de_\ga+R_{\al\mu\ga\de}\la^\de_\be+i\bmF_{\al\mu}\la_{\be\ga})\\\nonumber
	&\  -R_{\be\mu\mu\de}\nab^{A,\de}\la_{\al\ga}-R_{\be\mu\al\de}\nab^{A,\mu}\la^\de_\ga-R_{\be\mu\ga\de}\nab^{A,\mu}\la^\de_\al-i\bmF_{\be\mu}\nab^{A,\mu}\la_{\al\ga}\\\nonumber
	&\  -\nab^{A,\mu}(R_{\be\mu\al\de}\la^\de_\ga+R_{\be\mu\ga\de}\la^\de_\al+i\bmF_{\be\mu}\la_{\al\ga})\\\nonumber
	=&\   -\Ric_{\al\de}\nab^{A,\de}\la_{\be\ga}+2R_{\al\mu\be\de}\nab^{A,\mu}\la^\de_\ga+2R_{\al\mu\ga\de}\nab^{A,\mu}\la^\de_\be+2i\bmF_{\al\mu}\nab^{A,\mu}\la_{\be\ga}\\\nonumber
	&\  +\nab^\mu R_{\al\mu\be\de}\la^\de_\ga+\nab^\mu R_{\al\mu\ga\de}\la^\de_\be+i\nab^\mu \bmF_{\al\mu}\la_{\be\ga}\\\nonumber
	&\  +\Ric_{\be\de}\nab^{A,\de}\la_{\al\ga}-2R_{\be\mu\al\de}\nab^{A,\mu}\la^\de_\ga-2R_{\be\mu\ga\de}\nab^{A,\mu}\la^\de_\al-2i\bmF_{\be\mu}\nab^{A,\mu}\la_{\al\ga}\\\nonumber
	&\  -\nab^\mu R_{\be\mu\al\de}\la^\de_\ga-\nab^\mu R_{\be\mu\ga\de}\la^\de_\al-i\nab^\mu \bmF_{\be\mu}\la_{\al\ga}\\\nonumber
	=&\   2R_{\al\mu\be\de}{T^{3,\mu\de}}_{,\ga}+(T^1+T^2+T^4)\nab^A\la+\nab (T^2+T^4)\la+J_1,
	\end{align}
	where the terms in $J_1$ have the form $\la\la\nab\la$ as below 
	\begin{align*}
	J_1=&\   \nab^{A,\de}\la_{\be\ga}(-\tRic_{\al\de}+2i\tbmF_{\al\de})+\nab^{A,\de}\la_{\al\ga}(\tRic_{\be\de}-2i\tbmF_{\be\de}) +i\nab^\mu \tbmF_{\al\mu}\la_{\be\ga}-i\nab^\mu \tbmF_{\be\mu}\la_{\al\ga}\\
	&\  +2\tR_{\al\mu\ga\de}\nab^{A,\mu}\la^\de_\be+\nab^\mu \tR_{\al\mu\be\de}\la^\de_\ga+\nab^\mu \tR_{\al\mu\ga\de}\la^\de_\be\\
	&\  -2\tR_{\be\mu\ga\de}\nab^{A,\mu}\la^\de_\al-\nab^\mu \tR_{\be\mu\al\de}\la^\de_\ga-\nab^\mu \tR_{\be\mu\ga\de}\la^\de_\al.
	\end{align*}

	We next rewrite the $III$ expression as 
	\begin{align*} \nonumber
	III=&\  \nab^A_\al [-\frac{i}{2}\la^\de_\be[\Im(\psi\bar{\la}_{\de\ga})+2\nab_\ga V_\de]-\frac{i}{2}\la^\de_\ga[\Im(\psi\bar{\la}_{\de\be})+2\nab_\be V_\de]-iV^\de\nab^A_\de \la_{\be\ga}]\\ \nonumber
	&\  \quad -\nab^A_\be[-\frac{i}{2}\la^\de_\al[\Im(\psi\bar{\la}_{\de\ga})+2\nab_\ga V_\de]-\frac{i}{2}\la^\de_\ga[\Im(\psi\bar{\la}_{\de\al})+2\nab_\al V_\de]-iV^\de\nab^A_\de \la_{\al\ga}]\\ \nonumber
	=&\   -\frac{i}{2} T^3_{\al\be,\de}[\Im(\psi\bar{\la}^\de_{\ga})+2\nab_\ga V^\de]+III_1+III_2,
	\end{align*}
	where
	\begin{align*}
	    III_1:=&\  -\frac{i}{2}\la^\de_\be \nab_\al\Im(\psi\bar{\la}_{\de\ga})-\frac{i}{2}\nab^A_\al (\la^\de_\ga\Im(\psi\bar{\la}_{\de\be}))\\
	    &\  +\frac{i}{2}\la^\de_\al \nab_\be\Im(\psi\bar{\la}_{\de\ga})+\frac{i}{2}\nab^A_\be (\la^\de_\ga\Im(\psi\bar{\la}_{\de\al})),\\
	    III_2:=&\  -i\la^\de_\be \nab_\al\nab_\ga V_\de-i\nab^A_\al(\la^\de_\ga\nab_\be V_\de)-i\nab^A_\al(V^\de \nab^A_\de \la_{\be\ga})\\\nonumber
	&\  \quad +i\la^\de_\al \nab_\be\nab_\ga V_\de+i\nab^A_\be(\la^\de_\ga\nab_\al V_\de)+i\nab^A_\be(V^\de \nab^A_\de \la_{\al\ga}).
	\end{align*}

	The $I_{12}+III_2$ expression  vanishes modulo $\{V\nab T^3,T^3\nab V,\la V(T^2+T^4)\}$. Precisely, we can further write $III_2$ as
	\begin{align}\nonumber
	III_2=&\   -iT^3_{\al\de,\ga}\nab_\be V^\de+iT^3_{\be\de,\ga}\nab_\al V^\de-iV^\de \nab^A_\al T^3_{\de\be,\ga}+iV^\de \nab^A_\be T^3_{\de\al,\ga}\\\tag{$III_{21}$}
	&\  +i(-\la^\de_\be \nab_\al\nab_\ga V_\de+\la^\de_\al \nab_\be\nab_\ga V_\de-V^\de R_{\al\be\ga\si}\la^\si_\de)+V^\de\bmF_{\al\be}\la_{\de\ga}.
	\end{align}
Then replacing $R_{\al\be\ga\de}$, $\bmF_{\al\be}$ by $\tR_{\al\be\ga\de}$ and $\tbmF_{\al\be}$ respectively, we have
	\begin{align*}
	    I_{12}+III_{21}=&\  i(\la^\si_\be R_{\ga\si\al\de}V^\de-\la^\si_\al R_{\ga\si\be\de}V^\de-V^\de R_{\al\be\ga\si}\la^\si_\de)\\
	&\  -\tbmF_{\al\si}V^\si\la_{\be\ga}+\tbmF_{\be\si}V^\si\la_{\al\ga}+V^\de\bmF_{\al\be}\la_{\de\ga}\\
	=&\   i(\la^\si_\be T^2_{\ga\si\al\de}V^\de-\la^\si_\al T^2_{\ga\si\be\de}V^\de-V^\de T^2_{\al\be\ga\si}\la^\si_\de)+V^\de T^4_{\al\be}\la_{\de\ga}\\   \tag{$J_2$}
	&\  +\big[i(\la^\si_\be \tR_{\ga\si\al\de}V^\de-\la^\si_\al \tR_{\ga\si\be\de}V^\de-V^\de \tR_{\al\be\ga\si}\la^\si_\de)\\
	&\  \quad-\tbmF_{\al\si}V^\si\la_{\be\ga}+\tbmF_{\be\si}V^\si\la_{\al\ga}+V^\de\tbmF_{\al\be}\la_{\de\ga}\big]\\
	=&\   \la V T^2+\la VT^4,
	\end{align*}
	where the term $J_2$ vanishes due to the representations of $\tR_{\ga\si\al\de}$ and $\tbmF_{\al\si}$.

 	Next, we show that the terms $I_{11}+J_1+II+III_1$ vanish modulo $\la\la T^3$.  We have
	\begin{align*}
	I_{11}+III_1=&\  -i\nab^A_\ga\psi \tbmF_{\al\be}+\nab^A_\de \psi \tR_{\be\al\ga\de} 
	+\frac{i}{2}\la^\de_\be \nab_\al\Im(\psi\bar{\la}_{\de\ga})-\frac{i}{2}\la^\de_\al \nab_\be\Im(\psi\bar{\la}_{\de\ga})\\
	&\  -\frac{i}{2}\nab^A_\al (\la^\de_\ga\Im(\psi\bar{\la}_{\de\be}))+\frac{i}{2}\nab^A_\be (\la^\de_\ga\Im(\psi\bar{\la}_{\de\al})).
	\end{align*}
	We rewrite $II$ as 
	\begin{align}   \nonumber
	II=&\  \nab^A_\al [-\frac{1}{2}(\tRic_{\be\de}\la^\de_\ga+\tRic_{\ga\de}\la^\de_\be)+\tR_{\be\si\ga\de}\la^{\si\de}+\frac{i}{2}(\tbmF_{\be\de}\la^\de_\ga+\tbmF_{\ga\de}\la^\de_\be)]\\\nonumber
	&\  \quad-\nab^A_\be[-\frac{1}{2}(\tRic_{\al\de}\la^\de_\ga+\tRic_{\ga\de}\la^\de_\al)+\tR_{\al\si\ga\de}\la^{\si\de}+\frac{i}{2}(\tbmF_{\al\de}\la^\de_\ga+\tbmF_{\ga\de}\la^\de_\al)]\\\nonumber
	=&\   -\frac{1}{2}(\tRic_{\ga\de}-i\tbmF_{\ga\de}){T^3_{\al\be,}}^\de \\   \tag{$J_3$}
	&\  +\frac{1}{2}\nab^A_\al (\la^\de_\ga(-\tRic_{\be\de}+i\tbmF_{\be\de}))-\frac{1}{2}\nab^A_\be (\la^\de_\ga(-\tRic_{\al\de}+i\tbmF_{\al\de}))\\\tag{$J_4$}
	&\  +\frac{1}{2}\la^\de_\be \nab_\al (-\tRic_{\ga\de}+i\tbmF_{\ga\de})-\frac{1}{2}\la^\de_\al \nab_\be (-\tRic_{\ga\de}+i\tbmF_{\ga\de})\\\tag{$J_5$}
	&\  -\nab_\si \tR_{\al\be\ga\de}\la^{\si\de}+\tR_{\be\mu\ga\de}\nab^A_\al\la^{\mu\de}-\tR_{\al\mu\ga\de}\nab^A_\be\la^{\mu\de}.
	\end{align}
	And hence
	\begin{align*}
	&I_{11}+III_1+J_3+J_4\\
	=&-i\nab^A_\ga\psi \tbmF_{\al\be}+\nab^A_\de \psi \tR_{\be\al\ga\de} +\frac{1}{2}\la^\de_\be \nab_\al(-\la_{\ga\de}\bar{\psi}+\la_{\ga\si}\bar{\la}^\si_\de)\\
	&+\frac{1}{2}\la^\de_\al \nab_\be(\la_{\ga\de}\bar{\psi}-\la_{\ga\si}\bar{\la}^\si_\de)+\frac{1}{2}\nab^A_\al[\la^\de_\ga (-\bar{\la}_{\de\be}\psi+\la_{\be\si}\bar{\la}^\si_\de)]+\frac{1}{2}\nab^A_\be[\la^\de_\ga (\bar{\la}_{\de\al}\psi-\la_{\al\si}\bar{\la}^\si_\de)]\\
	=&-i\nab^A_\ga\psi \tbmF_{\al\be}+\nab^A_\de \psi \tR_{\be\al\ga\de}-\nab^A_\al \la^\de_\ga( \tRic_{\be\de}-i\tbmF_{\be\de})+\nab^A_\be \la^\de_\ga (\tRic_{\al\de}-i\tbmF_{\al\de})\\ &+\overline{\nab^A_\al \la^{\si\de}}\la_{\be\si}\la_{\ga\de}-\overline{\nab^A_\be \la^{\si\de}}\la_{\al\si}\la_{\ga\de}
	-\la^\de_\ga \Re(\nab^A_\al\psi \bar{\la}_{\be\de})+\la^\de_\ga \Re(\nab^A_\be\psi \bar{\la}_{\al\de})+\la^2 T^3.
	\end{align*}
	Since by $\tR$ and $\tbmF$ we also have
	\begin{align*}
	J_1+J_5=& -\nab^{A,\de}\la_{\be\ga}\tRic_{\al\de}+\nab^{A,\de}\la_{\al\ga}\tRic_{\be\de}+\la^\de_\ga \Re(\nab^A_\al\psi\bar{\la}_{\be\de}-\nab^A_\be\psi\bar{\la}_{\al\de})\\
	&-\overline{\nab^A_\al \la^{\mu\si}}\la_{\mu\be}\la_{\si\ga}+\overline{\nab^A_\be \la^{\mu\si}}\la_{\mu\al}\la_{\si\ga}-i\nab^{A,\de}\la_{\al\ga}\tbmF_{\be\de}+i\nab^{A,\de}\la_{\be\ga}\tbmF_{\al\de}\\
	&+\nab^{A,\si}\psi \tR_{\al\be\ga\si}+i\nab^A_\ga\psi \tbmF_{\al\be}+\la^2 T^3.
	\end{align*}
	Then in the above two formulas all terms cancel except for $\la\la T^3$. 
	Hence, we obtain that $I_{11}+J_1+II+III_1$ vanishes modulo $\la\la T^3$. 
	This concludes the proof of the $T^3$-equations.
\end{proof}

\medskip 

\begin{proof}[The equations for $T^4$]
	These have the form
	\begin{equation*}
	\begin{aligned}
	(\d_t -\De_g)T^4_{\al\be}=&-\Ric_{\alpha \delta} {T^{4,\delta}}_{\beta} + \Ric_{\beta\delta} {T^{4,\delta}}_{\al} - R_{\beta\alpha\sigma\delta} T^{2,\sigma\delta}\\
	&-\Re(\nab^{A,\si}\psi \overline{T^3_{\al\be,\si}})-V^\ga \Im(\la_{\ga}^\si\overline{T^3_{\al\be,\si}})+V^\ga \Im (T^3_{\ga\al,\si}\bar{\la}^\si_\be).
	\end{aligned}
	\end{equation*}
	By the $A$-equations we have
	\begin{align*}
	(\d_t -\De_g)T^4_{\al\be}=&
	-[\De_g,\nab_\al]A_\be+[\De_g,\nab_\be]A_\al
	-\nab_\al (\tRic_{\be\de}A^\de)+\nab_\be (\tRic_{\al\de}A^\de)\\
	&+\nab_\al\nab^\si \tbmF_{\be\si}-\nab_\be\nab^\si \tbmF_{\al\si}-\De_g \tbmF_{\al\be}\\
	&-\d_t \tbmF_{\al\be}
	+\nab_\al[\Re(\la^\ga_\be \overline{\nab^A_\ga \psi})-\tbmF_{\be\de}V^\de]-\nab_\be[\Re(\la^\ga_\al \overline{\nab^A_\ga \psi})-\tbmF_{\al\de}V^\de]\\
	:=&\ I_1+I_2+I_3.
	\end{align*}
	For the commutator we use the Bianchi identities to compute
	\begin{align*}
	&\ -[\nabla^\sigma \nabla_\sigma,\nabla_{\alpha}] A_\beta + [ \nabla^\sigma \nabla_\sigma,   \nabla_\beta] A_\alpha  \\
	= & \ -\nabla^\sigma (R_{\sigma \alpha \beta\delta} A^\delta - R_{\sigma \beta \alpha\delta} A^\delta)
	- (R_{\sigma \alpha \beta\delta} - R_{\sigma \beta \alpha\delta}) \nabla^\sigma A^\delta -
	{R^\sigma}_{\alpha \sigma \delta} \nabla^\delta A_\beta +{R^\sigma}_{\beta \sigma \delta} \nabla^\delta A_\alpha
	\\
	= & \ -\nabla^\sigma R_{\beta\alpha\sigma\delta} A^\delta - 2 R_{\beta\alpha\sigma\delta} \nabla^\sigma A^\delta
	- \Ric_{\alpha \delta} \nabla^{\delta} A_\beta + \Ric_{\beta \delta} \nabla^{\delta} A_\alpha
	\\
	= & \ -(\nabla_\beta \Ric_{\alpha\delta} - \nabla_\alpha R_{\beta\delta} ) A^\delta  
	- R_{\beta\alpha\sigma\delta} \bmF^{\sigma\delta}- \Ric_{\alpha \delta} ({\bmF^\delta}_\beta+\nabla_{\beta} A^\delta) +  \Ric_{\beta \delta} ({\bmF^\delta}_\alpha+ \nabla_{\alpha} A^\delta)
	\\
	= & \ -\nabla_\beta ( \Ric_{\alpha\delta} A^\delta) + \nabla_\alpha (\Ric_{\beta\delta}  A^\delta)  
	- R_{\beta\alpha\sigma\delta} \bmF^{\sigma\delta}- \Ric_{\alpha \delta} {\bmF^\delta}_\beta+  \Ric_{\beta \delta} {\bmF^\delta}_\alpha.
	\end{align*}
	We commute $\nab_\al,\ \nab_\be$ with $\nab^\si$ and use $\nab_\al \tbmF_{\be\si}+\nab_\be \tbmF_{\si\al}+\nab_\si \tbmF_{\al\be}=0$ to compute $I_2$ by
	\begin{align*}
	I_2 =& \  R_{\al\si\be\ga} {\tbmF^{\ga\si}} +
	R_{\al\si\si\ga} \tbmF^{\ \ga}_{\be} -  R_{\beta \si\al\ga} \tbmF^{\ga\si} -
	R_{\beta\si\si\ga} \tbmF^{\ \ga}_{\al}  \\
	= & \  \Ric_{\alpha \delta} {\tbmF^\delta}_{\ \beta} - \Ric_{\beta\delta} {\tbmF^\delta}_{\ \al} + R_{\beta\alpha\sigma\delta} \tbmF^{\sigma\delta}.
	\end{align*}
	Then we obtain
	\begin{equation*}
	I_1+I_2=-\Ric_{\alpha \delta} {T^{4,\delta}}_{\beta} + \Ric_{\beta\delta} {T^{4,\delta}}_{\al} - R_{\beta\alpha\sigma\delta} T^{4,\sigma\delta}
	\end{equation*}
	
	For $I_3$ we compute $\d_t \tbmF_{\al\be}$ first. 
	\begin{align*}
	\d_t \tbmF_{\al\be}=\Im(\d_t \la_{\al\si}\bar{\la}^\si_\be-\d_t \la_{\be\si}\bar{\la}^\si_\al)+\d_t g^{\si\mu}\Im(\la_{\al\si}\bar{\la}_{\be\mu})
	\end{align*}
	By the $g$-equations and
	\begin{equation*}
	i\d^{B}_t\la_{\al\be}+\nab^A_{\al}\nab^{A}_\be \psi -i\la^{\ga}_{\al}\Im(\psi\bar{\la}_{\ga\be})-i\la^{\ga}_{\al}\nab_{\be} V_{\ga}-i\la_\be^\ga\nab_\al V_\ga-iV^\ga\nab^A_\ga\la_{\al\be}=0,
	\end{equation*}
	we have
	\begin{align*}
	\Im(\d_t \la_{\al\si}\bar{\la}^\si_\be)=&-\Re(B\la_{\al\si}\bar{\la}^\si_\be-\nab^A_\al\nab^A_\si\psi \bar{\la}^\si_\be)+\Im(\la^\ga_\al\bar{\la}^\si_\be)(\Im(\psi\bar{\la}_{\ga\si})+\nab_\si V_\ga)\\
	&+\nab_\al (\tbmF^\ga_{\ \be}V_\ga)-V^\ga \Im(\la_{\ga\si}\overline{\nab^A_\al \la^\si_\be})+V^\ga \Im (T^1_{\ga\al,\si}\bar{\la}^\si_\be)
	\end{align*}
	Then we rewrite the expression $\d_t \tbmF_{\al\be}$ as 
	\begin{equation*}%\label{dtF}
	\begin{aligned}
	\d_t \tbmF_{\al\be}=&\ \nab_\al\Re(\nab^A_\si\psi \bar{\la}^\si_\be)
	-\nab_\be\Re(\nab^A_\si\psi \bar{\la}^\si_\al)-\Re(\nab^{A,\si}\psi \overline{T^3_{\al\be,\si}})\\
	&\ +\nab_\al (\tbmF^\ga_{\ \be}V_\ga)-\nab_\be (\tbmF^\ga_{\ \al}V_\ga)-V^\ga \Im(\la_{\ga}^\si\overline{T^3_{\al\be,\si}})+V^\ga \Im (T^3_{\ga\al,\si}\bar{\la}^\si_\be)\\
	=&\  \nab_\al \tbmF_{0\be}-\nab_\be \tbmF_{0\al}+\nab\psi T^3+\la V T^3.
	\end{aligned}
	\end{equation*}
	Hence, we have
	\begin{align*}
	I_3=&\ \nab_\al\Re(\nab^A_\si\psi \bar{\la}^\si_\be)
	-\nab_\be\Re(\nab^A_\si\psi \bar{\la}^\si_\al)-\Re(\nab^{A,\si}\psi \overline{T^1_{\al\be,\si}})\\
	&\ +\nab_\al (\tbmF^\ga_{\ \be}V_\ga)-\nab_\be (\tbmF^\ga_{\ \al}V_\ga)-V^\ga \Im(\la_{\ga}^\si\overline{T^1_{\al\be,\si}})+V^\ga \Im (T^3_{\ga\al,\si}\bar{\la}^\si_\be)\\
	&\ +\nab_\al[\Re(\la^\ga_\be \overline{\nab^A_\ga \psi})-\tbmF_{\be\de}V^\de]-\nab_\be[\Re(\la^\ga_\al \overline{\nab^A_\ga \psi})-\tbmF_{\al\de}V^\de]\\
	=&\ -\Re(\nab^{A,\si}\psi \overline{T^3_{\al\be,\si}})-V^\ga \Im(\la_{\ga}^\si\overline{T^3_{\al\be,\si}})+V^\ga \Im (T^3_{\ga\al,\si}\bar{\la}^\si_\be)
	\end{align*}
	This concludes the proof $T^4$-equations.
\end{proof}
\end{proof}

\bigskip

\section{The reconstruction of the flow}

In this last section we close the circle of ideas in this paper, and prove that one can start
from the good gauge solution given by Theorem~\ref{LWP-MSS-thm}, and reconstruct the flow at the level of
$d$-dimensional embedded submanifolds. For completeness, we 
provide here another, more complete statement of our 
main theorem:

\begin{thm}[Small data local well-posedness]   \label{LWP-thm-full}
	Let  $d\geq 2$ and $s>\frac{d}{2}$. Consider the skew mean curvature flow 	\eqref{Main-Sys}  for maps $F$ from $\R^d$ to the Euclidean space $(\R^{d+2},g_{\R^{d+2}})$ with  initial data $\Sigma_0$ which, in some 
	coordinates, has a metric $g_0$ satisfying $\||D|^{\si_d}(g_0-I_d)\|_{H^{s+1-\si_d}}\leq \ep_0$ and mean curvature $\lV \mathbf{H}_0  \rV_{H^s(\Sigma_0)}\leq \ep_0$. In addition, we assume that $\|g_0-I_d\|_{Y^{lo}_0}\lesssim \ep_0$ in dimension $d= 2$.
	
If $\ep_0>0$ is sufficiently small, then there exists a unique solution 
\[
F:  \R^d \times [0,1] \to (\R^{d+2},g_{\R^{d+2}})
\]
which, when represented in harmonic coordinates at the initial time and heat coordinates dynamically,
 has regularity
\[
\d_x^2 F,\ \d_t F \in C([0,1]; H^{s}(\R^d)).
\]
and induced metric and mean curvature 
\[
|D|^{\si_d}(g-I_d) \in C([0,1]; H^{s+2-\si_d}(\R^d)), \quad \mathbf{H} \in C([0,1]; H^{s}(\R^d)).
\]
In addition the mean curvature satisfies
the bounds
	\begin{equation*}%\label{psi-full-reg-re}
	\lV \la\rV_{l^2 X^s} +  \lV (h,A)\rV_{\bEE^s}\lesssim \lV \la_0\rV_{H^s}+\|h_0\|_{\bY_0^{s+2}}.
	\end{equation*}
where $\lambda$ and $A$ are expressed using 
the Coulomb gauge initially and the heat gauge 
dynamically in the normal bundle $N \Sigma_t$.	
\end{thm}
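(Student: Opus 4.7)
The plan is to start from the good gauge solution produced by Theorem~\ref{LWP-MSS-thm} and work backwards to reconstruct the embedding $F$. Given the initial data $\Sigma_0$ with its metric $g_0$ and mean curvature $\mathbf{H}_0$, I would first apply Proposition~\ref{Global-harmonic} (in dimensions $d\geq 3$) or Proposition~\ref{Global-harmonic-d2} (in dimension $d=2$) to introduce global harmonic coordinates on $\Sigma_0$, and then impose the Coulomb gauge in the normal bundle $N\Sigma_0$, obtaining initial data $(\lambda_0, h_0, A_0)$ satisfying the smallness assumption \eqref{small-data-MS}. Theorem~\ref{LWP-MSS-thm} then delivers the good gauge solution $(\lambda, h, A) \in l^2 X^s \times \bEE^s$ on $[0,1]$, with all of the compatibility conditions \eqref{R-la}, \eqref{la-commu}, \eqref{cpt-AiAj-2}, and \eqref{Cpt-A&B} satisfied.

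The next step is to reconstruct the frame $\{F_1, \ldots, F_d, \nu_1, \nu_2\}$ realizing $(g, \lambda, A)$. At $t = 0$, the Gauss relation \eqref{R-la}, the Codazzi relations \eqref{la-commu}, and the Ricci relations \eqref{cpt-AiAj-2} are precisely the integrability conditions for the spatial structure equations \eqref{strsys-cpf}; invoking the classical Bonnet-type theorem for codimension-two isometric immersions, I obtain an initial embedding $F_0: \R^d \to \R^{d+2}$ together with an orthonormal frame $(F_\alpha(0,x), \nu_1(0,x), \nu_2(0,x))$, unique modulo a rigid motion of $\R^{d+2}$. This rigid motion is fixed by requiring $\partial F_0$ to approach the standard oriented orthonormal frame at spatial infinity, which is meaningful because $g_0 \to I_d$ and $A_0 \to 0$ in an averaged sense. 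The frame is then propagated in time using the motion equations \eqref{mo-frame}, with $V^\gamma = g^{\alpha\beta}\Gamma^\gamma_{\alpha\beta}$ and $B = \nabla^\alpha A_\alpha$ prescribed by the heat gauge conditions. The temporal compatibility \eqref{Cpt-A&B} (obtained through constraint propagation in Section~\ref{Sec-LWP}) ensures that this time-propagated frame agrees at each later time with the one reconstructed spatially via \eqref{strsys-cpf}.

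With the frame in hand, the map $F$ is recovered by integrating $\partial_\alpha F = F_\alpha$ in space together with the time evolution $\partial_t F = -\Im(\psi \bar m) + V^\gamma F_\gamma$ of \eqref{sys-cpf}. The equality $\partial_t \partial_\alpha F = \partial_\alpha \partial_t F$ is built into the motion equations \eqref{mo-frame}, and the normalization $F(t,x) \sim x$ at infinity (inherited from the harmonic/heat coordinate conditions) pins $F$ uniquely. By the very definition of $\lambda$ and the frame, one has $(\partial_t F)^\perp = J(F)\mathbf{H}(F)$, so $F$ solves \eqref{Main-Sys}. The regularity statements follow: $\partial_x^2 F \in C([0,1]; H^s)$ is a consequence of $\mathbf{h} \in C([0,1]; H^s)$ (reconstructed from $\lambda$ in the given frame) combined with the elliptic bound from Lemma~\ref{d2F-lem} applied at each fixed time; meanwhile $\partial_t F \in C([0,1]; H^s)$ follows from the explicit formula above using the bounds on $\lambda$, $m$, and $V$. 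Uniqueness at this level reduces to uniqueness in Theorem~\ref{LWP-MSS-thm} together with the rigid-motion uniqueness in the Bonnet reconstruction, which is eliminated by the asymptotic normalization.

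The main obstacle lies in the careful verification that the Bonnet reconstruction works globally on $\R^d$, rather than only locally, at the low regularity considered here, and that the various bounds transfer cleanly from the $l^2 X^s$ and $\bEE^s$ norms of $(\lambda, h, A)$ to the stated $C([0,1]; H^s)$ bounds for $\partial_x^2 F$ and $\partial_t F$. This is especially delicate in dimension $d = 2$ where the low-frequency control of $h$ is borderline and has to be supplied by the $Y_0^{lo}$ assumption on $g_0 - I_d$; propagating this low-frequency control in time via the heat gauge and then using it to make sense of the asymptotics of $F$ at spatial infinity is the most technical point. Tracking the time continuity of $\partial_x^2 F$ further requires combining the local energy decay for $\lambda$ with the continuity of $h$ in $\bY^{s+2}$, together with an elliptic estimate in the spirit of Lemma~\ref{d2F-lem} to pass from metric/second fundamental form bounds to bounds on $\partial_x^2 F$.
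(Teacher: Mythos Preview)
Your overall strategy matches the paper's: produce $(\lambda,h,A)$ via Theorem~\ref{LWP-MSS-thm}, rebuild the frame from \eqref{strsys-cpf} and \eqref{mo-frame}, integrate \eqref{sys-cpf} to get $F$, and read off the regularity. Two points deserve comment.

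First, a minor one: the Bonnet-type reconstruction at $t=0$ is unnecessary. The initial manifold $\Sigma_0$ is \emph{given} as an embedded submanifold, so after passing to harmonic coordinates and the Coulomb gauge you already have the initial frame $(F_\alpha(0),m(0))$ satisfying \eqref{strsys-cpf}. There is nothing to reconstruct at $t=0$.

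Second, and this is the real gap: you propose to ``propagate the frame in time using the motion equations \eqref{mo-frame}'' as if this were a routine ODE step. It is not. The coefficients in \eqref{mo-frame} contain $\partial^A_\alpha\psi$, i.e.\ a derivative of $\lambda$, and at the regularity $s>d/2$ you only have $\nabla\lambda\in L^\infty_t H^{s-1}$, which need not embed into $L^\infty_{t,x}$ when $s<d/2+1$. So the time ODE for the frame cannot be solved pointwise in $x$. The paper handles this by exploiting the local energy decay encoded in $l^2X^s$: locally one has $\lambda\in L^2_t H^{s+\frac12}$, hence $\partial\lambda\in C_{x_d}L^2_t H^{s-1}_{x'}\cap L^2_{x_d}L^2_t C_{x'}$, so on almost every fixed $x_d$-slice the coefficients are $L^2_t L^\infty_{x'}$ and the frame can be propagated forward in time there. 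One then uses the \emph{spatial} structure equations \eqref{strsys-cpf}, whose coefficients are continuous at each fixed time, to extend the frame off that slice to all of $\R^d$; Frobenius' theorem (whose compatibility conditions are exactly the constraints $T^2=T^3=T^4=T^5=0$) guarantees consistency. The orthogonality relations (your property (i)) are then shown to propagate by writing a closed linear ODE system for $\langle m,m\rangle-2$, $\langle F_\alpha,m\rangle$, $\langle m,\bar m\rangle$, and $g_{\alpha\beta}-\langle F_\alpha,F_\beta\rangle$, first in $t$ on the good slice and then in $x$. You allude to local energy decay only in connection with the regularity of $\partial_x^2 F$, but its essential role is earlier, in making the frame reconstruction well-defined at all. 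Your identification of the ``main obstacle'' as the global spatial Bonnet step is misplaced: the spatial system has continuous coefficients and poses no difficulty; the subtlety is entirely in the time direction.
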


We prove the theorem in several steps:

\subsection{ The moving frame}
Once we have the initial data $(h_0,A_0,\la_0)$ which is small in $\HH^s\times H^s$ by \propref{Global-harmonic} and \ref{Global-harmonic-d2}, Theorem~\ref{LWP-MSS-thm} yields the good gauge local solution $\la$, along with the associated
derived variables $(h,A)$. But this does not yet give us the actual maps $F$.
 
 Here we undertake the task of reconstructing the frame $(F_\alpha, m)$. For this we use the system consisting of \eqref{strsys-cpf} and \eqref{mo-frame}, viewed as a linear ode. We recall these equations here:
 \begin{equation}               \label{strsys-cpf-re}
 \left\{\begin{aligned}
 &\d_{\al}F_{\be}=\Gamma^{\ga}_{\al\be}F_{\ga}+\Re(\lambda_{\al\be}\bar{m}),\\
 &\d_{\al}^A m=-\lambda^{\ga}_{\al} F_{\ga},
 \end{aligned}\right.
 \end{equation}
 respectively
 \begin{equation}              \label{mo-frame-re}
 \left\{\begin{aligned}
 &\d_t F_{\al}=-\Im (\d^A_{\al} \psi \bar{m}-i\la_{\al\ga}V^{\ga} \bar{m})+[\Im(\psi\bar{\la}^{\ga}_{\al})+\nab_{\al} V^{\ga}]F_{\ga},\\
 &\d^{B}_t m=-i(\d^{A,\al} \psi -i\la^{\al}_{\ga}V^{\ga} )F_{\al}.
 \end{aligned}\right.
 \end{equation}

 We start with the frame at time $t=0$, which 
 already is known to solve \eqref{strsys-cpf-re}, and 
 has the following properties:
 \begin{enumerate}[label=(\roman*)]
     \item \emph{Orthogonality}, $F_\alpha \perp m$, $\<m,m\>=2$, $\<m,\bar{m}\>=0$ and consistency with the metric $g_{\al\be} = \< F_\al,F_\be \>$.  

     \item \emph{Integrability}, $\partial_\beta F_\alpha = \partial_\alpha F_\beta$.
     \item \emph{Consistency}  with the second fundamental form and the connection $A$:
 \[
  \d_\al F_\be\cdot m=\la_{\al\be}, \qquad \< \partial_\alpha m,m\>= -2 i A_\alpha.
 \]
    
 \end{enumerate}

 Next we extend this frame to times $t > 0$ by simultaneously solving the pair of equations \eqref{strsys-cpf-re} and \eqref{mo-frame-re}.

\subsubsection{The solvability of \eqref{strsys-cpf-re} and \eqref{mo-frame-re}}
 The system consisting of \eqref{strsys-cpf-re} and \eqref{mo-frame-re}
 is overdetermined, and the necessary and sufficient condition 
 for existence of solutions is provided by Frobenius' theorem.
 We now verify these compatibility conditions in two steps:

 \medskip

 a) Compatibility conditions for the system \eqref{strsys-cpf-re} at fixed time. Here, by $T^2_{\al\be\mu\nu}=0$, $T^3_{\al\be,\ga}=0$, $T^4_{\al\be}=0 $ and  we have
 \begin{align*}
     &\d_{\al}(\Ga^\si_{\be\ga}F_\si+\Re(\la_{\be\ga}\bar{m}))-\d_{\be}(\Ga^\si_{\al\ga}F_\si+\Re(\la_{\al\ga}\bar{m}))
     =0,
 \end{align*}
 and 
 \[
 \d_\al(iA_\be m+\la^\si_\be F_\si)-\d_\be(iA_\al m+\la^\si_\al F_\si)=0,
 \]
 as needed.

 \medskip

 b) Compatibility conditions between the system \eqref{strsys-cpf-re} and \eqref{mo-frame-re}.
 By \eqref{strsys-cpf-re}, \eqref{mo-frame-re} and \eqref{dtla-rep} we have
 \begin{align*}
     \d_t(iA_\al m+\la^\si_\al F_\si)-\d_\al (iB m+i(\d^{A,\si}\psi-i\la^\si_\ga V^\ga)F_\si)
     =0
 \end{align*}
 and
 \begin{equation*}     %    \label{d_tal F}
 \begin{aligned}
     \d_\be[-\Im (\d^A_{\al} \psi \bar{m}-i\la_{\al\ga}V^{\ga} \bar{m})+[\Im(\psi\bar{\la}^{\ga}_{\al})+\nab_{\al} V^{\ga}]F_{\ga}]-\d_t[\Ga^\ga_{\be\al}F_\ga+\Re(\la_{\be\al}\bar{m})]
     = 0.
 \end{aligned}
 \end{equation*}

 \medskip

\subsubsection{Solving the system \eqref{strsys-cpf-re}-\eqref{mo-frame-re} locally}
Starting from the existing frame at time $t=0$, we want to extend
it forward in time by solving \eqref{mo-frame-re}, while insuring that \eqref{strsys-cpf-re} remains valid. 
The difficulty is that we lack the uniform integrability in time for the coefficients in \eqref{mo-frame-re}. However, in view of the local energy decay bounds for $\lambda$ and $\psi$, we do know
that locally we have $\lambda \in L^2_t H^{s+\frac12}$.
We choose a distinguished coordinate, say $x_d$, and denote the remaining coordinates by $x'$. Then in
view of Sobolev embeddings we have the local regularity
\[
\partial \lambda \in C_{x_d} L^2_t H^{s-1}_{x'} \cap L^2_{x_d}
L^2_t C_{x'}
\]
Thus on a ``good" $x_d$ slice we have  $\partial \lambda \in L^2_t C_{x'}$ and we can extend our frame forward in time as a continuous function, with $L^2_t L^\infty_{x'}$ time derivatives and bounded
spatial derivatives. 

At fixed time all the coefficients are continuous so we can start from the above $x_d$ slice and solve the system \eqref{strsys-cpf-re} globally in $x$, obtaining a global frame 
$(F_\alpha,m)$ which is locally Lipschitz in $x$ and continuous
in $t$. By Frobenius' theorem, this solution must also satisfy 
\eqref{mo-frame-re} on any good $x_d$ slice, which is a.e.
Thus we have obtained the desired global frame $(F_\alpha,m)$
for $t \in [0,1]$.

\subsubsection{Propagating the properties (i)-(iii)}
Here we show that the properties (i)-(iii) above also extend to all $t \in [0,1]$. 
The properties (ii) and (iii) follow directly from the equations \eqref{strsys-cpf-re} and \eqref{mo-frame-re} once 
the orthogonality conditions in (i) are verified.
We  denote 
 \begin{equation*}  % \label{tg}
     \tg_{00}=\<m,m\>,\quad \tg_{\al 0}=\<F_\al, m\>,\quad \tg_{\al\be}=\<F_\al,F_\be\>.
 \end{equation*}

The first step is to propagate (i) forward in time on a good $x_d$
slice. Indeed, by \eqref{mo-frame-re} and \eqref{dtg-formula}
we have
\begin{align*}
 &\begin{aligned}
     \d_t \tg_{\al 0}=&-\frac{i}{2}(\overline{\d^A_\al \psi}+i \bar{\la}_{\al\ga}V^\ga)(\tg_{00}-2)-i(\overline{\d^{A,\si} \psi}+i \bar{\la}^\si_{\ga}V^\ga)(g_{\al\si}-\tg_{\al\si})\\
     &+\frac{i}{2}(\d^A_\al \psi+i \la_{\al\ga}V^\ga)\<\bar{m},m\>+(\Im(\psi\bar{\la}_\al^\ga)+\nab_\al V^\ga)\tg_{\ga 0}+iB \tg_{\al 0},
 \end{aligned}
     \\
     &\d_t (\tg_{00}-2)=2\Im [(\d^{A,\al}\psi-i \la^\al_\ga V^\ga) \tg_{\al 0}],\\
     &\d_t \<m,\bar{m}\>= -2iB\<m,\bar{m}\>-2i (\d^{A,\al}\psi-i \la^\al_\ga V^\ga) \bar{\tg}_{\al 0},\\
     &\begin{aligned}
     \d_t(g_{\al\be}-\tg_{\al\be})=&\ (\Im(\psi\bar{\la}_\al^\ga)+\nab_\al V^\ga)(g_{\be\ga}-\tg_{\be\ga})+(\Im(\psi\bar{\la}_\be^\ga)+\nab_\be V^\ga)(g_{\al\ga}-\tg_{\al\ga})\\
     &\ +\Im (\d^A_\al \psi \tg_{\be 0}-i\la_{\al\ga}V^\ga \tg_{\be 0})-\Im (\d^A_\be \psi \bar{\tg}_{\al 0}-i\la_{\be\ga}V^\ga \bar{\tg}_{\al 0}).
     \end{aligned}
 \end{align*}
 Viewed as a linear system of ode's in time, these equations allow us to propagate (i) in time, given that it is satisfied at $t = 0$.

It remains to propagate (i) spatially.
Using \eqref{strsys-cpf-re} we compute
\begin{align*}
&\d_\al \tg_{\be 0}=\Ga^\ga_{\al\be} \tg_{\ga 0}+\frac{1}{2}\la_{\al\be}\<\bar{m},m\>+\frac{1}{2}\bar{\la}_{\al\be}(\tg_{00}-2)+\bar{\la}^\ga_\al (g_{\be\ga}-\tg_{\be\ga})+iA_\al \tg_{\be 0},\\
&\d_{\al}(\tg_{00}-2)=-2\Re(\la^\ga_\al \tg_{\ga 0}),\\
&\d_\al \<m,\bar{m}\>=-2i A_\al \<m,\bar{m}\>-2\Re\la^\ga_\al \bar{\tg}_{\ga 0},\\
&\d_\al(g_{\be\ga}- \tg_{\be\ga})=\Ga^\si_{\al\be}(g_{\si\ga}-\tg_{\si\ga})+\Ga^\si_{\al\ga}(g_{\si\be}-\tg_{\si\be})+\Re(\bar{\la}_{\be\al}\tg_{\ga 0}+\bar{\la}_{\ga\al}\tg_{\be 0}).
\end{align*}
By ode uniqueness and the choice of the initial data, the desired properties (i) for the frame are indeed propagated spatially.

\subsubsection{ The Sobolev regularity of the frame }
Here we show that our frame has the global regularity
\begin{equation*}
 \partial_x(F_\alpha, m) \in L^\infty H^{s}, \qquad \partial_t     (F_\alpha, m)
 \in L^\infty H^{s-1}. 
\end{equation*}

As a consequence of the property (i), we directly see that 
$(F_\alpha,m) \in L^\infty$. From \eqref{strsys-cpf-re}
it then follows that $\partial_x (F_\alpha,m) \in L^\infty$.
This allows us to differentiate further in  \eqref{strsys-cpf-re}
and bound higher derivatives of the frame, up to the 
$H^{s}$ regularity for $\partial_x(F_\alpha,m)$, which is imposed by $\lambda$. We can directly estimate this last norm.
Precisely, by \eqref{strsys-cpf-re}, \eqref{psi-full-reg} and Sobolev embeddings we have
\begin{align*}
    \|\d_x F_\al\|_{H^s}\lesssim &\ \|\Ga F_\ga+\la m\|_{H^s}\\
    \lesssim &\ \|\Ga\|_{H^s}\|F_\ga\|_{L^\infty\cap \dot{H}^s}+\|\la\|_{H^s}\|m\|_{L^\infty\cap \dot{H}^s}\\
    \lesssim &\ \ep_0 (\|g\|^{1/2}_{L^\infty}+\|\d_x F_\al\|_{H^s})+\ep_0(1+\|\d_x m\|_{H^s})\\
    \lesssim & \ \ep_0(1+\|\d_x F_\al\|_{H^s}+\|\d_x m\|_{H^s})
\end{align*}
and 
\begin{align*}
    \|\d_\al m\|_{H^{s}}\lesssim &\  \|Am+\la F_\ga\|_{H^{s}}\\
    \lesssim &\ \| A\|_{H^{s}}\|m\|_{L^\infty\cap \dot{H}^s}+\|\la\|_{H^s}\|F_\ga\|_{L^\infty\cap \dot{H}^s}\\
    \lesssim &\ \ep_0(1+\|\d_x F_\al\|_{H^s}+\|\d_\al m\|_{H^{s}}).
\end{align*}
These imply the uniform bound
\begin{equation*}
    \|\d_x F_\al\|_{H^s}+\|\d_x m\|_{H^{s}}\lesssim \ep_0.
\end{equation*}

 \subsection{ The moving manifold \texorpdfstring{$\Sigma_t$}{}} Here we propagate the full map $F$ 
 by simply integrating \eqref{sys-cpf}, i.e.
 \begin{align*}
     F(t)=F(0)+\int_0^t -\Im(\psi \bar{m})+V^\ga F_\ga ds.
 \end{align*}
 Then by \eqref{strsys-cpf-re}, we have
 \begin{align*}
     \d_\al F(t)=\d_\al F(0)+\int_0^t -\Im (\d^A_{\al} \psi \bar{m}-i\la_{\al\ga}V^{\ga} \bar{m})+[\Im(\psi\bar{\la}^{\ga}_{\al})+\nab_{\al} V^{\ga}]F_{\ga} ds,
 \end{align*}
 which is consistent with above 
 definition of $F_\alpha$.

 \subsection{ The (SMCF) equation for \texorpdfstring{$F$}{}}
 Here we establish that $F$ solves \eqref{Main-Sys}. Using the relation  
 $\la_{\al\be}=\d_\al \d_\be F \cdot m$
 we have
 \begin{align*} 
 -\Im (\psi\bar{m})=&\ -\Im (g^{\al\be}\d_\al\d_\be F\cdot (\nu_1+i\nu_2)\ (\nu_1-i\nu_2))\\
 =&\ (\De_g F\cdot \nu_1) \nu_2-(\De_g F\cdot \nu_2) \nu_1\\
 =&\ J (\De_g F)^{\perp}=J\mathbf{H}(F).
 \end{align*}
 This implies that the $F$ solves \eqref{Main-Sys}.

\end{document}